\renewcommand\th@plain{\slshape}
\newtheoremstyle{plain}% <name>
 {2mm}%                 <space above>
 {2mm}%                         <space below>
 {\slshape}%              <body font>
 {}%                         <indent amount>
 {\bfseries}%                <theorem head font>
 {.}%                        <punctuation after theorem head>
 {.5em}%                     <space after theorem head>
 {}%                         <theorem head spec>
\theoremstyle{plain}
\newtheorem{theorem}{Theorem}[section]
\newtheorem{corollary}[theorem]{Corollary}
\newtheorem{lemma}[theorem]{Lemma}
\newtheorem{proposition}[theorem]{Proposition}
\newtheorem{claim}[theorem]{Claim}
\newtheorem*{claim*}{Claim}
\newtheoremstyle{definition}% <name>
 {2mm}%                 <space above>
 {2mm}%                         <space below>
 {\normalfont}%              <body font>
 {}%                         <indent amount>
 {\bfseries}%                <theorem head font>
 {.}%                        <punctuation after theorem head>
 {.5em}%                     <space after theorem head>
 {}%                         <theorem head spec>
\theoremstyle{definition}
\newtheorem{definition}[theorem]{Definition}
\newtheorem{notation}[theorem]{Notation}
\newtheorem{remark}[theorem]{Remark}
\newtheorem*{acknowledgements}{Acknowledgements}
\crefname{section}{Section}{Sections}
\crefname{theorem}{Theorem}{Theorems}
\crefname{corollary}{Corollary}{Corollaries}
\crefname{lemma}{Lemma}{Lemmas}
\crefname{lemma}{Lemma}{Lemmas}
\crefname{proposition}{Proposition}{Propositions}
\crefname{claim}{Claim}{Claims}
\crefname{definition}{Definition}{Definitions}
\crefname{notation}{Notation}{Notations}
\crefname{problem}{Problem}{Problems}
\crefname{note}{Note}{Notes}
\crefname{remark}{Remark}{Remarks}
\crefname{example}{Example}{Examples}
\crefname{enumi}{}{}
\crefname{enumii}{}{}
\crefname{enumiii}{}{}
\numberwithin{equation}{section}
\setlist{font=\upshape}
\newcommand{\proofitem}[1]{\noindent\cref{#1}.}
\newcommand{\prooftitle}[1]{\noindent\textsf{#1}}
\newcommand{\textdf}[1]{\textbf{\textsl{#1}}}
\newcommand{\midmid}{\mathrel{}\middle|\mathrel{}}
\newcommand{\semicolon}{\nobreak\mskip2mu\mathpunct{}\nonscript\mkern-\thinmuskip{;}\mskip6muplus1mu\relax}
\renewcommand{\emptyset}{\varnothing}
\DeclareMathOperator{\Supp}{Supp}
\DeclareMathOperator{\GL}{GL}
\DeclareMathOperator{\Spec}{Spec} %%Yohsuke added
\newcommand{\prim}{\mathrm{prim}}
\newcommand{\loc}{\mathrm{loc}}
\newcommand{\fin}{\mathrm{fin}}
\newcommand{\NIP}{\mathrm{NIP}} %%Yohsuke added
\newcommand{\pr}{\mathrm{pr}} %%Yohsuke added
\DeclareMathOperator{\rank}{rank}
\DeclareMathOperator{\spa}{span}
\DeclareMathOperator{\spanR}{\spa_{\R}}
\DeclareMathOperator{\Ker}{Ker}
\DeclareMathOperator{\Cok}{Cok}
\DeclareMathOperator{\Img}{Im}
\renewcommand{\mod}[1]{(\mathrm{mod}\ #1)}
\DeclareMathOperator{\vol}{vol}
\newcommand{\N}{\mathbb{N}}
\newcommand{\Z}{\mathbb{Z}}
\newcommand{\Zprim}{\Z_{\prim}}
\newcommand{\Q}{\mathbb{Q}}
\newcommand{\R}{\mathbb{R}}
\newcommand{\C}{\mathbb{C}}
\renewcommand{\P}{\mathbb{P}}
\newcommand{\V}{\mathbb{V}}
\newcommand{\F}{\mathbb{F}}  %%Yohsuke added
\newcommand{\A}{\mathbb{A}} %%Yohsuke added
\newcommand{\mathvec}{\mathbf} %For universal replacement of the vector notation.
\let\aa\relax
\newcommand{\aa}{\mathvec{a}}
\newcommand{\bb}{\mathvec{b}}
\newcommand{\cc}{\mathvec{c}}
\newcommand{\ee}{\mathvec{e}}
\newcommand{\xx}{\mathvec{x}}
\newcommand{\yy}{\mathvec{y}}
\newcommand{\zz}{\mathvec{z}}
\newcommand{\uu}{\mathvec{u}}
\newcommand{\vv}{\mathvec{v}}
\newcommand{\xii}{\bm{\xi}}
\newcommand{\etaa}{\bm{\eta}}
\newcommand{\TT}{\mathbf{T}}
\renewcommand{\a}{\alpha}
\newcommand{\f}{\varphi}
\renewcommand{\l}{\lambda}
\newcommand{\s}{\sigma}
\newcommand{\G}{\Gamma}
\renewcommand{\L}{\Lambda}
\newcommand{\dd}{\mathfrak{d}}
\renewcommand{\tilde}{\widetilde}
\newcommand{\ball}[2]{\mathcal{B}_{#1}(#2)}
\newcommand{\cone}[3]{\mathcal{C}_{#1}(#2,#3)}
\newcommand{\realeps}{\sigma}
\newcommand{\realeta}{\tau}
\newcommand{\q}{\mathfrak{q}}
\DeclarePairedDelimiter\ceil{\lceil}{\rceil}       %%\ceil*{}
\DeclarePairedDelimiter\floor{\lfloor}{\rfloor}  %%\floor*{}
\renewcommand{\and}{\quad\textup{and}\quad}
\NewDocumentCommand{\xsideset}{mmme{_^}}{%
\mathop{%
% half width of #3
\settowidth{\dimen0}{$\m@th\displaystyle#3$}%
\dimen0=.5\dimen0
% half width of #3 with subscripts or superscripts,
% half width of #3 removed
\settowidth{\dimen2}{$%
\m@th\displaystyle#3%
\IfValueT{#4}{_{#4}}%
\IfValueT{#5}{^{#5}}%
$}%
\dimen2=.5\dimen2
\advance\dimen2 -\dimen0
% prescripts
\sbox6{\scriptspace\z@$\displaystyle{\vphantom{#3}}#1$}
% postscripts
\sbox8{\scriptspace\z@$\displaystyle{\vphantom{#3}}#2$}
% typeset the thing
\ifdim\wd6>\dimen2 \kern\dimexpr\wd6-\dimen2\relax\fi
{%
\mathop{\llap{\copy6}{\displaystyle#3}\rlap{\copy8}}\limits%
\IfValueT{#4}{_{#4}}%
\IfValueT{#5}{^{#5}}%
}%
\ifdim\wd8>\dimen2 \kern\dimexpr\wd8-\dimen2\relax\fi
}%
}
\newcommand{\dsum}[1]{\xsideset{}{^{#1}}\sum}
\newcommand{\astsum}{\dsum{\smash{\ast}}}
\newcommand{\dcup}[1]{\xsideset{}{^{#1}}\bigcup}
\newcommand{\astcup}{\dcup{\smash{\ast}}}
\newcommand{\dprod}[1]{\xsideset{}{^{#1}}\prod}
\newcommand{\astprod}{\dprod{\smash{\ast}}}
\begin{document}

\title[Distribution of rational points on random Fano hypersurfaces]
{Distribution of rational points\\on random Fano hypersurfaces}
\author[Y. Matsuzawa]{Yohsuke Matsuzawa}
\author[Y. Suzuki]{Yuta Suzuki}

% \address{Department of Mathematics, Graduate School of Science, Osaka Metropolitan University, 3-3-138, Sugimoto, Sumiyoshi, Osaka, 558-8585, Japan}

% \email{matsuzaway@omu.ac.jp}
% \email{suzuyu@rikkyo.ac.jp}

\begin{abstract}
    We prove an asymptotic formula for the average number of rational points 
    on Fano hypersurfaces that are contained in a small ball centered at a given
    adelic point.
    We also prove an asymptotic formula for the number of hypersurfaces admitting adelic points
    that are contained in a small ball.
\end{abstract}

\subjclass{Primary: 11D45; Secondary: 11P21, 14G05.}
\keywords{Fano hypersurfaces, rational points.}

\maketitle

\tableofcontents

%%%%%%%%%%%%%%%%%%%%%%%%%%%%%%%%%%%%%%%%
\section{Introduction}
\label{sec:intro}

Let $V$ be a projective variety defined over $\Q$.
The set $V(\Q)$ of rational points on $V$ is naturally embedded in the set of adelic points of $V$:
\begin{align}
V(\Q)\hookrightarrow \prod_{p\in M_{\Q}}V(\Q_p)
\end{align} 
where $M_{\Q}$ is the set of all places of $\Q$.
To study the distribution of rational points,
it is natural to study the proportion of rational points that are contained in 
a measurable subset of $\prod_{p\in M_{\Q}}V(\Q_p)$.
There is a general equidistribution principle that such proportion should be governed by
a naturally constructed measure on $\prod_{p\in M_{\Q}}V(\Q_p)$ (cf.\ \cite{huang2021}). 
It seems very difficult to prove such equidistribution for a given variety,
but the question such as if it holds for 100\% of varieties in a given class of varieties 
might be tractable.
Indeed, recently Browning, Le Boudec, and Sawin proved that 100\% of Fano hypersurfaces (except cubic surfaces) satisfy the Hasse principle, which is believed to be very difficult to prove
a given Fano variety \cite{BBS}.

Following their idea as well as \cite{leBoudec},
we investigate the number of rational points on random hypersurfaces
that are contained in an adelic neighborhood (actually a box) of a given adelic point.
More precisely, we study the average of counting functions of those points 
with bounded height, where the average is taken over all hypersurfaces.
To state our main theorems, let us first introduce 
the set of all hypersurfaces.

\begin{definition}\label{def:monomial_homog_poly_hypsurf}
Let $n,d \in \Z_{\geq1}$ be positive integers.
\begin{enumerate}
\item
$\mathbb{V}_{d,n} = \left\{ V \subset \P^{n}_{\Q} \ \middle|\ \text{$V$ is a hypersurface of degree $d$} \right\}$.

\item
Let $R$ be a commutative ring.
Let $X_{0}, \dots, X_{n}$ be indeterminate.
The set of all degree $d$ monomials in $X_{0}, \dots, X_{n}$ is denoted by $\mathcal{M}_{d,n}$.
Ordering the members of $\mathcal{M}_{d,n}$ by the lexicographic order,
we identify
\begin{align}\label{homog-poly-RN}
\left\{  \txt{homogeneous polynomials of degree $d$\\ in $X_{0},\dots, X_{n}$ with coefficients in  $R$}    \right\}
 = R^{N_{d,n}}
\end{align}
where 
\begin{align}
N_{d,n}\coloneqq\binom{n+d}{d} = \# \mathcal{M}_{d,n}.
\end{align}
For $\aa \in R^{N_{d,n}}$, 
we denote the corresponding homogeneous polynomial by $f_{\aa}$.

\item 
Any $V \in \mathbb{V}_{d,n}$ has exactly two defining equations with coefficients in $\Z$ whose coefficient vectors
are primitive.
One of such coefficient vectors are denoted by $\aa_{V}$.
(Thus we have $V = V_+(f_{\aa})$ where $V_+(f_{\aa})$ stands for the closed subscheme of the projective space defined by $f_{\aa}$.)
For $A \geq 1$, we define
\begin{align}
{\mathbb{V}}_{d,n}(A) := \left\{ V \in {\mathbb{V}}_{d,n} \ \middle|\ \| \aa_{V}\| \leq A \right\}
\end{align}
where $ \|\cdot\|$ is the usual Euclidean norm.

\end{enumerate}
\end{definition}

We use the following metric on $\P^{n}(\Q_{p})$ to measure the $p$-adic distances between points.

\begin{definition}
Let $p \in M_{\Q}$.
For $x = (x_{0} : \cdots : x_{n}), y=(y_{0}:\cdots : y_{n}) \in \P^{n}(\Q_{p})$, we write
$\xx = (x_{0}, \dots, x_{n}), \yy = (y_{0}, \dots, y_{n}) \in \Q_{p}^{n+1}$ and set
\begin{align}
d_{p}(x,y) = \frac{ \|  \xx \wedge \yy \|_{p}}{ \|\xx  \|_{p} \| \yy \|_{p}}.
\end{align}
Here, when the place $p$ is finite,
let $\|\cdot\|_{p}$ be the max norms on $\Q_{p}^{n+1}$ and $\bigwedge^2\Q_{p}^{n+1}$
with respect to the standard basis $\ee_0,\ldots,\ee_n$
and $\ee_i\wedge\ee_j$ with $0\le i<j\le n$.
When $p=\infty$,
let $\|\cdot\|_{\infty}$ be the Euclidean norms
on $\R^{n+1}$ and $\bigwedge^2\R^{n+1}$
with respect to the standard basis $\ee_0,\ldots,\ee_n$
and $\ee_i\wedge\ee_j$ with $0\le i<j\le n$.
\end{definition}

This function $d_{p}$ is well-defined non-negative function
on $\P^{n}(\Q_p)\times\P^{n}(\Q_p)$, symmetric and 
$d_{p}(x,y) =0$ if and only if $x=y$.
One can also prove $d_{p}$ satisfies
the strong triangle inequality for $p<\infty$
and the triangle inequality for $p=\infty$
(see e.g.\ \cite[Section~1.1]{Rumely})
and so $d_p$ is a genuine metric on $\P^{n}(\Q_p)$.
Note that $d_{\infty}(x,y)=|\sin\theta|$
with the angle $\theta\in[-\frac{\pi}{2},+\frac{\pi}{2}]$ between $\xx$ and $\yy$ since
\begin{equation}
\label{wedge_to_innerproduct}
\|\xx\wedge\yy\|^2
=\|\xx\|^2\cdot\|\yy\|^2-\langle\xx,\yy\rangle^2
=\|\xx\|^2\|\yy\|^2|\sin\theta|^2
\end{equation}
where $\langle\cdot,\cdot\rangle$ is the usual Euclidean inner product.

Now let us introduce the counting function of rational points
that are contained in an adelic neighborhood of a given point.

\begin{definition}
Let $n, d \in \Z_{\geq1}$ and suppose $d \leq n$ (so that general members of $\V_{d,n}$ are smooth Fano varieties).

\begin{enumerate}
\item
For $x \in \P^{n}(\Q)$ with primitive homogeneous coordinates $\xx \in \Z^{n+1}$,
we define 
\begin{align}
H(x) = \| \xx \|^{n+1-d}.
\end{align}
Note that when restricted on a hypersurface of degree $d$, this is the multiplicative height function
associated with the anti-canonical divisor.

\item 
We write
\if0
\begin{align}
    &\astprod_{p \in M_{\Q}} (0,1]\\
    &\coloneqq \left\{ (\s_p)_p \in \prod_{p \in M_{\Q}} (0,1] \ \middle|\  
    \begin{gathered}
    \text{$\s_p=1$ for all but finitely many $p$}\\
    \text{$\sigma_{p}=p^{-e_{p}}$ with some $e_{p}\in\mathbb{Z}_{\ge0}$ for all $p<\infty$}
    \end{gathered}\right\}
\end{align}
\fi
\begin{align}
    &\astprod_{p \in M_{\Q}} (0,1]\\
    &\coloneqq \left\{ (\s_p)_p \in \prod_{p \in M_{\Q}} (0,1] \ \middle|\  
    \parbox{20em}{
    $\s_p=1$ for all but finitely many $p$\\
    $\sigma_{p}=p^{-e_{p}}$ with some $e_{p}\in\mathbb{Z}_{\ge0}$ for all $p<\infty$
    }
    \right\}
\end{align}
For $\s = (\s_p)_p \in \prod_{p \in M_{\Q}}^* (0,1]$,
we write
\[
\sigma_{p}=p^{-e_{p}}
\quad\text{for $p<\infty$}
\]
and set
\[
q=q(\sigma)\coloneqq\prod_{\substack{p\in M_{\Q}\\p<\infty}}p^{e_{p}},\quad
\q=\q(\sigma)\coloneqq\frac{q(\sigma)}{\sigma_{\infty}}=\prod_{p\in M_{\Q}}\frac{1}{\sigma_{p}}
\]
and
\[
\Supp \s \coloneqq \{ p \in M_{\Q} \mid \s_p < 1\}.
\]

\item
Let $\xi = (\xi_{p})_{p \in M_{\Q}} \in \prod_{p \in M_{\Q}} \P^{n}(\Q_{p})$
and let $\s = (\s_{p})_{p \in M_{\Q}} \in \prod_{p \in M_{\Q}}^* (0,1]$.
%Let $S = \Supp \s$.
For $V \in \V_{d,n}$ and $B \geq 1$, we define 
\begin{align}
&N_V(B;\xi,\realeps)\\
&\coloneqq
\#\{x\in V(\Q)\mid H(x)\le B\ \text{and}\ d_p(x,\xi_p)\le\realeps_p\ \text{for all $p\in M_\Q$}\}. \label{def-of-NV}
\end{align}
\end{enumerate}

\end{definition}

%%%%%%%%%%%%%%%%%%%%%%%%%%%%%%%%%%%%%%%%
The following is our first main theorem.
\begin{theorem}
\label{mainthm:numerator}
Let $n\ge d\ge2$ with $(n,d)\neq(2,2)$, $\xi \in \prod_{p \in M_{\Q}} \P^n(\Q_p)$
and $\s \in \prod_{p \in M_{\Q}}^* (0,1]$.
For $A,B\ge1$, we have
\[
\sum_{V\in\mathbb{V}_{d,n}(A)}
N_V(B;\xi,\realeps)
=
\widetilde{C}_{d,n}(\xi,\sigma)A^{N_{d,n}-1}B(1+R_{d,n}(A,B;\xi,\realeps))
\]
with the error term $R_{d,n}(A,B;\xi,\realeps)$ bounded as
\[
R_{d,n}(A,B;\xi,\realeps)
\ll
(\q A^{-1}
+
\q B^{-\frac{1}{n}}
+
A^{-1}B^{\frac{d+1}{(2n+1)(n+1-d)}}
+
\q^{n-1}A^{-(2n-1)}B^{\frac{d}{n+1-d}}
)B^{\varepsilon}
\]
provided
\[
A\ge\max(\q, B^{\frac{d+1}{(2n+1)(n+1-d)}},\q^{\frac{n-1}{2n-1}}B^{\frac{d}{(2n-1)(n+1-d)}})
\quad\text{and}\quad
B\ge\q^{n},
\]
where the coefficient $\widetilde{C}_{d,n}(\xi,\sigma)$ is explicitly given by
\begin{align}
\widetilde{C}_{d,n}(\xi,\sigma)
&\coloneqq
\frac{V_{N_{d,n}-1}}{4\zeta(N_{d,n}-1)}
\mathfrak{W}_{d,n}(\xii_{\infty};\sigma_{\infty})
\frac{\varphi(q)}{J_{n+1}(q)\zeta(n+1)},\\
\mathfrak{W}_{d,n}(\xii_{\infty},\realeps_{\infty})
&\coloneqq
\int_{\mathcal{C}_{n+1}(\xii_{\infty},\sigma_{\infty})\cap\mathcal{B}_{n+1}(1)}
\frac{d\xx}{\|\nu_{d,n}(\xx)\|}
\end{align}
with a homogeneous coordinate $\xii_{\infty}\in\R^{n+1}$ of $\xi_{\infty}$
\textup{(}for other notation, see \cref{sec:notation}\textup{)}
and the implicit constant depends only on $d,n$.
In particular, we have
\begin{align}
    \widetilde{C}_{d,n}(\xi,\sigma) \asymp \frac{\s_\infty^n \f(q)}{J_{n+1}(q)}
    \asymp  \frac{\f(q)}{q} \frac{1}{\q^n}
\end{align}
where the implicit constant depends only on $d,n$.
\end{theorem}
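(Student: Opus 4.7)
The plan is to exchange the order of summation in \eqref{def-of-NV}, writing
\[
\sum_{V\in\V_{d,n}(A)} N_V(B;\xi,\sigma)
= \sum_{x\in\P^n(\Q)}\mathbf{1}\{H(x)\le B,\ d_p(x,\xi_p)\le\sigma_p\ \forall p\}\cdot\#\{V\in\V_{d,n}(A):x\in V\},
\]
and handling the inner count first. For a fixed $x$ with primitive coordinates $\xx\in\Z^{n+1}$, a hypersurface $V\in\V_{d,n}$ contains $x$ exactly when one of its two primitive defining vectors $\aa_V\in\Z^{N_{d,n}}$ satisfies the single linear equation $\langle\aa_V,\nu_{d,n}(\xx)\rangle = f_{\aa_V}(\xx) = 0$, where $\nu_{d,n}(\xx)$ is the vector of degree-$d$ monomial evaluations at $\xx$. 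Since $\xx$ primitive forces $\gcd(x_0^d,\dots,x_n^d)=1$, the vector $\nu_{d,n}(\xx)$ is itself primitive, so the sublattice $\nu_{d,n}(\xx)^\perp\cap\Z^{N_{d,n}}$ has covolume $\|\nu_{d,n}(\xx)\|$. A geometry of numbers count in a ball of radius $A$ inside this hyperplane, combined with a Möbius inversion for primitivity of $\aa$ and a factor $1/2$ for the $\pm$ ambiguity, yields
\[
\#\{V\in\V_{d,n}(A):x\in V\}
=\frac{V_{N_{d,n}-1}A^{N_{d,n}-1}}{2\zeta(N_{d,n}-1)\|\nu_{d,n}(\xx)\|}+(\text{error controlled by the successive minima of the sublattice}).
\]

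Next I would execute the outer sum. The condition $d_\infty(x,\xi_\infty)\le\sigma_\infty$ combined with $H(x)\le B$ restricts $\xx$ to $\cone{n+1}{\xii_\infty}{\sigma_\infty}\cap\ball{n+1}{B^{1/(n+1-d)}}$, while at each finite prime $p$ the condition $d_p(x,\xi_p)\le p^{-e_p}$ becomes the congruence that $\xx$ be a $(\Z/p^{e_p}\Z)^\times$-multiple of $\xii_p$ modulo $p^{e_p}$; this contributes $\varphi(p^{e_p})$ admissible primitive classes out of $J_{n+1}(p^{e_p})$ primitive classes modulo $p^{e_p}$, hence a global factor $\varphi(q)/J_{n+1}(q)$ by the Chinese remainder theorem. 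A second Möbius inversion (for primitivity of $\xx$, contributing $\zeta(n+1)^{-1}$) then converts
\[
\sum_{\substack{\xx\ \text{primitive, adelic}\\ \|\xx\|\le B^{1/(n+1-d)}}}\frac{1}{\|\nu_{d,n}(\xx)\|}
=\frac{\varphi(q)}{\zeta(n+1)J_{n+1}(q)}\mathfrak{W}_{d,n}(\xii_\infty,\sigma_\infty)\,B\,(1+o(1)),
\]
with the power $B^1$ coming from $\|\nu_{d,n}(\xx)\|\asymp\|\xx\|^d$ and the region scaling by $B^{1/(n+1-d)}$. Multiplying by the inner asymptotic yields exactly the stated main term $\widetilde{C}_{d,n}(\xi,\sigma)A^{N_{d,n}-1}B$. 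The two-sided estimate $\widetilde{C}_{d,n}(\xi,\sigma)\asymp\sigma_\infty^n\varphi(q)/J_{n+1}(q)\asymp\varphi(q)/(q\q^n)$ then follows from the volume computation $\mathfrak{W}_{d,n}(\xii_\infty,\sigma_\infty)\asymp\sigma_\infty^n$ (via \eqref{wedge_to_innerproduct}, noting that $\|\nu_{d,n}(\xx)\|\asymp 1$ on the relevant region of the unit ball) together with $J_{n+1}(q)\asymp q^{n+1}$ and $\q=q/\sigma_\infty$.

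The main obstacle will be obtaining the error bounds uniformly in all three parameters $A$, $B$, and $\q$. The inner hyperplane count is genuinely sensitive to the shape of the sublattice $\nu_{d,n}(\xx)^\perp\cap\Z^{N_{d,n}}$: for typical $\xx$ the successive minima are balanced and the error is of the expected order $A^{N_{d,n}-2}/\|\nu_{d,n}(\xx)\|$, but for "degenerate" $\xx$ on which the sublattice admits an unusually short vector the error deteriorates. Controlling the total contribution of these degenerate $\xx$---through a large-sieve-type second moment estimate for $\nu_{d,n}(\xx)$---should produce the two analytic error terms $A^{-1}B^{(d+1)/((2n+1)(n+1-d))}$ and $\q^{n-1}A^{-(2n-1)}B^{d/(n+1-d)}$, the first from an average-case bound and the second from the worst-case contribution near $\|\xx\|\asymp B^{1/(n+1-d)}$. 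The remaining $\q B^{-1/n}$ is the standard Lipschitz boundary error from the outer primitive-lattice count on a smoothly bounded region scaled by $B^{1/(n+1-d)}$, while $\q A^{-1}$ comes from the Möbius tail in the inner primitivity inversion. The precise hypotheses $A\ge\max(\q,B^{(d+1)/((2n+1)(n+1-d))},\q^{(n-1)/(2n-1)}B^{d/((2n-1)(n+1-d))})$ and $B\ge\q^n$ are exactly what is needed to keep each error term strictly below the main term $\widetilde{C}_{d,n}(\xi,\sigma)A^{N_{d,n}-1}B$.
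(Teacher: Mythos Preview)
Your main-term derivation matches the paper's exactly: swap summation, count primitive $\aa$ in the hyperplane $\nu_{d,n}(\xx)^\perp$ by geometry of numbers (the paper's \cref{prop:lattice_count_primitive_local} with trivial local data), then sum $1/\|\nu_{d,n}(\xx)\|$ over primitive $\xx$ in the cone with the congruence constraint (the paper's \cref{lem:det_reciprocal_prim_sum}). The size estimate for $\widetilde{C}_{d,n}$ is also correct via \cref{lem:W_bound}.

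The error analysis, however, has a genuine gap: you have not identified the mechanism that controls the successive minima of $\Lambda_{\nu_{d,n}(\xx)}$, and the mechanism you propose is not the right one. The paper does \emph{not} use a large-sieve or second-moment argument. The key input is the Browning--le~Boudec--Sawin bound $\lambda_{N_{d,n}-1}(\Lambda_{\nu_{d,n}(\xx)})\le\min(n\|\xx\|/\dd_2(\xx),\|\xx\|)$ (\cref{lem:BBS_rank_d2_bound}), where $\dd_2(\xx)$ is the minimal determinant of a rank-$2$ integral lattice containing $\xx$. One then splits according to whether $\lambda_{N_{d,n}-1}\le A$ or not, dyadically decomposes in $\|\xx\|\asymp X$ and $\mu(\xx)=n\|\xx\|/\dd_2(\xx)\asymp M$, and for each dyadic range bounds the number of admissible $\xx$ with $\dd_2(\xx)\asymp X/M$. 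This last step requires counting rank-$2$ primitive sublattices of $\Z^{n+1}$ with prescribed successive minima that contain an element in the cone and the correct congruence class---the $S_{2,n}$ and $L_{2,n}$ machinery of \cref{def:Srn,def:Lrn,prop:Srn-bound,lem:l2n_local}. Combining this with \cref{lem:leBoudec_bound}, which converts $\sum_\nu A^\nu/(\lambda_1\cdots\lambda_\nu)$ into expressions involving $(\mu(\xx)/A)^s$, is what produces the two error terms $A^{-1}B^{\frac{d+1}{(2n+1)(n+1-d)}}$ and $\q^{n-1}A^{-(2n-1)}B^{\frac{d}{n+1-d}}$.

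Your attributions of the remaining two terms are also off. The term $\q B^{-1/n}$ is not a Lipschitz boundary error; it is a simplification (using $B\ge\q^n$ and $d\ge2$) of the genuine errors $\q^nB^{-1}$ and $\q B^{-1/(n+1-d)}$ that come from the outer reciprocal sum via \cref{lem:det_reciprocal_prim_sum}. The term $\q A^{-1}$ does not come from a M\"obius tail in the inner primitivity inversion but from the $\dd_2$ analysis of $E_{11}$ (the case $\lambda_{N_{d,n}-1}\le A$).
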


\begin{remark}
    For the last assertion, use \cref{lem:W_bound} to see
    $\mathfrak{W}_{d,n}(\xii_{\infty},\realeps_{\infty}) \asymp \s_\infty^n$.
\end{remark}

\begin{remark}
In \cref{thm:first_moment_local_cond} below,
we indeed have a better error term estimate than \cref{mainthm:numerator}.
We simplified the error term in \cref{mainthm:numerator}
by making some error terms larger keeping the admissible range of $A,B,\q$.
\end{remark}

When a hypersurface $V$ has no adelic points $x \in \prod_{p \in M_{\Q}} V(\Q_{p})$
that satisfies the same conditions in \cref{def-of-NV} except the height bound, 
we automatically have $N_{V}(B; \xi, \s) = 0$.
From this viewpoint, it is natural to consider the average of $N_{V}(B; \xi, \s)$
over the following set of hypersurfaces rather than $ \V_{d,n}$:
\begin{align}
\V_{d,n}^{\loc}(\xi, \s) \coloneqq \left\{ V \in   \V_{d,n} \ \middle|\ 
\parbox{20em}{$\forall p \in M_\Q, \exists \eta \in V(\Q_{p})$ such that $d_{p}(\eta, \xi_{p}) \leq \sigma_{p}$}  \right\}.
\end{align}
For $A\ge1$,
we also introduce the set of such hypersurfaces with height $\le A$:
\[
\V_{d,n}^{\loc}(A;\xi,\s)
\coloneqq
\V_{d,n}^{\loc}(\xi, \s)\cap \V_{d,n}(A).
\]
% We introduce the counting function
% \Yuta{This is not counting function itself though?} of such hypersurfaces: for $A \geq 1$, we set
% \begin{align}
% \V_{d,n}^{\loc}(A; \xi, \s) := \left\{ V \in   \V_{d,n}(A) \ \middle|\ 
% \txt{$\forall p \in S, \exists \eta \in V(\Q_{p})$ such that $d_{p}(\eta, \xi_{p}) \leq \sigma_{p}$\\
% $\forall p \in M_{\Q} \setminus S, V(\Q_{p}) \neq  \emptyset$}  \right\}.
% \end{align}
Our second main theorem is to give an asymptotic formula of the size of this set.
To state the theorem, let us introduce some notation.
The volume of unit ball in $\R^{N}$ for $N \in \Z_{\geq 1}$ is denoted by $V_N$.

A point $a \in \P^{N_{d,n}-1}(\Q_p)$ corresponds to a homogeneous polynomial
of degree $d$ up to non-zero multiple via \cref{homog-poly-RN}.
One of such polynomial is denoted by $f_a$. Then $V_+(f_a)$ depends only on $a$.
By this, we identify 
\begin{align}
    \{\text{Hypersurfaces in $\P^n_{\Q_p}$ of degree $d$} \}= \P^{N_{d,n}-1}(\Q_p).
\end{align}

Having this identification in mind, we set
\begin{align}
Z_{p}(\xi,\s) = 
\left\{ a \in \P^{N_{d,n}-1}(\Q_{p}) \ \middle| \  \txt{$\exists \eta \in V_{+}(f_{a})(\Q_{p})$ such that\\ $d_{p}(\eta, \xi_{p}) \leq \s_{p}$}\right\}.
\end{align}
This is the locus consisting of hypersurfaces that admit a $\Q_p$-point
close to the given point $\xi_p$.
It is easy to see that this set is closed with respect to 
the strong topology coming from $\Q_p$, cf.\ \cref{subsec:desc-Zp}.
The density of this set with respect to the standard probability measure on
$\P^{N_{d,n}-1}(\Q_p)$ is denoted by $\rho_{p}(\xi,\s)$.
See \cref{def:rhops}.
\if0
\begin{definition}
    We write
    \begin{align}
        &\rho_{p}(\xi,\s) = \mu_{\P^{N_{d,n}-1}_{\Q_{p}}}(Z_{p}(\xi,\s)) 
        \quad \text{for $p \in M_{\Q} \setminus \{ \infty\}$};\\
        &\rho_{\infty}(\xi,\s) = \mu_{S^{N_{d,n}-1}}(W_{\infty}(\xi,\s)).
    \end{align}
\end{definition}
\fi

\begin{theorem}
\label{mainthm:denominator}
    Let $A \geq 1$.
    Let $\xi \in \prod_{p \in M_{\Q}} \P^n(\Q_p)$
    and $\s \in \prod_{p \in M_{\Q}}^* (0,1]$.
    Let us write
    \begin{align}
    &\#\V_{d,n}^{\loc}(A;\xi, \s) = 
    \frac{V_{ N_{d,n}}\prod_{p \in M_{\Q}} \rho_{p}(\xi, \s)}{2\zeta( N_{d,n})} A^{N_{d,n}}
    \left(1 + R(A;\xi, \s)\right).
    \end{align}
    If 
    \begin{align}\label{eq:asymp-formula-vdn-condition-intro}
        A \gg \q \quad \text{and} \quad
        A \geq q \left((\log \log 3q)(\log 2A)\right)^{\frac{1}{N_{d,n}-2}},
    \end{align}
    then we have
    \begin{align}\label{eq:asymp-formula-vdn-bound-intro}
        R(A;\xi, \s) \ll \frac{1}{\log \frac{A}{\q} \log \log \frac{A}{\q}}
        + \frac{\log\log 3A}{\frac{A}{\q}}.
    \end{align}
    Here the implicit constant in \cref{eq:asymp-formula-vdn-condition-intro}
    is an absolute constant and that of \cref{eq:asymp-formula-vdn-bound-intro}
    depends only on $d,n$.
    Moreover, the size of the coefficient of the main term is
    \begin{align}
        \frac{V_{ N_{d,n}}\prod_{p \in M_{\Q}} \rho_{p}(\xi, \s)}{2\zeta( N_{d,n})} 
    \asymp \frac{1}{\q}
    \end{align}
    where the implicit constant depends only on $d$ and $n$.
\end{theorem}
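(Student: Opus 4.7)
Set $N=N_{d,n}$. Since each $V\in\V_{d,n}(A)$ has exactly two primitive integer coefficient vectors $\pm\aa_V$, we rewrite
\[
\#\V_{d,n}^{\loc}(A;\xi,\s)=\tfrac12\,\#\bigl\{\aa\in\Z^N_{\mathrm{prim}}:\|\aa\|\le A,\ [\aa]\in Z_p(\xi,\s)\ \forall\,p\in M_\Q\bigr\}.
\]
The local conditions split into three types: (i) at $p=\infty$, a cone condition whose sphere density is $\rho_\infty(\xi,\s)$; (ii) at finite $p\in\Supp\s$, a condition depending only on $\aa$ modulo $p^{c_p}$ with $c_p=e_p+O_{d,n}(1)$, obtained by Hensel-lifting an approximate root of $f_\aa$ near $\xi_p$, giving a union of residue classes modulo some $M$ with $q\mid M\ll_{d,n}q^{O(1)}$; (iii) at finite $p\notin\Supp\s$, the weak condition $V_+(f_\aa)(\Q_p)\ne\emptyset$ has density $\rho_p=1-O_{d,n}(p^{-2})$ by Lang--Weil combined with Hensel on the smooth Fano locus, whose complement has codimension $\ge2$.

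For the main term I apply Möbius inversion $\mathbf{1}_{\gcd(\aa)=1}=\sum_k\mu(k)\mathbf{1}_{k\mid\aa}$ together with a geometry-of-numbers count: for each pair $(k,\mathbf{r}\bmod M)$, the number of $\aa\in k\Z^N\cap(\mathbf{r}+M\Z^N)$ with $\|\aa\|\le A$ and $[\aa]$ in the archimedean cone equals the expected volume plus a surface error. Summing over residues yields the factor $\prod_{p\in\Supp\s}\rho_p(\xi,\s)$, and $\sum_k\mu(k)/k^N=1/\zeta(N)$ produces $1/\zeta(N)$. The conditions at $p\notin\Supp\s$ are handled by an Ekedahl-type sieve: truncate at a cutoff $Y$, include the conditions for $p<Y$ exactly (they only enlarge the modulus by $O_{d,n}(1)$), and bound the tail $p\ge Y$ by $A^N\sum_{p\ge Y}p^{-2}$ using the codimension of the bad locus. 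The resulting main term equals $\tfrac{V_N}{2\zeta(N)}A^N\prod_p\rho_p(\xi,\s)$, as desired.

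The error analysis is the delicate part. The per-$k$ surface error contributes $\ll A^{N-1}\q/k^{N-1}$, and summing over $k\le A/\q$ by partial summation produces the $\log\log(3A)\cdot(\q/A)$-type term. Choosing $Y\asymp\log(A/\q)\cdot\log\log(A/\q)$ and balancing against the truncation loss $A^N/Y$, a Mertens-type refinement yields the $1/(\log(A/\q)\log\log(A/\q))$ saving. The main obstacle is maintaining uniformity in the thin archimedean cone, whose sphere measure is only $\asymp\sigma_\infty$: a naive surface-area bound would introduce a factor $1/\sigma_\infty^{N-1}$ in the error, so one must exploit the cone's product structure and apply a Davenport-type lattice-point estimate adapted to thin regions, valid precisely in the range \cref{eq:asymp-formula-vdn-condition-intro}. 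Finally, the size claim $\tfrac{V_N\prod_p\rho_p}{2\zeta(N)}\asymp1/\q$ follows from $\rho_\infty\asymp\sigma_\infty$ (implicit-function-theorem computation paralleling \cref{lem:W_bound}), $\prod_{p\in\Supp\s}\rho_p\asymp\prod_p p^{-e_p}=1/q$, and the convergent Euler product $\prod_{p\notin\Supp\s}\rho_p=\Theta(1)$, which together give $\rho_\infty\prod_p\rho_p\asymp\sigma_\infty/q=1/\q$.
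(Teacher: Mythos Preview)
Your overall architecture matches the paper's: write $\#\V_{d,n}^{\loc}(A;\xi,\s)=\tfrac12 M(A,P)-\tfrac12 E(A,P)$, bound the tail $E(A,P)$ by an Ekedahl sieve on the reducible locus $\NIP_{d,n}$, obtain an asymptotic for $M(A,P)$ by lattice-point counting, and optimise $P$. The size claim $\prod_p\rho_p\asymp 1/\q$ is also handled exactly as you describe (your three ingredients are the paper's \cref{easy-bound-Z}, \cref{prop-measure-of-padic-solvable-locus}, \cref{lem:bound-non-soluble-locus}).

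However, your step (ii) contains a genuine gap. The claim that for $p\in\Supp\s$ the condition $[\aa]\in Z_p(\xi,\s)$ depends only on $\aa\bmod p^{c_p}$ with $c_p=e_p+O_{d,n}(1)$ is false. Hensel lifting near $\xi_p$ requires a quantitative nondegeneracy $|f'_\aa(\xi_p)|_p^2>|f_\aa(\xi_p)|_p$, and this fails on a set of $\aa$ of positive $\mu_p$-measure at every fixed depth; equivalently, $W_p(\xi,\s)$ is closed but \emph{not} clopen in $(\Z_p^{N})_{\prim}$, hence is not a finite union of balls of any fixed radius. The same problem afflicts the primes $p\le P$ with $p\notin\Supp\s$: the condition $V_+(f_\aa)(\Q_p)\neq\emptyset$ is likewise not a pure congruence condition at bounded depth.

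The paper's substitute is an inside/outside approximation of $W_p(\xi,\s)$ by balls of \emph{variable} radius $p^{-v_p}$, together with the key ``boundary ball estimate'' (\cref{boundary-ball-estimate}): the number of radius-$p^{-v}$ balls that meet $W_p(\xi,\s)$ but are not contained in it is at most $p^{vN}/p^{\nu(v,e_p)}$, where $\nu(v,e_p)\to\infty$ with $v$. This is proved by applying Newton's method at a nearby point \emph{outside} $W_p(\xi,\s)$ to force all partial derivatives to be small, and then counting $(\aa,\xx)$ modulo $p^v$ satisfying those vanishing conditions. Only after this approximation does the problem reduce to a single congruence modulo $q'=\prod p^{v_p}$, and the lattice counting (via \cref{prop:lattice_count_primitive_local_general}, which plays the role of your Davenport-type estimate uniformly in $\sigma_\infty$) applies. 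The optimisation over the $v_p$ and then over $P$ (\cref{lem:choosing-vps-according-to-parameter}, \cref{thm:denominator}) is what actually produces the $1/(\log(A/\q)\log\log(A/\q))$ term; your heuristic ``$A^N/Y$ truncation loss'' does not account for the growth of $q'$ with the $v_p$, which is the real tension being balanced.
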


\begin{remark}
    It has been proven that positive proportion of hypersurfaces admit adelic points
    \cite[Theorem 3.6]{PoonenVoloch}.
    \cref{mainthm:denominator} can be regarded as a generalization of this result.
    The idea of the proof is similar, but we have to dealt with additional parameters
    $\xii$ and $\s$ and make implicit constants independent of them.
\end{remark}

%%%%%%%%%%%%%%%%%%%%%%%%%%%%%%%%%%%%%%%%
By combining \cref{mainthm:numerator} and \cref{mainthm:denominator},
we can obtain the following corollary on the least height of the rational point
satisfying the same condition as \cref{def-of-NV}
on almost all hypersurfaces:
%%%%%%%%%%%%%%%%%%%%%%%%%%%%%%%%%%%%%%%%
\begin{corollary}
\label{cor:least_height_weak_approximation}
Let $n\ge d\ge2$ and $(n,d)\neq(2,2)$,
$\xi \in \prod_{p \in M_{\Q}} \P^n(\Q_p)$
and $\s \in \prod_{p \in M_{\Q}}^* (0,1]$.
For $A\ge2$ and $0<\delta<1$, we have
\[
\mathfrak{M}(V;\xi,\sigma)
\coloneqq
\min\{
H(x)
\mid
x\in V(\Q)
\ \text{and}\ 
d_p(x,\xi_p)\le\realeps_p\ \text{for all $p\in S$}
\}
\ge
\delta\q^{n-1} A
\]
for all $V\in V_{d,n}^{\loc}(A;\xi,\sigma)$
but at most $\ll\delta|\V_{d,n}^{\loc}(A;\xi,\sigma)|$ exceptions provided
\begin{equation}
\label{cor:least_height_weak_approximation:A_cond}
A\ge\frac{\q^{\max(\theta,1)+\varepsilon}}{\delta}
\end{equation}
with
\[
\theta
=\theta(n,d)
\coloneqq
\frac{n^2-1}{(2n-1)(n+1-d)-d},
\]
where the implicit constant depends only on $d,n,\varepsilon$.
\end{corollary}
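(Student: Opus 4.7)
The plan is to argue by a Markov-type upper bound. Set $B \coloneqq \delta \q^{n-1} A$. Any exceptional hypersurface $V \in \V_{d,n}^{\loc}(A;\xi,\sigma)$, namely one with $\mathfrak{M}(V;\xi,\sigma) < \delta \q^{n-1} A$, carries at least one rational point $x$ with $H(x) \le B$ and $d_{p}(x,\xi_{p}) \le \sigma_{p}$ for every $p \in M_{\Q}$; in particular $N_{V}(B;\xi,\sigma) \ge 1$. Hence the number of exceptional $V$ is bounded above by
\[
\sum_{V \in \V_{d,n}^{\loc}(A;\xi,\sigma)} N_{V}(B;\xi,\sigma)
\le
\sum_{V \in \V_{d,n}(A)} N_{V}(B;\xi,\sigma).
\]

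For this sum I would apply \cref{mainthm:numerator}. Using the size bound $\widetilde{C}_{d,n}(\xi,\sigma) \asymp \varphi(q)/(q\q^{n}) \le 1/\q^{n}$ also supplied by that theorem, the main term is of order $\asymp \delta A^{N_{d,n}}/\q$, provided the error $R_{d,n}(A,B;\xi,\sigma)$ is $\ll 1$. On the other hand \cref{mainthm:denominator} yields $|\V_{d,n}^{\loc}(A;\xi,\sigma)| \asymp A^{N_{d,n}}/\q$. Combining the two gives the desired count $\ll \delta\,|\V_{d,n}^{\loc}(A;\xi,\sigma)|$ of exceptions.

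The substantive content is the verification that \eqref{cor:least_height_weak_approximation:A_cond} secures both the admissibility hypotheses of \cref{mainthm:numerator} and the boundedness of $R_{d,n}$. After substituting $B = \delta \q^{n-1} A$, the constraint $B \ge \q^{n}$ becomes $A \ge \q/\delta$; the constraint $A \ge B^{(d+1)/((2n+1)(n+1-d))}$ reduces to a polynomial bound $A \gg \q^{c}$ for an exponent $c$ that a case check confirms is always $\le \max(\theta,1)$; and the constraint $A \ge \q^{(n-1)/(2n-1)} B^{d/((2n-1)(n+1-d))}$, solved for $A$, yields exactly
\[
A \ge \q^{\theta},
\qquad
\theta = \frac{n^{2}-1}{(2n-1)(n+1-d)-d},
\]
which is the origin of the exponent $\theta$ in the statement. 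Each of the four summands of $R_{d,n}$ likewise becomes a negative power of $A\q^{-\max(\theta,1)}$ after the same substitution, and the resulting $B^{\varepsilon} \ll (\q^{n-1} A)^{\varepsilon}$ factor is absorbed by the $\q^{\varepsilon}/\delta$ slack in \eqref{cor:least_height_weak_approximation:A_cond}. The hypotheses of \cref{mainthm:denominator} are strictly weaker and follow comfortably from $A \gg \q^{1+\varepsilon}$.

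The main obstacle will be this bookkeeping of exponents: tracking, uniformly in $(n,d)\neq(2,2)$, how each admissibility constraint and each summand of $R_{d,n}$ depends on $\q$, $\delta$, and $\varepsilon$, and verifying that the single condition $A \ge \q^{\max(\theta,1)+\varepsilon}/\delta$ simultaneously implies all of them. Once this case analysis is complete, the rest of the argument is essentially a one-line Markov bound combined with the two main theorems.
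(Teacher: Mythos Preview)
Your proposal is correct and follows essentially the same approach as the paper: a Markov-type bound using $N_V(B;\xi,\sigma)\ge1$ for exceptional $V$, combined with \cref{mainthm:numerator} for the numerator and \cref{mainthm:denominator} for the denominator, with $B=\delta\q^{n-1}A$. The paper's proof is terser about the Markov step but carries out exactly the same exponent bookkeeping you outline, showing that the two admissibility constraints reduce to $A\ge\q^{\theta_1}$ and $A\ge\q^{\theta_2}=\q^{\theta}$ with $\theta_1\le\theta_2$, and that the error terms are then negligible under \eqref{cor:least_height_weak_approximation:A_cond}.
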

%%%%%%%%%%%%%%%%%%%%%%%%%%%%%%%%%%%%%%%%
\begin{proof}
Assume $A\ge\q^{1+\varepsilon}/\delta$.
Take $B=\delta\q^{n-1}A$.
We then automatically have $B\ge\q^{n+\varepsilon}$ since $A\ge\q^{1+\varepsilon}/\delta$.
We use \cref{mainthm:numerator}.
To this end, we need to check
\begin{equation}
\label{cor:least_height_weak_approximation:A_cond_weak}
A\ge\max(B^{\frac{d+1}{(2n+1)(n+1-d)}},\q^{\frac{n-1}{2n-1}}B^{\frac{d}{(2n-1)(n+1-d)}}).
\end{equation}
Under our choice, we have
\[
A\ge B^{\frac{d+1}{(2n+1)(n+1-d)}}
\impliedby
A\ge (\q^{n-1}A)^{\frac{d+1}{(2n+1)(n+1-d)}}
\iff
A\ge\q^{\theta_{1}}
\]
with
\[
\theta_{1}
\coloneqq
\frac{(d+1)(n-1)}{(2n+1)(n+1-d)-(d+1)}.
\]
We also have
\[
A\ge\q^{\frac{n-1}{2n-1}}B^{\frac{d}{(2n-1)(n+1-d)}}
\impliedby
A\ge\q^{\frac{n-1}{2n-1}}(\q^{n-1}A)^{\frac{d}{(2n-1)(n+1-d)}}
\iff
A\ge\q^{\theta_{2}}
\]
with
\[
\theta_{2}
\coloneqq
\frac{n^2-1}{(2n-1)(n+1-d)-d}.
\]
Note that $n\ge d\ge 2$ implies
\begin{align}
\theta_{1}
&=\frac{(d+1)(n-1)}{(2n-1)(n+1-d)-d+2(n+1-d)-1}\\
&\le
\frac{(n+1)(n-1)}{(2n-1)(n+1-d)-d}
=
\theta_{2}
=
\theta
\end{align}
and so \cref{cor:least_height_weak_approximation:A_cond_weak}
is assured by \cref{cor:least_height_weak_approximation:A_cond}.
Also, by $A\ge\q$, we have
\[
B^{\frac{d+1}{(2n+1)(n+1-d)}}
\le
(\q^{n-1}A)^{\frac{d+1}{(2n+1)(n+1-d)}}
\le
A^{\frac{n(d+1)}{(2n+1)(n+1-d)}}
\le
A.
\]
By \cref{mainthm:numerator} and \cref{thm:denominator},
it suffices to show that the error terms in \cref{mainthm:numerator} are all $\ll B^{-\varepsilon}$,
which is assured by \cref{cor:least_height_weak_approximation:A_cond} again.
% The error terms
% $\q A^{-1}$, $\q B^{-\frac{1}{n}}$
% are automatically $\ll1$. Also, $B\ge\q^{n}$ imply
% \[
% \q B^{-\frac{1}{n+1-d}}
% \le
% \q B^{-\frac{1}{n}}
% \le
% 1.
% \]
% Furthermore, $A\ge\max(\q,B^{\frac{1}{n(n+1-d)}})$ implies
% \begin{align}
% \q^{\frac{2}{2n+1}}A^{-1}B^{\frac{1}{(2n+1)(n+1-d)}}
% &\le
% \q^{\frac{2}{2n+1}}
% A^{-\frac{n+1}{2n+1}}
% \le
% A^{-\frac{n-1}{2n+1}}
% \le
% 1.
% \end{align}
% We thus check the other two error terms.
% We have
% \[
% A^{-(2n+1)}B^{\frac{d+1}{n+1-d}}
% \le
% \q^{\frac{d(n-1)}{n+1-d}}A^{-(2n+1)+\frac{d+1}{n+1-d}}
% \]
% which is $\le1$ if and only if
% \[
% A\ge\q^{\theta_{1}+\varepsilon}
% \quad\text{with}\quad
% \theta_{1}
% \coloneqq
% \frac{(d+1)(n-1)}{(2n+1)(n+1-d)-(d+1)}.
% \]
% Also, we have
% \[
% \q^{n-1}A^{-(2n-1)}B^{\frac{d}{n+1-d}}
% \le
% \q^{\frac{n^2-1}{n+1-d}}A^{-(2n-1)+\frac{d}{n+1-d}}
% \]
% which is $\le1$ if and only if
% \[
% A\ge\q^{\theta_{2}+\varepsilon}
% \quad\text{with}\quad
% \theta_{2}
% \coloneqq
% \frac{n^2-1}{(2n-1)(n+1-d)-d}.
% \]
% Note that $n\ge d\ge 2$ implies
% \begin{align}
% \theta_{1}
% &=\frac{(d+1)(n-1)}{(2n-1)(n+1-d)-d+2(n+1-d)-1}\\
% &\le
% \frac{(n+1)(n-1)}{(2n-1)(n+1-d)-d}
% =
% \theta_{2}
% =
% \theta
% \end{align}
% and so the first part of the assertion holds.
\end{proof}

%%%%%%%%%%%%%%%%%%%%%%%%%%%%%%%%%%%%%%%%
\begin{remark}
\label{rem:least_height_weak_approximation}
In \cref{cor:least_height_weak_approximation}, we have
\begin{align}
\theta\le1
\iff
n^{2}-1
\le
(2n-1)(n+1-d)-d
\iff
n\ge 2d-1.
\end{align}
Thus, the conclusion of \cref{cor:least_height_weak_approximation}
holds only assuming $A\ge\q^{1+\varepsilon}$ if $n\ge 2d-1$.
On the other hand, since $\theta(n,n)=n+1$,
the exponent $\theta(n,d)$ is not bounded with respect to $(n,d)$
without restriction.
\end{remark}

\begin{acknowledgements}
%The first author is supported by JSPS KAKENHI Grant Number JP22K13903.
This work was supported by JSPS KAKENHI Grant Numbers
JP19K23402, JP21K13772, JP22K13903.
\end{acknowledgements}

%%%%%%%%%%%%%%%%%%%%%%%%%%%%%%%%%%%%%%%%%%%%%%%%%%%%%%%%%%%%%%%%%%%%%%%%%%%%%%%%
%%%%%%%%%%%%%%%%%%%%%%%%%%%%%%%%%%%%%%%%%%%%%%%%%%%%%%%%%%%%%%%%%%%%%%%%%%%%%%%%
%%%%%%%%%%%%%%%%%%%%%%%%%%%%%%%%%%%%%%%%%%%%%%%%%%%%%%%%%%%%%%%%%%%%%%%%%%%%%%%%
\if0

For positive integers $n$ and $d$,
let $\mathbb{V}_{d,n}$ be the set of all hypersurfaces of degree $d$ in $\P^{n}$
defined over $\Q$.
A hypersurface $V\in\mathbb{V}_{d,n}$,
or more generally, a geometrically integral variety $V$ defined over $\Q$
is said to satisfy \textit{weak approximation}~\cite{Harari}
if the image of the diagonal embedding
\begin{equation}
\label{diagonal_embedding}
V(\Q)\hookrightarrow\prod_{p\in\Omega}V(\Q_p)
\end{equation}
is dense, where $\Omega$ is the set of all finite/infinite places of $\Q$
and the product is equipped with the product topology of the $p$-adic topologies.
According to the conjecture of Colliot-Th\'el\`ene~\cite{ColliotThelene},
the Brauer--Manin obstruction is the only obstruction for weak approximation
for rationally connected varieties as well as
for the Hasse principle and the Manin--Peyre conjecture.
(Note that generic element of $\V_{d,n}$ is smooth and Fano if $n\ge d$.)
In this respect, it is interesting to study the statistics of rational points of $V\in\mathbb{V}_{d,n}$
approximating a given point of $\prod_{p}V(\Q_p)$
relative to certain height.
This statistics is our main theme.

%%%%%%%%%%%%%%%%%%%%%%%%%%%%%%%%%%%%%%%%
Let $\Zprim^{n+1}$ be the set of primitive vectors of $\Z^{n+1}$,
i.e.\ the integral vector in $\Z^{n+1}$
with $1$ as the greatest common divisor of the coordinates.
For a hypersurface $V\in\mathbb{V}_{d,n}$ and $x=(x_0:\cdots:x_n)\in V(\Q)$
with $\xx=(x_0,\ldots,x_n)\in\Zprim^{n+1}$, let
\[
H(x)\coloneqq\|\xx\|^{n+1-d},
\]
where $\|\cdot\|$ is the Euclidean norm in $\R^{n+1}$.
When $V$ is Fano, $H(x)$ is the anti-canonical height of $x$.
We do the statistics relative to this height $H(x)$.

%%%%%%%%%%%%%%%%%%%%%%%%%%%%%%%%%%%%%%%%
Since the product in \cref{diagonal_embedding}
is equipped with just the product topology,
it suffices to consider an arbitrary finite set $S$ of places
and consider the denseness of the image of the diagonal embedding
\begin{equation}
\label{diagonal_embedding_S}
V(\Q)\hookrightarrow\prod_{p\in S}V(\Q_p).
\end{equation}
Take $\xi=(\xi_p)\in\prod_{p\in S}\P^{n}(\Q_p)$.
We want to count the number of rational points on $V$
approximating $\xi$ through the embedding \cref{diagonal_embedding}
up to a given precision.
In order to describe ``the precision of approximation'',
we define metrics on $\P^{n}(\Q_p)$,
the ambient space of $V(\Q_p)$.
Take
\[
x=(x_0:\cdots:x_n),\ 
y=(y_0:\cdots:y_n)\in\P^{n}(\Q_p)
\]
with the associated points
\[
\xx=(x_0,\cdots,x_n),\ 
\yy=(y_0,\cdots,y_n)\in\Q_{p}^{n+1}\setminus\{0\}.
\]
When the place $p$ is finite,
let $\|\cdot\|_{p}$ be the max norms on $\Q_{p}^{n+1}$ and $\bigwedge^2\Q_{p}^{n+1}$
with respect to the standard basis $\ee_0,\ldots,\ee_n$
and $\ee_i\wedge\ee_j$ with $0\le i<j\le n$.
When $p=\infty$,
let $\|\cdot\|_{p}$ be the Eulidean norms
on $\R^{n+1}$ and $\bigwedge^2\R^{n+1}$
with respect to the standard basis $\ee_0,\ldots,\ee_n$
and $\ee_i\wedge\ee_j$ with $0\le i<j\le n$.
We then define the metric $d_p$ on $\P^{n}(\Q_p)$ by
\[
d_p(x,y)
\coloneqq
\frac{\|\xx\wedge\yy\|_{p}}{\|\xx\|_{p}\cdot\|\yy\|_{p}}.
\]
We abbreviate the subscript $p$ when $p=\infty$.
This value is obviously well-defined
on $\P^{n}(\Q_p)\times\P^{n}(\Q_p)$,
strictly positive and symmetric.
One can also prove this value satisfies
the strong triangle inequality for $p<\infty$
and the triangle inequality for $p=\infty$
(see e.g.\ \cite[Section~1.1]{Rumely})
and so $d_p$ is a genuine metric on $\P^{n}(\Q_p)$.
Note that $d_{\infty}(x,y)=|\sin\theta|$
with the angle $\theta\in[-\frac{\pi}{2},+\frac{\pi}{2}]$ between $\xx$ and $\yy$ since
\begin{equation}
\label{wedge_to_innerproduct}
\|\xx\wedge\yy\|^2
=\|\xx\|^2\cdot\|\yy\|^2-\langle\xx,\yy\rangle^2
=\|\xx\|^2\|\yy\|^2|\sin\theta|^2
\end{equation}
with norms for the archimedean place and the Euclidean inner product $\langle\cdot,\cdot\rangle$.

%%%%%%%%%%%%%%%%%%%%%%%%%%%%%%%%%%%%%%%%
We then take ``precisions''
$\realeps=(\realeps_p)_{p\in S}$
with $\realeps_p=p^{-e_p}$ with $e_p\ge1$ for all $p\in S$
and $0<\realeps_{\infty}\le1$ for $p=\infty$.
Then, for $V\in\mathbb{V}_{d,n}$ and $B\ge1$, our main concern is on the counting function
\begin{equation}
\begin{aligned}
&N_V(B;\xi,\realeps)\\
&\coloneqq
\#\{x\in V(\Q)\mid H(x)\le B\ \text{and}\ d_p(x_p,\xi_p)\le\realeps_p\ \text{for all $p\in S$}\}
\end{aligned}
\end{equation}
of rational points of $V$ approximating $\xi$ up to the precision $\realeps$,
where $x_p$ in $d_p(x_p,\xi_p)$ is the image of $x$
through the embedding $V(\Q)\hookrightarrow V(\Q_p)$.

%%%%%%%%%%%%%%%%%%%%%%%%%%%%%%%%%%%%%%%%
It is hard to get an asymptotic formula for $N_V(B;\xi,\realeps)$ for a general fixed $V$.
However, we can still hope some average result over certain natural hypersurfaces $V\in\mathbb{V}_{d,n}$.
To parametrize $V\in\mathbb{V}_{d,n}$, we prepare some more notation.
%Let
%\[
%A[X_0,\ldots,X_n]^{(d)}
%\]
%be the $A$-module of all degree $d$ homogeneous polynomial over $A$.
Let
\[
\mathcal{M}_{d,n}
\coloneqq
\left\{\prod_{i=0}^{n}X_i^{d_i}\midmid d_0+\cdots+d_n=d\right\}
%\subset
%\Z[X_0,\ldots,X_n]^{(d)}
\]
be the set of all monomials of degree $d$ in the variables $X_0,\ldots,X_n$. Let
\[
N_{d,n}\coloneqq\#\mathcal{M}_{d,n}=\binom{n+d}{d}.
\]
For a degree $d$ hypersurface $V$ in $\P^{n}$ defined over $\Q$,
let $\aa_{V}=(a_{V,M})_{M\in\mathcal{M}_{d,n}}\in\Z^{\mathcal{M}_{d,n}}_{\prim}$
be one of the two associated coefficient vectors, i.e.\ 
$\aa_{V}$ satisfies
\[
V=
\biggl\{x=(x_0:\cdots:x_n)\in\P^{n}
\mathrel{\bigg|}\sum_{M\in\mathcal{M}_{d,n}}a_{V,M}M(x)=0\biggr\},
\]
where $(x_0:\cdots:x_n)$ is the homogeneous coordinate of a point of $\P^{n}$.
We then use the Euclidean norm $\|\aa_V\|$ as the height of the hypersurface $V$ and so let
\[
\mathbb{V}_{d,n}(A)
\coloneqq
\#\{V\in\mathbb{V}_{d,n}\mid\|a_V\|\le A\}
\]
for a real number $A\ge1$.

%%%%%%%%%%%%%%%%%%%%%%%%%%%%%%%%%%%%%%%%
\begin{theorem}
For $A,B\ge1$, we have
\[
\sum_{V\in\mathbb{V}_{d,n}(A)}
N_V(B;\xi,\realeps)
=
c_{d,n}(\realeps)A^{N_{d,n}}\cdot\frac{B}{A}(1+R_{d,n}(A,B;\xi,\realeps))
\]
with the error term $R_{d,n}(A,B;\xi,\realeps)$ bounded as
\[
R_{d,n}(A,B;\xi,\realeps)
\ll
\]
where the coefficient $c_{d,n}(\realeps)$ and the modulus $q$ are given by
\[
c_{d,n}(\realeps)\coloneqq
\and
q\coloneqq\prod_{p\in S}p^{e_p}
\]
and the implicit constant depends only on $d,n$.
\end{theorem}

\fi
%%%%%%%%%%%%%%%%%%%%%%%%%%%%%%%%%%%%%%%%%%%%%%%%%%%%%%%%%%%%%%%%%%%%%%%%%%%%%%%%
%%%%%%%%%%%%%%%%%%%%%%%%%%%%%%%%%%%%%%%%%%%%%%%%%%%%%%%%%%%%%%%%%%%%%%%%%%%%%%%%
%%%%%%%%%%%%%%%%%%%%%%%%%%%%%%%%%%%%%%%%%%%%%%%%%%%%%%%%%%%%%%%%%%%%%%%%%%%%%%%%

%%%%%%%%%%%%%%%%%%%%%%%%%%%%%%%%%%%%%%%%
\section{Notations and conventions}
\label{sec:notation}

%%%%%%%%%%%%%%%%%%%%%%%%%%%%%%%%%%%%%%%%
Besides notations and conventions introduced in \cref{sec:intro}
and will be introduced in the latter sections,
we use the following notations and conventions.

%%%%%%%%%%%%%%%%%%%%%%%%%%%%%%%%%%%%%%%%
Throughout the paper,
$d$ and $n$ denote positive integers.
We also use $M,N$ to denote a non-negative integer,
which is used for the dimension or the rank of certain vector spaces or modules.

%%%%%%%%%%%%%%%%%%%%%%%%%%%%%%%%%%%%%%%%
Let $\mu(n)$ be the M\"obius function,
$\tau(n)$ be the number of positive divisors of an integer $n$,
$\varphi(q)$ be the Euler totient function and
\[
J_k(q)\coloneqq q^k\prod_{p\mid q}\biggl(1-\frac{1}{p^k}\biggr)
\]
be the Jordan totient function.
For integers $a,b,\ldots,c$, we denote their greatest common divisor by $(a,b,\ldots,c)$.
When confusion with tuples may occur,
we write instead $\gcd(a,b,\ldots,c)$.

%%%%%%%%%%%%%%%%%%%%%%%%%%%%%%%%%%%%%%%%
For an integer $n$ and a positive integer $q$ such that $(n,q)=1$,
we write $\overline{n}\ \mod{q}$ be the multiplicative inverse of $n\ \mod{q}$ in $\Z/q\Z$.
The symbol
\[
\astsum_{u\ \mod{q}}
\and
\astcup_{u\ \mod{q}}
\]
denote the sum and union over all reduced residues $u\ \mod{q}$.
The symbol
\[
\dsum{(d)}_{n_1,\ldots,n_r}
\]
denote the summation where the summation variable runs over suitable dyadic sequences
of the form $n_{i}=2^{k_i}a_i$ with $k_i\in\mathbb{Z}$ and a positive real number $a_i$.

%%%%%%%%%%%%%%%%%%%%%%%%%%%%%%%%%%%%%%%%
Let $V$ be a real vector space. For $S\subset V$, we let $S_{\mathbb{R}}$ be the $\mathbb{R}$-subspace of $V$ spanned by $S$
and
$\mathbb{R}S\coloneqq\{ax\mid a\in\mathbb{R}\ \text{and}\ x\in S\}$.

%%%%%%%%%%%%%%%%%%%%%%%%%%%%%%%%%%%%%%%%
Let $V$ be a real metric vector space of dimension $r$,
i.e.\ a real vector space of dimension $r$ with an inner product.
We use the following notation.
\begin{itemize}
    \item For $R >0$, we set $\mathcal{B}_V(R):= \{\xx \in V \mid \|\xx\| \leq R\}$
    \item $\vol_{V}$ is the measure corresponding to the Lebesgue measure on $\R^r$
via an isometric isomorphism $V \simeq \R^r$. This is independent of the choice of the isometry. 
    \item For $\uu\in V$ and $0\le\realeps\le1$, we let
\begin{alignat}{3}
\mathcal{C}_{V}(\uu,\realeps)
&\coloneqq
\left\{\xx\in V\midmid
\|\xx\wedge\uu\|\le\realeps\|\xx\|\|\uu\|
\right\},\\
\mathcal{C}_{V}^{\perp}(\uu,\realeps)
&\coloneqq
\left\{\xx\in V\midmid
|\langle\xx,\uu\rangle|\le\realeps\|\xx\|\|\uu\|
\right\}.
\end{alignat}
\end{itemize}
Here note the the metric on $V$ naturally induces a metric on $\bigwedge^2 V$.

We always use the Euclidean inner product $\langle\cdot,\cdot\rangle$ and the Euclidean norm $\|\cdot\|$
to define the metric vector space structure on $\R^{N}$ and its subspaces.
For $\R^N$, we simplify the above notation as follows:
\begin{align}
    &\mathcal{B}_N(R) = \mathcal{B}_{\R^N}(R) = \{\xx \in \R^N \mid \|\xx\| \leq 1\},\\
    &\text{$\vol_N$ denotes the Lebesgue measure on $\R^N$},\\
    &\mathcal{C}_{N}(\uu,\realeps) = \mathcal{C}_{\R^N}(\uu,\realeps),\\
    &\mathcal{C}_{N}^{\perp}(\uu,\realeps) = \mathcal{C}_{\R^N}^{\perp}(\uu,\realeps).
\end{align}
We set $\vol_{0}(\R^0)=1$.
Note that if $\uu \neq 0$, we have 
\begin{align}
    \mathcal{C}_{N}(\uu,\realeps) 
    = \{\xx \in \R^N \setminus \{0\} \mid d_{\infty}(\xx, \uu) \leq \s\} \cup \{0\}.
\end{align}
This is the cone with axis $\R\uu$
of angle $\theta\in[0,\frac{\pi}{2}]$ with $\sin\theta=\realeps$.

The set of primitive vectors in $\Z^N$ is denoted by $\Z^N_{\prim}$:
\[
\Z^{N}_{\prim}
\coloneqq
\{\xx=(x_1,\ldots,x_N)\in\Z^N\mid\gcd(x_1,\ldots,x_N)=1\}.
\]
For a general lattice $\Gamma\subset\mathbb{R}^{N}$, we write
\[
\Gamma_{\prim}\coloneqq\Gamma\setminus\bigcup_{d\ge2}d\Gamma,
\]
which is the set of all primitive elements of $\Gamma$.

%%%%%%%%%%%%%%%%%%%%%%%%%%%%%%%%%%%%%%%%
We use \textbf{the Veronese embedding} given by
\[
\nu_{d,n}\colon\R^{n+1}\to\R^{\mathcal{M}_{d,n}}
\semicolon(x_0,\ldots,x_n)\mapsto(M(x_0,\ldots,x_n))_{M\in\mathcal{M}_{d,n}}.
\]
Note that we have
\begin{equation}
\label{Veronese_bound}
\|\nu_{d,n}(\xx)\|
\le\|\xx\|^d
\le d!\|\nu_{d,n}(\xx)\|,
% \le d^{d}\|\nu_{d,n}(\xx)\|,
\end{equation}
which can be seen by squaring and expanding $\|\xx\|^d$.

%%%%%%%%%%%%%%%%%%%%%%%%%%%%%%%%%%%%%%%%

Let $K$ be a field.
For $\aa \in K^{n+1} \setminus \{0\}$,
we denote the point in $\P^n(K)$ with homogeneous coordinates $\aa$
as $[\aa]$.

%%%%%%%%%%%%%%%%%%%%%%%%%%%%%%%%%%%%%%%%
We use Landau's symbol ``$O$'', Hardy's symbol ``$\asymp$''
and Vinogradov's symbol ``$\ll$'' in the standard way.
The dependence of the implicit constant on $a,b,\ldots,c$ is denoted by subscript,
e.g.\ ``$O_{a,b,\ldots,c}$'', ``$\asymp_{a,b,\ldots,c}$'' or ``$\ll_{a,b,\ldots,c}$''.
If Theorem or Lemma is stated
with the phrase ``where the implicit constant depends on $a,b,\ldots,c$'',
then every implicit constant in the corresponding proof
may also depend on $a,b,\ldots,c$ unless otherwise specified.

%%%%%%%%%%%%%%%%%%%%%%%%%%%%%%%%%%%%%%%%
\section{Preliminaries on geometry of numbers}
In this section, we recall some definitions and known results in geometry of numbers.
As a standard reference,
see e.g.\ Cassels's book~\cite{Cassels:GeometryOfNumbers} or 
\cite[Chapter 12]{Davenport:DiophantineEquation}.

%%%%%%%%%%%%%%%%%%%%%%%%%%%%%%%%%%%%%%%%
\textbf{A lattice} $\Lambda\subset\R^N$ of rank $r\ge0$
is a $\Z$-submodule of $\R^N$ of the form
$\Lambda=\Z\vv_1+\cdots+\Z\vv_r$,
where $\vv_1,\ldots,\vv_r\in\R^N$
are vectors linearly independent over $\R$.
(This is equivalent to say that $\L$ is a finitely generated subgroup of $\R^N$
of rank $r$ that has no accumulation points in $\R^N$.)
For a lattice $\Lambda\subset\R^N$,
let \textbf{the determinant} $\det(\Lambda)$
be the $r$-dimensional volume 
(with respect to the induced metric on $\spanR(\L)$)
of the fundamental parallelepiped.
When $\L$ is $0$, we set $\det (\L)=1$.
We say
a lattice $\Lambda\subset\R^N$
is \textbf{integral} if $\Lambda\subset\Z^{N}$.

%%%%%%%%%%%%%%%%%%%%%%%%%%%%%%%%%%%%%%%%
For a lattice $\Lambda\subset\R^N$,
we say a sublattice $\Gamma\subset\Lambda$ is \textbf{primitive (with respect to $\Lambda$)}
if the quotient module $\Lambda/\Gamma$ is torsion-free over $\Z$.
When $\Lambda=\Z^N$, we just say $\Gamma$ is primitive.

%%%%%%%%%%%%%%%%%%%%%%%%%%%%%%%%%%%%%%%%
Consider a lattice $\Lambda\subset\R^N$ of rank $r$.
We let
\[
\lambda_i(\Lambda)\coloneqq\inf\{\lambda\in\R_{>0}
\mid\dim\spanR(\Lambda\cap\mathcal{B}_{N}(\lambda))\ge i\}
\quad(i=1,\ldots,r),
\]
which are called \textbf{the successive minima of $\Lambda$}.

%%%%%%%%%%%%%%%%%%%%%%%%%%%%%%%%%%%%%%%%
\begin{lemma}[Minkowski's second theorem]
\label{lem:Minkowski_second}
For a lattice $\Lambda\subset\R^N$ of rank $r$,
\[
\det(\Lambda)\asymp\lambda_1(\Lambda)\cdots\lambda_r(\Lambda),
\]
where the implicit constant depends only on $r$.
\end{lemma}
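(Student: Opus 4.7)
The assertion is a specialization of Minkowski's second theorem to the case where the convex symmetric body is the Euclidean unit ball in $\spanR(\Lambda)$. My plan is to establish the two inequalities $\det(\Lambda) \le \lambda_1(\Lambda) \cdots \lambda_r(\Lambda)$ and $\lambda_1(\Lambda) \cdots \lambda_r(\Lambda) \le c_r \det(\Lambda)$ separately, with $c_r$ depending only on $r$.

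For the lower direction, I would pick linearly independent lattice vectors $\vv_1, \ldots, \vv_r \in \Lambda$ realizing the successive minima, so $\|\vv_i\| = \lambda_i(\Lambda)$; such vectors exist by the definition of $\lambda_i(\Lambda)$ together with a standard compactness argument (each $\vv_i$ is a shortest lattice vector outside $\spanR(\vv_1,\ldots,\vv_{i-1})$). They span a full-rank sublattice $\Lambda' \subseteq \Lambda$, and $\det(\Lambda') = [\Lambda:\Lambda'] \det(\Lambda) \ge \det(\Lambda)$, with $\det(\Lambda')$ computed in any orthonormal basis of $\spanR(\Lambda)$. Applying Hadamard's inequality to the matrix with columns $\vv_1, \ldots, \vv_r$ gives $\det(\Lambda') \le \prod_i \|\vv_i\| = \prod_i \lambda_i(\Lambda)$, yielding this direction with the absolute constant $1$.

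For the reverse direction, I would invoke the classical form of Minkowski's second theorem: for a symmetric convex body $K \subset \R^r$ and a rank-$r$ lattice, $\prod_i \lambda_i(K) \cdot \vol(K) \le 2^r \det(\Lambda)$. Applied to $K$ the Euclidean unit ball in $\spanR(\Lambda) \cong \R^r$ (which has volume $V_r$ depending only on $r$), this gives $\lambda_1(\Lambda) \cdots \lambda_r(\Lambda) \le (2^r / V_r)\det(\Lambda)$, as desired.

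The main obstacle is of course this upper direction, which is the genuine content of Minkowski's second theorem and cannot be derived by the short Hadamard-type argument used above. The standard proofs proceed by induction on $r$ by slicing along the hyperplane orthogonal to a shortest lattice vector and applying Minkowski's first theorem to suitably deformed ellipsoidal bodies; the details are delicate, so my plan is simply to cite Cassels's book \cite{Cassels:GeometryOfNumbers} for the classical inequality rather than reproduce its proof.
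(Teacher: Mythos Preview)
Your proposal is correct and matches the paper's approach: the paper simply cites Cassels~\cite[Theorem~I, Chapter~VIII, p.~205]{Cassels:GeometryOfNumbers} for the full statement, and you do the same for the nontrivial direction while spelling out the easy Hadamard-inequality direction. There is no substantive difference.
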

%%%%%%%%%%%%%%%%%%%%%%%%%%%%%%%%%%%%%%%%
\begin{proof}
See Theorem~I of Chapter VIII of \cite[p.~205]{Cassels:GeometryOfNumbers}.
\end{proof}

%%%%%%%%%%%%%%%%%%%%%%%%%%%%%%%%%%%%%%%%
For a lattice $\L \subset \R^N$, 
its $\R$-span is denoted by $\L_{\R}$.
\begin{lemma}
\label{lem:nice_basis}
For any lattice $\Lambda\subset\R^{N}$ of rank $r\ge0$,
there is a $\Z$-basis
\[
(\vv_1,\ldots,\vv_r)
\]
of $\Lambda$ satisfying
\begin{enumerate}[label=\textup{(\roman*)}]
%%%%%
\item\label{lem:nice_basis:successive_minima}
We have $\|\vv_i\|\asymp\lambda_i(\Lambda)$ for $i=1,\ldots,r$.
%%%%%
\item\label{lem:nice_basis:partial_determinant}
Consider the sublattices
\[
\Lambda_{\nu}\coloneqq\Z\vv_1+\cdots+\Z\vv_\nu\subset\Lambda
\quad\text{for $\nu=1,\ldots,r$}.
\]
We then have
\[
\det(\Lambda_\nu)\asymp\lambda_1(\Lambda)\cdots\lambda_\nu(\Lambda)\quad\text{for $\nu=1,\ldots,r$}.
\]
%%%%%
\item\label{lem:nice_basis:projection_length}
Consider the orthogonal projection
\[
\pi_{\nu}^{\perp}\colon\R^{N}
=(\Lambda_\nu)_{\R}\oplus(\Lambda_\nu)_{\R}^{\bot}\to(\Lambda_\nu)_{\R}^{\bot}
\]
for $\nu=1,\ldots,r$. We then have
\[
\|\pi_{\nu}^{\perp}(\vv_i)\|\asymp\lambda_i(\Lambda)\quad\text{for $1\le\nu<i\le r$}.
\]
%%%%%
\item\label{lem:nice_basis:quasi_orthogonal}
For any $x_1,\ldots,x_r\in\R$, we have
\[
\biggl\|\sum_{i=1}^{r}x_i\vv_i\biggr\|
\asymp
\sum_{i=1}^{r}|x_i|\|\vv_i\|.
\]
\end{enumerate}
where the implicit constants depend at most on $r$.
\end{lemma}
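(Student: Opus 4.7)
The plan is to first invoke a standard result from the reduction theory of lattices (due to Mahler) to produce a $\Z$-basis $(\vv_1,\ldots,\vv_r)$ of $\Lambda$ satisfying $\|\vv_i\|\asymp_r\lambda_i(\Lambda)$, which is \cref{lem:nice_basis:successive_minima}. This is the main input; see for instance Cassels~\cite[Ch.~V, Lemma~8]{Cassels:GeometryOfNumbers}. The subtle point, and the main obstacle in the argument, is that linearly independent vectors realizing the successive minima need not generate $\Lambda$ over $\Z$ once $r\ge 5$, so one has to modify them via unimodular operations while keeping the lengths under control. Properties \cref{lem:nice_basis:partial_determinant}, \cref{lem:nice_basis:projection_length}, and \cref{lem:nice_basis:quasi_orthogonal} will then follow from this basis by successively more refined, but essentially routine, arguments.

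Given the basis, \cref{lem:nice_basis:partial_determinant} is immediate: since $\vv_1,\ldots,\vv_\nu$ is a $\Z$-basis of $\Lambda_\nu$, one has $\lambda_i(\Lambda_\nu)\le\|\vv_i\|\asymp_r\lambda_i(\Lambda)$ for $i\le\nu$, while $\lambda_i(\Lambda_\nu)\ge\lambda_i(\Lambda)$ follows trivially from $\Lambda_\nu\subset\Lambda$. Applying \cref{lem:Minkowski_second} to $\Lambda_\nu$ then yields $\det(\Lambda_\nu)\asymp_r\prod_{i=1}^{\nu}\lambda_i(\Lambda)$.

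For \cref{lem:nice_basis:projection_length}, observe that $\Lambda_\nu$ is primitive in $\Lambda$ (being a direct summand under our basis), so $\pi_\nu^{\perp}(\Lambda)$ is a lattice of rank $r-\nu$ with $\Z$-basis $\pi_\nu^{\perp}(\vv_{\nu+1}),\ldots,\pi_\nu^{\perp}(\vv_r)$, and the multiplicative identity $\det(\Lambda)=\det(\Lambda_\nu)\det(\pi_\nu^{\perp}(\Lambda))$ holds. Combined with \cref{lem:Minkowski_second} and \cref{lem:nice_basis:partial_determinant}, this gives $\det(\pi_\nu^{\perp}(\Lambda))\asymp_r\prod_{i>\nu}\lambda_i(\Lambda)$. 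Hadamard's inequality then sandwiches
\[
\prod_{i>\nu}\lambda_i(\Lambda)
\asymp_r
\det(\pi_\nu^{\perp}(\Lambda))
\le
\prod_{i>\nu}\|\pi_\nu^{\perp}(\vv_i)\|
\le
\prod_{i>\nu}\|\vv_i\|
\asymp_r
\prod_{i>\nu}\lambda_i(\Lambda),
\]
so each factor $\|\pi_\nu^{\perp}(\vv_i)\|/\lambda_i(\Lambda)$ is $\ll_r 1$ while their product is $\gg_r 1$, forcing each to be $\asymp_r 1$.

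For \cref{lem:nice_basis:quasi_orthogonal}, the upper bound is the triangle inequality. For the lower bound, consider the Gram--Schmidt orthogonalization $\vv_i^{\ast}\coloneqq\pi_{i-1}^{\perp}(\vv_i)$, which are pairwise orthogonal with $\|\vv_i^{\ast}\|\asymp_r\lambda_i(\Lambda)$ by \cref{lem:nice_basis:projection_length}. Writing $\vv_i=\vv_i^{\ast}+\sum_{j<i}\mu_{ij}\vv_j^{\ast}$, Pythagoras applied to $\|\vv_i\|\asymp_r\lambda_i(\Lambda)$ gives $|\mu_{ij}|\lambda_j(\Lambda)\ll_r\lambda_i(\Lambda)$. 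Expanding $\ww=\sum x_i\vv_i=\sum_i y_i\vv_i^{\ast}$ with $y_i=x_i+\sum_{k>i}x_k\mu_{ki}$, orthogonality gives $\|\ww\|\gg_r|y_i|\lambda_i(\Lambda)$ for every $i$. A descending induction on $i$, starting from $y_r=x_r$ and using the bound on $|\mu_{ki}|\lambda_i(\Lambda)$, then converts this into $|x_i|\lambda_i(\Lambda)\ll_r\|\ww\|$ for all $i$, and summing yields the desired inequality. Thus the entire argument reduces, apart from Mahler's theorem, to bookkeeping around \cref{lem:Minkowski_second} and Gram--Schmidt.
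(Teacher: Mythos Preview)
Your proof is correct and follows essentially the same strategy as the paper's. The paper invokes Davenport~\cite[Lemma~12.3]{Davenport:DiophantineEquation} directly, which already furnishes the basis in lower-triangular form (after a rotation) with diagonal entries satisfying $|v_{ii}|\asymp\lambda_i(\Lambda)$; from this, \cref{lem:nice_basis:partial_determinant} and \cref{lem:nice_basis:projection_length} are read off immediately from the triangular structure. You instead start from the weaker Mahler input (just $\|\vv_i\|\asymp\lambda_i$) and recover \cref{lem:nice_basis:projection_length} via the Hadamard/quotient-determinant sandwich, which amounts to redoing the Gram--Schmidt step that Davenport's lemma already packages. For \cref{lem:nice_basis:quasi_orthogonal}, the paper uses a one-shot argument---pick $i_0$ maximizing $|x_{i_0}|\|\vv_{i_0}\|$ and bound the component of $\vv_{i_0}$ orthogonal to the span of the remaining $\vv_j$ from below via $\det(\Lambda)$---whereas your descending induction through the Gram--Schmidt coefficients $\mu_{ij}$ is an equally clean alternative.
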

%%%%%%%%%%%%%%%%%%%%%%%%%%%%%%%%%%%%%%%%
\begin{proof}
This is essentially Lemma~12.3 of \cite[p.~78]{Davenport:DiophantineEquation}.
By sending $\Lambda$ through some isometry $\R\Lambda\to\R^r$
and dilating by $\det(\Lambda)^{-\frac{1}{r}}$,
we may assume $N=r$ and $\det(\Lambda)=1$.
We may further rotate $\R^r$ without loss of generality.
By Lemma~12.3 of \cite[p.~78]{Davenport:DiophantineEquation},
after a suitable rotation,
we can take a basis $(\vv_1,\ldots,\vv_r)$
such that
\begin{equation}
\label{lem:nice_basis:successive_minima:v_form}
\setlength\arraycolsep{2pt}
\begin{pmatrix*}
\vv_1^{T}\\
\vv_2^{T}\\
\vdots\\
\vv_r^{T}
\end{pmatrix*}
=
\begin{pmatrix*}
v_{11}&&&\\
v_{21}&v_{21}&&\\
\vdots&\vdots&\ddots&\\
v_{r1}&v_{r2}&\cdots&v_{rr}
\end{pmatrix*}
% \begin{pmatrix*}
% v_{11}&0&0&\cdots&0\\
% v_{21}&v_{21}&0&\cdots&0\\
% \vdots&\vdots&\vdots&\ddots&\vdots\\
% v_{r1}&v_{r2}&v_{r3}&\cdots&v_{rr}
% \end{pmatrix*}
\end{equation}
and
\begin{equation}
\label{lem:nice_basis:successive_minima:v_initial_cond}
\|\vv_i\|\asymp|v_{ii}|\asymp\lambda_i(\Lambda)
\quad\text{for $i=1,\ldots,r$}.
\end{equation}
We check the claimed conditions for such chosen basis.
\medskip

%%%%%%%%%%%%%%%%%%%%%%%%%%%%%%%%%%%%%%%%
\proofitem{lem:nice_basis:successive_minima}
Clear by \cref{lem:nice_basis:successive_minima:v_initial_cond}.
\medskip

%%%%%%%%%%%%%%%%%%%%%%%%%%%%%%%%%%%%%%%%
\proofitem{lem:nice_basis:partial_determinant}
By \cref{lem:nice_basis:successive_minima:v_form}, we have
\[
\det(\Lambda_\nu)^2
=
\setlength\arraycolsep{2pt}
\det
\begin{pmatrix*}
v_{11}&&&\\
v_{21}&v_{21}&&\\
\vdots&\vdots&\ddots&\\
v_{\nu1}&v_{\nu2}&\cdots&v_{\nu\nu}
\end{pmatrix*}^2
=(|v_{11}|\cdots|v_{\nu\nu}|)^2.
\]
By \cref{lem:nice_basis:successive_minima:v_initial_cond}, we then have
$\det(\Lambda_\nu)^2
\asymp
(\lambda_1(\Lambda)\cdots\lambda_{\nu}(\Lambda))^2$
and so \cref{lem:nice_basis:partial_determinant} holds.
\medskip

%%%%%%%%%%%%%%%%%%%%%%%%%%%%%%%%%%%%%%%%
\proofitem{lem:nice_basis:projection_length}
By \cref{lem:nice_basis:successive_minima:v_form}, we have
\[
\begin{pmatrix*}
\pi_\nu^{\perp}(\vv_{\nu+1}^{T})\\
\pi_\nu^{\perp}(\vv_{\nu+2}^{T})\\
\vdots\\
\pi_\nu^{\perp}(\vv_{r}^{T})\\
\end{pmatrix*}
=
\begin{pmatrix*}
v_{\nu+1,\nu+1}&&&\\
v_{\nu+2,\nu+1}&v_{\nu+2,\nu+2}&&\\
\vdots&\vdots&\ddots&\\
v_{r,\nu+1}&v_{r,\nu+2}&\cdots&v_{rr}
\end{pmatrix*}.
% \begin{pmatrix*}
% v_{\nu+1,\nu+1}&0&0&\cdots&0\\
% v_{\nu+2,\nu+1}&v_{\nu+2,\nu+2}&0&\cdots&0\\
% \vdots&\vdots&\vdots&\ddots&\vdots\\
% v_{r,\nu+1}&v_{r,\nu+2}&v_{r,\nu+3}&\cdots&v_{rr}
% \end{pmatrix*},
\]
%where the components are with respect to the embedding $(\R\Lambda_{\nu})^{\perp}\to\R^N$.
By \cref{lem:nice_basis:successive_minima:v_initial_cond},
we therefore have
\[
\lambda_i(\Lambda)
\ll|v_{ii}|
\ll\|\pi_\nu^{\perp}(\vv_{i})\|
\ll\|\vv_{i}\|
\ll\lambda_i(\Lambda)
\]
for $\nu<i\le r$. This completes the proof.
\medskip

%%%%%%%%%%%%%%%%%%%%%%%%%%%%%%%%%%%%%%%%
\proofitem{lem:nice_basis:quasi_orthogonal}
The inequality
\[
\biggl\|\sum_{i=1}^{r}x_i\vv_i\biggr\|
\le
\sum_{i=1}^{r}|x_i|\|\vv_i\|
\]
follows just by the triangle inequality. We can then take $i_0\in\{1,\ldots,r\}$ such that
\[
\sum_{i=1}^{r}|x_i|\|\vv_i\|
\le
r|x_{i_0}|\|\vv_{i_0}\|
\]
by taking $i_0$ with $|x_{i_0}|\|\vv_{i_0}\|=\max_{i}|x_{i}|\|\vv_{i}\|$.
By \cref{lem:nice_basis:successive_minima} and \cref{lem:nice_basis:partial_determinant}, we have
\begin{equation}
\label{lem:nice_basis:quasi_orthogonal:to_det}
\sum_{i=1}^{r}|x_i|\|\vv_i\|
\ll
|x_{i_0}|\|\vv_{i_0}\|
\ll
|x_{i_0}|\frac{\det(\Lambda)}{\prod_{\substack{1\le i\le r\\i\neq i_0}}\lambda_i(\Lambda)}.
\end{equation}
Decompose $\vv_{i_0}$ as
\begin{equation}
\label{lem:nice_basis:quasi_orthogonal:decomp}
\vv_{i_0}=\widetilde{\vv}_{i_0}+\widetilde{\vv}_{i_0}^{\perp}
\quad\text{with}\quad
\widetilde{\vv}_{i_0}\in\sum_{\substack{1\le i\le r\\i\neq i_0}}\mathbb{R}\vv_{i}
\ \text{and}\ 
\widetilde{\vv}_{i_0}^{\perp}\in\biggl(\sum_{\substack{1\le i\le r\\i\neq i_0}}\mathbb{R}\vv_{i}\biggr)^{\perp}
\end{equation}
By \cref{lem:nice_basis:successive_minima}, we then have
\begin{align}
\det(\Lambda)
&=\det(\mathbb{Z}\vv_{1}+\cdots+\mathbb{Z}\widetilde{\vv}_{i_0}^{\perp}+\cdots+\mathbb{Z}\vv_{r})\\
&\ll
\|\vv_{1}\|\cdots\|\widetilde{\vv}_{i_0}^{\perp}\|\cdots\|\vv_{r}\|
\ll
\prod_{\substack{1\le i\le r\\i\neq i_0}}\lambda_i(\Lambda)\cdot\|\widetilde{\vv}_{i_0}^{\perp}\|.
\end{align}
By inserting this estimate into \cref{lem:nice_basis:quasi_orthogonal:to_det}
and recalling the decomposition \cref{lem:nice_basis:quasi_orthogonal:decomp},
\begin{align}
\sum_{i=1}^{r}|x_i|\|\vv_i\|
&\ll
|x_{i_0}|\|\widetilde{\vv}_{i_0}^{\perp}\|\\
&\ll
\|x_1\vv_1+\cdots+x_{i_0}\widetilde{\vv}_{i_0}+\cdots+x_{r}\vv_{r}+x_{i_0}\widetilde{\vv_{i_0}}^{\perp}\|
=
\biggl\|\sum_{i=1}^{r}x_i\vv_i\biggr\|.
\end{align}
This completes the proof.
\end{proof}

For a non-zero integral vector $\cc\in\Z^{N}\setminus\{0\}$, we define
\[
\Lambda_{\cc}\coloneqq\{\xx\in\Z^N\mid\langle\cc,\xx\rangle=0\}.
\]
By solving the equation $\langle\cc,\xx\rangle=0$,
it is easy to see that $\Lambda_{\cc}$
is a primitive integral lattice of $\R^N$ of rank $N-1$.
For integral vectors $\cc_1,\ldots,\cc_k\in\Z^{N}$,
let $\mathcal{G}(\cc_1,\ldots,\cc_k)$ be the greatest common divisor of determinants of $k$-minors of the $N$ by $k$ matrix $(\cc_1 \cdots \cc_k)$.
We have the following:
%%%%%%%%%%%%%%%%%%%%%%%%%%%%%%%%%%%%%%%%
\begin{lemma}
\label{lem:det_Lambda_c}
For vectors $\cc_1,\ldots,\cc_k\in\Z^{N}$ linearly independent over $\R$,
we have
\[
\det(\Lambda_{\cc_1}\cap\cdots\cap\Lambda_{\cc_k})
=
\frac{\det(\Z\cc_1+\cdots+\Z\cc_k)}
{\mathcal{G}(\cc_1,\ldots,\cc_k)}.
\]
\end{lemma}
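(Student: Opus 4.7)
The plan is to factor the claimed formula through the saturation of $\Gamma\coloneqq\Z\cc_1+\cdots+\Z\cc_k$ inside $\Z^N$. Write $\Gamma^{\mathrm{sat}}\coloneqq\Gamma_{\R}\cap\Z^N$, a primitive sublattice of rank $k$. Since $\Gamma^{\mathrm{sat}}_{\R}=\Gamma_{\R}$, we have $\Lambda_{\cc_1}\cap\cdots\cap\Lambda_{\cc_k}=\Z^N\cap\Gamma_{\R}^{\perp}$, which is nothing but the primitive orthogonal complement of $\Gamma^{\mathrm{sat}}$ inside $\Z^N$; denote it by $\Gamma^{\perp}$. The proof then reduces to establishing the two identities
\[
\det(\Gamma)=\mathcal{G}(\cc_1,\ldots,\cc_k)\cdot\det(\Gamma^{\mathrm{sat}})
\quad\text{and}\quad
\det(\Gamma^{\mathrm{sat}})=\det(\Gamma^{\perp}).
\]

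For the first identity, I would fix a $\Z$-basis $\vv_1,\ldots,\vv_k$ of $\Gamma^{\mathrm{sat}}$ and write $(\cc_1\mid\cdots\mid\cc_k)=(\vv_1\mid\cdots\mid\vv_k)A^T$ for a unique $A\in M_k(\Z)$ with $|\det A|=[\Gamma^{\mathrm{sat}}:\Gamma]$. Letting $C$ and $V$ denote these two $N\times k$ matrices, the Gram determinant formula $\det(\Gamma)^2=\det(C^TC)$ together with the identity $C^TC=AV^TVA^T$ immediately gives $\det(\Gamma)=|\det A|\cdot\det(\Gamma^{\mathrm{sat}})$. On the other hand, by Cauchy--Binet each $k\times k$ minor of $C$ satisfies $\det(C_I)=\det(V_I)\det A$, and primitivity of $\Gamma^{\mathrm{sat}}$ forces the Plücker coordinates $\det(V_I)$ to have gcd $1$ (the Smith normal form characterization of primitivity). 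Hence $\mathcal{G}(\cc_1,\ldots,\cc_k)=|\det A|$ and the first identity follows.

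For the duality $\det(\Gamma^{\mathrm{sat}})=\det(\Gamma^{\perp})$, primitivity of $\Gamma^{\mathrm{sat}}$ lets me extend $\vv_1,\ldots,\vv_k$ to a full $\Z$-basis $\vv_1,\ldots,\vv_N$ of $\Z^N$; set $U\coloneqq(\vv_1\mid\cdots\mid\vv_N)\in\GL_N(\Z)$ and let $\uu_1,\ldots,\uu_N\in\Z^N$ be the columns of $U^{-T}$, so that $\langle\uu_i,\vv_j\rangle=\delta_{ij}$. Expanding any $\ww\in\Z^N$ as $\ww=\sum_ib_i\uu_i$ with $b_i\in\Z$ gives $\langle\vv_j,\ww\rangle=b_j$, so $\ww\in\Gamma^{\perp}$ iff $b_1=\cdots=b_k=0$; thus $\uu_{k+1},\ldots,\uu_N$ is a $\Z$-basis of $\Gamma^{\perp}$. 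Consequently $\det(\Gamma^{\mathrm{sat}})^2$ is the top-left $k\times k$ principal minor of the Gram matrix $U^TU$, while $\det(\Gamma^{\perp})^2$ is the bottom-right $(N-k)\times(N-k)$ principal minor of $(U^TU)^{-1}$. Since $\det(U^TU)=(\det U)^2=1$, the Schur complement identity $\det(G_{11})=\det(G)\cdot\det((G^{-1})_{22})$ applied to $G=U^TU$ yields the duality.

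Combining the two identities produces $\det(\Lambda_{\cc_1}\cap\cdots\cap\Lambda_{\cc_k})=\det(\Gamma^{\perp})=\det(\Gamma^{\mathrm{sat}})=\det(\Gamma)/\mathcal{G}(\cc_1,\ldots,\cc_k)$, which is the desired formula. I expect the main obstacle to be the duality step: although the underlying fact is classical, one must be careful to verify that $\uu_{k+1},\ldots,\uu_N$ is a full $\Z$-basis of $\Gamma^{\perp}$ (rather than a finite-index sublattice) and to apply the block determinant identity with the correct pairing between the principal minor of $U^TU$ and that of its inverse.
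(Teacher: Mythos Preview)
Your argument is correct. The paper does not actually prove this lemma: its entire proof reads ``See \cite[Lemma~4]{leBoudec}'', so there is no in-paper argument to compare against. Your route---factoring through the saturation $\Gamma^{\mathrm{sat}}$, identifying $\mathcal{G}(\cc_1,\ldots,\cc_k)$ with the index $[\Gamma^{\mathrm{sat}}:\Gamma]$ via the Pl\"ucker coordinates, and then invoking the duality $\det(\Gamma^{\mathrm{sat}})=\det(\Gamma^{\perp})$ for primitive sublattices of $\Z^N$---is a standard self-contained proof of this classical identity. Both steps are carried out cleanly; in particular your verification that $\uu_{k+1},\ldots,\uu_N$ is a genuine $\Z$-basis of $\Gamma^{\perp}$ (using that $U^{-T}\in\GL_N(\Z)$) and the Schur complement argument for the Gram matrices are exactly what is needed. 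One minor remark: the name ``Cauchy--Binet'' is a slight overstatement in the first step, since $C=VA^T$ immediately gives $C_I=V_IA^T$ on rows and hence $\det(C_I)=\det(V_I)\det(A)$ by multiplicativity alone.
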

%%%%%%%%%%%%%%%%%%%%%%%%%%%%%%%%%%%%%%%%
\begin{proof}
See \cite[Lemma~4]{leBoudec}.
\end{proof}
%%%%%%%%%%%%%%%%%%%%%%%%%%%%%%%%%%%%%%%%
For  $\xx\in\Zprim^{n+1}$,
by \cref{lem:det_Lambda_c}, we easily see that
\begin{equation}
\label{det_Lambda_nu_dn_x}
\det(\Lambda_{\nu_{d,n}(\xx)})=\|\nu_{d,n}(\xx)\|
\end{equation}
since $\nu_{d,n}(\xx)\in\Zprim^{\mathcal{M}_{d,n}}$.

%%%%%%%%%%%%%%%%%%%%%%%%%%%%%%%%%%%%%%%%
For an integral vector $\cc\in\Z^{N}$ and $q\in\N$,
we also use the $\mod{q}$ analog
\[
\Lambda_{\cc}^{(q)}
\coloneqq
\{\xx\in\Z^N\mid\langle\cc,\xx\rangle\equiv0\ \mod{q}\},
\]
which is an integral lattice of rank $N$.
%%%%%%%%%%%%%%%%%%%%%%%%%%%%%%%%%%%%%%%%
\begin{lemma}
\label{lem:det_Lambda_cq}
For an integral vector $\cc\in\Z^{N}$ and $q\in\N$, 
\[
\det(\Lambda_{\cc}^{(q)})=\frac{q}{\gcd(\cc,q)}
\and
\lambda_{1}(\Lambda_{\cc}^{(q)})\le\cdots\le\lambda_{N}(\Lambda_{\cc}^{(q)})\le q.
\]
\end{lemma}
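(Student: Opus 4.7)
The plan is to compute the determinant via the index in $\Z^N$ and then bound the top successive minimum by exhibiting $N$ linearly independent vectors of length $q$ inside $\Lambda_{\cc}^{(q)}$.

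First, I would observe that $\Lambda_{\cc}^{(q)}$ is the kernel of the group homomorphism
\[
\Phi\colon \Z^N \longrightarrow \Z/q\Z, \qquad \xx \longmapsto \langle \cc, \xx\rangle \bmod q.
\]
The image of $\Phi$ is the subgroup of $\Z/q\Z$ generated by the residues of $c_1,\ldots,c_N$ modulo $q$, i.e.\ the cyclic subgroup generated by $\gcd(c_1,\ldots,c_N)\bmod q$. This subgroup has order $q/\gcd(c_1,\ldots,c_N,q) = q/\gcd(\cc,q)$, so
\[
[\Z^N : \Lambda_{\cc}^{(q)}] = \frac{q}{\gcd(\cc,q)}.
\]
Since $\Lambda_{\cc}^{(q)}$ is a full-rank sublattice of $\Z^N$ and $\det(\Z^N)=1$, the standard identity $\det(\Lambda_{\cc}^{(q)}) = [\Z^N : \Lambda_{\cc}^{(q)}] \cdot \det(\Z^N)$ yields the first claim.

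For the bound on the successive minima, I would simply note that $\langle \cc, q\ee_i\rangle = qc_i \equiv 0 \pmod q$ for every $i=1,\ldots,N$, so $q\ee_1,\ldots,q\ee_N \in \Lambda_{\cc}^{(q)}$. These are $N$ linearly independent vectors of Euclidean norm $q$, so by the definition of the successive minima we have $\lambda_N(\Lambda_{\cc}^{(q)}) \le q$, and the ordering $\lambda_1 \le \cdots \le \lambda_N$ is built into the definition.

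There is no real obstacle here — both statements are essentially bookkeeping. The only minor point worth double-checking is the image computation of $\Phi$, for which one uses that the subgroup of $\Z/q\Z$ generated by $c_1,\ldots,c_N$ is the same as that generated by $\gcd(c_1,\ldots,c_N)$, a standard fact from elementary number theory.
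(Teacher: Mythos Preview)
Your proof is correct. For the successive minima bound you give exactly the same argument as the paper. For the determinant, the paper takes a slightly different route: instead of computing the index via the image of the homomorphism $\Phi$, it uses the Smith normal form to find $A\in\GL_N(\Z)$ with $\cc^T A\equiv(d\cdot u,0,\ldots,0)\pmod{q}$, where $d=\gcd(\cc,q)$, and then exhibits an explicit basis $\Lambda_{\cc}^{(q)}=A\cdot(\tfrac{q}{d}\Z\ee_1+\Z\ee_2+\cdots+\Z\ee_N)$ from which the determinant is read off. Your first-isomorphism-theorem argument is more direct and avoids the change of basis; the paper's approach yields an explicit basis as a byproduct, though that extra information is not used elsewhere.
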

%%%%%%%%%%%%%%%%%%%%%%%%%%%%%%%%%%%%%%%%
\begin{proof}
We may assume $\cc \neq 0$.
By considering the Smith normal form,
we can find $A\in\GL_{N}(\Z)$ with
\[
\cc^{T}A\equiv
\begin{pmatrix*}
d\cdot u&0&\cdots&0
\end{pmatrix*}
\ \mod{q}
\quad\text{with}\quad
d\coloneqq\gcd(\cc,q)\ \text{and}\ 
u\in(\Z/q\Z)^{\times}
\]
We then have
\begin{align}
\Lambda_{\cc}^{(q)}
&=
\{A\xx\in\Z^N\mid
\begin{pmatrix*}
d\cdot u&0&\cdots&0
\end{pmatrix*}\xx\equiv0\ \mod{q}\}\\
&=
A\cdot\{\xx\in\Z^N\mid
\begin{pmatrix*}
d\cdot u&0&\cdots&0
\end{pmatrix*}\xx\equiv0\ \mod{q}\}\\
&=
A\cdot(\tfrac{q}{d}\Z\ee_1+\Z\ee_2+\cdots+\Z\ee_N),
\end{align}
where $\ee_1,\ldots,\ee_N$ are standard vectors. Therefore, we have
\[
\det(\Lambda_{\cc}^{(q)})^2
=\det(A)^2(\tfrac{q}{d})^2
=(\tfrac{q}{d})^2
\quad\text{and so}\quad
\det(\Lambda_{\cc}^{(q)})=\tfrac{q}{d}.
\]
For successive minima, it suffices to note that
$q\ee_1,\ldots,q\ee_N
\in q\Z^{N}\subset\Lambda_{\cc}^{(q)}\subset\Z^{N}$.
\end{proof}

%%%%%%%%%%%%%%%%%%%%%%%%%%%%%%%%%%%%%%%%
Note that
\[
x\in V(\Q)
\iff
\aa_{V}\in\Lambda_{\nu_{d,n}(\xx)}
\]
when $\xx\in\Z^{n+1}$ is a homogeneous coordinate of $x$.
Thus, when taking average over $V$, the main difficulty lies in
counting points of $\Lambda_{\nu_{d,n}(\xx)}$ and its dependence on $\xx$.
As a key tool for understanding the successive minima 
of $\Lambda_{\nu_{d,n}(\xx)}$,
we use a result of Browning, le Boudec and Sawin~\cite{BBS}.
Following Browning, le Boudec and Sawin~\cite{BBS}, we introduce the following quantity:
%%%%%%%%%%%%%%%%%%%%%%%%%%%%%%%%%%%%%%%%
\begin{definition}
For $\xx\in\Z^{n+1}$ and $1\le r\le n+1$, let
\[
\dd_r(\xx)
\coloneqq
\min\{\det(\Lambda)
\mid\text{$\Lambda\subset\Z^{n+1}$ is a rank $r$ integral lattice with $\xx\in\Lambda$}\}.
\]
\end{definition}
%%%%%%%%%%%%%%%%%%%%%%%%%%%%%%%%%%%%%%%%
\noindent
It is clear that $1\le\dd_r(\xx)\le\|\xx\|$
since we can use some $r-1$ vectors in the standard basis together with $\xx$
to generate a rank $r$ integral lattice
of determinant $\le\|\xx\|$
and the determinant of integral lattice is a positive integer.
By using the quantity $\dd_2(\xx)$,
we can bound the largest successive minima $\lambda_{N_{d,n}-1}(\Lambda_{\nu_{d,n}(\xx)})$
as follows:
%%%%%%%%%%%%%%%%%%%%%%%%%%%%%%%%%%%%%%%%
\begin{lemma}[Browning--le Boudec--Sawin]
\label{lem:BBS_rank_d2_bound}
For $\xx\in\Zprim^{n+1}$, we have
\[
\lambda_{N_{d,n}-1}(\Lambda_{\nu_{d,n}(\xx)})
\le
\min\biggl(n\frac{\|\xx\|}{\dd_2(\xx)},\|\xx\|\biggr).
\]
\end{lemma}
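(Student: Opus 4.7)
The plan is to exhibit $N_{d,n}-1$ linearly independent integer vectors in $\Lambda_{\nu_{d,n}(\xx)}$ whose Euclidean norms are all bounded by the claimed right-hand side; the conclusion then follows immediately since $\lambda_{N_{d,n}-1}(\Lambda)$ is bounded by the maximum norm in any such system of $N_{d,n}-1$ linearly independent lattice vectors.

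For the easy bound by $\|\xx\|$, I would use the classical generators of the homogeneous ideal of $[\xx]\in\P^n_\Q$: for each pair $i<j$, the linear form $\ell_{ij}(X)\coloneqq x_jX_i-x_iX_j$ vanishes at $\xx$ and its coefficient vector $x_j\ee_i-x_i\ee_j$ has Euclidean norm $\sqrt{x_i^2+x_j^2}\le\|\xx\|$. Multiplying by any $M'\in\mathcal{M}_{d-1,n}$ gives a polynomial $\ell_{ij}M'\in\Lambda_{\nu_{d,n}(\xx)}$ whose only non-zero coefficients are $\pm x_i,\pm x_j$, placed at the distinct monomials $X_iM'$ and $X_jM'$; its norm is therefore still at most $\|\xx\|$. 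Since the degree-$d$ component of the homogeneous ideal of $[\xx]$ is generated by such products, it is $\Q$-spanned by the $\ell_{ij}M'$, and extracting a maximally linearly independent subset yields the required $N_{d,n}-1$ vectors.

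For the improved bound $n\|\xx\|/\dd_2(\xx)$, I would fix $\yy\in\Z^{n+1}$ realising the minimum $\dd_2(\xx)=\|\xx\wedge\yy\|$. By minimality, the lattice $\Z\xx+\Z\yy$ is saturated in $\Z^{n+1}$, so $\mathcal{G}(\xx,\yy)=1$ and \cref{lem:det_Lambda_c} gives $\det(\Lambda_\xx\cap\Lambda_\yy)=\dd_2(\xx)$. The crucial observation is that the $2\times2$ minors $m_{ij}\coloneqq y_ix_j-y_jx_i$ satisfy $\sum_{i<j}m_{ij}^2=\dd_2(\xx)^2$, so $|m_{ij}|\le\dd_2(\xx)$ for every pair. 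Using them I would construct the determinantal linear forms $f_{ijk}(X)\coloneqq m_{jk}X_i-m_{ik}X_j+m_{ij}X_k$ for $i<j<k$: these vanish on the whole plane $\R\xx+\R\yy$ (an immediate $3\times3$ determinant identity with a repeated row) and have norm $\le\dd_2(\xx)$, and by Pl\"ucker-type relations they span $(\Lambda_\xx\cap\Lambda_\yy)\otimes\Q$. Together with a single additional integer linear form $\ell\in\Lambda_\xx\setminus(\Lambda_\xx\cap\Lambda_\yy)$, taken to realise the minimum positive value of $|\langle\cdot,\yy\rangle|$ on $\Lambda_\xx$ and then reduced modulo $\Lambda_\xx\cap\Lambda_\yy$, these forms give a $\Q$-basis of $\Lambda_\xx$. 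Multiplying each linear form by every monomial in $\mathcal{M}_{d-1,n}$ produces polynomials in $\Lambda_{\nu_{d,n}(\xx)}$ of controlled norm, from which $N_{d,n}-1$ linearly independent ones can be extracted.

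The main obstacle is bounding the norm of the extra linear form $\ell$: decomposing $\ell=\ell^{\parallel}+\ell^{\perp}$ with $\ell^{\parallel}\in\R\xx+\R\yy$ and $\ell^{\perp}\in(\R\xx+\R\yy)^{\perp}$, the in-plane component is forced to be of length $c\|\xx\|/\dd_2(\xx)$, where $c\ge1$ is the least positive value of $|\langle\cdot,\yy\rangle|$ on $\Lambda_\xx$, while the orthogonal component lies in $(\Lambda_\xx\cap\Lambda_\yy)_{\R}$ and can be trimmed to within the covering radius of $\Lambda_\xx\cap\Lambda_\yy$. Bounding this covering radius via Minkowski's second theorem applied to $\Lambda_\xx\cap\Lambda_\yy$, and then combining with the trivial $\|\xx\|$ bound in the regime where $\dd_2(\xx)$ is small, is where the factor $n$ in $n\|\xx\|/\dd_2(\xx)$ enters; this careful bookkeeping is the technical heart of the argument.
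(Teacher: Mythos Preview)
The paper does not prove this lemma; it simply cites Lemma~5 of \cite{leBoudec} for the bound $\|\xx\|$ and Lemma~3.15 of \cite{BBS} for the bound $n\|\xx\|/\dd_2(\xx)$. Your argument for the first bound is correct and is essentially le~Boudec's.

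For the second bound there is a genuine gap. Your spanning set for $(\Lambda_{\nu_{d,n}(\xx)})_\Q$ consists of the products $f_{ijk}M'$, each of norm $\le\dd_2(\xx)$, together with the products $\ell M'$, each of norm $\|\ell\|$. The forms $f_{ijk}M'$ span only the degree-$d$ part of the homogeneous ideal of the \emph{line} through $[\xx]$ and $[\yy]$, which has dimension $N_{d,n}-(d+1)$, while the $\ell M'$ span a space of dimension $N_{d-1,n}<N_{d,n}-1$ (using $n\ge2$). Hence any system of $N_{d,n}-1$ independent vectors drawn from your list must use some $f_{ijk}M'$, so the maximum norm you obtain is at least of order $\dd_2(\xx)$. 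But $\dd_2(\xx)\le n\|\xx\|/\dd_2(\xx)$ is equivalent to $\dd_2(\xx)^2\le n\|\xx\|$, and this \emph{fails} generically once $n\ge3$: Minkowski applied to the rank-$n$ quotient $\Z^{n+1}/\Z\xx$ gives only $\dd_2(\xx)\ll_n\|\xx\|^{(n-1)/n}$, so $\dd_2(\xx)^2$ can be as large as $\|\xx\|^{2(n-1)/n}\gg\|\xx\|$. In that regime your determinantal forms are already longer than the target $n\|\xx\|/\dd_2(\xx)$, regardless of how well you control $\|\ell\|$.

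Your closing remark inverts the regimes: when $\dd_2(\xx)$ is small the target $n\|\xx\|/\dd_2(\xx)$ exceeds $\|\xx\|$ and the minimum is attained by the trivial bound, so there is nothing to do; the hard case is precisely $\dd_2(\xx)$ large, where your construction does not reach the target. The argument in \cite{BBS} does not proceed by multiplying a fixed basis of linear forms in $\Lambda_\xx$ by monomials; you will need a different construction of short degree-$d$ forms in the regime $\dd_2(\xx)^2>n\|\xx\|$.
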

%%%%%%%%%%%%%%%%%%%%%%%%%%%%%%%%%%%%%%%%
\begin{proof}
See Lemma~5 of \cite{leBoudec} and Lemma~3.15 of \cite{BBS}.
\end{proof}

%%%%%%%%%%%%%%%%%%%%%%%%%%%%%%%%%%%%%%%%
Following Schmidt~\cite[Section~2]{Schmidt}, we introduce quotient lattices.
Let $\Gamma$ be a primitive submodule of a lattice $\Lambda\subset\R^N$.
Consider the orthogonal projection
$\pi\colon\R^{N}\to\Gamma_{\mathbb{R}}^{\perp}$.
By the primitivity of $\Gamma$,
this projection induces an isomorphism $\Lambda/\Gamma\cong\pi(\Lambda)$.
By this isomorphism, we identify the quotient module $\Lambda/\Gamma$
with a lattice $\pi(\Lambda)\subset\R^N$.
Such defined lattice $\Lambda/\Gamma=\pi(\Lambda)$ is called \textbf{the quotient lattice}.
Clearly, $\rank(\Lambda/\Gamma)=\rank(\Lambda)-\rank(\Gamma)$.
The determinant of the quotient lattice $\Lambda/\Gamma$
is
\begin{equation}
\label{det_quotient}
\det(\Lambda/\Gamma)=\frac{\det(\Lambda)}{\det(\Gamma)}.
\end{equation}
Note that there is no canonical way to define integrality for the sublattices
of $\G_{\R}^{\perp}$.

% Unnecessary anymore:
% %%%%%%%%%%%%%%%%%%%%%%%%%%%%%%%%%%%%%%%%
% We need the following trivial bound for the quotient lattice:
% %%%%%%%%%%%%%%%%%%%%%%%%%%%%%%%%%%%%%%%%
% \begin{lemma}
% \label{lem:quotient_smallest_successive_minima}
% For an integral primitive lattice $\Gamma\subset\R^N$ of rank $<N$, we have
% \[
% \frac{1}{\det(\Gamma)}\ll\lambda_1(\Z^N/\Gamma)\le\cdots\le\lambda_r(\Z^N/\Gamma)\le1
% \]
% where the implicit constant depends only on $N$.
% \end{lemma}
% %%%%%%%%%%%%%%%%%%%%%%%%%%%%%%%%%%%%%%%%
% \begin{proof}
% Recall that $\Z^N/\Gamma$ is the image of $\Z^N$ under the orthogonal projection
% \[
% \pi\colon\R^{N}\to\spanR(\Gamma)^{\perp}.
% \]
% Since $\|\pi(x)\|\le\|x\|$, the upper bound $\lambda_i(\Z^N/\Gamma)\le1$ is clear
% for all $i=1,\ldots,r$, where $r\coloneqq N-\rank(\Gamma)$.
% For the lower bound, note that
% \[
% \frac{1}{\det(\Gamma)}
% =\det(\Z^N/\Gamma)\ll\lambda_1(\Z^N/\Gamma)\cdots\lambda_r(\Z^N/\Gamma)
% \]
% by \cref{det_quotient} and Minkowski's second theorem.
% By using the bound $\lambda_i(\Z^N/\Gamma)\le1$ for $i=2,\ldots,r$ proven above,
% we obtain the lower bound.
% \end{proof}

%%%%%%%%%%%%%%%%%%%%%%%%%%%%%%%%%%%%%%%%
Let us conclude this section with recalling a recent result by 
Barroero and Widmer on lattice point counting ~\cite{BarroeroWidmer}.
It provides an asymptotic formula of the number of lattice points
contained in semialgebraic sets which is applicable uniformly over
lattices and semialgebraic conditions varying semialgebraically.
Note that Barroero and Widmer
work with general $o$-minimal structures containing semialgebraic sets,
but for our purpose the semialgebraic sets are sufficient.

\if0
We need some lattice point counting results.
As in Lemma~3.4 of \cite{BBS}
(note that the cone $\mathcal{C}_{\vv}^{(\alpha)}$ in \cite{BBS}
is a rather ``orthogonal'' cone,
which is a certain complement of our cone $\mathcal{C}_{N}(\vv,\realeps)$.),
we need to take care of the semialgebraic conditions
caused by the approximation with respect to the archimedean place.
Of course, some special case can be dealt with Davenport's result~\cite{Davenport:LatticeCounting},
but for the methodological simplicity and uniformity,
we use the result of Barroero and Widmer~\cite{BarroeroWidmer}.
Barroero and Widmer~\cite{BarroeroWidmer}
gave a result with general $o$-minimal structures containing semialgebraic sets
but for our purpose, the semialgebraic sets are sufficient.
\fi
% and so we state the result of Barroero and Widmer using semialgebraic sets.

% Too much?
% %%%%%%%%%%%%%%%%%%%%%%%%%%%%%%%%%%%%%%%%
% We recall the definition of semialgebraic sets:
% %%%%%%%%%%%%%%%%%%%%%%%%%%%%%%%%%%%%%%%%
% \begin{definition}[Semialgebraic set]
% \label{def:semialgebraic}
% A set $S\subset\R^{N}$ with $N\ge0$ is
% \textbf{semialgebraic} if $S$ is a finite union of sets of the form
% \[
% \{\xx\in\R^N
% \mid f_1(\xx),\ldots,f_k(\xx)=0\ \text{and}\ 
% g_1(\xx),\ldots, g_{\ell}(\xx)>0\}
% \]
% with some polynomials
% \[
% f_1,\ldots,f_k,g_1,\ldots,g_\ell\in\R[X_1,\ldots,X_N]
% \quad(k,\ell\ge0).
% \]
% Let us denote the set of all semialgebraic set of $\R^N$
% by $\mathsf{SemiAlg}(\R^N)$.
% \end{definition}

% %%%%%%%%%%%%%%%%%%%%%%%%%%%%%%%%%%%%%%%%
% \begin{definition}
% \label{def:Vj}
% Let $n\ge0$. For a tuple $\mathbbm{i}=(i_1,\ldots,i_{\nu})$ of distinct indices
% \[
% 1\le i_1<\cdots<i_\nu\le n,
% \]
% consider the orthogonal projection
% \[
% \pi_{\mathbbm{i}}
% \colon\R^n\to\R^{\nu}
% \semicolon(x_1,\ldots,x_n)\mapsto(x_{i_1},\ldots,x_{i_{\nu}}).
% \]
% For $0\le\nu\le n$ and a set $A\subset\R^n$, we then define
% \[
% V_{\nu}(A)
% \coloneqq
% \sum_{%
% \substack{\mathbbm{i}=(i_1,\ldots,i_{\nu})\\
% 1\le i_1<\cdots<i_\nu\le n
% }}\vol_{\nu}(\pi_{\mathbbm{i}}(A)).
% \]
% \end{definition}

%%%%%%%%%%%%%%%%%%%%%%%%%%%%%%%%%%%%%%%%
\begin{definition}
\label{def:Vnu}
For a linear subspace $W$ of a real metric vector space $V$,
consider the orthogonal projection
$\pi_{W}\colon V\to W$.
For a semialgebraic set $A\subset V$,
let
\[
V_{\nu}(A)
=
V_{\nu,V}(A)
\coloneqq
\sup_{\substack{%
W:\text{subspace of $V$}\\
\dim W=\nu
}}
\vol_{W}(\pi_{W}(A))
\]
(This is $V'_{\nu}(A)$ of Barroero--Widmer~\cite{BarroeroWidmer}.)
\end{definition}

%%%%%%%%%%%%%%%%%%%%%%%%%%%%%%%%%%%%%%%%
\begin{lemma}[Barroero--Widmer]
\label{lem:BarroeroWidmer_original}
Consider a semialgebraic set $Z\subset\R^{M+N}$ with $M,N\in\N$
such that for any $\TT\in\R^M$, the fiber
\[
Z_{\TT}\coloneqq\{\xx\in\R^N\mid(\TT,\xx)\in Z\}
\]
is bounded.
For any $\TT\in\R^M$ and any lattice $\Lambda \subset \R^N$ of rank $N$,
we have
\[
\#(\Lambda\cap Z_{\TT})
=
\frac{\vol_{N}(Z_{\TT})}{\det(\Lambda)}
+
O\Biggl(
\sum_{0\le\nu<N}\frac{V_{\nu,\R^N}(Z_{\TT})}{\lambda_1(\Lambda)\cdots\lambda_{\nu}(\Lambda)}
\Biggr),
\]
where the implicit constant depends only on $Z$.
\end{lemma}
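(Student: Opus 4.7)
The plan is to prove the bound by induction on the rank $N$ of the lattice, following the strategy of Davenport for integer points carried out through a good basis of $\Lambda$ supplied by \cref{lem:nice_basis}, with the semialgebraic structure of the family $\{Z_{\TT}\}$ used crucially to ensure that all implicit constants are uniform in the parameter $\TT$.

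For the base case $N=1$, one has $\Lambda=\Z\vv$ with $\|\vv\|=\det(\Lambda)$, and $\#(\Lambda\cap Z_{\TT})$ equals the number of integers $k$ with $k\vv\in Z_{\TT}$. The set $\{t\in\R\mid t\vv\in Z_{\TT}\}\subset\R$ is semialgebraic and bounded, hence a finite union of intervals whose number is bounded uniformly in $\TT$ by Tarski--Seidenberg applied to the family $\{Z_{\TT}\}$. This yields a count of $\vol_{1}(Z_{\TT})/\det(\Lambda)+O(1)$, and the $O(1)$ is absorbed into the $\nu=0$ term since $V_{0,\R}(Z_{\TT})=1$ when $Z_{\TT}\neq\emptyset$.

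For the inductive step, I would choose a basis $(\vv_{1},\ldots,\vv_{N})$ of $\Lambda$ as in \cref{lem:nice_basis}, set $\Lambda'\coloneqq\Z\vv_{1}+\cdots+\Z\vv_{N-1}$, and consider the orthogonal projection $\pi\colon\R^{N}\to(\Lambda'_{\R})^{\perp}$. The quotient $\pi(\Lambda)$ is a rank $1$ lattice generated by $\pi(\vv_{N})$ with $\|\pi(\vv_{N})\|\asymp\lambda_{N}(\Lambda)$ by \cref{lem:nice_basis}\ref{lem:nice_basis:projection_length}. Fiber $\Lambda\cap Z_{\TT}$ over $\pi(\Lambda)$: for each $k\in\Z$ with $k\pi(\vv_{N})\in\pi(Z_{\TT})$, the fiber is the coset $k\vv_{N}+\Lambda'$ intersected with the affine slice $Z_{\TT}\cap\pi^{-1}(k\pi(\vv_{N}))$, which after translation becomes the $\Lambda'$-point count in a semialgebraic subset of $\Lambda'_{\R}\cong\R^{N-1}$. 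Apply the inductive hypothesis to each such fiber count, then sum over $k$. The main terms combine via a Riemann-sum-to-integral comparison into $\vol_{N}(Z_{\TT})/\det(\Lambda)$, using Fubini and $\det(\Lambda)=\det(\Lambda')\cdot\|\pi(\vv_{N})\|$; the Riemann-sum error is controlled by the total-variation-type quantity $V_{N-1,\R^{N}}(Z_{\TT})/\det(\Lambda')$, which by \cref{lem:nice_basis}\ref{lem:nice_basis:partial_determinant} is $\asymp V_{N-1,\R^{N}}(Z_{\TT})/(\lambda_{1}(\Lambda)\cdots\lambda_{N-1}(\Lambda))$ and matches the $\nu=N-1$ contribution. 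The inductive error terms for the individual slices, combined with the bound $\ll 1+V_{1,\R^{N}}(Z_{\TT})/\lambda_{N}(\Lambda)$ on the number of nonempty fibers and the identifications $\lambda_{i}(\Lambda')\asymp\lambda_{i}(\Lambda)$ for $i<N$, aggregate to the remaining terms of the claimed bound.

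The main obstacle is ensuring that every implicit constant arising during the induction depends only on $Z$ and not on $\TT$. The number of connected components of the slices $Z_{\TT}\cap\pi^{-1}(k\pi(\vv_{N}))$, the complexity of cell decompositions of projections $\pi_{W}(Z_{\TT})$ onto various subspaces, and the number of intervals arising in the base case all vary with $\TT$, so a uniform bound is needed. This is precisely the content of the semialgebraic (or more general o-minimal) hypothesis: by Tarski--Seidenberg and uniform cell-decomposition theorems for definable families, each auxiliary set constructed during the induction (slices, orthogonal projections, intersections with affine subspaces) lies in a definable family in $\TT$ of uniformly bounded combinatorial complexity. Threading this uniformity through the inductive bookkeeping, while retaining the sharp form of the error terms, is the heart of the Barroero--Widmer argument and the part I would expect to be the most delicate.
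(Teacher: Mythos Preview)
The paper does not give a proof of this lemma at all: it simply cites Theorem~1.3 of Barroero--Widmer~\cite{BarroeroWidmer}, observing that semialgebraic sets form an $o$-minimal structure by Tarski--Seidenberg, and that the quantity $V_{\nu}$ used there is bounded by the quantity $V_{\nu,\R^{N}}$ of \cref{def:Vnu}. In other words, the paper treats this lemma as a black-box import.

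Your proposal, by contrast, sketches an actual proof of the Barroero--Widmer theorem itself, via a Davenport-style induction on $N$ through a good basis from \cref{lem:nice_basis}. This is a genuinely different route from the paper's treatment, though it is broadly in the spirit of how Barroero--Widmer themselves argue (they carry out precisely such a fiberwise induction, but in the general $o$-minimal setting). What your approach would buy is a self-contained exposition; what the paper's citation buys is brevity and the ability to avoid the delicate uniform-finiteness bookkeeping you correctly flag as the crux. Your identification of the main obstacle---uniform bounds on the combinatorial complexity of slices and projections as $\TT$ varies---is exactly right, and is the content that the $o$-minimal cell-decomposition theorems supply. There is no genuine gap in your outline, but be aware that turning it into a complete proof amounts to reproving the cited theorem, which is more than the paper asks for here.
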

%%%%%%%%%%%%%%%%%%%%%%%%%%%%%%%%%%%%%%%%
\begin{proof}
This is Theorem~1.3 of \cite[p.~4936]{BarroeroWidmer}
with $\mathsf{SemiAlg}\coloneqq(\mathsf{SemiAlg}(\R^{n}))$ as the $o$-minimal structure
except we bound $V_{\nu}(Z_{\TT})$
of \cite{BarroeroWidmer} by $V_{\nu}(Z_{\TT})$ defined by \cref{def:Vnu}.
Note that $\mathsf{SemiAlg}$ is an $o$-minimal structure
by the Tarski--Seidenberg theorem (see e.g.\ \cite[Corollary~2.11, p.~37]{vandenDries}).
\end{proof}

%%%%%%%%%%%%%%%%%%%%%%%%%%%%%%%%%%%%%%%%
\begin{lemma}
\label{lem:BarroeroWidmer}
Consider a semialgebraic set $Z\subset\R^{M+N}$ with $M,N\in\N$
such that for any $\TT\in\R^M$, the fiber
$Z_{\TT}$ defined in \cref{lem:BarroeroWidmer_original} is bounded.
For any $\TT\in\R^M$ and any lattice $\Lambda \subset \R^N$ of rank $r$,
we have
\[
\#(\Lambda\cap Z_{\TT})
=
\frac{\vol_{\L_{\R}}(\L_{\R}\cap Z_{\TT})}{\det(\Lambda)}
+
O\Biggl(
\sum_{0\le\nu<r}\frac{V_{\nu,\L_{\R}}(\L_{\R}\cap Z_{\TT})}
{\lambda_1(\Lambda)\cdots\lambda_{\nu}(\Lambda)}\Biggr),
\]
where the implicit constant depends only on $Z$.
\end{lemma}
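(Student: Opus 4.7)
The plan is to reduce the statement to the full-rank case \cref{lem:BarroeroWidmer_original} by transporting $\L$ into $\R^r$ via an isometry of $\L_\R$. For any isometry $\iota\colon\R^r\to\L_\R\subset\R^N$, the lattice $\iota^{-1}(\L)$ is a full-rank lattice in $\R^r$ with the same determinant and successive minima as $\L$, and $\iota^{-1}(\L_\R\cap Z_\TT)$ is a bounded subset of $\R^r$. A naive application of \cref{lem:BarroeroWidmer_original} along such an $\iota$ would however yield an implicit constant possibly depending on $\iota$, hence on $\L$, which is not what we want. To obtain a constant depending only on $Z$, I would package the choice of isometry into a single enlarged semialgebraic family.

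Concretely, let $\mathcal{F}_{r,N}\subset(\R^N)^r$ denote the Stiefel-type semialgebraic set of orthonormal $r$-frames $(\uu_1,\ldots,\uu_r)$ of $\R^N$, cut out by the equations $\langle\uu_i,\uu_j\rangle=\delta_{ij}$, and set
\[
\widetilde{Z}_r
\coloneqq
\left\{((\TT,\uu_1,\ldots,\uu_r),\yy)\in\R^{M+Nr}\times\R^r\midmid (\uu_1,\ldots,\uu_r)\in\mathcal{F}_{r,N},\ (\TT,y_1\uu_1+\cdots+y_r\uu_r)\in Z\right\}.
\]
This is a semialgebraic subset of $\R^{M+Nr+r}$ by the Tarski--Seidenberg theorem, and its fiber over any $(\TT,\uu_1,\ldots,\uu_r)$ is bounded, because the isometry $\yy\mapsto\sum_{i}y_i\uu_i$ sends it into the bounded set $Z_\TT$. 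Given $\L$ of rank $r$, I would choose any orthonormal basis $(\uu_1,\ldots,\uu_r)$ of $\L_\R$, put $\iota(\yy)\coloneqq\sum_{i}y_i\uu_i$, and apply \cref{lem:BarroeroWidmer_original} to $\widetilde{Z}_r$ at the parameter $(\TT,\uu_1,\ldots,\uu_r)$ together with the full-rank lattice $\iota^{-1}(\L)\subset\R^r$. The resulting implicit constant depends only on $\widetilde{Z}_r$, hence only on $Z$ and $r$; since $r\in\{0,1,\ldots,N\}$ takes finitely many values and $N$ is determined by $Z$, taking the maximum over $r$ absorbs the $r$-dependence into a constant that depends only on $Z$.

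It then remains to transport the output back through $\iota$. Because $\iota$ is an isometry, one has $\det(\iota^{-1}(\L))=\det(\L)$ and $\lambda_i(\iota^{-1}(\L))=\lambda_i(\L)$; similarly $\vol_r(\iota^{-1}(\L_\R\cap Z_\TT))=\vol_{\L_\R}(\L_\R\cap Z_\TT)$, and since orthogonal projections commute with isometries, $V_{\nu,\R^r}(\iota^{-1}(\L_\R\cap Z_\TT))=V_{\nu,\L_\R}(\L_\R\cap Z_\TT)$ for each $0\le\nu<r$. The asymptotic formula produced by \cref{lem:BarroeroWidmer_original} then matches the one claimed here. The main subtlety of the whole argument is precisely the uniformity of the implicit constant in $\L$, which is what forces the introduction of the auxiliary family $\widetilde{Z}_r$; once this is in hand, the rest is a routine transfer of structure along $\iota$.
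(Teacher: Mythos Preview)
Your proposal is correct and follows essentially the same approach as the paper: both enlarge the parameter space by the Stiefel manifold of orthonormal $r$-frames (equivalently, $N\times r$ matrices $\Phi$ with $\Phi^{T}\Phi=I_r$) to form a single semialgebraic family $\widetilde{Z}$, then apply \cref{lem:BarroeroWidmer_original} to the full-rank lattice $\iota^{-1}(\Lambda)\subset\R^r$ and transport everything back via the isometry. Your explicit remark that the residual dependence on $r$ can be absorbed since $r\in\{0,\ldots,N\}$ is a nice addition that the paper leaves implicit.
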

%%%%%%%%%%%%%%%%%%%%%%%%%%%%%%%%%%%%%%%%
\begin{proof}
Consider the semialgebraic set
\[
\widetilde{Z}
\coloneqq
\{(\Phi,\TT,\xx)\in\R^{Nr}\times\R^{M}\times\R^r
\mid
\Phi\in\mathrm{O}_{N,r}
\ \text{and}\ 
(\TT,\Phi(\xx))\in Z
\},
\]
where we identify $\R^{Nr}$ with the set of $N\times r$ matrices
and $\mathrm{O}_{N,r}$ is the set of $N\times r$ orthogonal matrices,
i.e.\ $N\times r$ matrices $\Phi$ satisfying $\Phi^{T}\Phi=I_r$.
Note that $\tilde{Z}$ is a semialgebraic set.
For any $(\Phi,\TT)\in\mathrm{O}_{N,r}\times\R^M$,
we have
\begin{equation}
\label{lem:BarroeroWidmer:new_fiber}
\widetilde{Z}_{(\Phi,\TT)}
=
\Phi^{-1}(Z_{\TT}),
\end{equation}
where we identify $\Phi\in\mathrm{O}_{N,r}$ with the isometry $\R^r\to\R^N\semicolon\xx\to\Phi\cdot\xx$.
Since $\Phi$ is an isometry and $Z_{\TT}$ is bounded,
$\widetilde{Z}_{(\Phi,\TT)}$ is bounded.
Thus, by taking an isometry
$\Phi_{\Lambda}\colon\R^{r}\to \L_{\R} \subset\R^N$
and noting that $\Phi_{\Lambda}^{-1}(\Lambda)$ is a full-rank lattice of $\R^{r}$
with the same determinant and successive minima as $\Lambda$,
we can use \cref{lem:BarroeroWidmer_original} to obtain
\begin{align}
\#(\Lambda\cap Z_{\TT})
&=\#(\Phi_{\Lambda}^{-1}(\Lambda)\cap\widetilde{Z}_{(\Phi_{\Lambda},\TT)})\\
&=
\frac{\vol_r(\Phi_{\Lambda}^{-1}(Z_{\TT}))}{\det(\Lambda)}
+
O\Biggl(
\sum_{0\le\nu<r}
\frac{V_{\nu,\R^r}(\Phi_{\Lambda}^{-1}(Z_{\TT}))}
{\lambda_1(\Lambda)\cdots\lambda_{\nu}(\Lambda)}\Biggr).
\end{align}
Since $\Phi_{\Lambda}$ is an isometry, we have
\begin{align}
\vol_{r}(\Phi_{\Lambda}^{-1}(Z_{\TT}))
&=\vol_{\L_{\R}}(\L_{\R}\cap Z_{\TT})
\and
V_{\nu,\R^r}(\Phi_{\Lambda}^{-1}(Z_{\TT}))
=V_{\nu,\L_{\R}}(\L_{\R}\cap Z_{\TT})
\end{align}
and so the assertion follows.
\end{proof}

%%%%%%%%%%%%%%%%%%%%%%%%%%%%%%%%%%%%%%%%
\section{Preliminary lemmas for the archimedean place}
%We next prepare an asymptotic formula
%for the number of lattice points with local conditions.
%Note that the local conditions over finite places
%can be combined into one congruence by using the Chinese remainder theorem.
%%%%%%%%%%%%%%%%%%%%%%%%%%%%%%%%%%%%%%%%
%We separate the condition for the archimedean place by using \cref{lem:BarroeroWidmer}.
%Therefore, we need a bound for the volume appearing in the error term of \cref{lem:BarroeroWidmer}.
%%%%%%%%%%%%%%%%%%%%%%%%%%%%%%%%%%%%%%%%
For various counting problem with archimedean restriction,
we use \cref{lem:BarroeroWidmer} to separate the effect of archimedean restriction.
In this section, we prepare some lemmas to estimate the volumes
which appear in the application of \cref{lem:BarroeroWidmer}.

%%%%%%%%%%%%%%%%%%%%%%%%%%%%%%%%%%%%%%%%
Let us fix
$N\in\mathbb{Z}_{\ge2}$,
$\xii\in\mathbb{R}^{N}\setminus\{0\}$ and
$\s\in(0,1]$
throughout this section.

%%%%%%%%%%%%%%%%%%%%%%%%%%%%%%%%%%%%%%%%
\begin{lemma}
\label{lem:equater_band_vol}
We have
\begin{align}
\vol_{N}(\mathcal{C}_{N}^{\perp}(\xii,\realeps)\cap \ball{N}{1})
&=
2V_{N-1}
\int_{0}^{\realeps}
\Bigl((1-h^2)^{\frac{N-1}{2}}-((\tfrac{1}{\realeps})^2-1)^{\frac{N-1}{2}}h^{N-1}\Bigr)dh\\
&=
2\biggl(\frac{N-1}{N}\biggr)V_{N-1}\realeps
\exp(O(\realeps^2))
\end{align}
and so
\[
\vol_{N}(\mathcal{C}_{N}^{\perp}(\xii,\realeps)\cap \ball{N}{1})\asymp\realeps,
\]
where the implicit constant depends only on $N$.
\end{lemma}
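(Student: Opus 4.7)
The plan is to reduce to a one-variable integral via slicing and rotation invariance. Since both $\mathcal{C}_{N}^{\perp}(\xii,\realeps)$ and the ball $\ball{N}{1}$ are invariant under orthogonal transformations fixing the line $\R\xii$, and the Lebesgue measure is rotation invariant, I would first replace $\xii$ by $\|\xii\|\ee_{N}$ without loss of generality. Then the cone description becomes
\[
\mathcal{C}_{N}^{\perp}(\ee_N,\realeps)
=\{\xx=(\xx',x_N)\in\R^{N-1}\times\R\mid|x_N|\le\realeps\|\xx\|\},
\]
and I would slice by fixing $x_N=h\in\R$. The two constraints $\|\xx\|\le1$ and $|h|\le\realeps\|\xx\|$ translate into
\[
|h|\sqrt{\realeps^{-2}-1}\le\|\xx'\|\le\sqrt{1-h^2},
\]
which is nonempty exactly when $|h|\le\realeps$.

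Next I would compute the $(N-1)$-volume of each slice. It is the annulus between two balls in $\R^{N-1}$, so has volume
\[
V_{N-1}\bigl((1-h^2)^{\frac{N-1}{2}}-h^{N-1}(\realeps^{-2}-1)^{\frac{N-1}{2}}\bigr).
\]
Integrating $h$ over $[-\realeps,\realeps]$ and using the symmetry $h\mapsto-h$ doubles the integral over $[0,\realeps]$, giving exactly the claimed closed-form expression.

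For the asymptotic, I would substitute $h=\realeps t$ to factor out a clean $\realeps$:
\[
2V_{N-1}\realeps\int_{0}^{1}\bigl((1-\realeps^2 t^2)^{\frac{N-1}{2}}-t^{N-1}(1-\realeps^2)^{\frac{N-1}{2}}\bigr)dt.
\]
Since $\realeps\in(0,1]$, both $(1-\realeps^2 t^2)^{(N-1)/2}=1+O_N(\realeps^2)$ and $(1-\realeps^2)^{(N-1)/2}=1+O_N(\realeps^2)$ uniformly in $t\in[0,1]$, so the integrand equals $(1-t^{N-1})+O_N(\realeps^2)$. Using $\int_0^1(1-t^{N-1})dt=\frac{N-1}{N}$, one obtains $\frac{N-1}{N}+O_N(\realeps^2)$, and rewriting $1+O_N(\realeps^2)=\exp(O_N(\realeps^2))$ yields the stated middle expression. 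The final $\asymp$ assertion then follows because $\exp(O_N(\realeps^2))\asymp_N 1$ on the bounded range $\realeps\in(0,1]$.

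The only mildly delicate point is to make the error bound uniform all the way up to $\realeps=1$ (where neither Taylor remainder is small); but the whole expression is bounded by a constant there while $\realeps\asymp1$, so the implicit constant absorbs the issue, and no separate case analysis is needed. Overall the proof is a direct computation with no essential obstacle.
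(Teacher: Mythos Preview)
Your proof is correct and follows essentially the same approach as the paper: slice orthogonally to $\xii$, identify each slice as an annulus in $\R^{N-1}$, integrate, then Taylor-expand. Your substitution $h=\realeps t$ is a cosmetic variant of the paper's direct expansion of the integrand, and your handling of the range near $\realeps=1$ matches the paper's one-line remark that the assertion is trivial there.
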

%%%%%%%%%%%%%%%%%%%%%%%%%%%%%%%%%%%%%%%%
\begin{proof}
Since $\xii\neq0$, we can take an orthonormal basis
\[
\ee_0,\ee_1\ldots,\ee_{N-1}
\quad\text{with}\quad
\ee_0
\coloneqq
\frac{\xii}{\|\xii\|}.
\]
Let us parametrize $\xx\in\R^{N}$ as
\begin{equation}
\label{lem:equator_band_vol:hx_parametrization}
\xx=h\ee_0+y_1\ee_1+\cdots+y_{N-1}\ee_{N-1}
\end{equation}
and write $\yy\coloneqq(y_1,\ldots,y_{N-1})$.
We then have
\begin{equation}
\label{lem:equator_band_vol:cond}
\begin{aligned}
\xx\in\mathcal{C}_{N}^{\perp}(\xii,\realeps)\cap \ball{N}{1}
&\iff
|\langle\xii,\xx\rangle|\le\realeps\|\xx\|\|\xii\|
\ \text{and}\ 
\|\xx\|\le1\\
&\iff
\biggl(\frac{|\langle\ee_0,\xx\rangle|}{\realeps}\biggr)^2\le\|\xx\|^2\le 1\\
&\iff
\biggl(\frac{h}{\realeps}\biggr)^2\le h^2+\|\yy\|^2\le 1\\
&\iff
\biggl(\biggl(\frac{1}{\realeps}\biggr)^2-1\biggr)h^2\le\|\yy\|^2\le 1-h^2.
\end{aligned}
\end{equation}
By \cref{lem:equator_band_vol:cond},
using the parametrization \cref{lem:equator_band_vol:hx_parametrization} and noticing
\[
\biggl(\biggl(\frac{1}{\realeps}\biggr)^2-1\biggr)h^2\le\|\yy\|^2\le 1-h^2
\implies
|h|\le\realeps,
\]
we have the first equality
\begin{align}
&\vol_{N}(\mathcal{C}_{N}^{\perp}(\xii,\realeps)\cap \ball{N}{1})\\
&=
2V_{N-1}
\int_{0}^{\realeps}
\Bigl((1-h^2)^{\frac{N-1}{2}}-((\tfrac{1}{\realeps})^2-1)^{\frac{N-1}{2}}h^{N-1}\Bigr)dh
\end{align}
We next prove the second equality.
% When $\realeps=1$, we trivially have
% \[
% \vol_{N}(\mathcal{C}_{N}^{\perp}(\xii,\realeps)\cap \ball{N}{1})
% =\vol_{N}(\ball{N}{1})
% =V_N
% =2\realeps\biggl(\frac{N-1}{N}\biggr)V_{N-1}
% \exp(O(\realeps^2)).
% \]
% We may thus assume $0\le\realeps<1$.
%By the first equality, we then have
By the first equality, we have
\begin{align}
&\vol_{N}(\mathcal{C}_{N}^{\perp}(\xii,\realeps)\cap \ball{N}{1})\\
&=
2V_{N-1}
\int_{0}^{\realeps}
\Bigl((1-h^2)^{\frac{N-1}{2}}
-(\tfrac{1}{\realeps})^{N-1}(1+O(\realeps^2))^{\frac{N-1}{2}}h^{N-1}\Bigr)dh\\
&=
2V_{N-1}\biggl(\realeps+O(\realeps^3)
-\frac{1}{N}\realeps(1+O(\realeps^2))\biggr)\\
&=
2\biggl(\frac{N-1}{N}\biggr)V_{N-1}\realeps(1+O(\realeps^{2})).
\end{align}
Since the assertion is trivial for $\realeps\gg1$,
this completes the proof.
\end{proof}

%%%%%%%%%%%%%%%%%%%%%%%%%%%%%%%%%%%%%%%%
\begin{lemma}
\label{lem:cone_ball_cap_vol}
We have
\[
\vol_{N}(\mathcal{C}_{N}(\xii,\realeps)\cap \ball{N}{1})
=
\frac{2}{N}V_{N-1}\realeps^{N-1}\exp(O(\sigma^2))
\]
and so
\[
\vol_{N}(\mathcal{C}_{N}(\xii,\realeps)\cap \ball{N}{1})\asymp\realeps^{N-1}
\]
where the implicit constant depends only on $N$.
\end{lemma}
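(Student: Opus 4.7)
The plan is to mimic the proof of \cref{lem:equater_band_vol} but now with the genuine cone condition $\|\xx\wedge\xii\|\le\realeps\|\xx\|\|\xii\|$. First I set up an orthonormal basis $\ee_0,\ee_1,\ldots,\ee_{N-1}$ of $\R^N$ with $\ee_0=\xii/\|\xii\|$, and parametrize
\[
\xx=h\ee_0+y_1\ee_1+\cdots+y_{N-1}\ee_{N-1},\qquad \yy\coloneqq(y_1,\ldots,y_{N-1}).
\]
Using the identity \cref{wedge_to_innerproduct}, the defining inequality of $\mathcal{C}_N(\xii,\realeps)$ becomes $\langle\xx,\xii\rangle^2\ge(1-\realeps^2)\|\xx\|^2\|\xii\|^2$, which in our coordinates reads $h^2\ge(1-\realeps^2)(h^2+\|\yy\|^2)$, i.e.\
\[
\|\yy\|^2\le\frac{\realeps^2}{1-\realeps^2}h^2.
\]
Together with the ball condition $h^2+\|\yy\|^2\le 1$, this identifies $\mathcal{C}_N(\xii,\realeps)\cap\ball{N}{1}$ as a body of revolution around the $\ee_0$-axis whose $(N-1)$-dimensional cross-section at height $h$ is a Euclidean ball of radius $\min\bigl(\tfrac{\realeps|h|}{\sqrt{1-\realeps^2}},\sqrt{1-h^2}\bigr)$.

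Next I split the integral over $h$ at $|h|=\sqrt{1-\realeps^2}$, which is the transition point between the two radii above. By symmetry in $h\mapsto-h$,
\[
\vol_N(\mathcal{C}_N(\xii,\realeps)\cap\ball{N}{1})
=2V_{N-1}\Biggl(\int_{0}^{\sqrt{1-\realeps^2}}\Bigl(\tfrac{\realeps h}{\sqrt{1-\realeps^2}}\Bigr)^{N-1}dh
+\int_{\sqrt{1-\realeps^2}}^{1}(1-h^2)^{\frac{N-1}{2}}dh\Biggr).
\]
The first integral evaluates exactly to $\frac{\realeps^{N-1}}{N}\sqrt{1-\realeps^2}$, giving the expected main term $\frac{2V_{N-1}}{N}\realeps^{N-1}\sqrt{1-\realeps^2}$. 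For the second integral, substituting $h=\sin\phi$ yields $\int_{\arccos\realeps}^{\pi/2}\cos^{N}\phi\,d\phi$; since $\cos\phi\le\realeps$ on the range of integration and the interval has length $\arcsin\realeps\ll\realeps$, this contributes $O(\realeps^{N+1})$.

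Combining these, I get
\[
\vol_N(\mathcal{C}_N(\xii,\realeps)\cap\ball{N}{1})
=\frac{2V_{N-1}}{N}\realeps^{N-1}\sqrt{1-\realeps^2}+O(\realeps^{N+1})
=\frac{2V_{N-1}}{N}\realeps^{N-1}(1+O(\realeps^2)).
\]
Rewriting $1+O(\realeps^2)=\exp(O(\realeps^2))$ (valid for $\realeps$ small, and trivially true for $\realeps$ bounded away from $0$ by adjusting the implicit constant, since the volume is trivially $\asymp 1\asymp\realeps^{N-1}$ in that range) yields the first claim. The $\asymp$ assertion then follows immediately since $\exp(O(\realeps^2))\asymp 1$ for $0<\realeps\le 1$. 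There is no real obstacle here beyond the cross-section bookkeeping; the only mild subtlety is handling the boundary regime $\realeps$ close to $1$, where one either argues the formula directly as above or simply observes that the volume stays $\asymp 1$ and so the asymptotic statement is trivial.
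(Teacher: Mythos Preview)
Your proof is correct. The paper takes a slightly different route: it observes via \cref{wedge_to_innerproduct} that $\mathcal{C}_N(\xii,\realeps)$ is the complement (up to boundary) of $\mathcal{C}_N^{\perp}(\xii,(1-\realeps^2)^{1/2})$, and then plugs into the exact integral formula already obtained in \cref{lem:equater_band_vol} for the perpendicular cone. After subtracting from $V_N=2V_{N-1}\int_0^1(1-h^2)^{(N-1)/2}\,dh$, the paper arrives at precisely your two integrals $\int_{\sqrt{1-\realeps^2}}^{1}(1-h^2)^{(N-1)/2}\,dh$ and $(\tfrac{\realeps^2}{1-\realeps^2})^{(N-1)/2}\int_0^{\sqrt{1-\realeps^2}}h^{N-1}\,dh$, and estimates them the same way. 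So the two arguments converge to the same computation; the paper just routes through the previous lemma, while you set up the slicing directly. Your approach is self-contained and arguably more transparent; the paper's approach has the mild advantage of reusing work already done. Both handle the regime $\realeps\asymp 1$ by the same triviality remark.
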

%%%%%%%%%%%%%%%%%%%%%%%%%%%%%%%%%%%%%%%%
\begin{proof}
By \cref{wedge_to_innerproduct}, we have
\begin{align}
\vol_{N}\bigl(
\mathcal{C}_{N}(\xii,\realeps)\cap \ball{N}{1}
\bigr)
&=
\vol_{N}\bigl(
(
\mathbb{R}^{N}
\setminus
\mathcal{C}_{N}^{\perp}(\xii,(1-\realeps^2)^{\frac{1}{2}})
)\cap \ball{N}{1}
\bigr)\\
&=
V_N
-
\vol_{N}\bigl(\mathcal{C}_{N}^{\perp}(\xii,(1-\realeps^2)^{\frac{1}{2}})\cap\ball{N}{1}\bigr).
\end{align}
For sufficiently small $\sigma$, by \cref{lem:equater_band_vol}, we have
\begin{align}
&\vol_{N}\bigl(\mathcal{C}_{N}(\xii,\realeps)\cap\ball{N}{1}\bigr)\\
&=
V_N-
2V_{N-1}
\int_{0}^{(1-\realeps^2)^{\frac{1}{2}}}
\Bigl((1-h^2)^{\frac{N-1}{2}}-(\tfrac{1}{1-\realeps^2}-1)^{\frac{N-1}{2}}h^{N-1}\Bigr)dh\\
&=
2V_{N-1}
\int_{(1-\realeps^2)^{\frac{1}{2}}}^{1}(1-h^2)^{\frac{N-1}{2}}dh
+
2V_{N-1}
(\tfrac{1}{1-\realeps^2}-1)^{\frac{N-1}{2}}
\int_{0}^{(1-\realeps^2)^{\frac{1}{2}}}h^{N-1}dh\\
&=
\frac{2}{N}V_{N-1}\realeps^{N-1}(1-\sigma^2)^{\frac{1}{2}}+O(\sigma^{N+1})\\
&=
\frac{2}{N}V_{N-1}\realeps^{N-1}\bigl(1+O(\sigma^{2})\bigr).
\end{align}
Thus the assertion holds if $\sigma$ is small.
When $\sigma\gg1$, the assertion is then trivial.
\end{proof}

%%%%%%%%%%%%%%%%%%%%%%%%%%%%%%%%%%%%%%%%

\begin{lemma}
\label{lem:cone_ball_projection_vol}
Let $W\subset\R^{N}$ be an $\R$-subspace of $\dim W = \nu\ge1$ and
$\pi\colon\R^{N}\to W$
be the orthogonal projection. Let
\[
\realeta\coloneqq\frac{\|\pi(\xii)\|}{\|\xii\|}\in[0,1].
\]
Then, for $X \geq 0$, we have
\[
\vol_{W}\bigl(\pi(\mathcal{C}_{N}(\xii,\realeps)\cap\mathcal{B}_{N}(X))\bigr)
\ll
\biggl(\frac{\realeta}{\realeps}+1\biggr)\cdot(\realeps X)^{\nu}.
\]
In particular, 
\[
\vol_{W}\bigl(\pi(\mathcal{C}_{N}(\xii,\realeps)\cap\mathcal{B}_{N}(X))\bigr)
\ll
\frac{1}{\realeps}\cdot(\realeps X)^{\nu}.
\]
Here the implicit constants depend only on $\nu$.
\end{lemma}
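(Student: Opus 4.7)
My plan is to decompose $\R^{N}$ along the cone's axis, use the cone condition to cut out a slab in the direction orthogonal to the axis, and reduce the volume estimate to that of a Minkowski sum of a segment with a ball in $W$. Writing $\ee_{1}\coloneqq\xii/\|\xii\|$ and $\xx=t\ee_{1}+\ww$ with $\ww\in\ee_{1}^{\perp}$, the identity \cref{wedge_to_innerproduct} turns the cone condition $\xx\in\mathcal{C}_{N}(\xii,\realeps)$ into $(1-\realeps^{2})\|\ww\|^{2}\le\realeps^{2}t^{2}$, while $\|\xx\|\le X$ gives $t^{2}+\|\ww\|^{2}\le X^{2}$. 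Combined, when $\realeps$ is bounded away from $1$ (say $\realeps\le 1/\sqrt{2}$), these yield $|t|\le X$ and $\|\ww\|\ll\realeps X$. The complementary range $\realeps>1/\sqrt{2}$ is settled trivially, since then the projection lies in $\mathcal{B}_{W}(X)$ of volume $V_{\nu}X^{\nu}\ll(\realeps X)^{\nu}\le(\realeta/\realeps+1)(\realeps X)^{\nu}$.

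Applying $\pi$ to the decomposition gives $\pi(\xx)=t\,\pi(\ee_{1})+\pi(\ww)$ with $\|\pi(\ee_{1})\|=\realeta$ by the very definition of $\realeta$, and $\|\pi(\ww)\|\le\|\ww\|\ll\realeps X$. Hence
\[
\pi\bigl(\mathcal{C}_{N}(\xii,\realeps)\cap\mathcal{B}_{N}(X)\bigr)\subset L+\mathcal{B}_{W}(C\realeps X),
\]
where $L\coloneqq[-X,X]\cdot\pi(\ee_{1})\subset W$ is a line segment of length $2\realeta X$ and $C$ depends only on $\nu$. The problem is thus reduced to estimating the volume of this Minkowski sum.

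For the Minkowski-sum bound, I split $W=W_{1}\oplus W_{2}$ with $W_{1}\coloneqq\R\pi(\ee_{1})$ (the case $\realeta=0$ is trivial, since then $L=\{0\}$ and the sum is just the ball $\mathcal{B}_{W}(C\realeps X)$ of volume $\ll(\realeps X)^{\nu}$), and apply Fubini over $W_{2}$: the fiber over $\zz\in W_{2}$ with $\|\zz\|\le C\realeps X$ is an interval of length $\ll\realeta X+\realeps X$, and the fiber is empty otherwise. Since $\vol_{W_{2}}(\mathcal{B}_{W_{2}}(C\realeps X))\ll(\realeps X)^{\nu-1}$, I conclude
\[
\vol_{W}\bigl(L+\mathcal{B}_{W}(C\realeps X)\bigr)\ll(\realeta X+\realeps X)(\realeps X)^{\nu-1}=\bigl(\tfrac{\realeta}{\realeps}+1\bigr)(\realeps X)^{\nu},
\]
which is the claimed bound; the ``in particular'' statement then follows from $\realeta\in[0,1]$. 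The argument is essentially elementary geometry performed in an adapted orthonormal frame, so no deep obstacle arises; the one point that needs attention is to keep all implicit constants depending only on $\nu$, which is automatic here since the sole dimension-dependent input is the volume of the $(\nu-1)$-dimensional unit ball.
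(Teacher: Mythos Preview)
Your proof is correct and follows essentially the same approach as the paper: decompose $\xx$ along the axis $\R\xii$, use the cone condition to bound the orthogonal component by $\realeps X$, then decompose $\pi(\xx)$ in $W$ along $\R\pi(\xii)$ and integrate (your Minkowski-sum Fubini is exactly the paper's product integral over $\yy\in\R\pi(\xii)$ and $\yy^{\perp}\in(\R\pi(\xii))^{\perp}$). One minor simplification: your case split at $\realeps=1/\sqrt{2}$ is unnecessary, since from $\|\ww\|^{2}\le\realeps^{2}(t^{2}+\|\ww\|^{2})=\realeps^{2}\|\xx\|^{2}$ you get $\|\ww\|\le\realeps X$ directly for all $\realeps\in(0,1]$; this is how the paper avoids the split.
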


\begin{proof}
%We may assume $\realeps\ge\frac{1}{2}$ since otherwise
%\[
%\biggl(\frac{\realeta}{\realeps}+1\biggr)\cdot(\realeps X)^{\nu}
%\gg
%X^{\nu}
%\gg
%\vol_{\nu}\bigl(\pi(\mathcal{B}_{N}(X))\bigr)
%\gg
%\vol_{\nu}\bigl(\pi(\mathcal{C}_{N}(\xii,\realeps)\cap\mathcal{B}_{N}(X))\bigr)
%\]
%and the assertion trivially holds. Then, we have $\realeta>0$ and so $\pi(\xii)\neq0$.
%%%%%%%%%%%%%%%%%%%%%%%%%%%%%%%%%%%%%%%%%
Take $\xx\in\mathcal{C}_{N}(\xii,\realeps)\cap\mathcal{B}_{N}(X)\setminus\{\bm{0}\}$ arbitrarily.
Decompose orthogonally as
\begin{equation}
\label{lem:cone_ball_projection_vol:decomp}
\xx=\tilde{\xx}+\tilde{\xx}^{\perp}
\quad\text{with}\quad
\tilde{\xx}\in\R\xii\and
\tilde{\xx}^{\perp}\in(\R\xii)^{\perp}
\end{equation}
so that
\[
\tilde{\xx}\coloneqq\frac{\langle\xx,\xii\rangle}{\|\xii\|^2}\xii.
\]
We then have
\[
\|\tilde{\xx}^{\perp}\|^2
=
\biggl\|\xx-\frac{\langle\xx,\xii\rangle}{\|\xii\|^2}\xii\biggr\|^2
=
\frac{\|\xx\|^2\cdot\|\xii\|^2-\langle\xx,\xii\rangle^2}{\|\xii\|^2}
=
\frac{\|\xx\wedge\xii\|^2}{\|\xx\|^2\cdot\|\xii\|^2}\cdot\|\xx\|^2.
\]
Since $\xx\in\mathcal{C}_{N}(\xii,\realeps)\cap\mathcal{B}_{N}(X)$, this gives
\begin{equation}
\label{lem:cone_ball_projection_vol:distance}
\|\tilde{\xx}^{\perp}\|\le\realeps X.
\end{equation}
We now consider two cases according to whether $\realeta=0$ or not.

%%%%%%%%%%%%%%%%%%%%%%%%%%%%%%%%%%%%%%%%
We first consider the case $\realeta=0$ and so $\pi(\xii)=0$. In this case,
by \cref{lem:cone_ball_projection_vol:decomp} and \cref{lem:cone_ball_projection_vol:distance},
\[
\|\pi(\xx)\|
\le\|\pi(\tilde{\xx})\|+\|\pi(\tilde{\xx}^{\perp})\|
=\|\pi(\tilde{\xx}^{\perp})\|
\le\|\tilde{\xx}^{\perp}\|
\le\realeps X.
\]
This shows the inclusion
$\pi(\mathcal{C}_{N}(\xii,\realeps)\cap\mathcal{B}_{N}(X))
\subset\mathcal{B}_{W}(\realeps X)$
and so
\begin{align}
\vol_{\nu}\bigl(\pi(\mathcal{C}_{N}(\xii,\realeps)\cap\mathcal{B}_{N}(X))\bigr)
&\le
\vol_{W}\bigl(\mathcal{B}_{W}(\realeps X)\bigr)
=
\vol_{\nu}\bigl(\mathcal{B}_{\nu}(\realeps X)\bigr)\\
&\ll
(\realeps X)^{\nu}
=
\biggl(\frac{\realeta}{\realeps}+1\biggr)\cdot(\realeps X)^{\nu}.
\end{align}
This proves the assertion when $\realeta=0$.

%%%%%%%%%%%%%%%%%%%%%%%%%%%%%%%%%%%%%%%%
We next consider the case $\realeta>0$.
Decompose $\pi(\xx)$ orthogonally as
\begin{equation}
\label{lem:cone_ball_projection_vol:decomp_in_W}
\pi(\xx)=\yy+\yy^{\perp}
\quad\text{with}\quad
\yy\in\R\pi(\xii)\and
\yy^{\perp}\in(\R\pi(\xii))^{\perp}.
\end{equation}
By \cref{lem:cone_ball_projection_vol:decomp} and \cref{lem:cone_ball_projection_vol:decomp_in_W}, we have
\[
\|\yy\|\le\|\pi(\xx)\|
\le\|\pi(\tilde{\xx})\|+\|\pi(\tilde{\xx}^{\perp})\|
\le\|\pi(\tilde{\xx})\|+\|\tilde{\xx}^{\perp}\|
\]
By using the definition of $\realeta$ and \cref{lem:cone_ball_projection_vol:distance}, we have
\begin{equation}
\label{lem:cone_ball_projection_vol:y_bound}
\|\yy\|
\le\realeta\|\tilde{\xx}\|+\|\tilde{\xx}^{\perp}\|
\le\realeta\|\xx\|+\|\tilde{\xx}^{\perp}\|
\le(\realeta+\realeps)X.
\end{equation}
Since \cref{lem:cone_ball_projection_vol:decomp} implies
\[
\pi(\xx)=\pi(\tilde{\xx})+\pi(\tilde{\xx}^{\perp})
\and
\pi(\tilde{\xx})\in\R\pi(\xii),
\]
by using \cref{lem:cone_ball_projection_vol:distance} and \cref{lem:cone_ball_projection_vol:decomp_in_W}, we have
\[
\|\yy^{\perp}\|^2
\le\|\pi(\tilde{\xx})-\yy\|^2+\|\yy^{\perp}\|^2
=\|\pi(\tilde{\xx}^{\perp})\|^2
\le\|\tilde{\xx}^{\perp}\|^2
\le(\realeps X)^2
\]
and so
\begin{equation}
\label{lem:cone_ball_projection_vol:y_perp_bound}
\|\yy^{\perp}\|\le\realeps X.
\end{equation}
Since the decomposition \cref{lem:cone_ball_projection_vol:decomp_in_W} is orthogonal one
and the assumption $\realeta>0$ implies $\pi(\xii)\neq 0$,
by using \cref{lem:cone_ball_projection_vol:y_perp_bound} and \cref{lem:cone_ball_projection_vol:y_bound}
we have
\begin{align}
\vol_{W}\bigl(\pi(\mathcal{C}_{N}(\xii,\realeps)\cap\mathcal{B}_{N}(X))\bigr)
&\le\int_{\|\yy\|\le(\realeta+\realeps)X}
\biggl(\int_{\|\yy^{\perp}\|\le\realeps X}d\yy^{\perp}\biggr)d\yy\\
&\ll
(\realeps X)^{\nu-1}
\int_{\|\yy\|\le(\realeta+\realeps)X}d\yy
\ll
\biggl(\frac{\realeta}{\realeps}+1\biggr)\cdot(\realeps X)^{\nu}.
\end{align}
This proves the assertion when $\realeta\neq0$.
\end{proof}

%%%%%%%%%%%%%%%%%%%%%%%%%%%%%%%%%%%%%%%%
\begin{lemma}
\label{lem:cone_intersection}
Let $W\subset\R^{N}$ be a linear subspace of dimension $\nu$. Write
\begin{equation}
\label{lem:cone_intersection:decomp}
\xii=\tilde{\xii}+\tilde{\xii}^{\perp}
\quad\textup{with}\quad
\tilde{\xii}\in W\and
\tilde{\xii}^{\perp}\in W^{\perp}.
\end{equation}
Let
\[
\realeta\coloneqq\frac{\|\tilde{\xii}\|}{\|\xii\|}\in[0,1].
\]
We then have
\begin{enumerate}[label=\textup{(\roman*)},topsep=0mm]
%%%%%
\item\label{lem:cone_intersection:degenerate}
When $0\le\realeta^2<1-\realeps^2$, we have
\[
\mathcal{C}_{N}(\xii,\realeps)\cap W
=
\{0\}.
\]
%%%%%
\item\label{lem:cone_intersection:nondegenerate}
When $1-\realeps^2\le\realeta^2\le1$, we have
\[
\mathcal{C}_{N}(\xii,\realeps)\cap W
\subset
\mathcal{C}_{W}(\tilde{\xii},\tilde{\realeps}),
\]
where $\tilde{\realeps}\in[0,1]$ is defined by
\[
\tilde{\realeps}^{\,2}
\coloneqq
\left\{
\begin{array}{>{\displaystyle}cl}
\frac{\realeps^2+\realeta^2-1}{\realeta^2}&\textup{if $\realeta>0$},\\[4mm]
1&\textup{if $\realeta=0$ (and so $\realeps=1$)},
\end{array}
\right.
\]
\end{enumerate}
In particular, we have
\[
\tilde{\xii}=0
\implies
\mathcal{C}_{N}(\xii,\realeps)\cap W
=
\left\{
\begin{array}{>{\displaystyle}cl}
W&\textup{if $\realeps=1$},\\
\{0\}&\textup{if $0\le\realeps<1$},\\
\end{array}
\right.
\]
and
\[
\tilde{\xii}\neq0
\implies
\mathcal{C}_{N}(\xii,\realeps)\cap W
\subset
\mathcal{C}_{W}(\tilde{\xii},\realeps).
\]
\end{lemma}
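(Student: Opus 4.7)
The plan is to translate both cone conditions into inner-product conditions via the identity \cref{wedge_to_innerproduct}, after which the whole statement collapses to a short linear-algebraic computation based on the orthogonal decomposition $\xii = \tilde{\xii} + \tilde{\xii}^{\perp}$.

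For any $\xx \in W$, orthogonality yields $\langle \xx, \xii \rangle = \langle \xx, \tilde{\xii} \rangle$, so by \cref{wedge_to_innerproduct}, membership $\xx \in \mathcal{C}_N(\xii, \realeps)$ is equivalent to
\[
\langle \xx, \tilde{\xii}\rangle^2 \ge (1 - \realeps^2) \|\xx\|^2 \|\xii\|^2; \qquad (\ast)
\]
call this inequality $(\ast)$. For part \ref{lem:cone_intersection:degenerate}, I would apply Cauchy--Schwarz to the left-hand side of $(\ast)$ to get $\langle \xx, \tilde{\xii}\rangle^2 \le \realeta^2 \|\xx\|^2 \|\xii\|^2$; combined with the strict inequality $\realeta^2 < 1 - \realeps^2$, this forces $\|\xx\| = 0$.

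For part \ref{lem:cone_intersection:nondegenerate} with $\realeta > 0$, I would first simplify the defining formula for $\tilde{\realeps}$ to $1 - \tilde{\realeps}^{\,2} = (1 - \realeps^2)/\realeta^2$, then rewrite the target inclusion $\xx \in \mathcal{C}_W(\tilde{\xii}, \tilde{\realeps})$ via \cref{wedge_to_innerproduct} as $\langle \xx, \tilde{\xii}\rangle^2 \ge (1 - \tilde{\realeps}^{\,2}) \|\xx\|^2 \|\tilde{\xii}\|^2$. Since $\|\tilde{\xii}\|^2 = \realeta^2 \|\xii\|^2$, this is exactly $(\ast)$, so the inclusion holds. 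The degenerate subcase $\realeta = 0$ forces $\realeps = 1$ (otherwise we are in \ref{lem:cone_intersection:degenerate} and the intersection is trivial); in that subcase Cauchy--Schwarz gives $\mathcal{C}_N(\xii, 1) = \R^N$, so the intersection is all of $W$, consistent with the convention $\tilde{\realeps} = 1$.

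The ``in particular'' clause follows quickly. When $\tilde{\xii} = 0$, it reduces to \ref{lem:cone_intersection:degenerate} and \ref{lem:cone_intersection:nondegenerate} together with the degenerate subcase just treated. When $\tilde{\xii} \neq 0$, if $\realeta^2 < 1 - \realeps^2$ the intersection is $\{0\}$ by \ref{lem:cone_intersection:degenerate} and the inclusion is trivial; otherwise I would note $\tilde{\realeps}^{\,2} = 1 - (1-\realeps^2)/\realeta^2 \le \realeps^2$ since $\realeta \le 1$, yielding $\mathcal{C}_W(\tilde{\xii}, \tilde{\realeps}) \subset \mathcal{C}_W(\tilde{\xii}, \realeps)$ and hence the stated weaker containment. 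I do not anticipate any real obstacle; the entire argument is algebraic manipulation of \cref{wedge_to_innerproduct} and Cauchy--Schwarz. The one place meriting slight care is the degenerate subcase $\realeta = 0$, where $\tilde{\xii}$ vanishes and the symbol $\mathcal{C}_W(\tilde{\xii}, \tilde{\realeps})$ must be interpreted via its wedge definition to be the whole of $W$.
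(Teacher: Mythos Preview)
Your proposal is correct and follows essentially the same approach as the paper: both arguments rest on the observation that $\langle\xx,\xii\rangle=\langle\xx,\tilde{\xii}\rangle$ for $\xx\in W$ combined with the identity \cref{wedge_to_innerproduct}, the only cosmetic difference being that the paper bounds $\|\xx\wedge\tilde{\xii}\|^2$ directly while you phrase the same computation in terms of inner products and Cauchy--Schwarz.
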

%%%%%%%%%%%%%%%%%%%%%%%%%%%%%%%%%%%%%%%%
\begin{proof}
Take $\xx\in\mathcal{C}_{N}(\xii,\realeps)\cap W$ arbitrarily.
By \cref{lem:cone_intersection:decomp}, we then have
\[
\langle\xx,\tilde{\xii}\rangle
=
\langle\xx,\xii\rangle.
\]
Therefore, by recalling $\xx\in\mathcal{C}_{N}(\xii,\realeps)$, we have
\begin{align}
\|\xx\wedge\tilde{\xii}\|^2
&=
\|\xx\|^2\|\tilde{\xii}\|^2-\langle\xx,\tilde{\xii}\rangle^2\\
&=
\|\xx\|^2\|\tilde{\xii}\|^2
-\langle\xx,\xii\rangle^2\\
&=
\|\xx\|^2\|\xii\|^2
-\langle\xx,\xii\rangle^2
+\|\xx\|^2\|\tilde{\xii}\|^2-\|\xx\|^2\|\xii\|^2\\
&=
\|\xx\wedge\xii\|^2
+\|\xx\|^2\|\tilde{\xii}\|^2-\|\xx\|^2\|\xii\|^2\\
&\le
\realeps^2\|\xx\|^2\|\xii\|^2
+\|\xx\|^2\|\tilde{\xii}\|^2-\|\xx\|^2\|\xii\|^2\\
&=
(\realeps^2+\realeta^2-1)\|\xx\|^2\|\xii\|^2.
\end{align}
and so
\begin{equation}
\label{lem:cone_intersection:prelim}
0\le
\|\xx\wedge\tilde{\xii}\|^2
\le
(\realeps^2+\realeta^2-1)\|\xx\|^2\|\xii\|^2.
\end{equation}
When $0\le\realeta^{2}<1-\realeps^2$,
by \cref{lem:cone_intersection:prelim} and $\|\xii\|\neq0$,
we should have $\|\xx\|=0$ and so \cref{lem:cone_intersection:degenerate} holds.
When $\realeta^2=1-\realeps^2$ and $\realeps<1$, by \cref{lem:cone_intersection:prelim},
we have $\xx\in\mathcal{C}_{W}(\tilde{\xii},\tilde{\realeps})$
since $\|\xx\wedge\tilde{\xii}\|=0$ in this case.
When $\realeta^2=1-\realeps^2$ and $\realeps=1$,
we have $\xx\in W=\mathcal{C}_{W}(\tilde{\xii},\tilde{\realeps})$
since $\tilde{\realeps}=1$ in this case.
When $1-\realeps^2<\realeta\le1$, since $\realeta>0$, by \cref{lem:cone_intersection:prelim}, we have
\[
\|\xx\wedge\tilde{\xii}\|^2
\le
\biggl(\frac{\realeps^2+\realeta^2-1}{\realeta^2}\biggr)\|\xx\|^2\|\tilde{\xii}\|^2
\]
so that $\xx\in\mathcal{C}_{W}(\tilde{\xii},\tilde{\realeps})$.
Therefore, \cref{lem:cone_intersection:nondegenerate} holds as well.
\end{proof}

\section{Lattice point counting with local conditions}

In this section, we prepare some asymptotic formulas of the number of 
lattice points satisfying given semialgebraic and congruence conditions.
The main semialgebraic condition in mind is "contained in cones such as $\cone{N}{\xii}{\sigma}$",
but for later purpose we consider arbitrary semialgebraic sets invariant under dilation.
We need formulas that are uniform over families of cones or semialgebraic sets.
Let us introduce:

\if0
We next prepare some asymptotic formulas for the lattice point counting problem with local conditions.
For later purpose, we use general ``semialgebraic families of homogeneous sets'' $Z$
in place of the archimedean cones $\cone{N}{\xii}{\sigma}$
and then give a more specific analysis for $\cone{N}{\xii}{\sigma}$.
The ``semialgebraic families of homogeneous sets'' $Z$
correspond to the semialgebraic subsets of $\P^{N-1}(\R)$
and so we define them as follows:
\fi
%%%%%%%%%%%%%%%%%%%%%%%%%%%%%%%%%%%%%%%%
\begin{definition}[Semialgebraic family of homogeneous sets]
A semialgebraic set $Z\subset\R^{M+N}$ is called
a \textdf{semialgebraic family of homogeneous sets}
if for any $\TT\in\R^{M}$,
\begin{align}
   Z_{\TT} = \{ \xx \in \R^N \mid (\TT, \xx) \in Z \} 
\end{align}
is invariant under dilation by a non-zero real number,
i.e.\ $cZ_{\TT}=Z_{\TT}$ for any $c\in\R^{\times}$.
\end{definition}

We use the following terminology.
\begin{definition}
    Let $\L$ be a free $\Z$-module of finite rank $r \geq 0$.
    For $\cc \in \L$ and $q \in \Z_{\geq 1}$, we say $\cc$ is $q$-primitive if the following holds:
    \begin{align}
        \text{$d \mid q$ and $\cc \in d \L$} \Longrightarrow d=1.
    \end{align}
\end{definition}

\begin{remark}
    Note that the following are equivalent.
    \begin{enumerate}
        \item $\cc$ is $q$-primitive.

        \item Either $\cc =0 $ and $q=1$, or 
        $\cc \neq 0$ and if $\cc = l \vv$ for some $l \in \Z$ and primitive $\vv \in \L$,
        then $(l,q) = 1$.
            
        \item Either $\cc =0 $ and $q=1$, or 
        $\cc \neq 0$ and $\left( (\L/\Z \cc)_{\rm tors}, q  \right)=1$.

        \item Either $\cc =0 $ and $q=1$, or 
        $\cc \neq 0$ and the image of $\cc$ in $\L/q\L$ can be extended to a $\Z/q\Z$-basis
        of $\L/q\L$.
    \end{enumerate}
\end{remark}

%%%%%%%%%%%%%%%%%%%%%%%%%%%%%%%%%%%%%%%%
\subsection{With general semialgebraic family of homogeneous sets}
%%%%%%%%%%%%%%%%%%%%%%%%%%%%%%%%%%%%%%%%

We use the following notation.
\begin{definition}
    Let $\L \subset \G \subset \R^N$ be lattices.
    For $\cc \in \G$, $q \in \Z_{\geq 1}$, and a semialgebraic set $Z \subset \R^N$, 
    we set
    \begin{align}
        &\mathfrak{V}(\cc,q;Z) = \mathfrak{V}(\L, \G; \cc,q;Z)
        =\vol_{\L_{\R}}\bigl(\spanR(\Lambda\cap(\cc+q\Gamma))\cap Z\cap\ball{N}{1}\bigr)\\
        &\mathfrak{V}_{\nu}(\cc,q;Z) = \mathfrak{V}_{\nu}(\L, \G;\cc,q;Z) =
        V_{\nu,\L_{\R}}
        (\spanR(\Lambda\cap(\cc+q\Gamma))\cap Z \cap\ball{N}{1}).
    \end{align}
    (see \cref{def:Vnu} for the definition of $V_{\nu,\L_{\R}}$.)
    Note that $\spanR(\Lambda\cap(\cc+q\Gamma))$ is either $\L_{\R}$ or $0$
    according to $\Lambda\cap(\cc+q\Gamma)$ is empty or not.
    Thus if $\Lambda\cap(\cc+q\Gamma) \neq \emptyset$, 
    \begin{align}
        &\mathfrak{V}(\cc,q;Z) = \vol_{\L_{\R}} (Z \cap \ball{\L_{\R}}{1})\\
        &\mathfrak{V}_{\nu}(\cc,q;Z) = V_{\nu, \L_{\R}}(Z \cap \ball{\L_{\R}}{1}).
    \end{align}
\end{definition}

\begin{lemma}
\label{lem:lattice_count_local_prelim_general}
Consider a semialgebraic family of homogeneous sets $Z\subset\R^{M+N}$ with $M,N\in\N$.
Consider a lattice $\Gamma\subset\R^N$
and its primitive sublattice $\Lambda$ of rank $r\ge0$.
For $X\ge0$, $\TT\in\R^{M}$, $\cc\in\Gamma$ and $q\in\N$, we have
\begin{align}
&\#\bigl(\Lambda
\cap(\cc+q\Gamma)\cap Z_{\TT}\cap\ball{N}{X}\bigr)\\
&=
\frac{\mathfrak{V}(\cc,q;Z_{\TT})}{q^{r}}
\frac{X^{r}}{\det(\Lambda)}
+O\biggl(
\sum_{1\le\nu<r}
\frac{\mathfrak{V}_{\nu}(\cc,q;Z_{\TT})}{q^{\nu}}
\frac{X^{\nu}}{\lambda_1(\Lambda)\cdots\lambda_{\nu}(\Lambda)}
+\mathfrak{Q}+\mathfrak{R}\biggr),
\end{align}
where
\begin{align}
\mathfrak{Q}
&\coloneqq
\mathbbm{1}_{X\ge q\lambda_1(\Lambda)},\\
\mathfrak{R}
&\coloneqq
\mathbbm{1}_{\Lambda\cap(\cc+q\Gamma)\cap Z_{\TT}\cap\ball{N}{X}\neq\emptyset}
+
\mathbbm{1}_{X<q\lambda_1(\Lambda)}\times
\frac{\mathfrak{V}(\cc,q;Z_{\TT})}{q^{r}}
\frac{X^{r}}{\det(\Lambda)}
% \sup_{\substack{\UU\in\R^{M}\\\diam(Z_{\UU})\le q\lambda_1(\Lambda)}}
% \frac{\mathfrak{V}(\cc,q;Z_{\UU})}{q^{r}\det(\Lambda)}
\end{align}
%so $\mathfrak{R}\ll1$
and the implicit constant depends only on $Z$ and $r$.
\end{lemma}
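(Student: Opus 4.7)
If $\Lambda \cap (\cc + q\Gamma) = \emptyset$ then the LHS is $0$, and by the convention $\spanR(\emptyset) = \{0\}$ both $\mathfrak{V}(\cc,q;Z_{\TT})$ and the $\mathfrak{V}_{\nu}(\cc,q;Z_{\TT})$ (for $\nu \ge 1$) vanish, so nothing needs proving. Otherwise fix any $\cc_0 \in \Lambda \cap (\cc + q\Gamma)$. The primitivity of $\Lambda$ in $\Gamma$ forces $\Lambda \cap q\Gamma = q\Lambda$: if $\ww \in \Gamma$ with $q\ww \in \Lambda$ then the class $[\ww] \in \Gamma/\Lambda$ is $q$-torsion, hence zero by torsion-freeness, so $\ww \in \Lambda$. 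Consequently the affine lattice we are counting is exactly $\cc_0 + q\Lambda$, and it lies inside $\L_{\R}$.

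In order to apply \cref{lem:BarroeroWidmer} despite the translation by $\cc_0$, I introduce the auxiliary semialgebraic family
\[
\widetilde{Z} \coloneqq \bigl\{(\TT, Y, \xx, \zz) \in \R^{M} \times \R \times \R^{N} \times \R^{N} \midmid (\TT, \zz+\xx) \in Z\ \text{and}\ \|\zz+\xx\| \le Y\bigr\},
\]
whose $(\TT, Y, \xx)$-fiber $(Z_{\TT} \cap \ball{N}{Y}) - \xx$ is bounded. Feeding $\widetilde{Z}$ and the rank-$r$ lattice $q\Lambda$ (determinant $q^{r}\det(\Lambda)$, successive minima $q\lambda_{i}(\Lambda)$) into \cref{lem:BarroeroWidmer} at parameter $(\TT, X, \cc_0)$, and using the bijection $\yy \mapsto q\yy + \cc_0$ between $q\Lambda \cap \widetilde{Z}_{(\TT,X,\cc_0)}$ and $\Lambda \cap (\cc+q\Gamma) \cap Z_{\TT} \cap \ball{N}{X}$ together with translation-invariance of $\vol_{\L_{\R}}$ and each $V_{\nu,\L_{\R}}$ on subsets of $\L_{\R}$, yields
\[
\text{LHS} = \frac{\vol_{\L_{\R}}(\L_{\R} \cap Z_{\TT} \cap \ball{N}{X})}{q^{r}\det(\Lambda)} + O\Biggl(\sum_{0 \le \nu < r}\frac{V_{\nu,\L_{\R}}(\L_{\R} \cap Z_{\TT} \cap \ball{N}{X})}{q^{\nu}\lambda_{1}(\Lambda)\cdots\lambda_{\nu}(\Lambda)}\Biggr).
\]

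Since $Z_{\TT}$ is dilation-invariant, $Z_{\TT} \cap \ball{N}{X} = X(Z_{\TT} \cap \ball{N}{1})$, which scales $\vol_{\L_{\R}}$ by $X^{r}$ and each $V_{\nu, \L_{\R}}$ by $X^{\nu}$. Combined with $\spanR(\Lambda \cap (\cc+q\Gamma)) = \L_{\R}$ (using $\cc_0 \in \L_{\R}$), these volumes coincide with $\mathfrak{V}(\cc,q;Z_{\TT})$ and $\mathfrak{V}_{\nu}(\cc,q;Z_{\TT})$, producing the stated main term and the $1 \le \nu < r$ part of the error.

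The main obstacle is the $\nu = 0$ contribution, which is $\mathbbm{1}_{\L_{\R} \cap Z_{\TT} \cap \ball{N}{X} \ne \emptyset}$ and does not appear in the claimed error. I dispose of it by case analysis. If $\Lambda \cap (\cc+q\Gamma) \cap Z_{\TT} \cap \ball{N}{X} \ne \emptyset$, the first indicator in $\mathfrak{R}$ equals $1$ and absorbs it. Otherwise the LHS is $0$: if additionally $X \ge q\lambda_{1}(\Lambda)$ then $\mathfrak{Q}=1$ absorbs it; while if $X < q\lambda_{1}(\Lambda)$ the second summand of $\mathfrak{R}$ equals exactly $X^{r}\mathfrak{V}(\cc,q;Z_{\TT})/(q^{r}\det(\Lambda))$, namely the entire (possibly large) main term, so the identity $0 = \text{main} + O(\text{error})$ rearranges to the required bound without leftover $\nu=0$ contribution. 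This last regime is the delicate point: the main term may genuinely dominate the sub-main Barroero-Widmer errors, and must be cancelled explicitly via the $\mathfrak{R}$ bookkeeping rather than bounded by a smaller quantity.
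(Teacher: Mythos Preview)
Your proof is correct and follows essentially the same approach as the paper: reduce to the coset $\cc_0+q\Lambda$ via primitivity, apply \cref{lem:BarroeroWidmer} to a translated auxiliary semialgebraic family, and use homogeneity of $Z_{\TT}$ to extract the $X^r$ and $X^\nu$ scaling. The only organizational difference is that the paper splits on $X\gtrless q\lambda_1(\Lambda)$ \emph{before} invoking Barroero--Widmer, handling the small-$X$ regime by the direct observation that distinct points of $\cc_0+q\Lambda$ are at least $q\lambda_1(\Lambda)$ apart (so $\mathcal{N}\le 1$), whereas you apply Barroero--Widmer uniformly and then do a three-way case split to absorb the $\nu=0$ term; both arguments arrive at the same bookkeeping with $\mathfrak{Q}$ and $\mathfrak{R}$.
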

%%%%%%%%%%%%%%%%%%%%%%%%%%%%%%%%%%%%%%%%
\begin{proof}
We may assume $\Lambda\cap(\cc+q\Gamma)\neq\emptyset$
since otherwise the assertion is trivial.
Since we can shift $\cc$ by the elements of $q\Gamma$
without changing $\cc+q\Gamma$,
we may assume $\cc\in\Lambda$ without loss of generality.
For any $\xx\in\Lambda\cap(\cc+q\Gamma)$, we then have
\[
\tfrac{1}{q}(\xx-\cc)\in(\tfrac{1}{q}\Lambda)\cap\Gamma=\Lambda
\]
by the primitivity of $\Lambda$. Thus, we have
\[
\Lambda\cap(\cc+q\Gamma)
=\cc+q\Lambda.
\]
For simplicity, write
\[
\mathcal{N}\coloneqq
\#\bigl(\Lambda
\cap(\cc+q\Gamma)\cap Z_{\TT}\cap\ball{N}{X}\bigr)
=
\#\bigl((\cc+q\Lambda)\cap Z_{\TT}\cap\ball{N}{X}\bigr).
\]

%%%%%%%%%%%%%%%%%%%%%%%%%%%%%%%%%%%%%%%%
We first consider the case $X\ge q\lambda_1(\Lambda)$.
We have
\begin{equation}
\label{lem:lattice_count_local_prelim_general:rephrase1}
\begin{aligned}
\mathcal{N}
=
\#\bigl((\cc+q\Lambda)\cap Z_{\TT}\cap\ball{N}{X}\bigr)
=
\#\bigl(q\Lambda\cap(Z_{\TT}\cap\ball{N}{X}-\cc\bigr)\bigr).
\end{aligned}
\end{equation}
Therefore, by considering the set
\[
\widetilde{Z}\coloneqq\left\{(\cc,\TT,X,\xx)
\in\R^{N}\times\R^{M}\times\R\times\R^{N}
\midmid
\xx\in Z_{\TT}\cap\ball{N}{X}-\cc
\right\},
\]
which is semialgebraic by the Tarski--Seidenberg theorem, we have
\begin{equation}
\label{lem:lattice_count_local_prelim_general:rephrase2}
\widetilde{Z}_{(\cc,\TT,X)}
=
Z_{\TT}\cap\ball{N}{X}-\cc
\quad\text{and so}\quad
\mathcal{N}
=
\#(q\Lambda\cap\widetilde{Z}_{(\cc,\TT,X)}),
\end{equation}
where $\widetilde{Z}_{(\cc,\TT,X)}$ is as in \cref{lem:BarroeroWidmer}.
Also, for any $(\cc,\TT,X)\in\R^{N+M+1}$,
$\widetilde{Z}_{(\cc,\TT,X)}$ is bounded by \cref{lem:lattice_count_local_prelim_general:rephrase2}.
Thus,
by applying \cref{lem:BarroeroWidmer} to \cref{lem:lattice_count_local_prelim_general:rephrase2}
with noting that
\[
\det(q\Lambda)=q^{r}\det(\Lambda)
\and
\lambda_i(q\Lambda)=q\lambda_i(\Lambda)\quad\text{for $i=1,\ldots,r$},
\]
we obtain
\begin{equation}
\label{lem:lattice_count_local_prelim_general:BarroeroWidmer}
\begin{aligned}
\mathcal{N}
&=
\frac{\vol_{r}(\L_{\R}\cap \widetilde{Z}_{(\cc,\TT,X)})}{q^r\det(\Lambda)}\\
&\hspace{20mm}+
O\Biggl(\sum_{1\le\nu<r}
\frac{V_{\nu,\L_{\R}}(\L_{\R}\cap \widetilde{Z}_{(\cc,\TT,X)})}
{q^{\nu}\lambda_1(\Lambda)\cdots\lambda_\nu(\Lambda)}+1\Biggr),
\end{aligned}
\end{equation}
where $+1$ corresponds to the term for $\nu=0$ of the sum in the error term.
Recall that $Z$ is a semialgebraic family of homogeneous sets.
Therefore, we have
\begin{align}
\vol_{r}(\L_{\R}\cap \widetilde{Z}_{(\cc,\TT,X)})
&=\vol_{r}(\L_{\R}\cap (Z_{\TT}\cap\ball{N}{X}-\cc))\\
&=\vol_{r}(\L_{\R}\cap (Z_{\TT}\cap\ball{N}{X}))\\
&=\vol_{r}(\L_{\R}\cap (Z_{\TT}\cap\ball{N}{1}))X^{r}\\
&=\mathfrak{V}(\cc,q;Z_{\TT})X^{r}
\end{align}
and
\begin{align}
V_{\nu,\L_{\R}}(\L_{\R}\cap \widetilde{Z}_{(\cc,\TT,X)})
&=V_{\nu,\L_{\R}}(\L_{\R}\cap (Z_{\TT}\cap\ball{N}{X}-\cc))\\
&=V_{\nu,\L_{\R}}(\L_{\R}\cap (Z_{\TT}\cap\ball{N}{X}))\\
&=V_{\nu,\L_{\R}}(\L_{\R}\cap (Z_{\TT}\cap\ball{N}{1}))X^{\nu}\\
&=\mathfrak{V}_{\nu}(\cc,q;Z_{\TT})X^{\nu}
\end{align}
and $\mathfrak{Q}=1$ in the current case, 
we obtain the assertion if $X\ge q\lambda_1(\Lambda)$.

%%%%%%%%%%%%%%%%%%%%%%%%%%%%%%%%%%%%%%%%
We consider the remaining case $X<q\lambda_1(\Lambda)$. In this case, we have
\[
\mathcal{N}
=
\#\bigl((\cc+q\Lambda)\cap Z_{\TT}\cap\ball{N}{X}\bigr)
\ll
\mathbbm{1}_{\Lambda
\cap(\cc+q\Gamma)\cap Z_{\TT}\cap\ball{N}{X}\neq\emptyset}
\]
since two distinct elements of $\cc+q\Lambda$ are $\ge q\lambda_1(\Lambda)$ apart.
Thus, we have
\[
\mathcal{N}
+\frac{\mathfrak{V}(\cc,q;Z_{\TT})}{q^{r}}
\frac{X^{r}}{\det(\Lambda)}\ll\mathfrak{R}.
\]
and so the assertion trivially holds if $X<q\lambda_1(\Lambda)$.
\end{proof}

%%%%%%%%%%%%%%%%%%%%%%%%%%%%%%%%%%%%%%%%
\begin{lemma}
\label{lem:uc_distribution}
Consider
\begin{itemize}
%%%%%%%%%%%%%%%%%%%%%%%%%%%%%%%%%%%%%%%%
\item
A lattice $\Lambda\subset\R^{N}$ of rank $r\ge1$.
%%%%%%%%%%%%%%%%%%%%%%%%%%%%%%%%%%%%%%%%
\item
A vector $\cc\in\Lambda$ and $q\in\N$ such that
$\cc$ is $q$-primitive.
% i.e.\ for the projection $\pi\colon\Lambda\to\Lambda/q\Lambda$,
% the order of $\pi(\cc)$ in $\Lambda/q\Lambda$ is $q$.
%%%%%%%%%%%%%%%%%%%%%%%%%%%%%%%%%%%%%%%%
\item
A ball $\aa+\mathcal{B}_{N}(T)$ centered at $\aa\in\R^{N}$ with radius $T\ge0$.
\end{itemize}
If $T\le Cq\lambda_1(\Lambda)$ with some $C\ge1$, then we have
\begin{align}
% \#\{u\ \mod{q}\mid (u\cc+q\Lambda)\cap(\aa+\mathcal{B}(T))\neq\emptyset\}
% &\le
\sum_{u\ \mod{q}}
\sum_{\substack{%
\xx\in\aa+\ball{N}{T}\\
\xx\in u\cc+q\Lambda
}}1
\ll
\frac{T}{\lambda_1(\Lambda)}+1,
\end{align}
where the implicit constant depends only on $r$ and $C$.
\end{lemma}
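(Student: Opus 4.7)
The plan is to reduce the double sum to a single lattice-point count in an auxiliary lattice and then to control that count via Minkowski's second theorem. Set $\Lambda''\coloneqq\Z\cc+q\Lambda$. Since $\cc$ is $q$-primitive, the image of $\cc$ in $\Lambda/q\Lambda$ has order exactly $q$, so the cosets $u\cc+q\Lambda$ corresponding to distinct residues $u\bmod q$ are pairwise disjoint subsets of $\Lambda''$. Restricting to reduced residues gives
\[
\astsum_{u\mod{q}}\#\bigl((u\cc+q\Lambda)\cap(\aa+\ball{N}{T})\bigr)\le\#\bigl(\Lambda''\cap(\aa+\ball{N}{T})\bigr),
\]
and it will suffice to bound the right-hand side by $O_{r,C}(T/\lambda_1(\Lambda)+1)$.

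For that, I would use the standard lattice-point estimate
\[
\#\bigl(\Lambda''\cap(\aa+\ball{N}{T})\bigr)\ll_{r}\prod_{i=1}^{r}\Bigl(\frac{T}{\mu_i}+1\Bigr),\qquad\mu_i\coloneqq\lambda_i(\Lambda''),
\]
and control the successive minima of $\Lambda''$ from both sides. Taking a basis $\uu_1,\dots,\uu_r$ of $\Lambda$ provided by \cref{lem:nice_basis}, the vectors $q\uu_i\in q\Lambda\subset\Lambda''$ are linearly independent with $\|q\uu_i\|\asymp_{r}q\lambda_i$ (writing $\lambda_i\coloneqq\lambda_i(\Lambda)$), which yields the upper bound $\mu_i\ll_{r}q\lambda_i$. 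On the other hand, the index $[\Lambda:\Lambda'']=q^{r-1}$ gives $\det\Lambda''=q^{r-1}\det\Lambda$, so \cref{lem:Minkowski_second} applied to $\Lambda''$ and to $\Lambda$ produces $\mu_1\cdots\mu_r\asymp_{r}q^{r-1}\lambda_1\cdots\lambda_r$. Dividing this identity by the upper bounds on $\mu_{k+1},\dots,\mu_r$ leads to the key partial-product lower bound
\[
\mu_1\mu_2\cdots\mu_k\gg_{r}q^{k-1}\lambda_1\cdots\lambda_k\qquad(1\le k\le r).
\]

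Finally I would expand $\prod_{i=1}^{r}(T/\mu_i+1)=\sum_{S\subset\{1,\dots,r\}}T^{|S|}/\prod_{i\in S}\mu_i$. For each nonempty $S$ of size $k$, the monotonicity of $(\mu_i)$ gives $\prod_{i\in S}\mu_i\ge\mu_1\cdots\mu_k$, whence
\[
\frac{T^k}{\prod_{i\in S}\mu_i}\ll_{r}\frac{T^k}{q^{k-1}\lambda_1\cdots\lambda_k}=\frac{T}{\lambda_1}\prod_{j=2}^{k}\frac{T}{q\lambda_j}\ll_{r,C}\frac{T}{\lambda_1},
\]
since the hypothesis $T\le Cq\lambda_1$ combined with $\lambda_j\ge\lambda_1$ bounds each factor $T/(q\lambda_j)$ by $C$. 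Summing over the at most $2^{r}$ subsets $S$ and adding the empty-set contribution $1$ yields the desired $\ll_{r,C}T/\lambda_1+1$. The main technical point is the partial-product lower bound on $\mu_1\cdots\mu_k$; neither Minkowski's identity on the full product nor the weaker trivial bound $\mu_i\ge\lambda_i$ is enough on its own, and one must couple them through the upper bound $\mu_i\ll q\lambda_i$ on the top minima to squeeze out the correct lower bounds on the partial products.
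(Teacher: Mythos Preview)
Your proof is correct and takes a genuinely different route from the paper's. The paper argues by induction on the rank $r$: it picks a nice basis $(\vv_1,\dots,\vv_r)$ of $\Lambda$, peels off the first coordinate $x_1$ using the quasi-orthogonality of \cref{lem:nice_basis}\,(iv), reparametrizes the congruence $x_1\equiv uc_1\pmod q$ via $u=\xi_1\overline{c_1/(c_1,q)}+(q/(c_1,q))v$, and reduces to a rank-$(r-1)$ instance of the same lemma in $\tilde\Lambda=\Z\vv_2+\cdots+\Z\vv_r$ with modulus $(c_1,q)$.

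Your approach replaces this inductive bookkeeping by a single structural observation: the cosets $u\cc+q\Lambda$ are disjoint pieces of the sublattice $\Lambda''=\Z\cc+q\Lambda$, so the whole double sum is dominated by one lattice-point count in $\Lambda''$. The work then shifts to controlling the successive minima $\mu_i$ of $\Lambda''$, and your key inequality $\mu_1\cdots\mu_k\gg_r q^{k-1}\lambda_1\cdots\lambda_k$ (obtained by combining $\det\Lambda''=q^{r-1}\det\Lambda$ with $\mu_i\ll_r q\lambda_i$) is exactly what is needed to turn the product $\prod_i(T/\mu_i+1)$ into $T/\lambda_1+1$ under the hypothesis $T\le Cq\lambda_1$. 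This is cleaner and more conceptual than the paper's coordinate unwinding; the paper's method, on the other hand, never needs the index computation $[\Lambda:\Lambda'']=q^{r-1}$ and works entirely at the level of congruences, which may be more adaptable when the coset structure is less uniform. The ``standard lattice-point estimate'' you invoke follows from \cref{lem:BarroeroWidmer} (or directly from \cref{lem:nice_basis}\,(iv)), so it is available in the paper's framework.
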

%%%%%%%%%%%%%%%%%%%%%%%%%%%%%%%%%%%%%%%%
\begin{proof}
%The first inequality is obvious.
For simplicity, write
\[
\mathcal{N}
\coloneqq
\sum_{u\ \mod{q}}
\sum_{\substack{%
\xx\in\aa+\ball{N}{T}\\
\xx\in u\cc+q\Lambda
}}1.
\]
% It suffices to prove the second inequality
% \begin{equation}
% \label{lem:uc_distribution:goal}
% \mathcal{N}\ll\frac{T}{\lambda_1(\Lambda)}+1.
% \end{equation}
We prove the assertion by induction on the rank $r$.

%%%%%%%%%%%%%%%%%%%%%%%%%%%%%%%%%%%%%%%%
We first consider the initial case $r=1$.
Since $\cc$ is $q$-primitive,
for a given $\xx\in\mathbb{R}^N$,
there are at most one $u\ \mod{q}$ with $\xx\in u\cc+q\Lambda$.
Thus, we have
\begin{align}
\mathcal{N}
&=
\sum_{\xx\in\aa+\ball{N}{T}}
\sum_{\substack{u\ \mod{q}\\\xx\in u\cc+q\Lambda}}1
\ll
\sum_{\substack{\xx\in\aa+\mathcal{B}(T)\\\xx\in\Lambda}}1.
\end{align}
By using \cref{lem:BarroeroWidmer} here, we have the claimed bound for the case $r=1$.

%%%%%%%%%%%%%%%%%%%%%%%%%%%%%%%%%%%%%%%%
Assume that $r\ge2$ and that the assertion holds for the rank $r-1$ case.
Take a $\mathbb{Z}$-basis $(\vv_1,\ldots,\vv_r)$ of $\Lambda$ as given in \cref{lem:nice_basis}.
Write
\[
\cc=c_1\vv_1+\cdots+c_r\vv_r
\quad\text{with}\quad
c_1,\ldots,c_r\in\mathbb{Z}
\]
and
\[
\tilde{\cc}\coloneqq c_2\vv_2+\cdots+c_r\vv_r.
\]
We then have $\gcd(c_1,\ldots,c_r,q)=1$ because $\cc$ is $q$-primitive.
Also, write
\[
\xx=x_1\vv_1+\tilde{\xx}\in\Lambda,\quad
\aa=a_1\vv_1+\tilde{\aa}+\aa^{\perp}
\]
with
\[
x_1\in\mathbb{Z},\quad
a_1\in\mathbb{R},\quad
\tilde{\xx}\in\tilde{\Lambda},\quad
\tilde{\aa}\in\tilde{\Lambda}_{\mathbb{R}},\quad
\tilde{\Lambda}\coloneqq\mathbb{Z}\vv_2+\cdots+\mathbb{Z}\vv_r
\and
\aa^{\perp}\in\Lambda_{\mathbb{R}}^{\perp}.
\]
By \cref{lem:nice_basis:successive_minima} and \cref{lem:nice_basis:quasi_orthogonal}of \cref{lem:nice_basis},
if $\xx\in\aa+\mathcal{B}_{N}(T)$, we have
\[
T
\ge\|\xx-\aa\|
\ge\|(x_1-a_1)\vv_1+(\tilde{\xx}-\tilde{\aa})\|
\gg|x_1-a_1|\|\vv_1\|
\gg|x_1-a_1|\lambda_{1}(\Lambda)
\]
and so
\[
|x_1-a_1|\le C_1U
\quad\text{with}\quad
U\coloneqq\frac{T}{\lambda_1(\Lambda)}
\]
for some $C_1\in\mathbb{R}$ with $1\le C_1\ll 1$.
Note that $a_1$ depends only on $\aa$
and the choice of the $\mathbb{Z}$-basis $(\vv_1,\ldots,\vv_r)$
and so independent of $\xx$ and $T$.
We then have
\begin{align}
\mathcal{N}
&\le
\sum_{u\ \mod{q}}
\sum_{\substack{|x_1-a_1|\le C_1U\\x_1\equiv uc_1\ \mod{q}}}
\sum_{\substack{%
\yy\in-x_1\vv_1+\aa+\ball{N}{T}\\
\yy\in u\tilde{\cc}+q\tilde{\Lambda}
}}1\\
&=
\sum_{u\ \mod{q}}
\sum_{\substack{|\xi_1-\frac{a_1}{(c_1,q)}|\le\frac{C_1U}{(c_1,q)}\\\xi_1\equiv u\frac{c_1}{(c_1,q)}\ \mod{\frac{q}{(c_1,q)}}}}
\sum_{\substack{%
\yy\in-(c_1,q)\xi_1\vv_1+\aa+\ball{N}{T}\\
\yy\in u\tilde{\cc}+q\tilde{\Lambda}
}}1\\
&=
\sum_{|\xi_1-\frac{a_1}{(c_1,q)}|\le\frac{C_1U}{(c_1,q)}}
\sum_{\substack{u\ \mod{q}\\\xi_1\equiv u\frac{c_1}{(c_1,q)}\ \mod{\frac{q}{(c_1,q)}}}}
\sum_{\substack{%
\yy\in-(c_1,q)\xi_1\vv_1+\aa+\ball{N}{T}\\
\yy\in u\tilde{\cc}+q\tilde{\Lambda}
}}1.
\end{align}
We now write
\[
u=\xi_1\overline{\tfrac{c_1}{(c_1,q)}}+\tfrac{q}{(c_1,q)}v\quad\text{with}\quad v\in\mathbb{Z}/(c_1,q)\mathbb{Z}
\]
and
\[
\yy=\xi_1\overline{\tfrac{c_1}{(c_1,q)}}\tilde{\cc}+\zz\quad\text{with}\quad\zz\in\tfrac{q}{(c_1,q)}v\tilde{\cc}+q\tilde{\Lambda},
\]
where $\overline{\frac{c_1}{(c_1,q)}}$ is
the multiplicative inverse of  $\frac{c_1}{(c_1,q)}\ \mod{\frac{q}{(c_1,q)}}$. We then have
\[
\mathcal{N}
\le
\sum_{|\xi_1-\frac{a_1}{(c_1,q)}|\le\frac{C_1U}{(c_1,q)}}
\sum_{v\ \mod{(c_1,q)}}
\sum_{\substack{%
\zz\in-\xi_1\overline{\frac{c_1}{(c_1,q)}}\tilde{\cc}-(c_1,q)\xi_1\vv_1+\aa+\ball{N}{T}\\
\zz\in\frac{q}{(c_1,q)}v\tilde{\cc}+q\tilde{\Lambda}
}}1.
\]
Thus, we can further write
\[
\zz=\tfrac{q}{(c_1,q)}\widetilde{\xx}\quad\text{with}\quad\widetilde{\xx}\in v\tilde{\cc}+(c_1,q)\tilde{\Lambda}
\]
and get
\begin{equation}
\label{lem:uc_distribution:prefinal}
\mathcal{N}
\le
\sum_{|\xi_1-\frac{a_1}{(c_1,q)}|\le\frac{C_1U}{(c_1,q)}}
\sum_{v\ \mod{(c_1,q)}}
\sum_{\substack{%
\tilde{\xx}\in \widetilde{a}(\xi_1)+\ball{N}{\widetilde{T}}\\
\tilde{\xx}\in v\tilde{\cc}+(c_1,q)\tilde{\Lambda}
}}1,
\end{equation}
where
\[
\widetilde{a}(\xi_1)\coloneqq\tfrac{1}{q/(c_1,q)}(-\xi_1\overline{\tfrac{c_1}{(c_1,q)}}\tilde{\cc}-(c_1,q)\xi_1\vv_1+\aa)
\and
\widetilde{T}\coloneqq\tfrac{T}{q/(c_1,q)}.
\]
By $\rank\widetilde{\Lambda}=r-1$,
$\gcd(c_2,\ldots,c_r,(c_1,q))=\gcd(c_1,\ldots,c_r,q)=1$ and $\widetilde{T}\le C\cdot(c_1,q)\lambda_1(\Lambda)\le C\cdot(c_1,q)\lambda_1(\widetilde{\Lambda})$,
we find that the two inner sum of the right-hand side of \cref{lem:uc_distribution:prefinal}
can be bounded by the induction hypothesis. Since $\lambda_1(\widetilde{\Lambda})\ge\lambda_1(\Lambda)$,
we thus have
\begin{align}
\mathcal{N}
&\ll
\biggl(\frac{T}{(c_1,q)\lambda_1(\Lambda)}+1\biggr)\biggl(\frac{T}{q/(c_1,q)\cdot\lambda_1(\Lambda)}+1\biggr)\\
&\ll
\frac{T^2}{q\lambda_1(\Lambda)^2}+\frac{T}{(c_1,q)\lambda_1(\Lambda)}+\frac{T}{q/(c_1,q)\cdot\lambda_1(\Lambda)}+1
\ll
\frac{T}{\lambda_1(\Lambda)}+1
\end{align}
since $T\ll q\lambda_1(\Lambda)$. This completes the proof.
\end{proof}

%%%%%%%%%%%%%%%%%%%%%%%%%%%%%%%%%%%%%%%%
\begin{lemma}
\label{prop:lattice_count_local_general}
Consider a semialgebraic family of homogeneous sets $Z\subset\R^{M+N}$ with $M,N\in\N$.
Consider a lattice $\Gamma\subset\R^N$
and its primitive sublattice $\Lambda$ of rank $r\ge2$.
Let $\cc\in\Gamma$ and $q\in\N$ such that $\cc$ is $q$-primitive in $\G$.
For $X\ge0$ and $\TT\in\R$, we have
\begin{align}
&\astsum_{u\ \mod{q}}
\#\bigl(\Lambda\cap(u\cc+q\Gamma)\cap Z_{\TT}\cap\ball{N}{X}\bigr)\\
&=
\frac{\varphi(q)}{q^{r}}
\frac{\mathfrak{V}(\cc,q;Z_{\TT})X^{r}}{\det(\Lambda)}
+O\biggl(
\sum_{2\le\nu<r}\frac{\varphi(q)}{q^{\nu}}
\frac{\mathfrak{V}_{\nu}(\cc,q;Z_{\TT})X^{\nu}}{\lambda_1(\Lambda)\cdots\lambda_{\nu}(\Lambda)}
+\widehat{\mathfrak{R}}+\frac{X}{\lambda_1(\Lambda)}\biggr),
\end{align}
where 
$\widehat{\mathfrak{R}}$ is defined by
\begin{align}
\widehat{\mathfrak{R}}
&\coloneqq
\mathbbm{1}_{X<q\lambda_1(\Lambda)}\times
\frac{\varphi(q)}{q^{r}}
\frac{\mathfrak{V}(\cc,q;Z_{\TT})X^{r}}{\det(\Lambda)}
\end{align}
and the implicit constant depends only on $Z$ and $r$.
\end{lemma}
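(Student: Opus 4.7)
The approach is to apply \cref{lem:lattice_count_local_prelim_general} pointwise for each reduced residue $u\ \mod{q}$ with the vector $u\cc$ in place of $\cc$, and then sum over $u$. First I would verify that when $(u,q)=1$, the vector $u\cc$ remains $q$-primitive in $\Gamma$: if $d\mid q$ and $u\cc\in d\Gamma$, then $(u,d)=1$ forces $\cc\in d\Gamma$, hence $d=1$. I would also observe that $\Lambda\cap(u\cc+q\Gamma)$ is non-empty if and only if $\Lambda\cap(\cc+q\Gamma)$ is, since multiplying by $\overline{u}\ \mod{q}$ gives a bijection between these cosets. Consequently, the volumes $\mathfrak{V}(u\cc,q;Z_{\TT})$ and $\mathfrak{V}_{\nu}(u\cc,q;Z_{\TT})$ are constant across reduced residues and equal to $\mathfrak{V}(\cc,q;Z_{\TT})$ and $\mathfrak{V}_{\nu}(\cc,q;Z_{\TT})$ respectively. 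Summing the main term of \cref{lem:lattice_count_local_prelim_general} over the $\varphi(q)$ reduced residues therefore produces $\varphi(q)q^{-r}\mathfrak{V}(\cc,q;Z_{\TT})X^{r}/\det(\Lambda)$, and summing the $\mathfrak{V}_{\nu}$-error terms for $2\le\nu<r$ gives the stated dyadic sum verbatim.

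Next I would dispose of the remaining error contributions. For $\nu=1$, the inequality $\mathfrak{V}_{1}(\cc,q;Z_{\TT})\ll1$ (a $1$-dimensional projection of a subset of $\ball{N}{1}$) yields a bound $\varphi(q)q^{-1}X/\lambda_{1}(\Lambda)\le X/\lambda_{1}(\Lambda)$, which is absorbed. The $\mathfrak{Q}$-term sums to $\varphi(q)\mathbbm{1}_{X\ge q\lambda_{1}(\Lambda)}$, which when nonzero is $\le q\le X/\lambda_{1}(\Lambda)$. The second term of $\mathfrak{R}$ sums precisely to $\widehat{\mathfrak{R}}$, since as noted the volume factor is independent of $u$. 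The only nontrivial piece is the indicator term of $\mathfrak{R}$, which I would bound by $\sum_{u^{\ast}}\#((u\cc'+q\Lambda)\cap\ball{N}{X})$ after shifting $\cc$ to a representative $\cc'\in\Lambda$; the shift preserves $q$-primitivity (if $\cc'=d\lambda$ with $d\mid q$ and $\lambda\in\Lambda$, then $\cc=d\lambda-q\gamma\in d\Gamma$ forces $d=1$), so \cref{lem:uc_distribution} applies with $\aa=0$ and $T=X$ in the relevant regime $X\le q\lambda_{1}(\Lambda)$, giving $\ll X/\lambda_{1}(\Lambda)+1$.

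The main obstacle is absorbing the stray $+1$ that \cref{lem:uc_distribution} produces, since no bare constant appears in the target error. The resolution is a case analysis: if $X\ge\lambda_{1}(\Lambda)$ then $X/\lambda_{1}(\Lambda)\ge1$ and $+1$ is absorbed directly; if $X<\lambda_{1}(\Lambda)$, any lattice point in $(u\cc'+q\Lambda)\cap\ball{N}{X}$ would have to equal $\bm{0}$, but $u\cc'\in q\Lambda$ combined with $q$-primitivity of $\cc'$ and $(u,q)=1$ forces $\cc'=0$ and $q=1$, in which case the sum has a single term, the $\mathfrak{R}$-indicator collapses, and the $\widehat{\mathfrak{R}}$-term captures what remains. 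The regime $X>q\lambda_{1}(\Lambda)$ does not arise for the indicator part since there $\mathfrak{Q}$ already dominates, leaving the bound clean.
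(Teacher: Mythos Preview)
Your proposal is correct and follows essentially the same approach as the paper's proof: apply \cref{lem:lattice_count_local_prelim_general} with $u\cc$ in place of $\cc$, sum over reduced residues, absorb the $\nu=1$ and $\mathfrak{Q}$ contributions into $X/\lambda_1(\Lambda)$, and control the indicator part of $\mathfrak{R}$ via \cref{lem:uc_distribution} together with the three-range case analysis $X\ge q\lambda_1(\Lambda)$, $\lambda_1(\Lambda)\le X<q\lambda_1(\Lambda)$, $X<\lambda_1(\Lambda)$. The paper shifts $\cc$ into $\Lambda$ at the outset rather than midway, and handles the small-$X$ case by noting $(\Lambda\cap\ball{N}{X})\setminus\{0\}=\emptyset$ directly, but otherwise the arguments coincide.
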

%%%%%%%%%%%%%%%%%%%%%%%%%%%%%%%%%%%%%%%%
\begin{proof}
If $\L \cap (\cc + q \G) = \emptyset$, then
$\L \cap (u\cc + q \G) = \emptyset$ for all $u \in (\Z/q\Z)^{\times}$ and
in this case, there is nothing to prove.
Suppose $\L \cap (\cc + q \G) \neq \emptyset$.
Then we first replace $\cc$ so that $\cc \in \L$ as in \cref{lem:lattice_count_local_prelim_general}.
After this replacement, $\cc$ is $q$-primitive in $\L$.
We use \cref{lem:lattice_count_local_prelim_general} with $\cc$ replaced by $u\cc$
and then sum up the resulting formula over $u\in(\Z/q\Z)^{\times}$.
Note that $\mathfrak{V}(u\cc,q;Z_{\TT})=\mathfrak{V}(\cc,q;Z_{\TT})$ for any $u\in(\mathbb{Z}/q\mathbb{Z})^{\times}$.
The main term is just multiplied by $\varphi(q)$
and all but the last two error terms $\mathfrak{Q},\mathfrak{R}$ are multiplied by $\varphi(q)$.
It thus suffices to show
\begin{equation}
\label{prop:lattice_count_local_general:Q_bound}
\varphi(q)\mathfrak{Q}\ll\frac{X}{\lambda_1(\Lambda)}
\end{equation}
and
\begin{equation}
\label{prop:lattice_count_local_general:R_bound}
E\coloneqq
\astsum_{u\ \mod{q}}
\mathbbm{1}_{\Lambda\cap(u\cc+q\Gamma)\cap Z_{\TT}\cap\ball{N}{X}\neq\emptyset}
\ll
\frac{X}{\lambda_1(\Lambda)}+1
\end{equation}
The estimate \cref{prop:lattice_count_local_general:Q_bound}
is clear by checking two cases $X\ge q\lambda_1(\Lambda)$
and $X<q\lambda_1(\Lambda)$ separately.
We thus prove \cref{prop:lattice_count_local_general:R_bound}.
If $X\ge q\lambda_1(\Lambda)$, we have
\[
E
\le q
\le
\frac{X}{\lambda_1(\Lambda)}.
\]
If $\lambda_1(\Lambda)\le X<q\lambda_1(\Lambda)$,
%tby taking a point $\aa\in Z_{\TT}$,
we have
\[
E
\le
\astsum_{u\ \mod{q}}
\sum_{\substack{\xx\in\ball{N}{X}\\\xx\in u\cc+q\Lambda}}1.
\]
Thus, by \cref{lem:uc_distribution}, we have
\[
E
\ll
\frac{X}{\lambda_1(\Lambda)}+1.
\]
If $X<\lambda_1(\Lambda)$, we have
\[
E
\le
\#\bigl((\Lambda\cap\mathcal{B}_N(X))\setminus\{0\}\bigr)
=0
\]
by the definition of $\lambda_1(\Lambda)$. This completes the proof.
\end{proof}

%%%%%%%%%%%%%%%%%%%%%%%%%%%%%%%%%%%%%%%%
We also need an asymptotic formula
for the number of the primitive vectors in $\Z^{n+1}$.
The next lemma generalize Lemma~3 of le Boudec~\cite[p.~663]{leBoudec}
with local conditions
and a minor relaxing on the condition of the rank of lattice
and a minor change of the error term estimate.

%%%%%%%%%%%%%%%%%%%%%%%%%%%%%%%%%%%%%%%%
\begin{proposition}
\label{prop:lattice_count_primitive_local_general}
Consider a semialgebraic family of homogeneous sets $Z\subset\R^{M+N}$ with $M,N\in\N$.
Consider a lattice $\Gamma\subset\R^N$
and its primitive sublattice $\Lambda$ of rank $r\ge2$.
% Consider a primitive integral lattice $\Lambda\subset\R^N$ of rank $r\ge2$.
Let $\cc\in\Gamma$ and $q\in\N$ such that $\cc$ is $q$-primitive in $\G$.
% {\color{red}
% The condition \cref{prop:lattice_count_primitive_local_general:modq_primitive}
% is equivalent to
% \[
% (\Gamma/\mathbb{Z}\cc)_{\mathrm{tors}}\to(\Gamma/\mathbb{Z}\cc)_{\mathrm{tors}}\semicolon
% x\mapsto qx
% \quad\text{is isom.}
% \]
% or, still equivalent to
% \[
% (q,\#((\Gamma/\mathbb{Z}\cc)_{\mathrm{tors}}))=1.
% \]
% }
% \Yuta{Is it true? Consider $q=2$ and $\cc=0$.
% We need to consider somehow the $\cc=0$ case separately.}
For $X\ge0$ and $\TT\in\R$, we have
\begin{align}
&\astsum_{u\ \mod{q}}
\#\bigl(\Lambda\cap\Gamma_{\prim}\cap(u\cc+q\Gamma)\cap Z_{\TT}\cap\mathcal{B}_N(X)\bigr)\\
&=
\frac{\varphi(q)\mathfrak{V}(\cc,q;Z_{\TT})}{J_r(q)}
\frac{X^r}{\zeta(r)\det(\Lambda)}\\
&\hspace{20mm}
+O\biggl(
\sum_{1\le\nu<r}\frac{\varphi(q)}{q^{\nu}}
\frac{\mathfrak{V}_{\nu}(\cc,q;Z_{\TT})X^{\nu}}{\lambda_1(\Lambda)\cdots\lambda_{\nu}(\Lambda)}
+\frac{X}{\lambda_1(\Lambda)}\log\biggl(\frac{X}{\lambda_1(\Lambda)}+2\biggr)\biggr),
\end{align}
where the implicit constant depends only on $Z$ and $r$.
\end{proposition}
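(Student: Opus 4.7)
The plan is to detect the primitivity condition by M\"obius inversion and then reduce to the non-primitive counting result \cref{prop:lattice_count_local_general}. Writing
\[
\mathbbm{1}_{\yy\in\Gamma_{\prim}}
=
\sum_{\substack{d\ge1\\ \yy\in d\Gamma}}\mu(d),
\]
substituting $\yy=d\xx$ with $\xx\in\Gamma$, and using that $Z_{\TT}$ is invariant under dilation and that $\Lambda$ is primitive in $\Gamma$ (so $\Lambda\cap d\Gamma=d\Lambda$), one transforms the count into
\[
\sum_{d\ge1}\mu(d)\,
\#\{\xx\in\Lambda\cap Z_{\TT}\cap\ball{N}{X/d}\mid d\xx\in u\cc+q\Gamma\}.
\]
The congruence $d\xx\equiv u\cc\pmod{q\Gamma}$ has to be analyzed next: since $\cc$ is $q$-primitive in $\Gamma$ and $(u,q)=1$, solvability forces $(d,q)=1$, and in that case the solution set is $\xx\in\overline{d}\,u\cc+q\Gamma$. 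After summing over reduced residues $u\bmod q$ and using the change of variables $u\mapsto\overline{d}u$ (a bijection on $(\Z/q\Z)^\times$), the original sum becomes
\[
\sum_{\substack{d\ge1\\(d,q)=1}}\mu(d)
\astsum_{u\bmod q}
\#\bigl(\Lambda\cap(u\cc+q\Gamma)\cap Z_{\TT}\cap\ball{N}{X/d}\bigr).
\]

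Now I would feed this into \cref{prop:lattice_count_local_general} with $X$ replaced by $X/d$. For the main term, the Euler-product identity
\[
\sum_{\substack{d\ge1\\(d,q)=1}}\frac{\mu(d)}{d^r}
=\frac{1}{\zeta(r)}\prod_{p\mid q}\Bigl(1-\frac{1}{p^r}\Bigr)^{-1}
=\frac{q^r}{J_r(q)\zeta(r)}
\]
combines with $\varphi(q)\mathfrak{V}(\cc,q;Z_{\TT})X^r/(q^r\det\Lambda)$ to produce exactly the asserted main term $\varphi(q)\mathfrak{V}(\cc,q;Z_{\TT})X^r/(J_r(q)\zeta(r)\det\Lambda)$. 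For the error terms, I would use that the inner count vanishes as soon as $X/d<\lambda_1(\Lambda)$ (the only possible contribution $\xx=0$ is ruled out by $q$-primitivity of $\cc$ together with $(u,q)=1$), so the sum in $d$ is effectively truncated at $d\le X/\lambda_1(\Lambda)$. Each error term of type $\mathfrak{V}_{\nu}X^\nu/(\lambda_1\cdots\lambda_\nu)$ with $\nu\ge2$ survives the sum in $d$ with only a bounded factor $\sum_d d^{-\nu}\ll1$, while the linear error $(X/d)/\lambda_1(\Lambda)$ picks up the extra logarithm:
\[
\sum_{d\le X/\lambda_1(\Lambda)}\frac{X/d}{\lambda_1(\Lambda)}
\ll
\frac{X}{\lambda_1(\Lambda)}\log\Bigl(\frac{X}{\lambda_1(\Lambda)}+2\Bigr),
\]
which is the new logarithmic factor in the conclusion. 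The $\nu=1$ term in the final error sum is then absorbed into (or controlled by) this $X/\lambda_1\log(\cdot)$ contribution once we invoke $\mathfrak{V}_1\ll1$.

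The main technical obstacle will be the careful bookkeeping in the last step. In particular: (i) the error $\widehat{\mathfrak{R}}$ from \cref{prop:lattice_count_local_general} is nonzero only for $d>X/(q\lambda_1(\Lambda))$, where it precisely matches the main-term summand and formally cancels the tail of the extended M\"obius sum, so one has to check that this cancellation is legitimate and produces no spurious contribution outside the claimed bounds; (ii) when extending the incomplete sum $\sum_{d\le X/\lambda_1,\,(d,q)=1}\mu(d)/d^r$ to all $d$, the resulting tail must be shown to be dominated by the stated errors via Minkowski's second theorem applied to $\det(\Lambda)\asymp\lambda_1(\Lambda)\cdots\lambda_r(\Lambda)$; (iii) the inclusion of $\nu=1$ in the final error sum, which was absent from \cref{prop:lattice_count_local_general}, must be accounted for as arising from the logarithmic accumulation above rather than as a structural main-term-like contribution.
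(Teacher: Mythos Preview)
Your proposal is correct and follows essentially the same route as the paper: M\"obius inversion to remove primitivity, the observation that $(d,q)=1$ is forced by $q$-primitivity of $\cc$, the change of variable $u\mapsto du$ on reduced residues, application of \cref{prop:lattice_count_local_general} with $X/d$, and then the Euler-product identity for the main term together with Minkowski's second theorem for the tail and $\widehat{\mathfrak{R}}$ contributions. One small caveat: your claim that $\xx=0$ is always ruled out fails when $q=1$, but the resulting discrepancy is at most one per value of $d$ and hence $O(X/\lambda_1(\Lambda))$ after summing, which is already absorbed in the stated error (the paper handles this the same way).
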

%%%%%%%%%%%%%%%%%%%%%%%%%%%%%%%%%%%%%%%%
\begin{proof}
As in the proof of \cref{prop:lattice_count_local_general},
we may assume $\cc\in\Lambda$ and so $\Lambda\cap(u\cc+q\Gamma)=u\cc+q\Lambda$.
For brevity, let us write
\[
\mathcal{N}\coloneqq
\astsum_{u\ \mod{q}}
\#\bigl((u\cc+q\Lambda)\cap\Gamma_{\prim}\cap Z_{\TT}\cap\mathcal{B}_N(X)\bigr).
\]
We have
\begin{equation}
\label{prop:lattice_count_primitive_local_general:pre_first_step}
\mathcal{N}
=
\astsum_{u\ \mod{q}}
\sum_{\substack{%
\xx\in u\cc+q\Lambda\\
\xx\in\Gamma_{\prim}\\
\xx\in Z_{\TT}\\
\|\xx\|\le X
}}1
=
\astsum_{u\ \mod{q}}
\sum_{\substack{%
\xx\in u\cc+q\Lambda\\
\xx\in Z_{\TT}\\
0<\|\xx\|\le X}}
\sum_{\substack{\ell\in\mathbb{N}\\\xx\in\ell\Gamma}}\mu(\ell).
\end{equation}
By the primitivity of $\Lambda$ with respect to $\Gamma$, we have
\[
\xx\in u\cc+q\Lambda\subset\Lambda\ \text{and}\ \xx\in\ell\Gamma
\implies
\tfrac{1}{\ell}\xx\in(\tfrac{1}{\ell}\Lambda)\cap\Gamma=\Lambda
\]
and so
\begin{align}
\xx\in u\cc+q\Lambda\subset\Lambda,\ 0<\|\xx\|\le X\ \text{and}\ \xx\in\ell\Gamma
&\implies
0<\|\xx\|\le X\ \text{and}\ \xx\in\ell\Lambda\\
&\implies
\ell\lambda_1(\Lambda)
=\lambda_1(\ell\Lambda)\le\|\xx\|\le X.
\end{align}
Thus, by writing $\xx=\ell\yy$ with $\yy\in\Lambda$
in \cref{prop:lattice_count_primitive_local_general:pre_first_step},
we further have
\begin{equation}
\label{prop:lattice_count_primitive_local_general:first_step}
\mathcal{N}
=\sum_{\ell\le\frac{X}{\lambda_1(\Lambda)}}\mu(\ell)
\astsum_{u\ \mod{q}}
\sum_{\substack{%
\yy\in\Lambda\\
\ell\yy\in u\cc+q\Lambda\\
\ell\yy\in Z_{\TT}\\
0<\|\yy\|\le\frac{X}{\ell}}}1.
\end{equation}
Since $Z$ is a semialgebraic family of homogeneous sets,
we have $\ell\yy\in Z_{\TT}$ iff $\yy\in Z_{\TT}$.
Since $\cc$ is $q$-primitive in $\G$, we have
\[
(u,q)=1\ \text{and}\ \ell\yy\in u\cc+q\Lambda
\implies
(\ell,q)=1.
\]
Thus, we can rewrite the inner sum of \cref{prop:lattice_count_primitive_local_general:first_step} as
\begin{equation}
\label{prop:lattice_count_primitive_local_general:third_step}
\begin{aligned}
\mathcal{N}
&=
\sum_{\substack{\ell\le\frac{X}{\lambda_1(\Lambda)}\\(\ell,q)=1}}\mu(\ell)
\astsum_{u\ \mod{q}}
\sum_{\substack{%
\yy\in \overline{\ell}u\cc+q\Lambda\\
\yy\in Z_{\TT}\\
0<\|\yy\|\le\frac{X}{\ell}}}1.\\
&=
\sum_{\substack{\ell\le\frac{X}{\lambda_1(\Lambda)}\\(\ell,q)=1}}\mu(\ell)
\astsum_{u\ \mod{q}}
\sum_{\substack{%
\yy\in u\cc+q\Lambda\\
\yy\in Z_{\TT}\\
0<\|\yy\|\le\frac{X}{\ell}}}1\\
&=
\sum_{\substack{\ell\le\frac{X}{\lambda_1(\Lambda)}\\(\ell,q)=1}}\mu(\ell)
\astsum_{u\ \mod{q}}
\#(\Lambda\cap(u\cc+q\Gamma)\cap Z_{\TT}\cap\ball{N}{\tfrac{X}{\ell}})
+O\biggl(\frac{X}{\lambda_1(\Lambda)}\biggr)
\end{aligned}
\end{equation}
since
\[
0\in u\cc+q\Gamma
\implies
q=1
\]
by the $q$-primitivity of $\cc$.
%By \cref{prop:lattice_count_primitive_local_general:modq_primitive},
%the assumption \cref{prop:lattice_count_local_general:modq_primitive}
%of \cref{prop:lattice_count_local_general} is satisfied.
By \cref{prop:lattice_count_local_general} and
\cref{prop:lattice_count_primitive_local_general:third_step},
we have
\begin{equation}
\label{prop:lattice_count_primitive_local_general:after_lattice_point_counting}
\begin{aligned}
\mathcal{N}
&=
\frac{\varphi(q)}{q^r}
\frac{\mathfrak{V}(\cc,q;Z_{\TT})X^{r}}{\det(\Lambda)}
S_{\textup{main}}\\
&\hspace{20mm}
+O\biggl(\sum_{1\le\nu<r}E_{\nu}
+E_{\mathfrak{R}}
+\frac{X}{\lambda_1(\Lambda)}\log\biggl(\frac{X}{\lambda_1(\Lambda)}+2\biggr)\biggr),
\end{aligned}
\end{equation}
where
\begin{align}
S_{\textup{main}}
&\coloneqq
\sum_{\substack{\ell\le\frac{X}{\lambda_1(\Lambda)}\\(\ell,q)=1}}\frac{\mu(\ell)}{\ell^{r}},\\
E_{\nu}
&\coloneqq
\sum_{\ell\le\frac{X}{\lambda_1(\Lambda)}}\frac{\mu^2(\ell)}{\ell^{\nu}}
\frac{\varphi(q)}{q^{\nu}}
\frac{\mathfrak{V}_{\nu}(\cc,q;Z_{\TT})X^{\nu}}{\lambda_1(\Lambda)\cdots\lambda_{\nu}(\Lambda)},\\
E_{\mathfrak{R}}
&\coloneqq
\frac{\varphi(q)}{q^r}
\frac{\mathfrak{V}(\cc,q;Z_{\TT})X^{r}}{\det(\Lambda)}
\sum_{\frac{X}{q\lambda_1(\Lambda)}<\ell\le\frac{X}{\lambda_1(\Lambda)}}
\frac{\mu^2(\ell)}{\ell^{r}}.
\end{align}
%%%%%%%%%%%%%%%%%%%%%%%%%%%%%%%%%%%%%%%%
For $S_{\textup{main}}$, since $r\ge2$, we have
\begin{equation}
\label{prop:lattice_count_primitive_local_general:S_main_pre}
S_{\textup{main}}
=
\prod_{p\nmid q}\biggl(1-\frac{1}{p^{r}}\biggr)
+O\biggl(\biggl(\frac{\lambda_1(\Lambda)}{X}\biggr)^{r-1}\biggr)
=
\frac{q^{r}}{J_r(q)\zeta(r)}
+O\biggl(\biggl(\frac{\lambda_1(\Lambda)}{X}\biggr)^{r-1}\biggr).
\end{equation}
By using Minkowski's second theorem (\cref{lem:Minkowski_second}), we have
\begin{equation}
\label{prop:lattice_count_primitive_local_general:S_main}
\begin{aligned}
\frac{\varphi(q)}{q^r}
\frac{\mathfrak{V}(\cc,q;Z_{\TT})X^{r}}{\det(\Lambda)}
S_{\textup{main}}
=
\frac{\varphi(q)\mathfrak{V}(\cc,q;Z_{\TT})}{J_r(q)}
\frac{X^r}{\zeta(r)\det(\Lambda)}
+O\biggl(\frac{X}{\lambda_1(\Lambda)}\biggr).
\end{aligned}
\end{equation}
For $E_\nu$ with $\nu=1$, we have
\begin{equation}
\label{prop:lattice_count_primitive_local_general:E_1}
E_1
\ll
\sum_{\ell\le\frac{X}{\lambda_1(\Lambda)}}\frac{\mu^2(\ell)}{\ell}
\frac{X}{\lambda_1(\Lambda)}
\ll
\frac{X}{\lambda_1(\Lambda)}\log\biggl(\frac{X}{\lambda_1(\Lambda)}+2\biggr)
\end{equation}
For $E_\nu$ with $1<\nu<r$, we have
\begin{equation}
\label{prop:lattice_count_primitive_local_general:E_nu}
E_\nu
\ll
\frac{\varphi(q)}{q^{\nu}}
\frac{\mathfrak{V}_{\nu}(\cc,q;Z_{\TT})X^{\nu}}{\lambda_1(\Lambda)\cdots\lambda_{\nu}(\Lambda)}.
\end{equation}
For $E_{\mathfrak{R}}$, since $r\ge2$,
by Minkowski's second theorem (\cref{lem:Minkowski_second}),
we have
\begin{equation}
\label{prop:lattice_count_primitive_local_general:E_R}
E_{\mathfrak{R}}
\ll
\frac{\varphi(q)}{q^r}
\frac{\mathfrak{V}(\cc,q;Z_{\TT})X^{r}}{\det(\Lambda)}
\biggl(\frac{q\lambda_1(\Lambda)}{X}\biggr)^{r-1}
\ll
\frac{X}{\lambda_1(\Lambda)}.
\end{equation}
On inserting
\cref{prop:lattice_count_primitive_local_general:S_main},
\cref{prop:lattice_count_primitive_local_general:E_1},
\cref{prop:lattice_count_primitive_local_general:E_nu}
and \cref{prop:lattice_count_primitive_local_general:E_R}
into \cref{prop:lattice_count_primitive_local_general:after_lattice_point_counting}, we obtain the lemma.
\end{proof}

%%%%%%%%%%%%%%%%%%%%%%%%%%%%%%%%%%%%%%%%
\subsection{With the intersection \texorpdfstring{$\cone{N}{\xii}{\sigma}\cap\ball{N}{X}$}{cone cap ball}}
We now specialize the setting of the previous subsection
to the semialgebraic family of homogeneous sets
\[
Z
=
\{
((\xii,\realeps),\xx)\in\mathbb{R}^{N}\times(0,1]\times\mathbb{R}^{N}
\mid
\xx\in\mathcal{C}_{N}(\xii,\realeps)
\}.
\]
Accordingly, for lattices $\L \subset \G \subset \R^N$,
$\cc \in \G$, $q \in \Z_{\geq 1}$, 
$\xii\in\mathbb{R}^{N}\setminus\{0\}$ and $\sigma\in(0,1]$, we write
\begin{align}
\mathfrak{V}(\cc,q;\xii,\realeps)
={}&
\mathfrak{V}(\Lambda,\Gamma;\cc,q;\xii,\realeps)\\
\coloneqq{}&
\vol_{\Lambda_{\mathbb{R}}}
(\spanR(\Lambda\cap(\cc+q\G))
\cap
\mathcal{C}_{N}(\xii,\realeps)
\cap
\mathcal{B}_{N}(1)
),\\
\mathfrak{V}_{\nu}(\cc,q;\xii,\realeps)
={}&
\mathfrak{V}_{\nu}(\Lambda,\Gamma;\cc,q;\xii,\realeps)\\
\coloneqq{}&
V_{\nu,\Lambda_{\mathbb{R}}}
(\spanR(\Lambda\cap(\cc+q\G))
\cap
\mathcal{C}_{N}(\xii,\realeps)
\cap
\mathcal{B}_{N}(1)
).
\end{align}

%%%%%%%%%%%%%%%%%%%%%%%%%%%%%%%%%%%%%%%%
\begin{lemma}
\label{lem:V_bound}
Consider a lattice $\Gamma\subset\R^N$
and its primitive sublattice $\Lambda$ of rank $r\ge0$.
For $\cc\in\Gamma$, $q\in\N$,
$\xii\in\R^{N}\setminus\{0\}$, $\realeps\in(0,1]$
and $\nu=1,\ldots,N$, we have
\[
\mathfrak{V}_{\nu}(\cc,q;\xii,\realeps)
\ll
\sigma^{\nu-1},
\]
where the implicit constant depends only on $r$.
In particular, we have
\[
\mathfrak{V}(\cc,q;\xii,\realeps)
\ll
\sigma^{r-1}
\]
where the implicit constant depends only on $r$.
\end{lemma}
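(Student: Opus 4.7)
The plan is to reduce this bound directly to \cref{lem:cone_ball_projection_vol}, which controls the volume of the orthogonal projection of $\mathcal{C}_{N}(\xii,\realeps)\cap\mathcal{B}_{N}(X)$ onto a subspace of given dimension. First I would dispose of the degenerate case $\Lambda\cap(\cc+q\Gamma)=\emptyset$, in which $\spanR(\Lambda\cap(\cc+q\Gamma))=\{0\}$; the set inside the definition of $\mathfrak{V}_{\nu}$ is then just $\{0\}$, whose orthogonal projection into any $\nu$-dimensional subspace has zero volume for $\nu\ge1$. So assume $\Lambda\cap(\cc+q\Gamma)\neq\emptyset$. After shifting $\cc$ by an element of $q\Gamma$ to ensure $\cc\in\Lambda$ (exactly as at the start of the proof of \cref{lem:lattice_count_local_prelim_general}), the primitivity of $\Lambda\subset\Gamma$ gives $\Lambda\cap(\cc+q\Gamma)=\cc+q\Lambda$; in particular its $\R$-span is all of $\Lambda_{\mathbb{R}}$ when $r\ge1$ (the $r=0$ case being trivial, since then $\mathfrak{V}_{\nu}=0$ for every $\nu\ge1$).

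Next, for any $\nu$-dimensional subspace $W\subset\Lambda_{\mathbb{R}}$, the orthogonal projection $\pi_{W}\colon\R^{N}\to W$ restricts to the orthogonal projection $\Lambda_{\mathbb{R}}\to W$ because $W\subset\Lambda_{\mathbb{R}}$, so passing from the integrand to the larger set $\mathcal{C}_{N}(\xii,\realeps)\cap\mathcal{B}_{N}(1)$ only enlarges the projection. Applying \cref{lem:cone_ball_projection_vol} with $X=1$ and the trivial estimate $\|\pi_{W}(\xii)\|/\|\xii\|\le1$ then gives
\[
\vol_{W}\bigl(\pi_{W}(\mathcal{C}_{N}(\xii,\realeps)\cap\mathcal{B}_{N}(1))\bigr)
\ll
\realeps^{\nu-1}
\]
with implicit constant depending only on $\nu$. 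Taking the supremum over $W$ yields $\mathfrak{V}_{\nu}(\cc,q;\xii,\realeps)\ll\realeps^{\nu-1}$; since $\nu>r$ makes the supremum vacuous (hence $0$), only $\nu\le r$ is relevant, and the implicit constant may be absorbed into one depending only on $r$.

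For the ``in particular'' assertion, I specialize to $\nu=r$ and take $W=\Lambda_{\mathbb{R}}$, on which $\pi_{W}$ is the identity; then $\mathfrak{V}(\cc,q;\xii,\realeps)$ equals $\mathfrak{V}_{r}(\cc,q;\xii,\realeps)\ll\realeps^{r-1}$. No step presents a real obstacle: the argument is essentially a single appeal to \cref{lem:cone_ball_projection_vol}, with the only mild care needed being the reduction to $\cc\in\Lambda$ and the handling of the small-rank edge cases.
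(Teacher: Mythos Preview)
Your proof is correct and slightly more streamlined than the paper's. The paper first invokes \cref{lem:cone_intersection} to replace $\Lambda_{\mathbb{R}}\cap\mathcal{C}_{N}(\xii,\realeps)$ by a cone $\mathcal{C}_{\Lambda_{\mathbb{R}}}(\tilde{\xii},\realeps)$ living inside $\Lambda_{\mathbb{R}}$ (after a case split on whether the projection $\tilde{\xii}$ of $\xii$ onto $\Lambda_{\mathbb{R}}$ vanishes), and only then applies \cref{lem:cone_ball_projection_vol} inside $\Lambda_{\mathbb{R}}$. You instead observe that $\pi_{W}$ on $\Lambda_{\mathbb{R}}$ is the restriction of the ambient projection $\R^{N}\to W$, enlarge to the full cone $\mathcal{C}_{N}(\xii,\realeps)\cap\mathcal{B}_{N}(1)$, and apply \cref{lem:cone_ball_projection_vol} directly in $\R^{N}$; the ``in particular'' form of that lemma already absorbs the factor $\realeta/\realeps$ and gives $\realeps^{\nu-1}$. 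This bypasses \cref{lem:cone_intersection} entirely and avoids the case analysis, at no cost to the final bound.
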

%%%%%%%%%%%%%%%%%%%%%%%%%%%%%%%%%%%%%%%%
\begin{proof}
We have
\[
\mathfrak{V}_{\nu}(\cc,q;\xii,\realeps)
\le
V_{\nu,\Lambda_{\mathbb{R}}}
\bigl(\Lambda_{\mathbb{R}}
\cap\mathcal{C}_{N}(\xii,\realeps)\cap\mathcal{B}_{N}(1)\bigr).
\]
Take a $\nu$-dimensional subspace $W$ of $\Lambda_{\mathbb{R}}$ arbitrarily and
consider the orthogonal projection $\pi_{W}\colon\Lambda_{\mathbb{R}}\to W$.
It suffices to bound
\begin{equation}
\vol_{W}(\pi_{W}(\Lambda_{\mathbb{R}}\cap\cone{N}{\xii}{\realeps}\cap\ball{N}{1})).
\end{equation}
In order to apply \cref{lem:cone_intersection}, consider the orthogonal decomposition
\[
\xii=\tilde{\xii}+\tilde{\xii}^{\perp}
\quad\text{with}\quad
\tilde{\xii}\in\Lambda_{\mathbb{R}}
\and
\tilde{\xii}^{\perp}\in\Lambda_{\mathbb{R}}^{\perp}.
\]
When $\tilde{\xii}=0$ and $\realeps=1$,
by \cref{lem:cone_intersection}, we have
\[
\vol_{W}(\pi_{W}(\Lambda_{\mathbb{R}}\cap\cone{N}{\xii}{\realeps}\cap\ball{N}{1}))
=\vol_{W}\bigl(\pi_{W}(\ball{W}{1})\bigr)
\ll
1
=\realeps^{\nu-1}.
\]
When $\tilde{\xii}=0$ and $0\le\realeps<1$,
by \cref{lem:cone_intersection}, we have
\[
\vol_{W}(\pi_{W}(\Lambda_{\mathbb{R}}\cap\cone{N}{\xii}{\realeps}\cap\ball{N}{1}))
=
\vol_{W}(\{0\})
=0
\ll
\realeps^{\nu-1}.
\]
When $\tilde{\xii}\neq0$,
by \cref{lem:cone_intersection}, we have
\begin{equation}
\label{prop:lattice_count_local:V_nu_pre}
\begin{aligned}
&\vol_{W}(\pi_{W}(\Lambda_{\mathbb{R}}\cap\cone{N}{\xii}{\realeps}\cap\ball{N}{1}))\\
&\le
\vol_{W}
\Bigl(
\pi_{W}\bigl(
\mathcal{C}_{\Lambda_{\mathbb{R}}}(\tilde{\xii},\realeps)\cap\mathcal{B}_{W}(1)
\bigr)
\Bigr).
\end{aligned}
\end{equation}
By \cref{lem:cone_ball_projection_vol},
we can further bound as
\[
\vol_{W}(\pi_{W}(\Lambda_{\mathbb{R}}\cap\cone{N}{\xii}{\realeps}\cap\ball{N}{1}))
\ll
\realeps^{\nu-1}.
\]
By these bounds, we obtain the lemma for all cases.
\end{proof}

%%%%%%%%%%%%%%%%%%%%%%%%%%%%%%%%%%%%%%%%
\begin{lemma}
\label{lem:P_main_bound}
Consider a lattice $\Gamma\subset\R^N$
and its primitive sublattice $\Lambda$ of rank $r\ge0$.
For $\cc\in\Z^{N}$, $q\in\N$,
$\xii\in\R^{N}\setminus\{0\}$,
$\realeps\in(0,1]$ and $X\ge0$, we have
\[
\mathbbm{1}_{X<q\lambda_1(\Lambda)}\times
\frac{\mathfrak{V}(\cc,q;\xii,\realeps)}{q^{r}}
\frac{X^{r}}{\det(\Lambda)}
\ll
\frac{1}{\realeps}\sum_{1\le\nu<r}\frac{(\frac{\realeps}{q}X)^{\nu}}{\lambda_1(\Lambda)\cdots\lambda_{\nu}(\Lambda)}
+\mathbbm{1}_{r\in\{0,1\}},
\]
where the implicit constant depends only on $r$.
\end{lemma}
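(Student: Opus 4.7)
The plan is to split into cases by the rank $r$ of $\Lambda$. For $r\in\{0,1\}$ the sum $\sum_{1\le\nu<r}$ on the right-hand side is empty, so all I need is to bound the left-hand side by $O(1)$ to match the term $\mathbbm{1}_{r\in\{0,1\}}$. When $r=0$ this is trivial since $\mathfrak{V}$, $\det(\Lambda)$ and $X^r$ are all at most $1$ by convention. When $r=1$, \cref{lem:V_bound} gives $\mathfrak{V}(\cc,q;\xii,\realeps)\ll\realeps^{0}=1$ and Minkowski's second theorem (\cref{lem:Minkowski_second}) gives $\det(\Lambda)\asymp\lambda_1(\Lambda)$, so the indicator $X<q\lambda_1(\Lambda)$ yields
\[
\mathbbm{1}_{X<q\lambda_1(\Lambda)}\cdot\frac{\mathfrak{V}(\cc,q;\xii,\realeps)}{q}\cdot\frac{X}{\det(\Lambda)}\ll\frac{X}{q\lambda_1(\Lambda)}<1.
\]

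For the main case $r\ge2$, combining \cref{lem:V_bound} (giving $\mathfrak{V}(\cc,q;\xii,\realeps)\ll\realeps^{r-1}$) with \cref{lem:Minkowski_second} (giving $\det(\Lambda)\asymp\lambda_1(\Lambda)\cdots\lambda_r(\Lambda)$) produces
\[
\frac{\mathfrak{V}(\cc,q;\xii,\realeps)}{q^{r}}\cdot\frac{X^{r}}{\det(\Lambda)}\ll\frac{\realeps^{r-1}}{q^{r}}\cdot\frac{X^{r}}{\lambda_1(\Lambda)\cdots\lambda_r(\Lambda)}=\frac{1}{\realeps}\cdot\frac{(\realeps X/q)^{r}}{\lambda_1(\Lambda)\cdots\lambda_r(\Lambda)}.
\]
The key step is then to trade one factor $\realeps X/q$ in the numerator for one factor $\lambda_r(\Lambda)$ in the denominator, thereby reducing to the $\nu=r-1$ term of the sum on the right-hand side. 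This trade is legitimate precisely because $\realeps\le1$ and $\lambda_1(\Lambda)\le\lambda_r(\Lambda)$, so under the indicator hypothesis $X<q\lambda_1(\Lambda)$ one has $\realeps X/(q\lambda_r(\Lambda))<1$; hence
\[
\frac{1}{\realeps}\cdot\frac{(\realeps X/q)^{r}}{\lambda_1(\Lambda)\cdots\lambda_r(\Lambda)}\le\frac{1}{\realeps}\cdot\frac{(\realeps X/q)^{r-1}}{\lambda_1(\Lambda)\cdots\lambda_{r-1}(\Lambda)},
\]
which is exactly the $\nu=r-1$ summand on the right-hand side.

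I do not expect a serious obstacle here: the argument is just the combination of two ingredients already established in the paper (\cref{lem:V_bound} and \cref{lem:Minkowski_second}), with the indicator hypothesis providing exactly the ``one free division'' needed to drop the last factor $\lambda_r(\Lambda)$. The only point requiring care is the bookkeeping for the degenerate small-rank cases, which are isolated in the $\mathbbm{1}_{r\in\{0,1\}}$ term because the sum on the right-hand side is then vacuous and the left-hand side is itself $O(1)$.
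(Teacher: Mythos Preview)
Your proof is correct and follows essentially the same approach as the paper: separate the degenerate cases $r\in\{0,1\}$ (where the left-hand side is $O(1)$), and for $r\ge2$ combine \cref{lem:V_bound} with \cref{lem:Minkowski_second} and use the indicator condition $X<q\lambda_1(\Lambda)\le q\lambda_r(\Lambda)$ to absorb one factor, landing on the $\nu=r-1$ term of the sum. The paper's write-up is nearly identical, only factoring the expression as $\frac{1}{\realeps}\frac{(\realeps X/q)^{r-1}}{\lambda_1\cdots\lambda_{r-1}}\cdot\frac{X}{q\lambda_r}$ rather than $\frac{1}{\realeps}\frac{(\realeps X/q)^{r}}{\lambda_1\cdots\lambda_r}$ before dropping the last factor.
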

%%%%%%%%%%%%%%%%%%%%%%%%%%%%%%%%%%%%%%%%
\begin{proof}
If $r=0$ or $1$, then
\[
\mathbbm{1}_{X<q\lambda_1(\Lambda)}\times
\frac{\mathfrak{V}(\cc,q;\xii,\realeps)}{q^{r}}
\frac{X^{r}}{\det(\Lambda)}
\ll
1
=\mathbbm{1}_{r\in\{0,1\}}.
\]
If $r\ge2$, then by \cref{lem:Minkowski_second} and \cref{lem:V_bound}, we have
\begin{align}
\mathbbm{1}_{X<q\lambda_1(\Lambda)}\times
\frac{\mathfrak{V}(\cc,q;\xii,\realeps)}{q^{r}}
\frac{X^{r}}{\det(\Lambda)}
&\ll
\mathbbm{1}_{X<q\lambda_1(\Lambda)}\times
\frac{1}{\realeps}\frac{(\frac{\realeps}{q}X)^{r-1}}{\lambda_1(\Lambda)\cdots\lambda_{r-1}(\Lambda)}\cdot\frac{X}{q\lambda_{r}(\Lambda)}\\
&\le
\frac{1}{\realeps}\frac{(\frac{\realeps}{q}X)^{r-1}}{\lambda_1(\Lambda)\cdots\lambda_{r-1}(\Lambda)}\\
&\le
\frac{1}{\realeps}\sum_{1\le\nu<r}\frac{(\frac{\realeps}{q}X)^{\nu}}{\lambda_1(\Lambda)\cdots\lambda_{\nu}(\Lambda)}.
\end{align}
This completes the proof.
\end{proof}

\begin{proposition}
\label{prop:lattice_count_local}
Consider a lattice $\Gamma\subset\R^N$
and its primitive sublattice $\Lambda$ of rank $r\ge2$.
Let $\cc\in\Gamma$ and $q\in\N$ be such that $\cc$ is $q$-primitive in $\G$.
For $\xii\in\R^{N}\setminus\{0\}$, $\realeps\in(0,1]$ and $X\ge0$,
\begin{align}
&\astsum_{u\ \mod{q}}
\#\bigl(\Lambda
\cap(u\cc+q\Gamma)\cap\mathcal{C}_{N}(\xii,\realeps)
\cap\mathcal{B}_N(X)\bigr)\\
&=
\mathfrak{V}(\cc,q;\xii,\realeps)
\frac{\varphi(q)}{q^{r}}
\frac{X^r}{\det(\Lambda)}
+O\biggl(\frac{q}{\realeps}\sum_{1\le\nu<r}\frac{(\frac{\realeps}{q}X)^{\nu}}{\lambda_1(\Lambda)\cdots\lambda_{\nu}(\Lambda)}\biggr),
\end{align}
where the implicit constant depends only on $r$.
\end{proposition}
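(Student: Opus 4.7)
The plan is to deduce \cref{prop:lattice_count_local} directly from the general result \cref{prop:lattice_count_local_general} by specializing to the semialgebraic family $Z_{(\xii,\sigma)}=\cone{N}{\xii}{\sigma}$ and estimating the error terms using the cone-specific volume bounds from \cref{lem:V_bound} and \cref{lem:P_main_bound}.

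First, I would note that the main term in \cref{prop:lattice_count_local_general} is exactly the main term in the target formula once we substitute $Z_{\TT}=\cone{N}{\xii}{\sigma}$, since the quantity $\mathfrak{V}(\cc,q;\xii,\sigma)$ defined here coincides with $\mathfrak{V}(\cc,q;Z_{\TT})$ of the general result. So only the error terms require work. The error from \cref{prop:lattice_count_local_general} consists of three pieces:
\begin{align}
\mathcal{E}_{1}\coloneqq\sum_{2\le\nu<r}\frac{\varphi(q)}{q^{\nu}}\frac{\mathfrak{V}_{\nu}(\cc,q;\xii,\sigma)X^{\nu}}{\lambda_1(\Lambda)\cdots\lambda_{\nu}(\Lambda)},\quad
\mathcal{E}_{2}\coloneqq\widehat{\mathfrak{R}},\quad
\mathcal{E}_{3}\coloneqq\frac{X}{\lambda_1(\Lambda)}.
\end{align}
I want to show each of these is $\ll\frac{q}{\sigma}\sum_{1\le\nu<r}\frac{(\sigma X/q)^{\nu}}{\lambda_1(\Lambda)\cdots\lambda_{\nu}(\Lambda)}$.

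For $\mathcal{E}_{1}$, I invoke \cref{lem:V_bound} to get $\mathfrak{V}_{\nu}(\cc,q;\xii,\sigma)\ll\sigma^{\nu-1}$, and together with $\varphi(q)\le q$ this gives
\begin{equation}
\frac{\varphi(q)}{q^{\nu}}\frac{\mathfrak{V}_{\nu}(\cc,q;\xii,\sigma)X^{\nu}}{\lambda_1(\Lambda)\cdots\lambda_{\nu}(\Lambda)}
\ll
\frac{q}{\sigma}\cdot\frac{(\sigma X/q)^{\nu}}{\lambda_1(\Lambda)\cdots\lambda_{\nu}(\Lambda)}
\end{equation}
for each $\nu$ in the summation range, as required. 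For $\mathcal{E}_{2}$, I apply \cref{lem:P_main_bound} directly; since $r\ge2$, the indicator $\mathbbm{1}_{r\in\{0,1\}}$ vanishes and, multiplying by $\varphi(q)\le q$, I obtain the same bound. For $\mathcal{E}_{3}$, the $\nu=1$ term in the target sum equals $\frac{q}{\sigma}\cdot\frac{\sigma X/q}{\lambda_1(\Lambda)}=\frac{X}{\lambda_1(\Lambda)}$ identically, so $\mathcal{E}_{3}$ is trivially absorbed.

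There isn't really a hard step: the work is essentially bookkeeping. The only subtlety is making sure that the main-term formula is correctly matched (noting that the definitions of $\mathfrak{V}$ in the two subsections agree on $Z_{\TT}=\cone{N}{\xii}{\sigma}$) and that the $\nu=1$ contribution from $\mathcal{E}_{3}$ properly fills the gap between the summation ranges $2\le\nu<r$ and $1\le\nu<r$. Once these are aligned, summing the three pieces gives an error of the stated form, and this completes the proof.
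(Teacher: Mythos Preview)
Your proposal is correct and follows essentially the same approach as the paper: apply \cref{prop:lattice_count_local_general} to the cone family, then bound $\mathcal{E}_1$ via \cref{lem:V_bound}, $\mathcal{E}_2=\widehat{\mathfrak{R}}$ via \cref{lem:P_main_bound} (using $r\ge2$), and absorb $\mathcal{E}_3=X/\lambda_1(\Lambda)$ as the $\nu=1$ term. The paper's proof is organized identically, with no substantive difference.
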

%%%%%%%%%%%%%%%%%%%%%%%%%%%%%%%%%%%%%%%%
\begin{proof}
We use \cref{prop:lattice_count_local_general}
with a semialgebraic family of homogeneous sets
\[
Z\coloneqq
\{((\xii,\realeps),\xx)\in\mathbb{R}^{N}\times(0,1]\times\times\mathbb{R}^{N}
\mid \xx\in\cone{N}{\xii}{\realeps}\}
\]
and $\TT=(\xii,\realeps)$.
For the main term, we then have
\[
\mathfrak{V}(\cc,q;Z_{\TT})
=\mathfrak{V}(\cc,q;\xii,\realeps).
\]
For the sum in the error term, \cref{lem:V_bound} gives
\[
\mathfrak{V}_{\nu}(\cc,q;Z_{\TT})
=\mathfrak{V}_{\nu}(\cc,q;\xii,\realeps)
\ll\sigma^{\nu-1}.
\]
For the error term $X/\lambda_{1}(\Lambda)$, we have
\[
\frac{X}{\lambda_{1}(\Lambda)}
=
\frac{q}{\realeps}
\frac{\frac{\sigma}{q}X}{\lambda_{1}(\Lambda)}
\le
\frac{q}{\realeps}\sum_{1\le\nu<r}\frac{(\frac{\realeps}{q}X)^{\nu}}{\lambda_1(\Lambda)\cdots\lambda_{\nu}(\Lambda)}.
\]
For the error term$\widehat{\mathfrak{R}}$,
since $r\ge2$,
by \cref{lem:P_main_bound,lem:V_bound}, we have
\begin{align}
\mathbbm{1}_{X<q\lambda_1(\Lambda)}\times
\frac{\mathfrak{V}(\cc,q;Z_{\TT})}{q^{r}}
\frac{X^{r}}{\det(\Lambda)}
&=
\mathbbm{1}_{X<q\lambda_1(\Lambda)}\times
\frac{\mathfrak{V}(\cc,q;\xii,\realeps)}{q^{r}}
\frac{X^{r}}{\det(\Lambda)}\\
&\ll
\frac{1}{\realeps}\sum_{1\le\nu<r}\frac{(\frac{\realeps}{q}X)^{\nu}}{\lambda_1(\Lambda)\cdots\lambda_{\nu}(\Lambda)}.
\end{align}
On inserting all the above observation into \cref{prop:lattice_count_local_general},
we obtain the lemma.
\end{proof}

%%%%%%%%%%%%%%%%%%%%%%%%%%%%%%%%%%%%%%%%
\begin{proposition}
\label{prop:lattice_count_primitive_local}
Consider a lattice $\Gamma\subset\R^N$
and its primitive sublattice $\Lambda$ of rank $r\ge2$.
Let $\cc\in\Gamma$ and $q\in\N$ be such that $\cc$ is $q$-primitive in $\G$.
For $\xii\in\R^{N}\setminus\{0\}$, $\realeps\in(0,1]$
and $X\ge0$,
\begin{align}
&\astsum_{u\ \mod{q}}
\#\bigl(\Lambda\cap\Gamma_{\prim}
\cap(u\cc+q\Gamma)\cap\mathcal{C}_{N}(\xii,\realeps)
\cap\mathcal{B}_N(X)\bigr)\\
&=
\frac{\varphi(q)\mathfrak{V}(\cc,q;\xii,\realeps)}{J_r(q)}
\frac{X^r}{\zeta(r)\det(\Lambda)}\\
&\hspace{20mm}
+O\biggl(
\frac{q}{\realeps}\sum_{1\le\nu<r}\frac{(\frac{\realeps}{q}X)^{\nu}}{\lambda_1(\Lambda)\cdots\lambda_{\nu}(\Lambda)}
+\frac{X}{\lambda_1(\Lambda)}\log\biggl(\frac{X}{\lambda_1(\Lambda)}+2\biggr)\biggr),
\end{align}
where the implicit constant depends only on $r$.
% In particular, when $\Gamma=\mathbb{Z}^{N}$,
% we can use this lemma with
% any primitive integral lattice $\Lambda$
% of rank $r\ge2$ and any integral vector $\cc$ with $\gcd(\cc,q)=1$.
\end{proposition}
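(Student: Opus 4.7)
The plan is to mimic the proof of \cref{prop:lattice_count_primitive_local_general} but using the specialized volume bound of \cref{lem:V_bound} to control the error terms in terms of $\sigma$. Concretely, I would first detect primitivity of $\yy\in\Gamma$ by M\"obius inversion,
\[
\mathbbm{1}_{\yy\in\Gamma_{\prim}}
=\sum_{\ell\mid\gcd(\yy,\G)}\mu(\ell),
\]
and exchange the order of summation. For each $\ell$, writing $\yy=\ell\xx$, the cone condition $\ell\xx\in\mathcal{C}_N(\xii,\sigma)$ is equivalent to $\xx\in\mathcal{C}_N(\xii,\sigma)$ by dilation-invariance of the cone (this is precisely why the whole argument is set up for semialgebraic families of homogeneous sets), and the ball condition becomes $\|\xx\|\le X/\ell$. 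As in \cref{prop:lattice_count_primitive_local_general:third_step} of the previous proof, I would observe that if $\cc$ is $q$-primitive in $\Gamma$ and $\ell\xx\in u\cc+q\Gamma$ with $(u,q)=1$, then automatically $(\ell,q)=1$, so the outer sum restricts to $(\ell,q)=1$; then replacing $u$ by $\overline{\ell}u$ (bijection on reduced residues mod $q$) reduces the inner sum back to an instance amenable to \cref{prop:lattice_count_local}. The range of $\ell$ can be truncated to $\ell\le X/\lambda_1(\Lambda)$ at the cost of absorbing the omitted contribution into the $X/\lambda_1(\Lambda)$ error.

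Next I would insert \cref{prop:lattice_count_local} for each $\ell$ in the truncated sum. The main term becomes
\[
\frac{\mathfrak{V}(\cc,q;\xii,\sigma)\varphi(q)}{q^{r}\det(\Lambda)}X^{r}
\sum_{\substack{\ell\le X/\lambda_1(\Lambda)\\(\ell,q)=1}}\frac{\mu(\ell)}{\ell^{r}}.
\]
Extending this sum to infinity contributes an Euler product $\prod_{p\nmid q}(1-p^{-r})=q^{r}/(J_r(q)\zeta(r))$ and, since $r\ge2$, a tail error of size $O((\lambda_1(\Lambda)/X)^{r-1})$, which after multiplication with the main coefficient is $\ll X/\lambda_1(\Lambda)$ via \cref{lem:Minkowski_second} and \cref{lem:V_bound}. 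This yields the claimed main term exactly.

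For the error terms coming from \cref{prop:lattice_count_local}, I would sum
\[
\sum_{\ell\le X/\lambda_1(\Lambda)}\frac{q}{\sigma}\sum_{1\le\nu<r}\frac{(\frac{\sigma}{q}\cdot\frac{X}{\ell})^{\nu}}{\lambda_1(\Lambda)\cdots\lambda_{\nu}(\Lambda)}.
\]
For $\nu\ge2$, the inner sum over $\ell$ converges absolutely and yields $\frac{q}{\sigma}\cdot\frac{(\sigma X/q)^{\nu}}{\lambda_1(\Lambda)\cdots\lambda_{\nu}(\Lambda)}$, matching the claimed shape. For $\nu=1$, the factors $q/\sigma$ and $\sigma/q$ cancel and we are left with $\sum_{\ell\le X/\lambda_1}\frac{X}{\ell\,\lambda_1(\Lambda)}\ll\frac{X}{\lambda_1(\Lambda)}\log\bigl(\frac{X}{\lambda_1(\Lambda)}+2\bigr)$, which is exactly the second error term in the statement (and absorbs the $\nu=1$ contribution of the first sum as well).

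The only non-obvious point, which is the main obstacle, is to make sure that the $\nu=1$ logarithmic term and the tail of the Euler product both fit cleanly into the stated error, and that the truncation-of-$\ell$ argument really costs only $O(X/\lambda_1(\Lambda))$; this is handled as in \cref{prop:lattice_count_primitive_local_general:third_step}, using the $q$-primitivity of $\cc$ to rule out $\xx=0$ in a coset $u\cc+q\Gamma$ with $(u,q)=1$ (which would force $q=1$). Everything else is routine bookkeeping paralleling the proof of \cref{prop:lattice_count_primitive_local_general}.
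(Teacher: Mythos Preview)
Your argument is correct, but you take a longer route than the paper. The paper's proof is a one-line application of \cref{prop:lattice_count_primitive_local_general} with the semialgebraic family $Z=\{((\xii,\sigma),\xx)\mid \xx\in\mathcal{C}_N(\xii,\sigma)\}$, followed by invoking \cref{lem:V_bound} to convert the general error terms $\mathfrak{V}_\nu(\cc,q;Z_{\TT})$ into the cone-specific bounds $\sigma^{\nu-1}$ --- exactly parallel to how \cref{prop:lattice_count_local} was deduced from \cref{prop:lattice_count_local_general}. You instead redo the M\"obius inversion from scratch, feeding in \cref{prop:lattice_count_local} as the building block; this amounts to swapping the order of the two operations ``specialize to the cone'' and ``sieve for primitivity''. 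Both routes are valid and produce the same error terms, but since \cref{prop:lattice_count_primitive_local_general} has already packaged the M\"obius step, the paper's route avoids repeating that bookkeeping.
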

%%%%%%%%%%%%%%%%%%%%%%%%%%%%%%%%%%%%%%%%
\begin{proof}
We can obtain this lemma by using \cref{prop:lattice_count_primitive_local_general}
similarly to the proof of \cref{prop:lattice_count_local}
and bound the error term by using \cref{lem:V_bound}.
\end{proof}

%%%%%%%%%%%%%%%%%%%%%%%%%%%%%%%%%%%%%%%%
\section{Sums of reciprocals including the Veronese embedding}

%%%%%%%%%%%%%%%%%%%%%%%%%%%%%%%%%%%%%%%%
% \Yuta{Probably, when $\s=0$, then the left-hand side is essentially
% a one-dimensional sum
% \[
% \ll
% \frac{1}{\min(\ast)^{d}}
% +
% \sum_{n\ge 1}\frac{1}{(qn)^{d}}
% \ll
% \mathfrak{R}
% +
% \frac{1}{q},
% \]
% where
% \begin{align}
% \min(\ast)
% \coloneqq{}&
% \min
% \{\|x\|
% \mid
% \xx\in\cc+q\Z^{n+1},\ 0<\|\xx\|\le X,\ 
% [\xx]=[\xii]
% \}\\
% \ge{}&
% \min\{\|x\|
% \mid
% \xx\in
% (\cc+q\Z^{n+1})
% \cap(\mathcal{B}_{n+1}(X))\setminus\{0\})\}
% \end{align}
% and so the assertion is WORSE than this rough estimate.
% }
\begin{lemma}
\label{lem:det_reciprocal_sum}
Let $n,d$ be positive integers with $n\ge d\ge 2$.
For $\cc\in\Z^{n+1}$ and $q\in\N$ such that $\gcd(\cc,q)=1$
and for $\xii\in\R^{n+1}\setminus\{0\}$, $0\le\realeps\le1$ and $X\ge1$, we have
\begin{align}
%T_{d,n}(X;\cc,q;\xii,\realeps)
%\coloneqq{}
&\sum_{\substack{%
\xx\in\cc+q\Z^{n+1}\\
0<\|\xx\|\le X\\
\xx\in\mathcal{C}_{n+1}(\xii,\realeps)}}
\frac{1}{\|\nu_{d,n}(\xx)\|}\\
&=
\mathfrak{W}_{d,n}(\xii,\realeps)
\frac{X^{n+1-d}}{q^{n+1}}
+O\biggl(\frac{1}{\realeps}\biggl(\frac{\realeps}{q}\biggr)^{n}(X^{n-d}+\log X)
+\frac{1}{q}
+\mathfrak{R}\biggr),
\end{align}
where the coefficient $\mathfrak{W}_{d,n}(\xii,\realeps)$ is defined by
\[
\mathfrak{W}_{d,n}(\xii,\realeps)
\coloneqq
\int_{\mathcal{C}_{n+1}(\xii,\realeps)\cap\mathcal{B}_{n+1}(1)}
\frac{d\xx}{\|\nu_{d,n}(\xx)\|},
\]
the error term $\mathfrak{R}$ is defined by
\begin{align}
\mathfrak{R}
&\coloneqq
\int_{1}^{q}\mathbbm{1}_{(\cc+q\Z^{n+1})
\cap(\mathcal{B}_{n+1}(\min(X,dt^{\frac{1}{d}}))\setminus\{0\})\neq\emptyset}\frac{dt}{t^2}
\end{align}
\textup{(}so $\mathfrak{R}\ll1$\textup{)}
and the implicit constant depends only on $d,n$.
\end{lemma}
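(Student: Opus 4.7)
The plan is to convert the weighted sum into a lattice counting problem via the integral representation
\[
\frac{1}{\|\nu_{d,n}(\xx)\|}=\int_{\|\nu_{d,n}(\xx)\|}^{\infty}\frac{dt}{t^{2}}.
\]
Switching the order of summation and integration, the left-hand side becomes
\[
\int_{0}^{\infty}\frac{K(t)}{t^{2}}\,dt,
\]
where $K(t)$ counts $\xx\in\cc+q\Z^{n+1}$ with $0<\|\xx\|\le X$, $\xx\in\mathcal{C}_{n+1}(\xii,\realeps)$, and $\|\nu_{d,n}(\xx)\|\le t$. By \cref{Veronese_bound}, the last condition forces $\|\xx\|\le dt^{1/d}$, so $K(t)$ counts lattice points in the bounded semialgebraic set
\[
Z_{t}\coloneqq\mathcal{C}_{n+1}(\xii,\realeps)\cap\ball{n+1}{X}\cap\{\xx\in\R^{n+1}\mid\|\nu_{d,n}(\xx)\|\le t\}.
\]

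I would then apply \cref{lem:BarroeroWidmer} to the lattice $q\Z^{n+1}$ (rank $n+1$, determinant $q^{n+1}$, all successive minima equal to $q$) and the semialgebraic family obtained by translating $Z_{t}$ by $-\cc$; since translation preserves both $\vol_{n+1}$ and the projected-volume quantities $V_{\nu}$, this yields
\[
K(t)=\frac{\vol_{n+1}(Z_{t})}{q^{n+1}}+O\biggl(\sum_{0\le\nu<n+1}\frac{V_{\nu,\R^{n+1}}(Z_{t})}{q^{\nu}}\biggr).
\]
Integrating the main term against $dt/t^{2}$ and applying Fubini again collapses the integral to $\int_{\mathcal{C}_{n+1}(\xii,\realeps)\cap\ball{n+1}{X}}\|\nu_{d,n}(\xx)\|^{-1}d\xx$. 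The substitution $\xx=X\yy$, together with the homogeneity of $\|\nu_{d,n}\|$ of degree $d$ and the dilation invariance of $\mathcal{C}_{n+1}(\xii,\realeps)$, converts this to $\mathfrak{W}_{d,n}(\xii,\realeps)X^{n+1-d}/q^{n+1}$, matching the claimed main term.

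For the error terms with $\nu\ge1$, I would use the containment $Z_{t}\subset\mathcal{C}_{n+1}(\xii,\realeps)\cap\ball{n+1}{\min(X,dt^{1/d})}$ together with \cref{lem:cone_ball_projection_vol} to obtain $V_{\nu,\R^{n+1}}(Z_{t})\ll\realeps^{\nu-1}\min(X,dt^{1/d})^{\nu}$, then integrate against $dt/(q^{\nu}t^{2})$, splitting the range of $t$ at $(X/d)^{d}$ and using $\|\nu_{d,n}(\xx)\|\ge1$ for any nonzero integer $\xx$ to control the lower end. The dominant contribution should come from $\nu=n$ and produce $(1/\realeps)(\realeps/q)^{n}(X^{n-d}+\log X)$, with the logarithm corresponding to the boundary case $n=d$. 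The $\nu=0$ term $V_{0,\R^{n+1}}(Z_{t})=\mathbbm{1}_{Z_{t}\neq\emptyset}$ is bounded by $\mathbbm{1}_{(\cc+q\Z^{n+1})\cap(\ball{n+1}{\min(X,dt^{1/d})}\setminus\{0\})\neq\emptyset}$, yielding $\mathfrak{R}$ after truncating the integration at $t=q$; the tail $t\ge q$ contributes $O(1/q)$ trivially since the indicator is bounded by $1$ and $\int_{q}^{\infty}dt/t^{2}=1/q$.

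The main obstacle will be the error-term bookkeeping. In particular, the inner radial integrand behaves like $t^{\nu/d-2}$, whose sign changes as $\nu$ crosses $d$, so the contributions from $1\le\nu<n$ are not \emph{a priori} dominated by the $\nu=n$ term; one must carefully verify, possibly by separating the regimes $\nu<d$ and $\nu\ge d$, that all non-dominant pieces can be absorbed into the $O(1/q)$ term once the assumption $X\ge1$ is used, and that the logarithm appears only in the boundary case $n=d$ without leaking into the polynomial factor $X^{n-d}$.
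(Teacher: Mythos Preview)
Your overall strategy is exactly the paper's: partial summation turns the weighted sum into $\int K(t)\,dt/t^2$, Barroero--Widmer is applied to $q\Z^{n+1}$, the main term re-assembles into $\mathfrak{W}_{d,n}(\xii,\realeps)X^{n+1-d}/q^{n+1}$ by Fubini and scaling, and \cref{lem:cone_ball_projection_vol} controls the projected volumes. The paper keeps only the two extremal error indices $\nu=1$ and $\nu=n$ (their $E_2$ and $E_1$), which together dominate every intermediate $\nu$ pointwise in $t$; after integration $E_2$ gives $O(1/q)$ and $E_1$ gives $(1/\realeps)(\realeps/q)^n(X^{n-d}+\log X)$, so your concerns in the last paragraph are resolvable exactly as you anticipate.

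The genuine gap is your handling of the $\nu=0$ term. The quantity $V_{0,\R^{n+1}}(Z_t)=\mathbbm{1}_{Z_t\neq\emptyset}$ is the indicator that the \emph{real} semialgebraic set $Z_t$ is non-empty, and since the cone $\mathcal{C}_{n+1}(\xii,\realeps)$ contains arbitrarily short real vectors, one has $Z_t\neq\emptyset$ for every $t>0$; thus $V_0(Z_t)\equiv1$. Your claimed inequality $V_0(Z_t)\le\mathbbm{1}_{(\cc+q\Z^{n+1})\cap(\mathcal{B}_{n+1}(\min(X,dt^{1/d}))\setminus\{0\})\neq\emptyset}$ goes the wrong way and is false. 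If you simply integrate $V_0\equiv1$ you only get $O(1)$, which proves a weaker statement than the lemma (and the explicit $\mathfrak{R}$-shape is what is exploited in \cref{lem:det_reciprocal_sum_with_unit} when one sums over residues $u$).

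The fix, and what the paper does, is a case split in $t$. When $\min(X,dt^{1/d})\ge q/3$ the $\nu=1$ term $E_2=q^{-1}\min(X,dt^{1/d})$ is $\gg1$ and absorbs the $\nu=0$ contribution. When $\min(X,dt^{1/d})<q/3$ one abandons Barroero--Widmer and observes directly that the coset $\cc+q\Z^{n+1}$ meets $\mathcal{B}_{n+1}(\min(X,dt^{1/d}))$ in at most one point, so $K(t)\le\mathbbm{1}_{(\cc+q\Z^{n+1})\cap(\mathcal{B}_{n+1}(\min(X,dt^{1/d}))\setminus\{0\})\neq\emptyset}$; one also checks that the main-term volume is $\ll E_1$ in this regime so nothing is lost. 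Integrating this indicator over $[1,q]$ gives $\mathfrak{R}$, and the tail $[q,\infty)$ contributes $O(1/q)$.
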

%%%%%%%%%%%%%%%%%%%%%%%%%%%%%%%%%%%%%%%%
\begin{proof}
%For simplicity, write
%\[
%S\coloneqq
%\sum_{\substack{%
%\xx\in\cc+q\Z^{n+1}\\
%0<\|\xx\|\le X\\
%\xx\in\mathcal{C}_{n+1}(\xii,\realeps)}}
%\frac{1}{\|\nu_{d,n}(\xx)\|}.
%\]
%Since the cone $\mathcal{C}_{N}(\xii,\realeps)$ is invariant under the bijective dilation
%\[
%\R^N\to\R^N\semicolon
%\xx\mapsto\frac{\xx}{q},
%\]
%by changing the summation variable, we have
%\begin{equation}
%\label{lem:det_reciprocal_sum:rephrase1}
%T_{d,n}(X)
%=
%\frac{1}{q^{d}}
%\sum_{\substack{%
%\xx\in\Z^{n+1}\\
%0<\|\frac{\cc}{q}+\xx\|\le X/q\\
%\frac{\cc}{q}+\xx\in\mathcal{C}_{n+1}(\xii,\realeps)}}
%\frac{1}{\|\nu_{d,n}(\frac{\cc}{q}+\xx)\|}.
%\end{equation}
By partial summation, we have
\begin{equation}
\label{lem:det_reciprocal_sum:partial_sum}
\begin{aligned}
&\sum_{\substack{%
\xx\in\cc+q\Z^{n+1}\\
0<\|\xx\|\le X\\
\xx\in\mathcal{C}_{n+1}(\xii,\realeps)}}
\frac{1}{\|\nu_{d,n}(\xx)\|}\\
&=
\sum_{\substack{%
\xx\in\cc+q\Z^{n+1}\\
0<\|\xx\|\le X\\
\xx\in\mathcal{C}_{n+1}(\xii,\realeps)}}
\int_{\|\nu_{d,n}(\xx)\|}^{\infty}\frac{dt}{t^2}
=
\int_{1}^{\infty}
\biggl(
\sum_{\substack{%
\xx\in\cc+q\Z^{n+1}\\
0<\|\xx\|\le X\\
\|\nu_{d,n}(\xx)\|\le t\\
\xx\in\mathcal{C}_{n+1}(\xii,\realeps)}}1
\biggr)\frac{dt}{t^2}.
\end{aligned}
\end{equation}
We have
\begin{equation}
\label{lem:det_reciprocal_sum:rephrase1}
\sum_{\substack{%
\xx\in\cc+q\Z^{n+1}\\
0<\|\xx\|\le X\\
\|\nu_{d,n}(\xx)\|\le t\\
\xx\in\mathcal{C}_{n+1}(\xii,\realeps)}}1
=
\#\left\{\xx\in q\Z^{n+1}
\midmid
\begin{gathered}
\|\xx\|>0,\ 
0<\|\cc+\xx\|\le X,\ 
\|\nu_{d,n}(\cc+\xx)\|\le t\\
\|(\cc+\xx)\wedge\xii\|
\le\realeps\|\cc+\xx\|\|\xii\|
\end{gathered}
\right\}.
\end{equation}
Therefore, by considering the semialgebraic set
\[
Z\coloneqq\left\{(\cc,\xii,\realeps,X,t,\xx)
\in\R^{n+1}\times\R^{n+1}\times\R\times\R\times\R\times\R^{n+1}
\midmid
(\ast)
\right\},
\]
where $(\ast)$ is the condition
\[
\tag{$\ast$}
\begin{gathered}
\|\xx\|>0,\quad
0<\|\cc+\xx\|\le X,\quad
\|\nu_{d,n}(\cc+\xx)\|\le t\\
\|(\cc+\xx)\wedge\xii\|
\le\realeps\|\cc+\xx\|\|\xii\|,
\end{gathered}
\]
we have
\begin{equation}
\label{lem:det_reciprocal_sum:rephrase2}
\sum_{\substack{%
\xx\in\cc+q\Z^{n+1}\\
0<\|\xx\|\le X\\
\|\nu_{d,n}(\xx)\|\le t\\
\xx\in\mathcal{C}_{n+1}(\xii,\realeps)}}1
=
\#(q\Z^{n+1}\cap Z_{(\cc,\xii,\realeps,X,t)}),
\end{equation}
where $Z_{\TT}$ is as in \cref{lem:BarroeroWidmer_original}.
For any $(\cc,\xii,\realeps,X,t)\in\R^{(n+1)+(n+1)+1+1+1}$, we have
$Z_{(\cc,\xii,\realeps,X,t)}\subset\mathcal{B}_{r}(X+\|\cc\|)$
which is bounded. Therefore,
by applying \cref{lem:BarroeroWidmer_original} to \cref{lem:det_reciprocal_sum:rephrase2}, we obtain
\begin{equation}
\label{lem:det_reciprocal_sum:BarroeroWidmer}
\sum_{\substack{%
\xx\in\cc+q\Z^{n+1}\\
0<\|\xx\|\le X\\
\|\nu_{d,n}(\xx)\|\le t\\
\xx\in\mathcal{C}_{n+1}(\xii,\realeps)}}1
=
\frac{\vol_{n+1}(Z_{(\cc,\xii,\realeps,X,t)})}{q^{n+1}}
+
O\Biggl(\sum_{0\le\nu\le n}
\frac{V_{\nu}(Z_{(\cc,\xii,\realeps,X,t)})}{q^{\nu}}
\Biggr).
\end{equation}
We have
\begin{equation}
\label{lem:det_reciprocal_sum:ZT_rephrase}
\begin{aligned}
Z_{(\cc,\xii,\realeps,X,t)}
&=
\left\{
\xx\in\R^{n+1}
\midmid
\begin{gathered}
\|\xx\|>0,\ 
0<\|\cc+\xx\|\le X,\ 
\|\nu_{d,n}(\cc+\xx)\|\le t\\
\|(\cc+\xx)\wedge\xii\|
\le\realeps\|\cc+\xx\|\|\xii\|
\end{gathered}
\right\}\\
&=
\left\{
\xx\in\R^{n+1}
\midmid
\begin{gathered}
\xx\neq\cc,\ 
0<\|\xx\|\le X,\ 
\|\nu_{d,n}(\xx)\|\le t\\
\xx\in\mathcal{C}_{n+1}(\xii,\realeps)
\end{gathered}
\right\}-\cc.
\end{aligned}
\end{equation}
We next bound $V_{\nu}(Z_{(\cc,\xii,\realeps,X,t)})$.
Take a $\nu$-dimensional subspace $W$ of $\R^{n+1}$ arbitrarily and
consider the orthogonal projection $\pi_{W}\colon\R^{r}\to W$.
It suffices to bound
$\vol_{W}(\pi_{W}(Z_{(\cc,\xii,\realeps,X,t)}))$.
By \cref{Veronese_bound} and \cref{lem:det_reciprocal_sum:ZT_rephrase}, we have
\[
\vol_{W}\bigl(\pi_{W}(Z_{(\cc,\xii,\realeps,X,t)})\bigr)
\le
\vol_{W}
\Bigl(
\pi_{W}\bigl(
\mathcal{C}_{n+1}(\xii,\realeps)\cap\mathcal{B}_{n+1}(\min(X,dt^{\frac{1}{d}}))
\bigr)\Bigr).
\]
By \cref{lem:cone_ball_projection_vol}, we have
\begin{equation}
\label{lem:det_reciprocal_sum:V_nu}
\vol_{W}\bigl(\pi_{W}(Z_{(\cc,\xii,\realeps,X,t)})\bigr)
\ll
\frac{1}{\realeps}\bigl(\realeps\min(X,dt^{\frac{1}{d}})\bigr)^{\nu}
\end{equation}
for $\nu\ge1$.
Also, we trivially have
\begin{equation}
\label{lem:det_reciprocal_sum:V_0}
V_{0}(Z_{(\cc,\xii,\realeps,X,t)})
\le1.
\end{equation}
On inserting
\cref{lem:det_reciprocal_sum:ZT_rephrase},
\cref{lem:det_reciprocal_sum:V_nu} and \cref{lem:det_reciprocal_sum:V_0}
into \cref{lem:det_reciprocal_sum:BarroeroWidmer}, we obtain
\[
\sum_{\substack{%
\xx\in\cc+q\Z^{n+1}\\
0<\|\xx\|\le X\\
\|\nu_{d,n}(\xx)\|\le t\\
\xx\in\mathcal{C}_{n+1}(\xii,\realeps)}}1
=
\frac{1}{q^{n+1}}
\int\limits_{\substack{\|\xx\|\le X\\
\|\nu_{d,n}(\xx)\|\le t\\
\xx\in\mathcal{C}_{n+1}(\xii,\realeps)}}d\xx
+
O(E_1+E_2),
\]
where
\[
E_1\coloneqq\frac{1}{\realeps}\biggl(\frac{\realeps}{q}\min(X,dt^{\frac{1}{d}})\biggr)^{n}\and
E_2\coloneqq\frac{1}{q}\min(X,dt^{\frac{1}{d}})
\]
provided $E_2\ge\frac{1}{3}$ and so $E_2\gg1$.
When $E_2\le\frac{1}{3}$, by \cref{Veronese_bound}, we have
\[
0<\|\xx\|\le X\ \text{and}\ 
\|\nu_{d,n}(\xx)\|\le t
\implies
0<\|\xx\|\le\frac{q}{3}.
\]
By \cref{lem:cone_ball_projection_vol}
and $\frac{\realeps}{q}\min(X,dt^{\frac{1}{d}})\le1$ in this case, we have
\begin{align}
\frac{1}{q^{n+1}}
\int\limits_{\substack{\|\xx\|\le X\\
\|\nu_{d,n}(\xx)\|\le t\\
\xx\in\mathcal{C}_{n+1}(\xii,\realeps)}}d\xx
&\ll
\frac{1}{q^{n+1}}
\vol_{n+1}\Bigl(\mathcal{C}_{n+1}(\xii,\realeps)
\cap\mathcal{B}_{n+1}\bigl(\min(X,dt^{\frac{1}{d}})\bigr)
\Bigr)\\
&\ll
\frac{1}{\realeps}\biggl(\frac{\realeps}{q}\min(X,dt^{\frac{1}{d}})\biggr)^{n+1}
\ll
E_1.
\end{align}
Also, since there is at most one point in
\[
(\cc+q\Z^{n+1})\cap\mathcal{B}_{n+1}(\min(X,dt^{\frac{1}{d}}))
\subset
(\cc+q\Z^{n+1})\cap\mathcal{B}_{n+1}(\tfrac{q}{3}),
\]
we have
\[
\sum_{\substack{%
\xx\in\cc+q\Z^{n+1}\\
0<\|\xx\|\le X\\
\|\nu_{d,n}(\xx)\|\le t\\
\xx\in\mathcal{C}_{n+1}(\xii,\realeps)}}1
\le
\mathbbm{1}_{(\cc+q\Z^{n+1})
\cap(\mathcal{B}_{n+1}(\min(X,dt^{\frac{1}{d}}))\setminus\{0\})\neq\emptyset}
\eqqcolon\tilde{\mathfrak{R}}.
\]
Thus, in any case, we have
\[
\sum_{\substack{%
\xx\in\cc+q\Z^{n+1}\\
0<\|\xx\|\le X\\
\|\nu_{d,n}(\xx)\|\le t\\
\xx\in\mathcal{C}_{n+1}(\xii,\realeps)}}1
=
\frac{1}{q^{n+1}}
\int\limits_{\substack{\|\xx\|\le X\\
\|\nu_{d,n}(\xx)\|\le t\\
\xx\in\mathcal{C}_{n+1}(\xii,\realeps)}}d\xx
+
O(E_1+E_2+\tilde{\mathfrak{R}}).
\]
We substitute this formula into \cref{lem:det_reciprocal_sum:partial_sum} and obtain
\begin{equation}
\label{lem:det_reciprocal_sum:after_Barroero_Widmer}
\sum_{\substack{%
\xx\in\cc+q\Z^{n+1}\\
0<\|\xx\|\le X\\
\xx\in\mathcal{C}_{n+1}(\xii,\realeps)}}
\frac{1}{\|\nu_{d,n}(\xx)\|}
=
I+O(I_{0}+I_{1}+I_{2}+I_{\tilde{\mathfrak{R}}}),
\end{equation}
where
\begin{align}
I&\coloneqq
\frac{1}{q^{n+1}}
\int_{0}^{\infty}
\biggl(\int\limits_{\substack{\|\xx\|\le X\\
\|\nu_{d,n}(\xx)\|\le t\\
\xx\in\mathcal{C}_{n+1}(\xii,\realeps)}}d\xx\biggr)
\frac{dt}{t^2},\\
I_{0}&\coloneqq
\frac{1}{q^{n+1}}
\int_{0}^{1}
\biggl(\int\limits_{\substack{\|\xx\|\le X\\
\|\nu_{d,n}(\xx)\|\le t\\
\xx\in\mathcal{C}_{n+1}(\xii,\realeps)}}d\xx\biggr)
\frac{dt}{t^2},\\
I_{1}&\coloneqq
\frac{1}{\realeps}\biggl(\frac{\realeps}{q}\biggr)^{n}
\int_{1}^{\infty}\min(X,dt^{\frac{1}{d}})^{n}\frac{dt}{t^2},\\
I_{2}&\coloneqq
\frac{1}{q}
\int_{1}^{\infty}\min(X,dt^{\frac{1}{d}})\frac{dt}{t^2},\\
I_{\tilde{\mathfrak{R}}}
&\coloneqq
\int_{1}^{\infty}\mathbbm{1}_{(\cc+q\Z^{n+1})
\cap(\mathcal{B}_{n+1}(\min(X,dt^{\frac{1}{d}}))\setminus\{0\})\neq\emptyset}\frac{dt}{t^2}
\end{align}
For the integral $I$, we just swap the integral and obtain
\begin{align}
I
&=
\frac{1}{q^{n+1}}
\int\limits_{\substack{\|\xx\|\le X\\
\xx\in\mathcal{C}_{n+1}(\xii,\realeps)}}
\biggl(\int_{\|\nu_{d,n}(\xx)\|}^{\infty}\frac{dt}{t^2}\biggr)d\xx\\
&=
\frac{1}{q^{n+1}}
\int\limits_{\substack{\|\xx\|\le X\\
\xx\in\mathcal{C}_{n+1}(\xii,\realeps)}}\frac{d\xx}{\|\nu_{d,n}(\xx)\|}
=
\frac{X^{n+1-d}}{q^{n+1}}
\int_{\mathcal{C}_{n+1}(\xii,\realeps)\cap\mathcal{B}_{n+1}(1)}
\frac{d\xx}{\|\nu_{d,n}(\xx)\|}.
\end{align}
and so
\begin{equation}
\label{lem:det_reciprocal_sum:I}
I=
\mathfrak{W}_{d,n}(\xii,\realeps)
\frac{X^{n+1-d}}{q^{n+1}}.
\end{equation}
For the integral $I_0$, by using \cref{Veronese_bound} and \cref{lem:cone_ball_projection_vol}, we bound as
\begin{equation}
\label{lem:det_reciprocal_sum:I0}
\begin{aligned}
I_0
&\ll
\frac{1}{q^{n+1}}
\int_{0}^{1}
\vol_{n+1}(\mathcal{C}_{n+1}(\xii,\realeps)\cap\mathcal{B}_{n+1}(dt^{\frac{1}{d}}))\frac{dt}{t^2}\\
&\ll
\frac{1}{\realeps}\biggl(\frac{\realeps}{q}\biggr)^{n+1}
\int_{0}^{1}
t^{\frac{n+1}{d}-2}dt
\ll
\frac{1}{\realeps}\biggl(\frac{\realeps}{q}\biggr)^{n+1}
\ll
\frac{1}{\realeps}\biggl(\frac{\realeps}{q}\biggr)^{n}
X^{n-d}.
\end{aligned}
\end{equation}
For $I_1$, we dissect integral at $t=X^d$ and bound as
\begin{equation}
\label{lem:det_reciprocal_sum:I1}
\begin{aligned}
I_1
&=
\frac{1}{\realeps}\biggl(\frac{\realeps}{q}\biggr)^{n}
\biggl(
\int_{1}^{X^d}t^{\frac{n}{d}-2}dt
+
X^n\int_{X^d}^{\infty}\frac{dt}{t^2}
\biggr)\\
&\ll
\frac{1}{\realeps}\biggl(\frac{\realeps}{q}\biggr)^{n}
(X^{n-d}+\log X),
\end{aligned}
\end{equation}
where $\log X$ is introduced to deal with the case $n=d$.
Similarly, $I_2$ is
\begin{equation}
\label{lem:det_reciprocal_sum:I2}
\begin{aligned}
I_2
\ll
\frac{1}{q}\biggl(\int_{1}^{X^d}t^{\frac{1}{d}-2}du+X\int_{X^d}^{\infty}\frac{dt}{t^2}\biggr)
\ll
\frac{1}{q}
\end{aligned}
\end{equation}
Finally, we bound $I_{\tilde{\mathfrak{R}}}$ as
\begin{equation}
\label{lem:det_reciprocal_sum:IR}
I_{\tilde{\mathfrak{R}}}
\ll
\mathfrak{R}
+\int_{q}^{\infty}\frac{dt}{t^2}
\ll
\mathfrak{R}+\frac{1}{q}.
\end{equation}
On inserting
\cref{lem:det_reciprocal_sum:I},
\cref{lem:det_reciprocal_sum:I0},
\cref{lem:det_reciprocal_sum:I1},
\cref{lem:det_reciprocal_sum:I2}
and \cref{lem:det_reciprocal_sum:IR}
into \cref{lem:det_reciprocal_sum:after_Barroero_Widmer},
we get the lemma.
\end{proof}

%%%%%%%%%%%%%%%%%%%%%%%%%%%%%%%%%%%%%%%%
\begin{lemma}
\label{lem:det_reciprocal_sum_with_unit}
Let $n,d$ be positive integers with $n\ge d\ge 2$.
For $\cc\in\Z^{n+1}$ and $q\in\N$ such that $\gcd(\cc,q)=1$
and for $\xii\in\R^{n+1}\setminus\{0\}$, $0\le\realeps\le1$ and $X\ge1$, we have
\begin{align}
%T_{d,n}(X;\cc,q;\xii,\realeps)
%\coloneqq{}
&\astsum_{u\ \mod{q}}
\sum_{\substack{%
\xx\in u\cc+q\Z^{n+1}\\
0<\|\xx\|\le X\\
\xx\in\mathcal{C}_{n+1}(\xii,\realeps)}}
\frac{1}{\|\nu_{d,n}(\xx)\|}\\
&=
\mathfrak{W}_{d,n}(\xii,\realeps)
\frac{\varphi(q)}{q^{n+1}}X^{n+1-d}
+O\biggl(\biggl(\frac{\realeps}{q}\biggr)^{n-1}(X^{n-d}+\log X)+1\biggr),
\end{align}
where $\mathfrak{W}_{d,n}(\xii,\realeps)$ is as in \cref{lem:det_reciprocal_sum}
and the implicit constant depends only on $d,n$.
\end{lemma}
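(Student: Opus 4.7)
The plan is to apply \cref{lem:det_reciprocal_sum} for each reduced residue $u\ \mod{q}$ with $\cc$ replaced by $u\cc$ (noting $\gcd(u\cc,q)=1$ automatically), and then sum the resulting formulas over $u$. Since the main term $\mathfrak{W}_{d,n}(\xii,\realeps)\frac{X^{n+1-d}}{q^{n+1}}$ and the first two error terms $\frac{1}{\realeps}(\frac{\realeps}{q})^n(X^{n-d}+\log X)$ and $\frac{1}{q}$ are independent of $u$, summing them over the $\varphi(q)$ reduced residues gives
\[
\mathfrak{W}_{d,n}(\xii,\realeps)\frac{\varphi(q)}{q^{n+1}}X^{n+1-d}
+
O\biggl(
\frac{\varphi(q)}{q}\biggl(\frac{\realeps}{q}\biggr)^{n-1}(X^{n-d}+\log X)
+1\biggr),
\]
which already matches the claimed formula up to handling the remaining term $\mathfrak{R}=\mathfrak{R}(u\cc)$.

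The main obstacle is therefore to show $\sum^{\ast}_{u\ \mod{q}}\mathfrak{R}(u\cc)\ll 1$. Swapping sum and integral,
\[
\sum^{\ast}_{u\ \mod{q}}\mathfrak{R}(u\cc)
=
\int_{1}^{q}
\sum^{\ast}_{u\ \mod{q}}
\mathbbm{1}_{(u\cc+q\Z^{n+1})\cap(\ball{n+1}{T(t)}\setminus\{0\})\neq\emptyset}
\frac{dt}{t^2},
\]
where $T(t)\coloneqq\min(X,dt^{\frac{1}{d}})$. I would bound each indicator by the corresponding count of lattice points and apply \cref{lem:uc_distribution} with $\Lambda=\Z^{n+1}$, $\aa=0$ and radius $T(t)$. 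The hypothesis $\gcd(\cc,q)=1$ ensures that $\cc$ is $q$-primitive in $\Z^{n+1}$, and the required bound $T(t)\le Cq\lambda_{1}(\Z^{n+1})=Cq$ holds with $C=d$ for $t\le q$. Hence
\[
\sum^{\ast}_{u\ \mod{q}}
\mathbbm{1}_{(u\cc+q\Z^{n+1})\cap(\ball{n+1}{T(t)}\setminus\{0\})\neq\emptyset}
\ll T(t)+1
\ll dt^{\frac{1}{d}}+1.
\]

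Substituting back and using $d\ge2$ so that $\frac{1}{d}-2\le-\frac{3}{2}<-1$,
\[
\sum^{\ast}_{u\ \mod{q}}\mathfrak{R}(u\cc)
\ll
\int_{1}^{\infty}(t^{\frac{1}{d}-2}+t^{-2})dt
\ll1,
\]
which is absorbed in the claimed error $O(1)$. Combining the three error contributions and using $\varphi(q)/q\le1$ to simplify the first one to $(\frac{\realeps}{q})^{n-1}(X^{n-d}+\log X)$ completes the proof.
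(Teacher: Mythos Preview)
Your proof is correct and follows essentially the same approach as the paper: apply \cref{lem:det_reciprocal_sum} with $u\cc$ in place of $\cc$, sum over reduced residues, and handle the accumulated $\mathfrak{R}$-terms by swapping sum and integral and invoking \cref{lem:uc_distribution} with $\Lambda=\Z^{n+1}$ to bound the inner sum of indicators by $O(t^{1/d}+1)$. The paper's write-up is nearly identical, differing only in cosmetic details (it drops the $\setminus\{0\}$ when bounding and keeps the integral over $[1,q]$ rather than extending to $[1,\infty)$).
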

%%%%%%%%%%%%%%%%%%%%%%%%%%%%%%%%%%%%%%%%
\begin{proof}
We use \cref{lem:det_reciprocal_sum} with $\cc$ replaced by $u\cc$
and then sum up the resulting formula over $u\in(\Z/q\Z)^{\times}$.
The main term is just multiplied by $\varphi(q)$
and all but the last error term is multiplied by $\varphi(q)\le q$.
It thus suffices to bound the remaining error term
\[
E\coloneqq
\int_{1}^{q}
\biggl(\astsum_{u\ \mod{q}}
\mathbbm{1}_{(u\cc+q\Z^{n+1})
\cap\mathcal{B}_{n+1}(\min(X,dt^{\frac{1}{d}}))\neq\emptyset}\biggr)\frac{dt}{t^2}.
\]
By \cref{lem:uc_distribution}, we have
\[
E
\le
\int_{1}^{q}
\biggl(\astsum_{u\ \mod{q}}
\mathbbm{1}_{(u\cc+q\Z^{n+1})
\cap\mathcal{B}_{n+1}(dt^{\frac{1}{d}})\neq\emptyset}\biggr)\frac{dt}{t^2}
\ll
\int_{1}^{q}t^{\frac{1}{d}-2}dt
\ll
1.
\]
This completes the proof.
\end{proof}

%%%%%%%%%%%%%%%%%%%%%%%%%%%%%%%%%%%%%%%%
\begin{lemma}
\label{lem:W_bound}
For $n,d\in\mathbb{N}$ with $n\ge d\ge 2$, $\xii\in\R^{n+1}\setminus\{0\}$ and $0\le\realeps\le1$, we have
\[
\mathfrak{W}_{d,n}(\xii,\realeps)
\asymp
\realeps^{n},
\]
where $\mathfrak{W}_{d,n}(\xii,\realeps)$ is as in \cref{lem:det_reciprocal_sum}
and the implicit constant depends only on $d,n$.
\end{lemma}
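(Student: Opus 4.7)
The plan is to reduce the weighted integral defining $\mathfrak{W}_{d,n}(\xii,\realeps)$ to a pure volume computation and then invoke \cref{lem:cone_ball_cap_vol}. First I would use the Veronese bound \cref{Veronese_bound}, namely $\|\xx\|^d / d! \le \|\nu_{d,n}(\xx)\| \le \|\xx\|^d$, to remove the Veronese factor and reduce to showing
\[
\int_{\mathcal{C}_{n+1}(\xii,\realeps)\cap\mathcal{B}_{n+1}(1)} \frac{d\xx}{\|\xx\|^d} \asymp \realeps^{n},
\]
with constants depending only on $d,n$.

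Next I would pass to polar coordinates $\xx = r\omega$ with $r\in[0,1]$ and $\omega\in S^{n}$. Since the cone $\mathcal{C}_{n+1}(\xii,\realeps)$ is scale-invariant, setting $K \coloneqq \mathcal{C}_{n+1}(\xii,\realeps)\cap S^{n}$, the integrand factors cleanly and the hypothesis $n \ge d \ge 2$ ensures $n-d\ge 0$, so the radial integral is finite:
\[
\int_{\mathcal{C}_{n+1}(\xii,\realeps)\cap\mathcal{B}_{n+1}(1)} \frac{d\xx}{\|\xx\|^d}
= \left(\int_0^1 r^{n-d}\,dr\right)\cdot\mathrm{vol}_{S^{n}}(K)
= \frac{\mathrm{vol}_{S^{n}}(K)}{n-d+1}.
\]
Applying the same polar decomposition to the plain volume gives
\[
\vol_{n+1}\bigl(\mathcal{C}_{n+1}(\xii,\realeps)\cap\mathcal{B}_{n+1}(1)\bigr)
= \frac{\mathrm{vol}_{S^{n}}(K)}{n+1},
\]
so the weighted integral and the volume differ only by the constant factor $\frac{n+1}{n-d+1}$, which depends only on $d,n$.

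Finally, \cref{lem:cone_ball_cap_vol} yields $\vol_{n+1}(\mathcal{C}_{n+1}(\xii,\realeps)\cap\mathcal{B}_{n+1}(1)) \asymp \realeps^{n}$, and putting everything together gives
\[
\mathfrak{W}_{d,n}(\xii,\realeps)
\asymp \frac{n+1}{n-d+1}\,\vol_{n+1}\bigl(\mathcal{C}_{n+1}(\xii,\realeps)\cap\mathcal{B}_{n+1}(1)\bigr)
\asymp \realeps^{n},
\]
as desired. There is essentially no obstacle here: the only subtlety is the integrability of $r^{n-d}$ at the origin, which is guaranteed by $n\ge d$, and both the Veronese step and the comparison with the unweighted volume via polar coordinates are routine. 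The entire content of the lemma thus reduces to the spherical-cap volume estimate already established in \cref{lem:cone_ball_cap_vol}.
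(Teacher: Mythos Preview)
Your argument is correct. The reduction via the Veronese bound \cref{Veronese_bound} to $\int \|\xx\|^{-d}\,d\xx$ is valid in both directions, the radial integral $\int_0^1 r^{n-d}\,dr$ converges precisely because $n\ge d$, and the remaining factor is exactly $(n+1)$ times the cone--ball volume from \cref{lem:cone_ball_cap_vol}.

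The paper proceeds slightly differently. For the lower bound it takes the cruder shortcut $\|\nu_{d,n}(\xx)\|\le\|\xx\|^d\le 1$ on $\mathcal{B}_{n+1}(1)$, so the integrand is $\ge 1$ and one lands directly on \cref{lem:cone_ball_cap_vol}. For the upper bound it does not pass to polar coordinates but instead uses the layer--cake identity $\|\nu_{d,n}(\xx)\|^{-1}=\int_{\|\nu_{d,n}(\xx)\|}^{\infty} t^{-2}\,dt$, swaps the integrals, and bounds the resulting level--set volumes via \cref{lem:cone_ball_projection_vol}. Your polar--coordinate computation is more elementary and handles both bounds at once using only \cref{lem:cone_ball_cap_vol}; the paper's route avoids polar coordinates at the cost of invoking the projection lemma, which it needs elsewhere anyway.
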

%%%%%%%%%%%%%%%%%%%%%%%%%%%%%%%%%%%%%%%%
\begin{proof}
For the upper bound, we use \cref{Veronese_bound} and \cref{lem:cone_ball_projection_vol} to obtain
\begin{align}
\mathfrak{W}_{d,n}(\xii,\realeps)
&=
\int_{\mathcal{C}_{n+1}(\xii,\realeps)\cap\mathcal{B}_{n+1}(1)}
\int_{\|\nu_{d,n}(\xx)\|}^{\infty}\frac{dt}{t^2}d\xx\\
&\le
\int_{0}^{\infty}
\biggl(\int_{\mathcal{C}_{n+1}(\xii,\realeps)\cap\mathcal{B}_{n+1}(\min(1,dt^{\frac{1}{d}}))}
d\xx\biggr)\frac{dt}{t^2}\\
&\ll
\realeps^{n}
\int_{0}^{\infty}\min(1,dt^{\frac{1}{d}})^{n+1}\frac{dt}{t^2}\\
&\ll
\realeps^{n}
\biggl(\int_{0}^{1}t^{\frac{n+1}{d}-2}dt
+\int_{1}^{\infty}\frac{dt}{t^2}\biggr)
\ll
\realeps^{n}
\end{align}
For the lower bound, by \cref{Veronese_bound} and \cref{lem:cone_ball_cap_vol}, we have
\begin{align}
\mathfrak{W}_{d,n}(\xii,\realeps)
&\gg
\int_{\mathcal{C}_{n+1}(\xii,\realeps)\cap\mathcal{B}_{n+1}(1)}d\xx
\gg
\realeps^{n}.
\end{align}
This completes the proof.
\end{proof}

%%%%%%%%%%%%%%%%%%%%%%%%%%%%%%%%%%%%%%%%
\begin{lemma}
\label{lem:det_reciprocal_prim_sum}
Let $n,d$ be positive integers with $n\ge d\ge 2$.
For $\cc\in\Z^{n+1}$ and $q\in\N$ such that $\gcd(\cc,q)=1$
and for $\xii\in\R^{n+1}\setminus\{0\}$, $0\le\realeps\le1$ and $X\ge1$,
\begin{align}
%&E_{d,n}(X;\cc,q;\xii,\realeps)
%\coloneqq
&
\astsum_{u\ \mod{q}}
\sum_{\substack{%
\xx\in\Zprim^{n+1}\\
\|\xx\|\le X\\
\xx\equiv u\cc\ \mod{q}\\
\xx\in\mathcal{C}_{n+1}(\xii,\realeps)}}
\frac{1}{\|\nu_{d,n}(\xx)\|}\\
&=
\mathfrak{W}_{d,n}(\xii,\realeps)\frac{\varphi(q)}{J_{n+1}(q)}\cdot
\frac{X^{n+1-d}}{\zeta(n+1)}
+O\biggl(\biggl(\frac{\realeps}{q}\biggr)^{n-1}(X^{n-d}+\log X)+1\biggr),
\end{align}
where $\mathfrak{W}_{d,n}(\xii,\realeps)$ is as in \cref{lem:det_reciprocal_sum}
and the implicit constant depends only on $d,n$.
\end{lemma}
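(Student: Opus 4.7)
The plan is to reduce to \cref{lem:det_reciprocal_sum_with_unit} via M\"obius inversion, exploiting the scaling properties of all the objects involved.

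First, I would write $\mathbbm{1}_{\xx\in\Zprim^{n+1}}=\sum_{\ell\mid\gcd(\xx)}\mu(\ell)$ and substitute $\xx=\ell\yy$ with $\yy\in\Z^{n+1}\setminus\{0\}$. Since $\|\nu_{d,n}(\ell\yy)\|=\ell^{d}\|\nu_{d,n}(\yy)\|$, since $\cone{n+1}{\xii}{\realeps}$ is dilation-invariant, and since $\|\yy\|\ge1$ forces $\ell\le X$, the sum becomes
\[
\sum_{\substack{1\le\ell\le X\\(\ell,q)=1}}\frac{\mu(\ell)}{\ell^{d}}
\astsum_{u\ \mod{q}}
\sum_{\substack{
\yy\in u\overline{\ell}\cc+q\Z^{n+1}\\
0<\|\yy\|\le X/\ell\\
\yy\in\cone{n+1}{\xii}{\realeps}}}
\frac{1}{\|\nu_{d,n}(\yy)\|}.
\]
Here the restriction $(\ell,q)=1$ comes from the congruence $\ell\yy\equiv u\cc\ \mod{q}$ together with $(u,q)=1$ and $\gcd(\cc,q)=1$: any prime dividing $\gcd(\ell,q)$ would have to divide every entry of $u\cc$ modulo $q$, hence every entry of $\cc$, contradicting primitivity. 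As $u$ runs over reduced residues modulo $q$, so does $u\overline{\ell}$, so the inner double sum is exactly the quantity appearing in \cref{lem:det_reciprocal_sum_with_unit} with $X$ replaced by $X/\ell$.

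Applying \cref{lem:det_reciprocal_sum_with_unit}, the main term contribution is
\[
\mathfrak{W}_{d,n}(\xii,\realeps)\frac{\varphi(q)}{q^{n+1}}X^{n+1-d}
\sum_{\substack{1\le\ell\le X\\(\ell,q)=1}}\frac{\mu(\ell)}{\ell^{n+1}}.
\]
Completing the sum to all $\ell$ introduces a tail error of size $O(X^{-n})$ per $\ell$, giving a total tail of $O(\mathfrak{W}_{d,n}(\xii,\realeps)\varphi(q)q^{-n-1}X^{1-d})$, which by \cref{lem:W_bound} is $\ll(\realeps/q)^{n-1}\cdot(\realeps/q)\cdot X^{1-d}\ll(\realeps/q)^{n-1}X^{n-d}$ because $X\ge1$ and $d\ge2$. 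The completed sum evaluates as
\[
\sum_{(\ell,q)=1}\frac{\mu(\ell)}{\ell^{n+1}}
=
\prod_{p\nmid q}\biggl(1-\frac{1}{p^{n+1}}\biggr)
=
\frac{q^{n+1}}{J_{n+1}(q)\zeta(n+1)},
\]
which produces exactly the claimed main term.

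For the error contribution, I substitute the error term from \cref{lem:det_reciprocal_sum_with_unit} and sum with the weight $\mu^{2}(\ell)/\ell^{d}$. Since $d\ge2$, the series $\sum\mu^{2}(\ell)/\ell^{d}$ and $\sum\mu^{2}(\ell)/\ell^{n}$ both converge absolutely, so the three pieces give respectively
\[
\biggl(\frac{\realeps}{q}\biggr)^{n-1}X^{n-d}\sum_{\ell\le X}\frac{\mu^{2}(\ell)}{\ell^{n}}\ll\biggl(\frac{\realeps}{q}\biggr)^{n-1}X^{n-d},
\quad
\biggl(\frac{\realeps}{q}\biggr)^{n-1}\log X\sum_{\ell}\frac{\mu^{2}(\ell)}{\ell^{d}}\ll\biggl(\frac{\realeps}{q}\biggr)^{n-1}\log X,
\]
and $\sum\mu^{2}(\ell)/\ell^{d}\ll1$, which together match the stated error term. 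The main thing to be careful about is the truncation of the M\"obius sum at $\ell\le X$ and the verification that the completion error is absorbed into the $(\realeps/q)^{n-1}X^{n-d}$ term; everything else is routine bookkeeping given the previous lemmas.
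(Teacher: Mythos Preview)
Your proposal is correct and follows essentially the same route as the paper: M\"obius inversion in $\ell$, the observation that $\gcd(\cc,q)=1$ forces $(\ell,q)=1$, a change of variable in $u$ to absorb $\overline{\ell}$, application of \cref{lem:det_reciprocal_sum_with_unit} at level $X/\ell$, and then completion of the sum $\sum_{(\ell,q)=1}\mu(\ell)/\ell^{n+1}$ with the tail absorbed via \cref{lem:W_bound}. The only cosmetic difference is that the paper bounds the completion tail by $O(1/X)$ rather than your sharper $O(X^{-n})$, leading to $O((\realeps/q)^{n}X^{n-d})$ instead of your $O((\realeps/q)^{n}X^{1-d})$; both are swallowed by the stated error.
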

%%%%%%%%%%%%%%%%%%%%%%%%%%%%%%%%%%%%%%%%
\begin{proof}
For brevity, let us write
\[
S
\coloneqq
\astsum_{u\ \mod{q}}
S(u)
\and
S(u)\coloneqq
\sum_{\substack{%
\xx\in\Zprim^{n+1}\\
\|\xx\|\le X\\
\xx\equiv u\cc\ \mod{q}\\
\xx\in\mathcal{C}_{n+1}(\xii,\realeps)}}
\frac{1}{\|\nu_{d,n}(\xx)\|}.
\]
For $u\in(\Z/q\Z)^{\times}$, we have
\begin{align}
S(u)
=
\sum_{\substack{%
\xx\in\Z^{n+1}\\
0<\|\xx\|\le X\\
\xx\equiv u\cc\ \mod{q}\\
\xx\in\mathcal{C}_{n+1}(\xii,\realeps)}}
\frac{1}{\|\nu_{d,n}(\xx)\|}
\sum_{\ell\mid\xx}\mu(\ell)
&=
\sum_{\ell\le X}\mu(\ell)
\sum_{\substack{%
\xx\in\Z^{n+1}\\
0<\|\xx\|\le X\\
\xx\equiv u\cc\ \mod{q}\\
\xx\in\mathcal{C}_{n+1}(\xii,\realeps)\\
\ell\mid\xx}}
\frac{1}{\|\nu_{d,n}(\xx)\|}\\
&=
\sum_{\ell\le X}\frac{\mu(\ell)}{\ell^{d}}
\sum_{\substack{%
\xx\in\Z^{n+1}\\
0<\|\xx\|\le X/\ell\\
\ell\xx\equiv u\cc\ \mod{q}\\
\ell\xx\in\mathcal{C}_{n+1}(\xii,\realeps)}}
\frac{1}{\|\nu_{d,n}(\xx)\|}.
\end{align}
Since the cone $\mathcal{C}_{N}(\xii,\realeps)$ is invariant under dilation, we have
\begin{equation}
\label{lem:det_reciprocal_prim_sum:first_step}
S
=
\sum_{\ell\le X}\frac{\mu(\ell)}{\ell^{d}}
\astsum_{u\ \mod{q}}
\sum_{\substack{%
\xx\in\Z^{n+1}\\
0<\|\xx\|\le X/\ell\\
\ell\xx\equiv u\cc\ \mod{q}\\
\xx\in\mathcal{C}_{n+1}(\xii,\realeps)}}
\frac{1}{\|\nu_{d,n}(\xx)\|}.
\end{equation}
By the assumption $\gcd(\cc,q)=1$, we have
\[
(u,q)=1\ \text{and}\ 
\ell\xx\equiv u\cc\ \mod{q}
\implies
(\ell,q)=1.
\]
Thus,
by changing the summation variable $u$ via $u\rightsquigarrow \ell u$,
we can rewrite \cref{lem:det_reciprocal_prim_sum:first_step} as
\begin{equation}
\label{lem:det_reciprocal_prim_sum:second_step}
\begin{aligned}
S
&=
\sum_{\substack{\ell\le X\\(\ell,q)=1}}\frac{\mu(\ell)}{\ell^{d}}
\astsum_{u\ \mod{q}}
\sum_{\substack{%
\xx\in\Z^{n+1}\\
0<\|\xx\|\le X/\ell\\
\ell\xx\equiv\ell u\cc\ \mod{q}\\
\xx\in\mathcal{C}_{n+1}(\xii,\realeps)}}
\frac{1}{\|\nu_{d,n}(\xx)\|}\\
&=
\sum_{\substack{\ell\le X\\(\ell,q)=1}}\frac{\mu(\ell)}{\ell^{d}}
\astsum_{u\ \mod{q}}
\sum_{\substack{%
\xx\in\Z^{n+1}\\
0<\|\xx\|\le X/\ell\\
\xx\equiv u\cc\ \mod{q}\\
\xx\in\mathcal{C}_{n+1}(\xii,\realeps)}}
\frac{1}{\|\nu_{d,n}(\xx)\|}.
\end{aligned}
\end{equation}
We then apply \cref{lem:det_reciprocal_sum_with_unit} to the inner sum and get
\begin{equation}
\label{lem:det_reciprocal_prim_sum:prefinal}
S
=
\mathfrak{W}_{d,n}(\xii,\realeps)
\frac{\varphi(q)}{q^{n+1}}X^{n+1-d}
\sum_{\substack{\ell\le X\\(\ell,q)=1}}\frac{\mu(\ell)}{\ell^{n+1}}
+O\biggl(\biggl(\frac{\realeps}{q}\biggr)^{n-1}(X^{n-d}+\log X)+1\biggr)
\end{equation}
since $n\ge d\ge2$. For the main term, we use
\[
\sum_{\substack{\ell\le X\\(\ell,q)=1}}\frac{\mu(\ell)}{\ell^{n+1}}
=
\prod_{p\nmid q}\biggl(1-\frac{1}{p^{n+1}}\biggr)+O\biggl(\frac{1}{X}\biggr)
=
\frac{q^{n+1}}{J_{n+1}(q)\zeta(n+1)}+O\biggl(\frac{1}{X}\biggr).
\]
Therefore, by \cref{lem:W_bound}, we have
\begin{align}
&\mathfrak{W}_{d,n}(\xii,\realeps)
\frac{\varphi(q)}{q^{n+1}}X^{n+1-d}
\sum_{\substack{\ell\le X\\(\ell,q)=1}}\frac{\mu(\ell)}{\ell^{n+1}}\\
&=
\mathfrak{W}_{d,n}(\xii,\realeps)
\frac{\varphi(q)}{J_{n+1}(q)}\cdot\frac{X^{n+1-d}}{\zeta(n+1)}
+
O\biggl(\biggl(\frac{\realeps}{q}\biggr)^{n}X^{n-d}\biggr).
\end{align}
On inserting this formula into \cref{lem:det_reciprocal_prim_sum:prefinal},
we obtain the lemma.
\end{proof}

\section{Key lemmas}
%%%%%%%%%%%%%%%%%%%%%%%%%%%%%%%%%%%%%%%%%

\begin{definition}
Let $M$ be a free $\Z$-module of finite rank.
Let $v \in M$.
Define 
\begin{align}
g(v) := \#(M/\Z v)_{\rm tors}
\end{align}
for $v \neq 0$ and define $g(0)=0$.
\end{definition}
When $v \neq 0$, write $v = d w$ with $d \in \Z$ and a primitive element $w \in M$.
Then $g(v) = |d|$.

\begin{definition}\label{def:Srn}
Let us consider 
\begin{itemize}
    \item $n \in \Z_{\geq 2}$ and $r \in \Z$ with $1 \leq r \leq n+1$;
    \item $s_{1}, \dots ,s_{r} \geq 1$;
    \item $\s \in (0,1]$ and $\xii \in \R^{n+1} \setminus \{0\}$;
    \item $q \in \Z_{\geq 1}$ and $\cc \in \Z^{n+1}$ with $\gcd(g(\cc), q)=1$.
\end{itemize}
Note that if $\cc =0$, the last condition means $q=1$.
Let
\begin{align}
S_{r,n} = S_{r,n}(s_1,\dots, s_r; \cc, q ; \xii, \s) =
\# \mathcal{S}_{r,n}
\end{align}
where
\begin{align}
&\mathcal{S}_{r,n}= \mathcal{S}_{r,n}(s_1,\dots, s_r; \cc, q ; \xii, \s)\\
&=\left\{ L \subset \Z^{n+1} \ \txt{primitive\\sublattice} \ \middle|\ 
\txt{$\rank L = r$, $s_{i}/2 < \l_{i}(L) \leq s_{i} \ (1 \leq i \leq r)$\\ 
$\exists \xx \in L$ s.t. $\xx \equiv u\cc\ \mod {q\Z^{n+1}}$ \\
for some $u \in (\Z/q\Z)^{\times}$\\
and $d_{\infty}(\xx, \xii) \leq \s$}  \right\}.
\end{align}
\end{definition}

\begin{definition}
For a  non-zero sublattice $L \subset \Z^{m}$ and $\xii \in \R^{m} \setminus \{0\}$, define 
\begin{align}
d_{\infty}(L, \xii) := \inf \left\{ d_{\infty}(\xx, \xii) \ \middle|\ \xx \in  L_{\R} \setminus \{0\} \right\}.
\end{align}
\end{definition}

\begin{remark}
Let $\xii \in \R^m$ be such that $\|\xii\|=1$.
If $\xii = \xii_{1} + \xii_{2}$ where $\xii_{1} \in L_{\R}$ and $\xii_{2} \in L_{\R}^{\perp}$,
we have
\begin{align}
d_{\infty}(L, \xii) = \| \xii_{2} \|. 
\end{align}
Indeed, for $\xx \in L_{\R}$ with $ \| \xx \|=1$,  we have
\begin{align}
d_{\infty}(\xx, \xii)^{2} = \frac{\| \xx \|^{2} \| \xii \|^{2} - \langle \xx, \xii \rangle^{2}}{\| \xx \|^{2} \| \xii \|^{2} }  = 1 - \langle \xx, \xii_1 \rangle^{2}.
\end{align}
This takes minimum when $\xx = \xii_1/\|\xii_1\|$:
\begin{align}
    d_{\infty}(L, \xii)^2 = 1 - \frac{\langle \xii_1, \xii_1 \rangle^2}{\|\xii_1\|^2} = \|\xii_2\|^2.
\end{align}
\end{remark}

\begin{lemma}\label{lem:seq-of-sublattices-general}
Notation as in \cref{def:Srn}.
Let $L \in \mathcal{S}_{r,n}$.
Then there is a sequence of primitive sublattices 
\begin{align}
0=L_{0}\subset L_{1} \subset L_{2} \subset \cdots \subset L_{r}= L 
\end{align}
such that
\begin{enumerate}
\item $\rank L_{i}=i $
\item $\det L_{i} \asymp s_{1} \cdots s_{i}$
\item $ \l_{1}(L_{i}/L_{i-1}) \asymp s_{i}$
\end{enumerate}
for $i=1,\dots, r$. Here implicit constants depend only on $r$.
(We equip metrics on the quotient lattices using orthogonal projections as usual.)

Let $\pi_{i} \colon \Z^{n+1} \longrightarrow \Z^{n+1}/L_{i}$ be the quotient map.
Set $d_{i} = (g(\pi_{i}(\cc)), q)$ for $i=0,\dots,r$.
Then we have
\begin{align}
1 = d_{0}|d_{1} | \cdots | d_{r-1} | d_{r}=q.
\end{align}
We also have
\begin{align}
&d_{\infty}(L_{1}, \xii) \geq \cdots \geq d_{\infty}(L_{r}, \xii);\\
& d_{\infty}(L_{r}, \xii) \leq \s.
\end{align}

Moreover, if $v \in L_{i+1}/L_{i}$ is a $\Z$-basis, then
\begin{enumerate}
\item
there is $u \in (\Z/(d_{i+1}/d_{i})\Z)^{\times}$ such that
\begin{align}
v \equiv u \left(d_{i}^{-1}\pi_{i}(\cc) \right)\ \mod { (d_{i+1}/d_{i})\Z^{n+1}/L_{i}  }.
\end{align}
Here note that $d_{i}^{-1}\pi_{i}(\cc) \in \Z^{n+1}/L_{i}$ and 
\begin{align}
\left( g\left(d_{i}^{-1}\pi_{i}(\cc) \right), d_{i+1}/d_{i}\right) = 1
\end{align}
by the definition of $d_{i}$.
Note also that if $\pi_i(\cc) = 0$, then $d_i=d_{i+1} = q$ and hence this is vacuously true.
\item
we have
\begin{align}
d_{\infty, (L_{i})_{\R}^{\perp}}(v, \pi_{i}(\xii)) \leq \frac{d_{\infty}(L_{i+1}, \xii)}{d_{\infty}(L_{i}, \xii)} .
\end{align}
If $\xii \in (L_{i})_{\R}$, then the RHS is $\infty$ and the condition is vacuous.
\end{enumerate}
\end{lemma}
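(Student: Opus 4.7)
The plan is to apply \cref{lem:nice_basis} directly to $L$, producing a $\Z$-basis $(\vv_1,\ldots,\vv_r)$ with $\|\vv_i\|\asymp\lambda_i(L)\asymp s_i$, and to set $L_i\coloneqq\Z\vv_1+\cdots+\Z\vv_i$. Conditions (1)--(3) will then follow immediately: (2) is the partial determinant statement \cref{lem:nice_basis}~(ii), while (3) uses that $L_i/L_{i-1}$ is rank-one, spanned by $\pi_{i-1}^\perp(\vv_i)$ of length $\asymp s_i$ by \cref{lem:nice_basis}~(iii). Primitivity of each $L_i$ in $\Z^{n+1}$ follows from the short exact sequence $0\to L/L_i\to\Z^{n+1}/L_i\to\Z^{n+1}/L\to 0$: both outer groups are torsion-free (the first is free on $\vv_{i+1},\ldots,\vv_r$; the second by primitivity of $L$), hence so is the middle.

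Next I would handle the arithmetic tower. The elementary fact that any surjection $\phi\colon M\twoheadrightarrow M'$ of free $\Z$-modules satisfies $g(v)\mid g(\phi(v))$ (with the convention $g(0)=0$), applied to $\Z^{n+1}/L_i\twoheadrightarrow\Z^{n+1}/L_{i+1}$, yields $d_i\mid d_{i+1}$. At the endpoints, $d_0=\gcd(g(\cc),q)=1$ is the hypothesis, and $d_r=q$ follows from the defining $\xx\in L$ with $\xx\equiv u\cc\pmod{q\Z^{n+1}}$ and $\gcd(u,q)=1$: this forces $\pi_r(\cc)\in q(\Z^{n+1}/L)$ (using that $\Z^{n+1}/L$ is torsion-free), hence $q\mid g(\pi_r(\cc))$. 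The distance monotonicity $d_\infty(L_1,\xii)\geq\cdots\geq d_\infty(L_r,\xii)$ and the bound $d_\infty(L_r,\xii)\leq\sigma$ are immediate from $L_i\subset L_{i+1}$ and the defining property of $\mathcal{S}_{r,n}$.

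For the distance inequality involving $v$, I would use the orthogonal decomposition $\xii=\xii_i+\xii_i^\perp$ with $\xii_i^\perp\in(L_i)_\R^\perp$ (so $\pi_i(\xii)=\xii_i^\perp$), and further $\xii_i^\perp=\alpha v+w$ with $w\perp v$ inside $(L_i)_\R^\perp$. The vector $w$ then lies in $(L_{i+1})_\R^\perp$ and in fact equals the orthogonal projection of $\xii$ there. A short computation via \cref{wedge_to_innerproduct} yields $\|v\wedge\xii_i^\perp\|=\|v\|\cdot\|w\|$, giving the claimed inequality (in fact as an equality).

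The main obstacle will be the congruence. Set $M=\Z^{n+1}/L_i$ and $\cc'=\pi_i(\cc)$, so $d_i^{-1}\cc'\in M$ is well-defined. Extend $v$ to a $\Z$-basis $(v,f_1,\ldots,f_{n-i})$ of $M$ and expand $d_i^{-1}\cc'=b_0v+\sum_{j\geq 1}b_jf_j$. The target congruence $v\equiv u(d_i^{-1}\cc')\pmod{(d_{i+1}/d_i)M}$ splits into $ub_0\equiv 1$ and $ub_j\equiv 0\pmod{d_{i+1}/d_i}$ for $j\geq 1$. The latter conditions follow by reducing modulo $\Z v$: the image of $d_i^{-1}\cc'$ in $M/\Z v$ has primitive divisor divisible by $d_{i+1}/d_i$ by the very definition of $d_{i+1}$, which translates to $(d_{i+1}/d_i)\mid\gcd(b_1,\ldots,b_{n-i})$ and hence to $(d_{i+1}/d_i)\mid b_j$ for every $j\geq 1$. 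The former reduces to $\gcd(b_0,d_{i+1}/d_i)=1$, which follows by combining this divisibility with the identity $\gcd(g(d_i^{-1}\cc'),d_{i+1}/d_i)=1$ (itself a direct consequence of $d_i=\gcd(g(\cc'),q)$ together with $(d_{i+1}/d_i)\mid q/d_i$). The degenerate case $\pi_i(\cc)=0$, where $d_i=d_{i+1}=q$ and the congruence is vacuous, must be handled separately.
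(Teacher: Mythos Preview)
Your proof is correct and follows essentially the same approach as the paper: both construct the flag $L_i$ from a basis as in \cref{lem:nice_basis}, derive the divisibility chain $d_0\mid\cdots\mid d_r$ from the multiplicativity of $g$ under quotients, and prove the congruence by expanding $\pi_i(\cc)$ in a basis of $\Z^{n+1}/L_i$ extending $v$. Your treatment of the distance inequality via the orthogonal splitting $\xii_i^\perp=\alpha v+w$ is in fact slightly cleaner than the paper's (and shows the bound is actually an equality), and your explicit short-exact-sequence argument for the primitivity of each $L_i$ fills in a detail the paper leaves implicit.
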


\begin{proof}
    The existence of the sequence $L_i$ follows from \cref{lem:nice_basis} or
    \cite[Lemma 12.3]{Davenport:DiophantineEquation}.
    We have $g(\pi_i(\cc)) \mid g(\pi_{i+1}(\cc))$ by the definition of $g$.
    Thus $d_i \mid d_{i+1}$.
    Since $L_r = L \in \mathcal{S}_{r,n}$, we have $d_r = q$.
    We have $d_{\infty}(L_i, \xii) \geq d_{\infty}(L_{i+1}, \xii)$
    since $L_i \subset L_{i+1}$.
    Since $L_r = L \in \mathcal{S}_{r,n}$, we have $d_{\infty}(L_r, \xii) \leq \s$.

    For (1), identify $\Z^{n+1}/L_i$ with $\Z^{n+1-i}$ via 
    a $\Z$-basis of $\Z^{n+1}/L_i$ extending $v$.
    Let us write $\pi_i(\cc) = (k_1,\dots, k_{n+1-i})$.
    Then
    \begin{align}
        &\gcd(k_1,\dots, k_{n+1-i},q) = d_i\\
        &\gcd(k_2,\dots, k_{n+1-i},q) = d_{i+1}.
    \end{align}
    Thus 
    \begin{align}
        d_i^{-1}\pi_i(\cc) = (k_1/d_i, \dots, k_{n+1-i}/d_i) \equiv
        (k_1/d_i, 0 , \dots, 0) \ \mod{(d_{i+1}/d_i)\Z^{n+1-i}}
    \end{align}
    and $\gcd(k_1/d_i, d_{i+1}/d_i)=1$. This proves what we wanted.

    For (2), first note that we may assume $\|\xii\|=1$.
    Let us write
    \begin{align}
        \xii = \etaa + \etaa' \quad \etaa \in (L_{i+1})_{\R},\ \etaa' \in (L_{i+1})_{\R}^\perp.
    \end{align}
    Since $\|\xii\| = 1$, we have $d_{\infty}(L_{i+1}, \xii) = \|\etaa'\|$.
    Write similarly
    \begin{align}
        \etaa = \etaa_1 + \etaa_2 \quad \etaa_1 \in (L_i)_{\R},\ \etaa_2 \in (L_i)_{\R}^{\perp}.
    \end{align}
    If $\etaa_2 + \etaa' = 0$, then $\xii \in \R L_i$ and we are done.
    Assume $\etaa_2 + \etaa' \neq 0$.
    Then we have $d_{\infty}(L_i, \xii) = \|\etaa_2 + \etaa'\|$.
    If $\etaa_2 = 0$, then $d_{\infty}(L_{i+1}, \xii) = d_{\infty}(L_i, \xii) = \|\etaa'\|$
    and we are done. Thus we may assume $\etaa_2 \neq 0$.
    We have
    \begin{align}
        d_{\infty}(L_{i+1}, \xii) = \|\etaa'\| =  \| (\etaa_2 + \etaa') - \etaa_2 \|.
    \end{align}
    Dividing both side by $d_{\infty}(L_i, \xii)$, we get
    \begin{align}
        \frac{d_{\infty}(L_{i+1}, \xii)}{d_{\infty}(L_i, \xii)} 
        = \left\| \frac{\etaa_2 + \etaa'}{\|\etaa_2 + \etaa'\|} - \frac{\etaa_2}{\|\etaa_2 + \etaa'\|} \right\|
        \geq |\sin(\theta)|
        = d_{\infty}(\etaa_2 + \etaa', \etaa_2)
    \end{align}
    where $\theta$ is the angle between 
    $\etaa_2 + \etaa'$ and $\etaa_2$.
    By the identification 
    $\pi_i|_{(L_i)_{\R}^{\perp}} \colon (L_i)_{\R}^{\perp} \to \R^{n+1}/(L_i)_{\R}$,
    $\pi_i(\xii)$ corresponds to $\etaa_2 + \etaa'$ and $v$ corresponds to
    some real multiple of $\etaa_2$.
    Thus $d_{\infty}(\etaa_2 + \etaa', \etaa_2) 
    = d_{\infty, \R L_i^{\perp}}(\pi_i(\xii), v)$ and we are done.
\end{proof}

\begin{lemma}\label{lem:point-count-general}
Let $L \subset \Z^{n+1}$ be a primitive sublattice of rank $i$.
Let $d \geq 1$.
Let $\cc \in \Z^{n+1}/L$ be such that $(g(\cc), d)=1$.
Let $\xii \in L_{\R}^{\perp} \setminus \{ 0\}$ and let $\s \in (0,1]$.
Then 
\begin{align}
&\# \left\{ v \in (\Z^{n+1}/L)_{\prim} \ \middle|\ \txt{$\| v \| \leq X, \exists u \in (\Z/d\Z)^{\times}, v \equiv u\cc\  \mod {d \Z^{n+1}/L}$ \\
$d_{\infty, \R L^{\perp}} (v, \xii) \leq \s$ } \right\}\\
&\ll \det L \left( \frac{\s^{n-i}}{d^{n-i}}X^{n+1-i} + X \right)
\end{align}
where the implicit constant depends only on $n,i$.
\end{lemma}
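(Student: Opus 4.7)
The plan is to interpret $\Lambda := \Z^{n+1}/L$ as a rank $r := n+1-i$ lattice sitting inside $L_{\R}^{\perp}$ with $\det\Lambda = 1/\det L$ (via the quotient-lattice formula recalled after \cref{lem:nice_basis}). Since dropping the primitivity condition only enlarges the count, the quantity in question is bounded above by
\[
\astsum_{u\ \mod{d}}\#\bigl(\Lambda\cap(u\cc+d\Lambda)\cap\mathcal{C}_{n+1}(\xii,\s)\cap\mathcal{B}_{n+1}(X)\bigr),
\]
and the hypothesis $(g(\cc),d)=1$ is exactly $d$-primitivity of $\cc$ in $\Lambda$. Applying \cref{prop:lattice_count_local} with $\Gamma=\Lambda$ (so $\Lambda$ is trivially primitive in itself), the main term
\[
\mathfrak{V}(\cc,d;\xii,\s)\,\frac{\varphi(d)}{d^{r}}\,\frac{X^{r}}{\det\Lambda}
\]
becomes, after using $\varphi(d)/d^{r}\le 1/d^{r-1}$, the bound $\mathfrak{V}\ll\s^{r-1}$ from \cref{lem:V_bound} (equivalently \cref{lem:cone_ball_cap_vol}, noting $\xii\in\Lambda_{\R}$), and $1/\det\Lambda=\det L$, precisely the target main contribution $\ll\det L\cdot\s^{n-i}X^{n+1-i}/d^{n-i}$.

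The crucial arithmetic input for the error terms is the lower bound $\lambda_{1}(\Lambda)\ge 1/\det L$. To prove it, observe that if $v=\pi(w)\in\Lambda\setminus\{0\}$ with $w\in\Z^{n+1}\setminus L$, then $L+\Z w$ is an integral sublattice of $\Z^{n+1}$ of rank $i+1$ whose determinant equals $\det L\cdot\|v\|$ (by the base-times-height formula applied to the orthogonal decomposition $w=\pi_{L_{\R}}(w)+v$); since any integral lattice has determinant $\ge 1$, one obtains $\|v\|\ge 1/\det L$. Combined with Minkowski's second theorem ($\lambda_{1}\cdots\lambda_{r}\asymp 1/\det L$), this immediately converts the boundary term $X/\lambda_{1}(\Lambda)$ (the $\nu=1$ error) into $\det L\cdot X$, the second piece of the target bound. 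For the intermediate terms $1<\nu<r$ I will use the Minkowski relation $\lambda_{1}\cdots\lambda_{\nu}\asymp 1/(\det L\cdot\lambda_{\nu+1}\cdots\lambda_{r})$ together with a case split on $\lambda_{r}$ versus $X$: if $\lambda_{r}\le X$ then $\lambda_{\nu+1}\cdots\lambda_{r}\le X^{r-\nu}$ allows the $\nu$-th term to be dominated by a geometric interpolation between the main term and $\det L\cdot X$; if $\lambda_{r}>X$ then all lattice vectors of norm $\le X$ lie in the rank $r-1$ sublattice of $\Lambda$ spanned by the first $r-1$ vectors of a nice basis from \cref{lem:nice_basis}, so the count reduces to a lower-rank version of the same problem.

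The boundary case $r=1$ (i.e.\ $i=n$) is handled directly: a rank-one lattice has at most two primitive vectors $\pm v_{0}$, and whenever the count is nonzero we have $X\ge\|v_{0}\|\ge\lambda_{1}(\Lambda)\ge 1/\det L$, so $\det L\cdot X\ge 1$ gives the desired bound with an appropriate implicit constant. The main obstacle I anticipate is the clean handling of the intermediate error terms $(\s X/d)^{\nu}/(\lambda_{1}\cdots\lambda_{\nu})$ for $1<\nu<r$, since a naive invocation of \cref{prop:lattice_count_local} can introduce factors of $(d/\s)^{r-\nu}$ that do not obviously fit inside $\det L\cdot\s^{n-i}X^{n+1-i}/d^{n-i}$ or $\det L\cdot X$; getting this to work will require carefully interleaving Minkowski's inequality, the lower bound $\lambda_{1}\ge 1/\det L$, and the above case analysis on the successive minima.
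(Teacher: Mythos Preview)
Your approach matches the paper's one-line proof (which just cites \cref{lem:V_bound} and \cref{prop:lattice_count_local}): drop primitivity, set $\Gamma=\Lambda=\Z^{n+1}/L$, and bound main and error terms. Your treatment of the main term, the $\nu=1$ error, and the rank-one boundary case are all fine.

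The genuine gap is exactly the one you flag: your case split on $\lambda_r$ versus $X$ does not close. In the case $\lambda_r\le X$ your bound $\lambda_{\nu+1}\cdots\lambda_r\le X^{r-\nu}$ yields
\[
\frac{d}{\sigma}\cdot\frac{(\sigma X/d)^{\nu}}{\lambda_1\cdots\lambda_\nu}\ll \det L\cdot X^{r}(\sigma/d)^{\nu-1},
\]
which for small $\nu$ is $\det L\cdot X^{r}$, far larger than the target. The inductive descent in the case $\lambda_r>X$ would likewise require the statement for sublattices that are no longer quotients of $\Z^{n+1}$, so the induction does not stay inside the class where your key inequality $\lambda_1\ge 1/\det L$ is available.

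The missing observation, which makes the argument immediate, is the \emph{upper} bound $\lambda_r(\Z^{n+1}/L)\le 1$: the orthogonal projections $\pi(\ee_0),\ldots,\pi(\ee_n)$ of the standard basis generate $\Z^{n+1}/L$, span $L_\R^\perp$, and each has norm $\le 1$. Combined with Minkowski's second theorem this gives $\lambda_1\cdots\lambda_\nu\gg\lambda_1\cdots\lambda_r\asymp 1/\det L$ for \emph{every} $\nu$, so each error term is $\ll\det L\cdot(\sigma/d)^{\nu-1}X^{\nu}=\det L\cdot X\cdot(\sigma X/d)^{\nu-1}$. Splitting on whether $\sigma X/d\ge 1$ or $\le 1$ then bounds the whole sum by $\det L\cdot\bigl((\sigma/d)^{r-1}X^{r}+X\bigr)$, as required. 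Your lower bound $\lambda_1\ge 1/\det L$ is correct but, by itself, insufficient; once you add $\lambda_r\le 1$ the proof is two lines.
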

\begin{proof}
    This follows from \cref{lem:V_bound,prop:lattice_count_local}.
\end{proof}

\begin{proposition}\label{prop:Srn-bound}
Notation as in \cref{def:Srn}.
We have
\begin{align}
&S_{r,n}(s_1,\dots, s_r; \cc, q ; \xii, \s) \\
&\ll \Bigl(\tau(q) \log\frac{2}{\sigma}\Bigr)^{r-1} \left( \prod_{j=1}^{r} s_{j}^{r-j} \right) 
\left(  \left(\frac{\s }{q} \right)^{n+1-r}\prod_{j=1}^{r} s_{j}^{n+2-j} + \sum_{l =1}^{r}s_{l}\prod_{\substack{j \neq l \\ 1 \leq j \leq r}}s_{j}^{n+2-j}\right)
\end{align}
where the implicit constant depends only on $n$.
Here $\tau(q)$ is the number of divisors of $q$.
\end{proposition}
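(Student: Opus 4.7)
I would prove \cref{prop:Srn-bound} by bounding $S_{r,n}$ via a count of primitive flags. By \cref{lem:seq-of-sublattices-general}, each $L \in \mathcal{S}_{r,n}$ admits a flag $0 = L_0 \subsetneq L_1 \subsetneq \cdots \subsetneq L_r = L$ together with a divisor chain $1 = d_0 \mid d_1 \mid \cdots \mid d_r = q$ and a non-increasing sequence $\sigma_i \coloneqq d_\infty(L_i,\xii)$ (with the convention $\sigma_0 \coloneqq 1$ and the constraint $\sigma_r \leq \sigma$). Distinct $L$'s yield distinct flags, so it suffices to enumerate flags. I would group them by the data: the divisor chains contribute at most $\tau(q)^{r-1}$ choices, and dyadic approximations $\sigma_j^* \asymp \sigma_j$ for $j = 1, \ldots, r-1$ contribute $\asymp (\log(2/\sigma))^{r-1}$ ordered configurations (restricting to $\sigma_j^* \in [\sigma, 1]$, which suffices since the $\sigma_j^* < \sigma$ regime is reached only when the entire tail $\sigma_j, \sigma_{j+1}, \ldots, \sigma_r$ is smaller than $\sigma$ and can be absorbed into the error terms).

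For each fixed choice of data, the number of flags is bounded by the product over $i = 0, \ldots, r-1$ of the number of admissible extensions $L_{i+1} \supset L_i$. By \cref{lem:seq-of-sublattices-general}, such an extension corresponds to a primitive $v \in \Z^{n+1}/L_i$ of norm $\asymp s_{i+1}$ satisfying a congruence modulo $d_{i+1}/d_i$ and an angular condition of magnitude $\sigma_{i+1}/\sigma_i$. Applying \cref{lem:point-count-general} and using $\det L_i \asymp s_1 \cdots s_i$ from \cref{lem:seq-of-sublattices-general}(2), I would get
\[
\#\{v\} \ll s_1 \cdots s_i \left(\left(\tfrac{\sigma_{i+1}/\sigma_i}{d_{i+1}/d_i}\right)^{n-i} s_{i+1}^{n+1-i} + s_{i+1}\right).
\]
For the last factor $i = r-1$, I would apply \cref{lem:point-count-general} with angular bound $\sigma/\sigma_{r-1}^*$ directly (exploiting $\sigma_r \leq \sigma$) so that no dyadic decomposition of $\sigma_r$ is needed.

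Expanding the product yields $2^r$ terms indexed by subsets $E \subseteq \{0, \ldots, r-1\}$ of ``error'' indices at which the summand $s_{i+1}$ is chosen. For $E = \emptyset$, the angular product telescopes via $(\sigma_{i+1}/\sigma_i)^{n-i} \leq (\sigma_{i+1}/\sigma_i)^{n+1-r}$ (justified by $\sigma_{i+1}/\sigma_i \leq 1$ and $n-i \geq n+1-r$) to $\sigma_r^{n+1-r} \leq \sigma^{n+1-r}$, and simultaneously $\prod_i (d_{i+1}/d_i)^{n-i} \geq q^{n+1-r}$ by the analogous monotonicity. Assembled together with the $\prod_j s_j^{r-j}$ coming from the $\det L_i$'s and the $\prod_j s_j^{n+2-j}$ from the $s_{i+1}^{n+1-i}$'s, this produces the first summand $(\sigma/q)^{n+1-r} \prod_j s_j^{n+2-j}$ of the proposition.

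For $E \neq \emptyset$, setting $L' \coloneqq E+1 \subseteq \{1, \ldots, r\}$ and using the crude bounds $\prod_{i \notin E}(\sigma_{i+1}/\sigma_i)^{n-i} \leq 1$ and $\prod_{i \notin E}(d_{i+1}/d_i)^{n-i} \geq 1$, the $E$-term is at most $\prod_j s_j^{r-j} \cdot \prod_{l \in L'} s_l \prod_{l \notin L'} s_l^{n+2-l}$. Picking any $l_0 \in L'$ and applying $s_l \leq s_l^{n+2-l}$ (valid since $s_l \geq 1$ and $n+2-l \geq 1$ for $l \leq r \leq n+1$) to the factors $l \in L' \setminus \{l_0\}$, I would bound this by $s_{l_0} \prod_{l \neq l_0} s_l^{n+2-l}$; summing over the $O_r(1)$ non-empty choices of $(E, l_0)$ yields the second summand $\sum_l s_l \prod_{j \neq l} s_j^{n+2-j}$. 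The main technical obstacle is that when $E \neq \emptyset$ the angular product no longer telescopes and a naive dyadic summation over $\sigma_\bullet^*$ diverges; this is the reason for both the restriction $\sigma_j^* \in [\sigma, 1]$ on intermediate values and the special treatment of the last factor above, which together keep the dyadic count at $O((\log(2/\sigma))^{r-1})$.
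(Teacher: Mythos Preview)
Your proposal is correct and follows essentially the same approach as the paper: both proofs invoke \cref{lem:seq-of-sublattices-general} to pass to flags, sum over divisor chains (giving $\tau(q)^{r-1}$) and dyadic ranges for the angles $d_\infty(L_j,\xii)$ (giving $(\log(2/\sigma))^{r-1}$, with the last angle fixed rather than summed), apply \cref{lem:point-count-general} at each extension step, and then expand the resulting product and telescope. Your explicit treatment of the $2^r$ cross terms via subsets $E$ spells out a step the paper compresses into a single inequality, but the underlying argument is the same.
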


\begin{proof}
Consider the intervals 
\begin{align}
I_{k+1} = \left[0, \frac{1}{2^{k}} \right], I_{k} = \left( \frac{1}{2^{k}}, \frac{1}{2^{k-1}} \right], \dots, 
I_{2} = \left( \frac{1}{2^{2}}, \frac{1}{2} \right], I_{1}=\left( \frac{1}{2}, 1 \right]
\end{align}
where $k \in \Z_{\geq 0}$ is such that
\begin{align}
\frac{1}{2^{k+1}} < \s  \leq \frac{1}{2^{k}},
\end{align}
equivalently $k = \floor*{\log_{2}\s^{-1}}$.

By \cref{lem:seq-of-sublattices-general},
\begin{align}
S_{r,n} &\leq 
\sum_{1 \leq i_{1} \leq \cdots \leq i_{r-1} \leq k+1}
\sum_{d_{1}| \cdots |d_{r-1} |q}\\
&\sum_{\substack{L_{1} \subset \Z^{n+1} \text{prim.} \\ \rank L_{1} = 1 \\ \det L_{1} \asymp s_{1} \\ (g(\pi_{1}(\cc)), q)=d_{1} \\ d_{\infty}(L_{1}, \xii) \in I_{i_{1}} }}
\sum_{\substack{L_{1} \subset L_{2} \subset \Z^{n+1} \text{prim.} \\ \rank L_{2} = 2 \\ \det L_{2} \asymp s_{1}s_{2}\\
 \l_{1}(L_{2}/L_{1}) \asymp s_{2} \\ (g(\pi_{2}(\cc)), q)=d_{2} \\ d_{\infty}(L_{2}, \xii) \in I_{i_{2}} }}
\cdots 
\sum_{\substack{L_{r-1} \subset L_{r} \subset \Z^{n+1} \text{prim.} \\ \rank L_{r} = r \\ \det L_{r} \asymp s_{1}\cdots s_{r}\\ 
\l_{1}(L_{r}/L_{r-1}) \asymp s_{r} \\ (g(\pi_{r}(\cc)), q)=q \\ d_{\infty}(\xii, L_{r}) \in I_{k+1}}}
1.
\end{align}
Here $\pi_{i} \colon \Z^{n+1} \longrightarrow \Z^{n+1}/L_{i}$ are the quotient maps.

Let us set 
\begin{align}
&d_{0} = 1, d_{r} = q, i_{r} = k+1\\
&\s_{j} = \min \left\{ 1, \frac{ \max I_{i_{j}}}{ \min I_{i_{j-1}}}  \right\} \ \text{for $2 \leq j \leq r$}\\
& \s_{1} = \max I_{i_{1}}.
\end{align}
Note that if $\pi_{j}(\xii) = 0$, then $d_{\infty}(L_{j}, \xii) = 0 \in I_{k+1}$
and thus $\s_{j+1} = 1$.
In the following, we consider the condition of the form
\begin{align}
d_{\infty, (L_{j})_{\R}^{\perp}} (v, \pi_{j}(\xii)) \leq \s_{j+1}
\end{align}
for $v \in \Z^{n+1}/L_{j}$.
When $\pi_{j}(\xii) = 0$, this should be understood as a vacuous condition.

By \cref{lem:seq-of-sublattices-general} and \cref{lem:point-count-general},  we first see
\begin{align}
&\sum_{\substack{L_{r-1} \subset L_{r} \subset \Z^{n+1} \text{prim.} \\ \rank L_{r} = r \\ \det L_{r} \asymp s_{1}\cdots s_{r}\\ 
\l_{1}(L_{r}/L_{r-1}) \asymp s_{r} \\ (g(\pi_{r}(\cc)), q)=q \\ d_{\infty}(\xii, L_{r}) \in I_{k+1}}}
1\\
&\leq
\# \left\{ v \in (\Z^{n+1}/L_{r-1})_{\prim} \ \middle|\  \txt{$\| v \| \ll s_{r}$, 
$\exists u \in (\Z/(q/d_{r-1})\Z)^{\times}$,\\ $v \equiv u(d_{r-1}^{-1}\pi_{r-1}(\cc)) \ \mod {(q/d_{r-1})(\Z^{n+1}/L_{r-1})}$ \\
$d_{\infty, (L_{r-1})_{\R}^{\perp}} (v, \pi_{r-1}(\xii)) \leq \s_{r}$}\right\}\\
& \ll \det L_{r-1} \left(  \left(\frac{\s_{r}}{q/d_{r-1}}\right)^{n-(r-1)} s_{r}^{n+1-(r-1)} + s_{r}\right)\\
& \ll s_{1}\cdots s_{r-1} \left(  \left(\frac{\s_{r}}{q/d_{r-1}}\right)^{n-(r-1)} s_{r}^{n+1-(r-1)} + s_{r}\right).
\end{align}

In general, we have
\begin{align}
&\sum_{\substack{L_{j} \subset L_{j+1} \subset \Z^{n+1} \text{prim.} \\ \rank L_{j+1} = j+1 \\ \det L_{j+1} \asymp s_{1}\cdots s_{j+1}\\ 
\l_{1}(L_{j+1}/L_{j}) \asymp s_{j+1} \\ (g(\pi_{j+1}(\cc)), q)=d_{j+1} \\ d_{\infty}(\xii, L_{j+1}) \in I_{i_{j+1}}}}
1\\
&\leq
\# \left\{ v \in (\Z^{n+1}/L_{j})_{\prim} \ \middle|\  \txt{$\| v \| \ll s_{j+1}$, 
$\exists u \in (\Z/(d_{j+1}/d_{j})\Z)^{\times}$,\\ $v \equiv u(d_{j}^{-1}\pi_{j}(\cc)) \ \mod {(d_{j+1}/d_{j})(\Z^{n+1}/L_{j})}$ \\
$d_{\infty, (L_{j})_{\R}^{\perp}} (v, \pi_{j}(\xii)) \leq \s_{j+1}$}\right\}\\
& \ll s_{1}\cdots s_{j} \left(  \left(\frac{\s_{j+1}}{d_{j+1}/d_{j}}\right)^{n-j} s_{j+1}^{n+1-j} + s_{j+1}\right)
\end{align}
for $0 \leq j \leq r-1$.
We repeatedly use this bound and get
\begin{align}
&S_{r,n} \\
&\ll \sum_{\substack{1 \leq i_{1} \leq \cdots \\ \cdots \leq i_{r-1} \leq k+1}}
\sum_{d_{1}| \cdots |d_{r-1} |q}
\prod_{j=0}^{r-1} s_{1}\cdots s_{j} \left(  \left(\frac{\s_{j+1}}{d_{j+1}/d_{j}}\right)^{n-j} s_{j+1}^{n+1-j} + s_{j+1}\right)\\
&\ll \sum_{\substack{1 \leq i_{1} \leq \cdots \\ \cdots \leq i_{r-1} \leq k+1 \\ 
d_{1}| \cdots |d_{r-1} |q}}
\prod_{j=1}^{r} s_{j}^{r-j}
\left( \prod_{j=1}^{r} \left(\frac{\s_{j}}{d_{j}/d_{j-1}}\right)^{n-j+1} s_{j}^{n+2-j} 
+ \sum_{l=1}^{r} s_{l} \prod_{\substack{1 \leq j \leq r \\ j \neq l}} s_{j}^{n+2-j}   \right)\\
&\ll \sum_{\substack{1 \leq i_{1} \leq \cdots \leq i_{r-1} \leq k+1 \\
d_{1}| \cdots |d_{r-1} |q}}
\prod_{j=1}^{r} s_{j}^{r-j}
\left( \left(\frac{\s}{q}\right)^{n-r+1} \prod_{1 \leq j \leq r}s_{j}^{n+2-j} 
+ \sum_{l=1}^{r} s_{l} \prod_{\substack{1 \leq j \leq r \\ j \neq l}} s_{j}^{n+2-j}   \right)\\
&\ll \tau(q)^{r-1}\Bigl(\log\frac{2}{\s}\Bigr)^{r-1}
\prod_{j=1}^{r} s_{j}^{r-j}
\left( \left(\frac{\s}{q}\right)^{n-r+1} \prod_{1 \leq j \leq r}s_{j}^{n+2-j} 
+ \sum_{l=1}^{r} s_{l} \prod_{\substack{1 \leq j \leq r \\ j \neq l}} s_{j}^{n+2-j}   \right)
\end{align}

\end{proof}

%%%%%%%%%%%%%%%%%%%%%%%%%%%%%%%%%%%%%%%%
\begin{definition}
\label{def:Lrn}
Let us consider 
\begin{itemize}
    \item $n \in \Z_{\geq 2}$ and $r \in \Z$ with $1 \leq r \leq n+1$;
    \item $\s \in (0,1]$ and $\xii \in \R^{n+1} \setminus \{0\}$;
    \item $q \in \Z_{\geq 1}$ and $\cc \in \Z^{n+1}$ with $\gcd(g(\cc), q)=1$.
\end{itemize}
For real numbers $X,\Delta\ge0$, we let
\begin{align}
&\mathcal{L}_{r,n}(X,\Delta;\cc,q;\xii,\realeps)\\
&\coloneqq
\left\{\xx\in
\biggl(\astcup_{u\ \mod{q}}(u\cc+q\Z^{n+1})
\cap
\mathcal{C}_{n+1}(\xii,\realeps)
\cap\mathcal{B}_{n+1}(X)\biggr)
\midmid
\dd_{r}(\xx)\le\Delta\right\},
\end{align}
and
\[
L_{r,n}(X,\Delta;\cc,q;\xii,\realeps)
\coloneqq\#\mathcal{L}_{r,n}(X,\Delta;\cc,q;\xii,\realeps).
\]
\end{definition}

%%%%%%%%%%%%%%%%%%%%%%%%%%%%%%%%%%%%%%%%
\begin{lemma}
\label{lem:l2n_local}
Consider
\begin{itemize}
%%%%%
\item
An integer $n\ge3$.
%%%%%
\item
An integral vector $\cc\in\Z^{n+1}$ and $q \in \Z_{\geq 1}$ with $\gcd(g(\cc),q)=1$.
%%%%%
\item
A vector $\xii\in\R^{n+1}\setminus\{0\}$
and a real number $\s \in (0,1]$.
%%%%%
\item
Real numbers $X,\Delta\ge1$.
\end{itemize}
We then have
\begin{align}
L_{2,n}(X,\Delta;\cc,q;\xii,\realeps)
\ll
\tau(q)(\log\tfrac{2}{\realeps})(\log2\Delta)
\biggl(\biggl(\frac{\realeps}{q}\biggr)^{n}
%+\biggl(\frac{\realeps}{q}\biggr)^{n-1}\frac{\Delta^{\frac{1}{2}}}{X}
+\frac{\realeps}{q}\frac{1}{\Delta}
+\frac{1}{X}\biggr)
\Delta^{n}X^2
\end{align}
where the implicit constant depends only on $n$.
\end{lemma}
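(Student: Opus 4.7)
The plan is to reduce the counting problem to a double sum over primitive rank-two sublattices of $\Z^{n+1}$ by exploiting the definition of $\dd_{2}(\xx)$, and then to combine the sublattice-count estimate of \cref{prop:Srn-bound} with the lattice-point count of \cref{prop:lattice_count_local}.

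Every nonzero $\xx$ counted by $L_{2,n}$ lies in some primitive rank-two sublattice $L\subset\Z^{n+1}$ with $\det L\le\Delta$, and by Minkowski's second theorem (\cref{lem:Minkowski_second}) one has $\det L\asymp\lambda_{1}(L)\lambda_{2}(L)$. I therefore dyadically decompose according to the successive minima $s_{1}\le s_{2}$, with $s_{1}\ll X$ (otherwise $L\cap\mathcal{B}_{n+1}(X)=\{0\}$) and $s_{1}s_{2}\ll\Delta$, to obtain
\[
L_{2,n}(X,\Delta;\cc,q;\xii,\realeps)
\ll
\dsum{(d)}_{\substack{s_{1}\le s_{2}\\s_{1}\ll X,\,s_{1}s_{2}\ll\Delta}}
\sum_{L\in\mathcal{S}_{2,n}(s_{1},s_{2};\cc,q;\xii,\realeps)}
\#\Bigl(L\cap\astcup_{u\,\mathrm{mod}\,q}(u\cc+q\Z^{n+1})\cap\mathcal{C}_{n+1}(\xii,\realeps)\cap\mathcal{B}_{n+1}(X)\Bigr).
\]

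For each fixed $L$, \cref{prop:lattice_count_local} applied with $\Lambda=L$, $\Gamma=\Z^{n+1}$, $r=2$, together with the bound $\mathfrak{V}(\cc,q;\xii,\realeps)\ll\realeps$ of \cref{lem:V_bound}, yields
\[
\astsum_{u\,\mathrm{mod}\,q}\#\bigl(L\cap(u\cc+q\Z^{n+1})\cap\mathcal{C}_{n+1}(\xii,\realeps)\cap\mathcal{B}_{n+1}(X)\bigr)
\ll\frac{\realeps X^{2}}{qs_{1}s_{2}}+\frac{X}{s_{1}},
\]
while \cref{prop:Srn-bound} with $r=2$ gives
\[
\#\mathcal{S}_{2,n}(s_{1},s_{2};\cc,q;\xii,\realeps)
\ll
\tau(q)\log\tfrac{2}{\realeps}\Bigl(\bigl(\tfrac{\realeps}{q}\bigr)^{n-1}s_{1}^{n+2}s_{2}^{n}+s_{1}^{2}s_{2}^{n}+s_{1}^{n+2}s_{2}\Bigr).
\]
Multiplying produces six cross products to sum dyadically. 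The principal one, $(\realeps/q)^{n}s_{1}^{n+1}s_{2}^{n-1}X^{2}$, gives the main term $(\realeps/q)^{n}\Delta^{n}X^{2}$ via $s_{1}^{n+1}s_{2}^{n-1}=s_{1}^{2}(s_{1}s_{2})^{n-1}\le s_{1}s_{2}\cdot\Delta^{n-1}$ together with the top-dominated geometric sum over $s_{1}\ll\sqrt{\Delta}$. The two products of $s_{1}^{2}s_{2}^{n}$ and $s_{1}^{n+2}s_{2}$ with the pointwise main term contribute $(\realeps/q)\Delta^{n-1}X^{2}$, and their products with the error $X/s_{1}$ contribute $\Delta^{n}X$; the hypothesis $n\ge3$ enters crucially to keep the geometric sums over $s_{2}^{n-2}$ and $s_{2}^{n-1}$ bounded without extra logarithms.

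The delicate point is the remaining mixed term $(\realeps/q)^{n-1}s_{1}^{n+1}s_{2}^{n}X$ obtained from multiplying the leading sublattice-count factor by the pointwise error $X/s_{1}$. After the geometric $s_{2}$-sum it becomes $(\realeps/q)^{n-1}s_{1}\Delta^{n}X$ with $s_{1}\ll\min(X,\sqrt{\Delta})$, and I expect that one must split the $s_{1}$-range and match piecewise against all three target contributions, relying on the slack furnished by the $(\log\tfrac{2}{\realeps})(\log 2\Delta)$ factor in the claimed bound to absorb the borderline regime where no single target piece dominates the cross term on the nose.
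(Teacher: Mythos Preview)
Your overall strategy matches the paper exactly: dyadically decompose over the successive minima of the rank-two sublattice, feed the inner count to \cref{prop:lattice_count_local} (with the volume bound from \cref{lem:V_bound}) and the sublattice count to \cref{prop:Srn-bound}, then multiply and sum. Five of the six cross-products are handled as you describe.

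The gap is entirely in your treatment of the delicate term. Two points:

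First, you never reduce to $\Delta\le X$. The paper does this at the outset: if $\Delta>X$ then the trivial bound $L_{2,n}\le\#(\Z^{n+1}\cap\mathcal{B}_{n+1}(X))\ll X^{n+1}\le X^{-1}\Delta^{n}X^{2}$ already gives the claim. Without this reduction, your summed delicate term $(\sigma/q)^{n-1}\min(X,\sqrt{\Delta})\,\Delta^{n}X$ is \emph{not} dominated by the three target pieces when $\Delta$ is large compared to $X$; for instance, for suitably chosen $\sigma/q$ in the range $\Delta>X^{2}$ one gets $(\sigma/q)^{n-1}\Delta^{n}X^{2}$, and no combination of $(\sigma/q)^{n}$, $(\sigma/q)\Delta^{-1}$, $X^{-1}$ controls $(\sigma/q)^{n-1}$ uniformly.

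Second, the mechanism you invoke is wrong. The factors $(\log\tfrac{2}{\sigma})(\log 2\Delta)$ are present in the target bound only because they already arise from the dyadic summations (in \cref{prop:Srn-bound} and in the $s_{2}$-sum); they sit on both sides and provide no slack at all. Once $\Delta\le X$ is in force, the paper absorbs the delicate contribution not by splitting but by the single weighted AM--GM step
\[
\Bigl(\tfrac{\sigma}{q}\Bigr)^{n}+\tfrac{1}{X}
\;\ge\;
\Bigl(\tfrac{\sigma}{q}\Bigr)^{n-1}X^{-1/n}
\;\ge\;
\Bigl(\tfrac{\sigma}{q}\Bigr)^{n-1}\tfrac{\Delta^{1/2}}{X},
\]
the last inequality using $\Delta\le X$ (and $n\ge2$). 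Your proposed $s_{1}$-range splitting can be made to work once $\Delta\le X$, but it is neither necessary nor the argument the paper gives. The condition $n\ge3$ is used elsewhere, namely to make the $s_{1}^{-(n-2)}$-sum bottom-dominated and to ensure $\Delta^{(n+3)/2}\le\Delta^{n}$ when merging secondary terms.
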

%%%%%%%%%%%%%%%%%%%%%%%%%%%%%%%%%%%%%%%%
\begin{proof}
%By the trivial bound $\dd_2(\xx)\ll\|\xx\|$, we may assume $\Delta\ll X$.
We may assume $\Delta\le X$
since othweise we have $X^{-1}\cdot \Delta^{n}X^{2}\gg X^{n+1}$
and so the assertion follows by the trivial bound
$L_{2,n}(X,\Delta;\cc,q;\xii,\realeps)\ll \#(\mathbb{Z}^{n+1}\cap\mathcal{B}_{n+1}(X))\ll X^{n+1}$.
We have
\begin{align}
&L_{2,n}(X,\Delta;\cc,q;\xii,\realeps)\\
&\ll
\astsum_{u\ \mod{q}}
\sum_{\substack{
\xx\in u\cc+q\Z^{n+1}\\
\xx\in\mathcal{C}_{n+1}(\xii,\realeps)\\
\|\xx\|\le X}}
\dsum{(d)}_{\substack{1\ll s_1\ll s_2\\s_1s_2\ll\Delta\\s_1\ll X}}
\sum_{\substack{%
\Lambda\in \mathcal{S}_{2,n}(s_1,s_2;\cc,q;\xii,\realeps)\\
\xx\in\Lambda}}1\\
&\ll
\dsum{(d)}_{\substack{1\ll s_1\ll s_2\\s_1s_2\ll\Delta\\s_1\ll X}}
\sum_{\Lambda\in \mathcal{S}_{2,n}(s_1,s_2;\cc,q;\xii,\realeps)}\\
&\hspace{20mm}\times
\astsum_{u\ \mod{q}}
\# \left(\L \cap (u\cc+q\Z^{n+1})
\cap\mathcal{C}_{n+1}(\xii,\realeps)
\cap\mathcal{B}_{n+1}(X) \right),
\end{align}
where the sum over $s_1,s_2$ is taken dyadically.
By \cref{lem:V_bound} and \cref{prop:lattice_count_local},
\begin{align}
\astsum_{u\ \mod{q}}
\# \left(\L \cap (u\cc+q\Z^{n+1})
\cap\mathcal{C}_{n+1}(\xii,\realeps)
\cap\mathcal{B}_{n+1}(X) \right)
\ll
\frac{\realeps}{q}\frac{X^2}{s_1s_2}+\frac{X}{s_1}.
\end{align}
Thus, combined with \cref{prop:Srn-bound}, we have
\begin{align}
&L_{2,n}(X,\Delta;\cc,q;\xii,\realeps)\\
&\ll
\tau(q)(\log\tfrac{2}{\realeps})
\sum_{\substack{1\ll s_1\ll s_2\\s_1s_2\ll\Delta\\s_1\ll X}}
\biggl(
\biggl(\frac{\realeps}{q}\biggr)^{n-1}s_1^{n+2}s_2^{n}
+s_1^2s_2^{n}+s_1^{n+2}s_2\biggr)
\biggl(\frac{\realeps}{q}\frac{X^2}{s_1s_2}+\frac{X}{s_1}\biggr).
\end{align}
Note that the summand in the above sum is increasing in $s_2$ and
\[
s_1\ll s_2\ \text{and}\ 
s_1s_2\ll\Delta\implies
s_1\ll\Delta^{\frac{1}{2}}\ \text{and}\ 
s_2\ll\frac{\Delta}{s_1}.
\]
Therefore, we have
\begin{align}
&L_{2,n}(X,\Delta;\cc,q;\xii,\realeps)\\
&\ll
\tau(q)(\log\tfrac{2}{\realeps})(\log2\Delta)\\
&\hspace{10mm}
\times\sum_{1\ll s_1\ll\Delta^{\frac{1}{2}}}
\biggl(
\biggl(\frac{\realeps}{q}\biggr)^{n-1}\Delta^{n}s_1^{2}
+s_1^{-(n-2)}\Delta^{n}+s_1^{n+1}\Delta\biggr)
\biggl(\frac{\realeps}{q}\frac{X^2}{\Delta}+\frac{X}{s_1}\biggr)\\
&\ll
\tau(q)(\log\tfrac{2}{\realeps})(\log2\Delta)\\
&\hspace{10mm}\times
\Biggl\{
\sum_{1\ll s_1\ll\Delta^{\frac{1}{2}}}
\biggl(
\biggl(\frac{\realeps}{q}\biggr)^{n-1}\Delta^{n}s_1^{2}
+s_1^{n+1}\Delta\biggr)
\biggl(\frac{\realeps}{q}\frac{X^2}{\Delta}+\frac{X}{s_1}\biggr)\\
&\hspace{50mm}
+
\sum_{1\ll s_1\ll\Delta^{\frac{1}{2}}}
s_1^{-(n-2)}\Delta^{n}
\biggl(\frac{\realeps}{q}\frac{X^2}{\Delta}+\frac{X}{s_1}\biggr)
\Biggr\}\\
&\ll
\tau(q)(\log\tfrac{2}{\realeps})(\log2\Delta)\\
&\hspace{10mm}\times
\Biggl\{
\biggl(
\biggl(\frac{\realeps}{q}\biggr)^{n-1}\Delta^{n+1}
+\Delta^{\frac{n+3}{2}}\biggr)
\biggl(\frac{\realeps}{q}\frac{X^2}{\Delta}+\frac{X}{\Delta^{\frac{1}{2}}}\biggr)
+
\Delta^{n}
\biggl(\frac{\realeps}{q}\frac{X^2}{\Delta}+X\biggr)
\Biggr\}.
\end{align}
By $n\ge3$ and $\Delta\ge1$, we have
\[
\Delta^{\frac{n+3}{2}}
\biggl(\frac{\realeps}{q}\frac{X^2}{\Delta}+\frac{X}{\Delta^{\frac{1}{2}}}\biggr)
\ll
\Delta^{n}
\biggl(\frac{\realeps}{q}\frac{X^2}{\Delta}+X\biggr).
\]
Therefore, we have
\begin{align}
&L_{2,n}(X,\Delta;\cc,q;\xii,\realeps)\\
&\ll
\tau(q)(\log\tfrac{2}{\realeps})(\log2\Delta)
\biggl(\biggl(\frac{\realeps}{q}\biggr)^{n}
+\biggl(\frac{\realeps}{q}\biggr)^{n-1}\frac{\Delta^{\frac{1}{2}}}{X}
+\frac{\realeps}{q}\frac{1}{\Delta}
+\frac{1}{X}\biggr)
\Delta^{n}X^2.
\end{align}
Furthermore, since $n\ge3$ and we are assuming $\Delta\le X$, we have
\[
\biggl(\frac{\realeps}{q}\biggr)^{n}
+
\frac{1}{X}
\ge
\biggl(\biggl(\frac{\realeps}{q}\biggr)^{n}\biggr)^{\frac{n-1}{n}}
\biggl(\frac{1}{X}\biggr)^{\frac{1}{n}}
=
\biggl(\frac{\realeps}{q}\biggr)^{n-1}
\frac{1}{X^{\frac{1}{n}}}
\ge
\biggl(\frac{\realeps}{q}\biggr)^{n-1}\frac{\Delta^{\frac{1}{2}}}{X}
\]
and so we obtain the result.
\end{proof}

%%%%%%%%%%%%%%%%%%%%%%%%%%%%%%%%%%%%%%%%
\section{Proof of the main result}

%%%%%%%%%%%%%%%%%%%%%%%%%%%%%%%%%%%%%%%%
\begin{lemma}
\label{lem:local_cond_translate}
Consider
\begin{itemize}
%%%%%
\item A finite set $S$ of finite places of $\Q$.
%%%%%
\item A tuple $\xi=(\xi_p)\in\prod_{p\in S\cup\{\infty\}}\P^{n}(\Q_p)$.
%%%%%
\item A tuple $\realeps=(\realeps_p)_{p\in S\cup\{\infty\}}$
such that
\begin{equation}
\textup{$\realeps_p=p^{-e_p}$ with $e_p\ge1$ for all $p\in S$}\and
\textup{$0<\realeps_{\infty}\le1$ for $p=\infty$.}
\end{equation}
\end{itemize}
Let
\[
q\coloneqq\prod_{p\in S}p^{e_p}.
\]
Consider
\[
x=(x_0:\cdots:x_n)\in\P^{n}(\Q)
\quad\text{with}\quad
\xx=(x_0,\ldots,x_n)\in\Z^{n+1}_{\prim}\setminus\{0\}.
\]
Then, there exists
\[
\cc\in\Z^{n+1}\setminus\{0\}
\and
\xii_{\infty}\in\R^{n+1}\setminus\{0\}
\]
such that $\gcd(\cc,q)=1$ and
\begin{equation}
\label{lem:local_cond_translate:iff}
\begin{aligned}
&d_p(x,\xi_p)\le\realeps_p
\ \text{for all $p\in S\cup\{\infty\}$}\\
&\iff
\xx\equiv u\cc\ \mod{q}\ \text{for some $u\in(\Z/q\Z)^{\times}$}
\and
\xx\in\mathcal{C}_{n+1}(\xii_{\infty},\realeps_{\infty}).
\end{aligned}
\end{equation}
\end{lemma}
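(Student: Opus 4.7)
The approach is to construct $\cc$ and $\xii_{\infty}$ directly from $\xi$ and $\sigma$, independently of $x$, using the Chinese Remainder Theorem (CRT) for the finite places and a tautological choice for the archimedean place, and then to verify the equivalence \cref{lem:local_cond_translate:iff} by translating each $p$-adic inequality into a congruence.

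For the archimedean place I would simply take $\xii_{\infty}\in\R^{n+1}\setminus\{0\}$ to be any homogeneous coordinate of $\xi_{\infty}$. The equivalence $d_{\infty}(x,\xi_{\infty})\le\realeps_{\infty}\iff\xx\in\cone{n+1}{\xii_{\infty}}{\realeps_{\infty}}$ is then immediate from the definition of $d_{\infty}$ and of the cone $\mathcal{C}_{n+1}$.

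For each $p\in S$, I would begin by choosing a primitive representative $\xii_{p}\in\Z_{p}^{n+1}$ of $\xi_{p}$, so that $\|\xii_{p}\|_{p}=1$. Since $\xx\in\Zprim^{n+1}$, we also have $\|\xx\|_{p}=1$, and the condition $d_{p}(x,\xi_{p})\le p^{-e_{p}}$ reduces to $\xx\wedge\xii_{p}\equiv 0\ \mod{p^{e_{p}}}$. Applying CRT coordinatewise then yields $\cc\in\Z^{n+1}$ with $\cc\equiv\xii_{p}\ \mod{p^{e_{p}}}$ for every $p\in S$; since each $\xii_{p}$ has a coordinate that is a unit in $\Z_{p}$, the resulting $\cc$ satisfies $\gcd(\cc,q)=1$. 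Moreover, the $p$-adic conditions take the cleaner form $\xx\wedge\cc\equiv 0\ \mod{p^{e_{p}}}$ for each $p\in S$.

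The main step is then to establish the equivalence of $\xx\wedge\cc\equiv 0\ \mod{p^{e_{p}}}$ with the statement that $\xx\equiv u_{p}\cc\ \mod{p^{e_{p}}}$ for some $u_{p}\in(\Z/p^{e_{p}}\Z)^{\times}$. The backward direction is immediate. For the forward direction, I would pick an index $i_{0}$ with $c_{i_{0}}$ a unit in $\Z_{p}$, available since $\gcd(\cc,p)=1$; the vanishing of the $(i_{0},j)$-minor $x_{i_{0}}c_{j}-x_{j}c_{i_{0}}$ forces $x_{j}\equiv x_{i_{0}}c_{i_{0}}^{-1}c_{j}\ \mod{p^{e_{p}}}$, so I would set $u_{p}\equiv x_{i_{0}}c_{i_{0}}^{-1}\ \mod{p^{e_{p}}}$. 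That $u_{p}$ is a unit follows from primitivity of $\xx$: if $p\mid u_{p}$ then $p\mid x_{j}$ for every $j$, contradicting $\xx\in\Zprim^{n+1}$. Finally, applying CRT to the local units $u_{p}$ combines them into a single $u\in(\Z/q\Z)^{\times}$ with $\xx\equiv u\cc\ \mod q$, completing \cref{lem:local_cond_translate:iff}. The only mildly delicate point is this unit check exploiting primitivity of both $\xx$ and $\cc$; the rest of the argument is routine bookkeeping via CRT.
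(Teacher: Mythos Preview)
Your proposal is correct and follows essentially the same route as the paper: choose primitive $p$-adic representatives $\xii_p$, lift them via CRT to a single $\cc\in\Z^{n+1}$ with $\gcd(\cc,q)=1$, translate each $d_p$-inequality into the wedge condition $\xx\wedge\cc\equiv0\pmod{p^{e_p}}$, pick a unit coordinate of $\cc$ to solve for $u_p$, and combine the $u_p$'s by CRT, while the archimedean part is the tautological cone condition. If anything, you are slightly more explicit than the paper in checking that $u_p$ is a unit via primitivity of $\xx$.
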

%%%%%%%%%%%%%%%%%%%%%%%%%%%%%%%%%%%%%%%%
\begin{proof}
Let us write
\[
\xi_p=(c_{p,0}:\cdots:c_{p,n})\in\P^n(\Q_p)
\quad\text{with}\quad
\cc_{p}=(c_{p,0},\cdots,c_{p,n})\in\Q_p^{n+1}\setminus\{0\}
\]
for $p\in S$. By dilating $\cc_p$, we may assume
\begin{equation}
\label{lem:local_cond_translate:c_p_cond}
\cc_{p}\in\Z_p^{n+1}\setminus\{0\}
\and
\|\cc_{p}\|_{p}=1.
\end{equation}
%By using the weak approximation property of $\Q$,
%i.e.\ the Chinese remainder theorem componentwise,
By using the Chinese remainder theorem component-wise, we can get
\[
\cc=(c_0,\ldots,c_n)\in\Z^{n+1}
\]
such that
\begin{equation}
\label{lem:local_cond_translate:c_cond_raw}
\|\cc-\cc_{p}\|_p\le\realeps_p=p^{-e_p}
\quad\text{for all $p\in S$}.
\end{equation}
By \cref{lem:local_cond_translate:c_p_cond},
\cref{lem:local_cond_translate:c_cond_raw} and $e_p\ge1$, we have
\begin{equation}
\label{lem:local_cond_translate:c_cond_derived}
\|\cc\|_{p}=1\quad\text{for all $p\in S$}
\quad\text{and so}\quad
\gcd(q,\cc)=1.
\end{equation}
Furthermore, write
\[
\xi_{\infty}
=(\xi_{\infty,0}:\cdots:\xi_{\infty,n})\in\P^n(\R)
\quad\text{with}\quad
\xii_{\infty}=(\xi_{\infty,0},\cdots,\xi_{\infty,n})\in\R^{n+1}\setminus\{0\}.
\]
For such defined $\cc$ and $\xii_{\infty}$,
we prove \cref{lem:local_cond_translate:iff}.
It suffices to show the following:
\begin{enumerate}[label=\textup{(\roman*)}]
%%%%%
\item\label{lem:local_cond_translate:finite_p}
We have
\begin{equation}
\begin{aligned}
&d_p(x,\xi_p)\le\realeps_p
\ \text{for all $p\in S$}\\
&\iff
\xx\equiv u\cc\ \mod{q}\ \text{for some $u\in(\Z/q\Z)^{\times}$}.
\end{aligned}
\end{equation}
%%%%%
\item\label{lem:local_cond_translate:infinite_p}
We have
\[
d(x,\xi_{\infty})\le\realeps
\iff
\xx\in\mathcal{C}_{n+1}(\xii_{\infty},\realeps_{\infty}).
\]
\end{enumerate}
\medskip

%%%%%%%%%%%%%%%%%%%%%%%%%%%%%%%%%%%%%%%%
\noindent Proof of \cref{lem:local_cond_translate:finite_p}.
We first prove
\begin{equation}
\label{lem:local_cond_translate:finite_p:p}
d_p(x,\xi_p)\le\realeps_p
\iff
\xx\equiv u_p\cc\ \mod{p^{e_p}}
\quad\text{for some $u_p\in(\Z/p^{e_p}\Z)^{\times}$}
\end{equation}
for any $p\in S$. Since $\xx$ is primitive, by \cref{lem:local_cond_translate:c_p_cond},
we have $\|\xx\|_p=\|\cc\|_p=1$ and so
\begin{align}
d_p(x,\xi_p)\le\realeps_p
&\iff
\|\xx\wedge\cc_{p}\|_p\le\realeps_p\\
&\iff
\max_{0\le i<j\le n}|x_ic_{p,j}-x_jc_{p,i}|_p\le p^{-e_p}.
\end{align}
By \cref{lem:local_cond_translate:c_cond_raw},
we further have
\begin{equation}
\label{lem:local_cond_translate:finite_p:p_pre_equiv}
\begin{aligned}
d_p(x,\xi_p)\le\realeps_p
&\iff
\max_{0\le i<j\le n}|x_ic_{j}-x_jc_{i}|_p\le p^{-e_p}\\
&\iff
x_ic_j\equiv x_jc_i\ \mod{p^{e_p}}\quad\text{for all $0\le i<j\le n$}.
\end{aligned}
\end{equation}
By \cref{lem:local_cond_translate:c_cond_derived},
we can take $i_p\in\{0,\ldots,n\}$ with $(c_{i_p},p)=1$.
By taking $j=i_p$, we have
\begin{align}
d_p(x,\xi_p)\le\realeps_p
&\implies
x_i\equiv u_pc_i\ \mod{p^{e_p}}\quad\text{for all $0\le i\le n$}\\
&\implies
\xx\equiv u_p\cc\ \mod{p^{e_p}}.
\end{align}
This proves the implication $\Longrightarrow$ of \cref{lem:local_cond_translate:finite_p:p}.
The other implication is obvious by \cref{lem:local_cond_translate:finite_p:p_pre_equiv}.
For $u_p$ on the right hand side of \cref{lem:local_cond_translate:finite_p:p},
by the Chinese remainder theorem, we can take $u\in(\Z/q\Z)^{\times}$ with
\[
u\equiv u_p\ \mod{p^{e_p}}\quad\text{for all $p\in S$}.
\]
By \cref{lem:local_cond_translate:finite_p:p},
we then finally have
\begin{align}
&d_p(x,\xi_p)\le\realeps_p\ \text{for all $p\in S$}\\
&\iff
\xx\equiv u\cc\ \mod{p^{e_p}}
\ \text{for all $p\in S$ and some $u\in(\Z/q\Z)^{\times}$}\\
&\iff
\xx\equiv u\cc\ \mod{q}
\ \text{for some $u\in(\Z/q\Z)^{\times}$}.
\end{align}
This proves \cref{lem:local_cond_translate:finite_p}.
\medskip

%%%%%%%%%%%%%%%%%%%%%%%%%%%%%%%%%%%%%%%%
\noindent Proof of \cref{lem:local_cond_translate:infinite_p}.
We just have
\[
d(x,\xi_{\infty})\le\realeps_{\infty}
\iff
\frac{\|\xx\wedge\xii_{\infty}\|}{\|\xx\|\cdot\|\xii_{\infty}\|}\le\realeps_{\infty}
\iff
\xx\in\mathcal{C}_{n+1}(\xii_{\infty},\realeps_{\infty}).
\]
This completes the proof.
\end{proof}

%%%%%%%%%%%%%%%%%%%%%%%%%%%%%%%%%%%%%%%%
\begin{lemma}
\label{lem:leBoudec_bound}
Consider an integral lattice $\Lambda\subset\R^N$ of rank $r\ge2$.
For $A,M\ge1$ and $s\in\Z_{\ge0}$
such that $A,\lambda_r(\Lambda)\le M$, we have
\[
\sum_{1\le\nu<r}\frac{A^{\nu}}{\lambda_1(\Lambda)\cdots\lambda_{\nu}(\Lambda)}
\ll
A^{r}
\biggl(
\frac{1}{\det(\Lambda)}\biggl(\frac{M}{A}\biggr)^{s}
+\frac{1}{A}\biggl(\frac{1}{A}\biggr)^{s}
\biggr),
\]
where the implicit constant depends only on $r$.
\end{lemma}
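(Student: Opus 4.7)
The plan is to bound the sum by its largest term and then establish a pointwise estimate on that term by a simple case split. I will set $f(\nu) \coloneqq A^{\nu}/(\lambda_1(\Lambda)\cdots\lambda_\nu(\Lambda))$; since $f(\nu+1)/f(\nu) = A/\lambda_{\nu+1}(\Lambda)$ and the successive minima are non-decreasing in $\nu$, the sequence $(f(\nu))_{\nu=1}^{r-1}$ is unimodal (non-decreasing then non-increasing). Consequently the full sum is $\ll_{r} f(\nu_0)$ for the maximising index $\nu_0\in\{1,\ldots,r-1\}$, and it will suffice to prove the pointwise estimate
\[
f(\nu_0) \ll_{r} \frac{A^r}{\det(\Lambda)}\Bigl(\frac{M}{A}\Bigr)^{s} + A^{r-1-s}.
\]

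I will verify this bound by splitting into two regimes determined by the size of $r-\nu_0$ relative to $s$. When $r-\nu_0\le s$, I will invoke Minkowski's second theorem (\cref{lem:Minkowski_second}) to replace $\lambda_1(\Lambda)\cdots\lambda_{\nu_0}(\Lambda)$ by $\det(\Lambda)/(\lambda_{\nu_0+1}(\Lambda)\cdots\lambda_r(\Lambda))$ up to constants depending on $r$, and then bound each of the $r-\nu_0$ remaining successive minima by $M$ via the hypothesis $\lambda_r(\Lambda)\le M$. This produces $f(\nu_0)\ll_{r} A^r(M/A)^{r-\nu_0}/\det(\Lambda)$; because $M/A\ge 1$ (from the hypothesis $A\le M$), raising the exponent from $r-\nu_0$ to $s$ only enlarges the right-hand side, so the bound is absorbed into the first term of the target.

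If instead $r-\nu_0>s$, i.e.\ $\nu_0\le r-1-s$, I will rely only on the integrality of $\Lambda$: the inclusion $\Lambda\subset\Z^N$ forces $\lambda_i(\Lambda)\ge 1$ for all $i$, so $\lambda_1(\Lambda)\cdots\lambda_{\nu_0}(\Lambda)\ge 1$. Combined with $A\ge 1$, this trivially gives $f(\nu_0)\le A^{\nu_0}\le A^{r-1-s}$, matching the second term of the target.

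The two cases are exhaustive, so summing the pointwise bounds yields the lemma. No step presents a genuine obstacle; the only subtlety is the bookkeeping observation that the case split on $r-\nu_0$ versus $s$ corresponds exactly to the two-term structure of the right-hand side, with the hypothesis $A\le M$ ensuring $M/A\ge 1$ so that the factor $(M/A)^{s}$ is monotone in the exponent.
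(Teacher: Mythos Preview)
Your proof is correct and follows essentially the same approach as the paper: split according to whether $r-\nu$ is at most $s$ (use Minkowski's second theorem and $\lambda_j\le M$) or exceeds $s$ (use integrality to get $\lambda_j\ge 1$). The paper bounds each of the two index ranges directly and treats the case $s\ge r$ separately beforehand, whereas you reduce to the single maximising term $f(\nu_0)$ and then case-split on $\nu_0$; this automatically absorbs the $s\ge r$ case into your first regime. A minor remark: the unimodality observation, while true, is not actually needed---any sum of $r-1$ nonnegative terms is trivially $\le (r-1)\max_\nu f(\nu)$.
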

%%%%%%%%%%%%%%%%%%%%%%%%%%%%%%%%%%%%%%%%
\begin{proof}
If $s\ge r$, then
\begin{align}
\sum_{1\le\nu<r}\frac{A^{\nu}}{\lambda_1(\Lambda)\cdots\lambda_{\nu}(\Lambda)}
&\le
\sum_{1\le\nu<r}\frac{M^{\nu}}{\lambda_1(\Lambda)\cdots\lambda_{\nu}(\Lambda)}\\
&\ll
\frac{M^{r}}{\lambda_1(\Lambda)\cdots\lambda_r(\Lambda)}
\asymp
A^{r}
\frac{1}{\det(\Lambda)}
\biggl(\frac{M}{A}\biggr)^{r}
\ll
A^{r}
\frac{1}{\det(\Lambda)}
\biggl(\frac{M}{A}\biggr)^{s}
\end{align}
by Minkowski's second theorem (\cref{lem:Minkowski_second}). Thus, we may assume $0\le s\le r-1$.

%%%%%%%%%%%%%%%%%%%%%%%%%%%%%%%%%%%%%%%%
We have
\[
\sum_{1\le\nu<r}\frac{A^{\nu}}{\lambda_1(\Lambda)\cdots\lambda_{\nu}(\Lambda)}
=
\sum_{1\le i\le s}\frac{A^{r-i}}{\lambda_1(\Lambda)\cdots\lambda_{r-i}(\Lambda)}
+\sum_{s+1\le i<r}\frac{A^{r-i}}{\lambda_1(\Lambda)\cdots\lambda_{r-i}(\Lambda)}.
\]
For the former sum, by Minkowski's second theorem (\cref{lem:Minkowski_second}) and $\lambda_r(\Lambda)\le M$,
\begin{align}
\sum_{1\le i\le s}\frac{A^{r-i}}{\lambda_1(\Lambda)\cdots\lambda_{r-i}(\Lambda)}
&=
\frac{A^{r}}{\lambda_1(\Lambda)\cdots\lambda_{r}(\Lambda)}
\sum_{i=1}^{s}\frac{\lambda_{r-i+1}(\Lambda)\cdots\lambda_{r}(\Lambda)}{A^{i}}\\
&\ll
\frac{A^{r}}{\det(\Lambda)}\biggl(\frac{M}{A}\biggr)^{s}.
\end{align}
For the latter sum, since $\Lambda$ is integral,
we have $\lambda_1(\Lambda)\ge1$ and so
\[
\sum_{s+1\le i<r}\frac{A^{r-i}}{\lambda_1(\Lambda)\cdots\lambda_{r-i}(\Lambda)}
\ll
\sum_{s+1\le i<r}A^{r-i}
\ll
A^{r-(s+1)}.
\]
By combining the above estimates, we obtain the lemma.
\end{proof}

\begin{theorem}
\label{thm:first_moment_local_cond}
Under the setting of \cref{lem:local_cond_translate}
with $n\ge d\ge2$ and $(n,d)\neq(2,2)$,
\begin{align}
\sum_{V\in\V_{d,n}(A)}N_V(B;\xi,\sigma)
=
\widetilde{C}_{d,n}(\xi,\sigma)
A^{N_{d,n}-1}B\bigl(1+O(R_{d,n}\cdot B^{\varepsilon})\bigr)
\end{align}
for $A,B\ge2$ provided
\[
%A\ge\max(\q,B^{\frac{1}{n(n+1-d)}}),\quad
A\ge\max(\q, B^{\frac{d+1}{(2n+1)(n+1-d)}},\q^{\frac{n-1}{2n-1}}B^{\frac{d}{(2n-1)(n+1-d)}}),\quad
B\ge\q^{n}\quad\text{with}\quad
\q\coloneqq\frac{q}{\sigma_{\infty}},
\]
where
\begin{align}
\widetilde{C}_{d,n}(\xi,\sigma)
&\coloneqq
\frac{V_{N_{d,n}-1}}{4\zeta(N_{d,n}-1)}
\mathfrak{W}_{d,n}(\xii_{\infty};\sigma_{\infty})
\frac{\varphi(q)}{J_{n+1}(q)\zeta(n+1)},\\
R_{d,n}
&\coloneqq
\q A^{-1}
+
\q^{n}B^{-1}
+
\q B^{-\frac{1}{n+1-d}}
+
A^{-1}B^{\frac{1}{n(n+1-d)}}
+
\widetilde{R}_{d,n}
\end{align}
with
\begin{align}
\widetilde{R}_{d,n}
\coloneqq
\left\{
\begin{array}{>{\displaystyle}cl}
0&
\text{if $A\ge B^{\frac{1}{n+1-d}}$},\\
A^{-(2n+1)}B^{\frac{d+1}{n+1-d}}
+
\q^{n-1}A^{-(2n-1)}B^{\frac{d}{n+1-d}}
&\text{if $A\le B^{\frac{1}{n+1-d}}$}
\end{array}
\right.
\end{align}
and the implicit constant depends only on $d,n,\varepsilon$.
\end{theorem}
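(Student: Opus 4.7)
The plan is to swap the order of summation, counting for each fixed primitive coordinate vector $\xx$ of a rational point $x$ the number of hypersurfaces $V\in\V_{d,n}(A)$ passing through $x$. By Lemma \ref{lem:local_cond_translate}, the adelic conditions $d_p(x,\xi_p)\le\sigma_p$ translate into a congruence condition $\xx\equiv u\cc\ \mod{q}$ for some $u\in(\Z/q\Z)^\times$ together with an archimedean cone condition $\xx\in\cone{n+1}{\xii_\infty}{\sigma_\infty}$. The height condition $H(x)\le B$ is $\|\xx\|\le B^{1/(n+1-d)}$, and the condition $x\in V$ is equivalent to $\aa_V\in\Lambda_{\nu_{d,n}(\xx)}$. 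Each projective point $x$ corresponds to two vectors $\pm\xx\in\Z^{n+1}_{\prim}$ and each $V$ has two primitive defining vectors $\pm\aa_V$, producing an overall factor $\tfrac14$. Thus
\[
\sum_{V\in\V_{d,n}(A)}N_V(B;\xi,\sigma)
=\frac14
\astsum_{u\ \mod q}
\sum_{\substack{\xx\in\Z^{n+1}_{\prim},\ \|\xx\|\le B^{1/(n+1-d)}\\ \xx\equiv u\cc\ \mod q,\ \xx\in\cone{n+1}{\xii_\infty}{\sigma_\infty}}}
\#\bigl(\Lambda_{\nu_{d,n}(\xx)}\cap\Z^{N_{d,n}}_{\prim}\cap\ball{N_{d,n}}{A}\bigr).
\]

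Next, for each $\xx$, we count primitive lattice points in $\Lambda_{\nu_{d,n}(\xx)}$ inside the Euclidean ball of radius $A$. Since $\Lambda_{\nu_{d,n}(\xx)}$ is primitive of rank $N_{d,n}-1$ with $\det=\|\nu_{d,n}(\xx)\|$ by \cref{det_Lambda_nu_dn_x}, a standard M\"obius-inversion argument combined with \cref{lem:BarroeroWidmer} gives
\[
\#\bigl(\Lambda_{\nu_{d,n}(\xx)}\cap\Z^{N_{d,n}}_{\prim}\cap\ball{N_{d,n}}{A}\bigr)
=\frac{V_{N_{d,n}-1}}{\zeta(N_{d,n}-1)}\frac{A^{N_{d,n}-1}}{\|\nu_{d,n}(\xx)\|}
+E(\xx)
\]
with an error $E(\xx)$ bounded by $\sum_{1\le\nu<N_{d,n}-1}A^\nu/(\lambda_1\cdots\lambda_\nu)(\Lambda_{\nu_{d,n}(\xx)})$ up to a logarithmic factor. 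Inserting the main term and summing over $\xx$ using \cref{lem:det_reciprocal_prim_sum} yields
\[
\frac{V_{N_{d,n}-1}}{4\zeta(N_{d,n}-1)}\,
\mathfrak{W}_{d,n}(\xii_\infty,\sigma_\infty)\,
\frac{\varphi(q)}{J_{n+1}(q)\zeta(n+1)}\,A^{N_{d,n}-1}B
+\text{(acceptable errors)},
\]
where the acceptable errors coming from \cref{lem:det_reciprocal_prim_sum} produce the terms $\q A^{-1}$, $\q B^{-1/(n+1-d)}$, and $\q^n B^{-1}$ (after normalizing by the main term) in $R_{d,n}$.

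The main obstacle is controlling the aggregate error term
\[
\mathcal{E}\coloneqq\astsum_{u\ \mod q}\sum_{\xx}E(\xx).
\]
Here one invokes \cref{lem:leBoudec_bound} with $\Lambda=\Lambda_{\nu_{d,n}(\xx)}$ and $M=n\|\xx\|/\dd_2(\xx)\ge\lambda_{N_{d,n}-1}(\Lambda_{\nu_{d,n}(\xx)})$ by \cref{lem:BBS_rank_d2_bound}; an appropriate choice of the free parameter $s$ replaces each fiber error by the sum of a ``main-like'' term $A^{N_{d,n}-1}/\|\nu_{d,n}(\xx)\|\cdot(M/A)^s$ and an off-main term $A^{N_{d,n}-2}/A\cdot(1/A)^s$. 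The first is reabsorbed into the primitive Veronese sum via \cref{lem:det_reciprocal_prim_sum} after stratifying $\xx$ by dyadic scales of $\dd_2(\xx)$ and bounding the contribution of small $\dd_2(\xx)$ using \cref{lem:l2n_local}; this is precisely where the two regimes $A\gtrless B^{1/(n+1-d)}$ arise, corresponding to whether the trivial bound $\|\xx\|\le B^{1/(n+1-d)}$ or the non-trivial bound via $\dd_2(\xx)$ is dominant, and this produces the $\widetilde{R}_{d,n}$ terms in $R_{d,n}$. The second summand contributes the $A^{-1}B^{1/(n(n+1-d))}$ term after a similar dyadic treatment. Throughout, the hypotheses on $A$ and $B$ guarantee that every error piece is dominated by the main term up to $B^\varepsilon$, and the hardest bookkeeping lies in verifying that the choice of $s$ in \cref{lem:leBoudec_bound} together with the dyadic splitting by $\dd_2(\xx)$ exhausts the error with the claimed exponents.
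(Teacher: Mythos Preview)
Your approach matches the paper's: swap the order of summation, count primitive points of $\Lambda_{\nu_{d,n}(\xx)}$ in $\ball{N_{d,n}}{A}$ via \cref{prop:lattice_count_primitive_local} (applied with trivial $q=1$, $\sigma=1$), evaluate the resulting sum $\sum 1/\|\nu_{d,n}(\xx)\|$ by \cref{lem:det_reciprocal_prim_sum}, and control the aggregate error by dyadic stratification in $\|\xx\|$ and $\mu(\xx)\coloneqq n\|\xx\|/\dd_2(\xx)$ combined with \cref{lem:l2n_local}.

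One structural point that organizes the error analysis and that your sketch blurs: the paper splits the error $E_1=\sum_{\xx}\sum_{\nu} A^{\nu}/(\lambda_1\cdots\lambda_\nu)$ into $E_{11}$ (where $\lambda_{N_{d,n}-1}(\Lambda_{\nu_{d,n}(\xx)})\le A$) and $E_{12}$ (where $\lambda_{N_{d,n}-1}>A$) and treats them differently. In $E_{11}$ one does \emph{not} use \cref{lem:leBoudec_bound}; since all successive minima are $\le A$, Minkowski's second theorem makes the last term dominate and gives directly $\sum_\nu A^{\nu}/(\lambda_1\cdots\lambda_\nu)\ll A^{N_{d,n}-2}\mu(\xx)/\|\nu_{d,n}(\xx)\|$. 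There is no ``reabsorption into the Veronese sum'': after dyadic decomposition $\|\xx\|\asymp X$, $\mu(\xx)\asymp M$ the count is bounded by $L_{2,n}(X,CX/M;\cc,q;\xii_\infty,\sigma_\infty)$ from \cref{lem:l2n_local}, and this produces the $A^{-1}B^{1/(n(n+1-d))}$ and $\q A^{-1}$ terms. \cref{lem:leBoudec_bound} is invoked only for $E_{12}$, where by \cref{lem:BBS_rank_d2_bound} one has $\mu(\xx)>A$ and hence $A\le\|\xx\|\le B^{1/(n+1-d)}$; thus $E_{12}=0$ once $A\ge B^{1/(n+1-d)}$, which is the origin of the two regimes in $\widetilde{R}_{d,n}$. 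There the specific choices $s=n$ and $s=n-1$, paired respectively with the $\widetilde F_1$ and $\widetilde F_3$ pieces of the $L_{2,n}$ bound, yield exactly the two summands of $\widetilde{R}_{d,n}$. You also omit a third error $E_2\asymp A\log A\cdot\#\{\xx\}$ coming from the lattice-point count; it is absorbed using \cref{prop:lattice_count_local} together with the inequality $N_{d,n}\ge nd+3$.
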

%%%%%%%%%%%%%%%%%%%%%%%%%%%%%%%%%%%%%%%%
% \Yuta{It's better to introduce the condition
% \[
% A\ge\max(\q, B^{\frac{d+1}{(2n+1)(n+1-d)}},\q^{\frac{n-1}{2n-1}}B^{\frac{d}{(2n-1)(n+1-d)}})
% \]
% which covers the condition $A\ge B^{\frac{1}{n(n+1-d)}}$}
%%%%%%%%%%%%%%%%%%%%%%%%%%%%%%%%%%%%%%%%
\begin{proof}
Note that the assumption on the size of $A$ implies
\begin{equation}
A
\ge
B^{\frac{d+1}{(2n+1)(n+1-d)}}
\ge
B^{\frac{d+1}{3n(n+1-d)}}
\ge
B^{\frac{1}{n(n+1-d)}}.
\end{equation}
Thus, we have $A\ge\max(\q,B^{\frac{1}{n(n+1-d)}})$,
which we shall use below.
For brevity, write
\[
S
\coloneqq
\sum_{V\in\V_{d,n}(A)}N_V(B;\xi,\sigma).
\]
By \cref{lem:local_cond_translate}, we have
\begin{align}
S
&=
\frac{1}{4}
\sum_{\substack{
\aa\in\Zprim^{\mathcal{M}_{d,n}}\\
\|\aa\|\le A
}}
\astsum_{u\ \mod{q}}
\sum_{\substack{
\xx\in\Zprim^{n+1}\\
H(\xx)\le B\\
\xx\in u\cc+q\Z^{n+1}\\
\xx\in\cone{n+1}{\xii_{\infty}}{\realeps_{\infty}}\\
\aa\in\Lambda_{\nu_{d,n}(\xx)}
}}
1\\
&=
\frac{1}{4}
\astsum_{u\ \mod{q}}
\sum_{\substack{
\xx\in\Zprim^{n+1}\\
H(\xx)\le B\\
\xx\in u\cc+q\Z^{n+1}\\
\xx\in\cone{n+1}{\xii_{\infty}}{\realeps_{\infty}}
}}
\#(\Lambda_{\nu_{d,n}(\xx)}
\cap\Zprim^{\mathcal{M}_{d,n}}
\cap\ball{N_{d,n}}{A}).
\end{align}
By \cref{prop:lattice_count_primitive_local}
with $q=1$ and $\sigma=1$, we have
\[
S=T+O(E_{1}+E_{2}),
\]
where
\begin{align}
T
&\coloneqq
\frac{V_{N_{d,n}-1}}{4\zeta(N_{d,n}-1)}A^{N_{d,n}-1}
\astsum_{u\ \mod{q}}
\sum_{\substack{
\xx\in\Zprim^{n+1}\\
H(\xx)\le B\\
\xx\in u\cc+q\Z^{n+1}\\
\xx\in\cone{n+1}{\xii_{\infty}}{\realeps_{\infty}}
}}
\frac{1}{\det(\Lambda_{\nu_{d,n}(\xx)})},\\
E_{1}
&\coloneqq
\astsum_{u\ \mod{q}}
\sum_{\substack{
\xx\in\Zprim^{n+1}\\
H(\xx)\le B\\
\xx\in u\cc+q\Z^{n+1}\\
\xx\in\cone{n+1}{\xii_{\infty}}{\realeps_{\infty}}
}}
\sum_{1\le\nu<N_{d,n}-1}
\frac{A^{\nu}}{\lambda_{1}(\Lambda_{\nu_{d,n}(\xx)})\cdots\lambda_{\nu}(\Lambda_{\nu_{d,n}(\xx)})},\\
E_{2}
&\coloneqq
A\log A
\astsum_{u\ \mod{q}}
\sum_{\substack{
\xx\in\Zprim^{n+1}\\
H(\xx)\le B\\
\xx\in u\cc+q\Z^{n+1}\\
\xx\in\cone{n+1}{\xii_{\infty}}{\realeps_{\infty}}
}}
1
\end{align}

%%%%%%%%%%%%%%%%%%%%%%%%%%%%%%%%%%%%%%%%
For the main term $T$, by \cref{det_Lambda_nu_dn_x}
and \cref{lem:det_reciprocal_prim_sum}, we have
\begin{align}
T
&=
\widetilde{C}_{d,n}(\xi,\sigma)
A^{N_{d,n}-1}B
+O\bigl(\q^{-(n-1)}
A^{N_{d,n}-1}(B^{\frac{n-d}{n+1-d}}+\log B)
+A^{N_{d,n}-1}\bigr).
\end{align}
By \cref{lem:W_bound} and the bound $\varphi(q)\gg q^{1-\varepsilon}$,
we have $\widetilde{C}_{d,n}(\xi,\sigma)\gg\q^{-n-\varepsilon}$ and so
\begin{equation}
T
=
\widetilde{C}_{d,n}(\xi,\sigma)
A^{N_{d,n}-1}B
\Bigl(1+O\bigl((
\q B^{-\frac{1}{n+1-d}}
+\q^{n}B^{-1}
)B^{\varepsilon}\bigr)\Bigr),
\end{equation}
where we used $B\ge\q^{n}$.

%%%%%%%%%%%%%%%%%%%%%%%%%%%%%%%%%%%%%%%%
For the error term $E_{1}$, let us introduce
\[
\mu(\xx)\coloneqq n\frac{\|\xx\|}{\dd_2(\xx)}
\]
and we further dissect as
\[
E_{1}=E_{11}+E_{12},
\]
where
\begin{align}
E_{11}
&\coloneqq
\astsum_{u\ \mod{q}}
\sum_{\substack{
\xx\in\Zprim^{n+1}\\
H(\xx)\le B\\
\xx\in u\cc+q\Z^{n+1}\\
\xx\in\cone{n+1}{\xii_{\infty}}{\realeps_{\infty}}\\
\lambda_{N_{d,n}-1}(\Lambda_{\nu_{d,n}(\xx)})\le A
}}
\sum_{1\le\nu<N_{d,n}-1}
\frac{A^{\nu}}{\lambda_{1}(\Lambda_{\nu_{d,n}(\xx)})\cdots\lambda_{\nu}(\Lambda_{\nu_{d,n}(\xx)})},\\
E_{12}
&\coloneqq
\astsum_{u\ \mod{q}}
\sum_{\substack{
\xx\in\Zprim^{n+1}\\
H(\xx)\le B\\
\xx\in u\cc+q\Z^{n+1}\\
\xx\in\cone{n+1}{\xii_{\infty}}{\realeps_{\infty}}\\
\lambda_{N_{d,n}-1}(\Lambda_{\nu_{d,n}(\xx)})>A
}}
\sum_{1\le\nu<N_{d,n}-1}
\frac{A^{\nu}}{\lambda_{1}(\Lambda_{\nu_{d,n}(\xx)})\cdots\lambda_{\nu}(\Lambda_{\nu_{d,n}(\xx)})}.
\end{align}

%%%%%%%%%%%%%%%%%%%%%%%%%%%%%%%%%%%%%%%%
For the sum $E_{11}$, 
by using \cref{det_Lambda_nu_dn_x},
\cref{lem:Minkowski_second}
and \cref{lem:BBS_rank_d2_bound},
\begin{align}
E_{11}
\ll
A^{N_{d,n}-2}
\astsum_{u\ \mod{q}}
\sum_{\substack{
\xx\in\Zprim^{n+1}\\
H(\xx)\le B\\
\xx\in u\cc+q\Z^{n+1}\\
\xx\in\cone{n+1}{\xii_{\infty}}{\realeps_{\infty}}
}}
\frac{\mu(\xx)}{\|\xx\|^d}.
\end{align}
We then dissect the sum dyadically with writing
\begin{equation}
\label{thm:first_moment_local_cond:dyadic_variable}
\|\xx\|\asymp X
\and
\mu(\xx)\asymp M
\end{equation}
as
\[
E_{11}
\ll
A^{N_{d,n}-2}
\dsum{(d)}_{\substack{
1\ll X^{n+1-d}\ll B\\
1\ll M\ll X
}}
\frac{M}{X^d}
L_{2,n}\biggl(X,\frac{CX}{M};\cc,q;\xii_{\infty},\realeps_\infty\biggr),
\]
with some $1\le C\ll1$.
By \cref{prop:lattice_count_local}, \cref{lem:l2n_local}, we have
\[
L_{2,n}\biggl(X,\frac{CX}{M};\cc,q;\xii_{\infty},\realeps_\infty\biggr)
\ll
(F_{1}+F_{2}+F_{3}+F_{4})
B^{\varepsilon}
+X,
\]
where
\begin{equation}
\label{thm:first_moment_local_cond:F_def}
\begin{aligned}
F_{1}
=F_{1}(X,M)
&\coloneqq
\q^{-n}X^{n+1}\min(M^{-n}X,1),\\
F_{2}
=F_{2}(X,M)
&\coloneqq
\q^{-n}X^{n+1}\min(\q M^{-(n+\frac{1}{2})}X^{\frac{1}{2}},1),\\
F_{3}
=F_{3}(X,M)
&\coloneqq
\q^{-n}X^{n+1}\min(\q^{n-1}M^{-(n-1)},1),\\
F_{4}
=F_{4}(X,M)
&\coloneqq
\q^{-n}X^{n+1}\min(\q^{n}M^{-n},1).
\end{aligned}
\end{equation}
For $1\ll M\ll X$, we then have
\begin{align}
M\cdot F_{1}(X,M)
&\ll
\q^{-n}X^{n+1+\frac{1}{n}},\\
M\cdot F_{2}(X,M)
&\ll
\q^{-n+\frac{2}{2n+1}}X^{n+1+\frac{1}{2n+1}},\\
M\cdot F_{3}(X,M)
&\ll
\q^{-n+1}X^{n+1},\\
M\cdot F_{4}(X,M)
&\ll
\q^{-n+1}X^{n+1},\\
M\cdot X
&\ll
X^{2}.
\end{align}
Also, we have
\[
\q^{\frac{2}{2n+1}}
X^{\frac{1}{2n+1}}
\le
\q^{\frac{2}{2n+1}}
(X^{\frac{1}{n}})^{\frac{2n-1}{2n+1}}
\le
X^{\frac{1}{n}}
+
\q.
\]
We thus have
\begin{align}
E_{11}
&\ll
\q^{-n}
A^{N_{d,n}-2}
\dsum{(d)}_{1\ll X^{n+1-d}\ll B}
(X^{n+1-d}
(
X^{\frac{1}{n}}+\q
)
+\q^{n}X^{-(d-2)})
B^{\varepsilon}\\
&\ll
\q^{-n}A^{N_{d,n}-1}B
(
A^{-1}B^{\frac{1}{n(n+1-d)}}
+
\q A^{-1}
+
\q^{n}(AB)^{-1})
B^{\varepsilon}.
\end{align}
This completes the estimate for $E_{11}$.

%%%%%%%%%%%%%%%%%%%%%%%%%%%%%%%%%%%%%%%%
We next bound $E_{12}$.
For $\xx$ counted in $E_{12}$, by \cref{lem:BBS_rank_d2_bound}, we have
\[
A
<\lambda_{N_{d,n}-1}(\Lambda_{\nu_{d,n}(\xx)})
\le\|\xx\|
\le B^{\frac{1}{n+1-d}}.
\]
Thus, we may assume $A\le B^{\frac{1}{n+1-d}}$ since otherwise the sum $E_{12}$ is empty.
Also, for $\xx$ counted in $E_{12}$, by \cref{lem:BBS_rank_d2_bound},
we have $\mu(\xx)>A$.
We dissect dyadically with variables \cref{thm:first_moment_local_cond:dyadic_variable}
and use \cref{lem:leBoudec_bound} to get
\[
E_{12}
\ll
A^{N_{d,n}-1}
\dsum{(d)}_{\substack{
1\ll X^{n+1-d}\ll B\\
A\le M\ll X
}}
G\cdot
L_{2,n}\biggl(X,\frac{CX}{M};\cc,q;\xii_{\infty},\realeps_\infty\biggr)
\]
with some $1\le C\ll1$, where
\[
G
\coloneqq
\min_{s\in\mathbb{Z}_{\ge0}}
G_{s}
\quad\text{with}\quad
G_{s}
\coloneqq
\biggl(
\frac{1}{X^d}\biggl(\frac{M}{A}\biggr)^{s}
+
\biggl(\frac{1}{A}\biggr)^{s+1}
\biggr).
\]
By using \cref{lem:l2n_local}, we have
\[
E_{12}
\ll
A^{N_{d,n}-1}
\dsum{(d)}_{\substack{
1\ll X^{n+1-d}\ll B\\
A\le M\ll X
}}
G(\widetilde{F}_1+\widetilde{F}_2+\widetilde{F}_3+\widetilde{F}_4)
B^{\varepsilon},
\]
where
\begin{equation}
\label{thm:first_moment_local_cond:F_def_nomin}
\begin{aligned}
\widetilde{F}_{1}
&\coloneqq
\q^{-n}X^{n+1}\cdot M^{-n}X,\\
\widetilde{F}_{2}
&\coloneqq
\q^{-n}X^{n+1}\cdot \q M^{-(n+\frac{1}{2})}X^{\frac{1}{2}},\\
\widetilde{F}_{3}
&\coloneqq
\q^{-n}X^{n+1}\cdot\q^{n-1}M^{-(n-1)},\\
\widetilde{F}_{4}
&\coloneqq
\q^{-n}X^{n+1}\cdot\q^{n}M^{-n}.
\end{aligned}
\end{equation}
In the sum $E_{12}$, by the assumption $A\ge\q$, we have $\q\le A\le M\ll X$ and so
\[
\widetilde{F}_{2}
=
\q(MX)^{-\frac{1}{2}}\widetilde{F}_{1}
\ll
\widetilde{F}_{1}
\and
\widetilde{F}_{4}
=
\q M^{-1}\widetilde{F}_{3}
\ll\widetilde{F}_{3}.
\]
We thus simply have
\[
E_{12}
\ll
A^{N_{d,n}-1}\q^{\varepsilon}
\dsum{(d)}_{\substack{
1\ll X^{n+1-d}\ll B\\
A\le M\ll X
}}
G(\widetilde{F}_1+\widetilde{F}_3)B^{\varepsilon},
\]
We use the bound $G\le G_{n}$ for $G\widetilde{F}_{1}$
and $G\le G_{n-1}$ for $G\widetilde{F}_{3}$.
In the expressions $G_{n}\widetilde{F}_{1}$ and $G_{n-1}\widetilde{F}_{3}$
the exponents of $X$ is $\ge n+1-d$ and the exponents of $M$ is $\le 0$.
Thus, we have
\begin{align}
E_{12}
&\ll
A^{N_{d,n}-1}B^{\varepsilon}
\bigl(G_{n}\widetilde{F}_1+G_{n-1}\widetilde{F}_3\bigr)\big\vert_{X=B^{\frac{1}{n+1-d}}, M=A},\\
&=
\q^{-n}
A^{N_{d,n}-1}B\\
&\hspace{10mm}\times
\Bigl(
(1+A^{-(n+1)}B^{\frac{d}{n+1-d}})
A^{-n}B^{\frac{1}{n+1-d}}
\\
&\hspace{40mm}
+
(1+A^{-n}B^{\frac{d}{n+1-d}})
\q^{n-1}A^{-(n-1)}
\Bigr)
B^{\varepsilon}.
\end{align}
The assumption $A\ge\max(\q,B^{\frac{1}{n(n+1-d)}})$ implies
\[
A^{-n}B^{\frac{1}{n+1-d}}
\le
A^{-1}B^{\frac{1}{n(n+1-d)}}
\and
\q^{n-1}A^{-(n-1)}
\le
\q A^{-1}
\]
and so the contribution of terms
\[
A^{-n}B^{\frac{1}{n+1-d}},\quad
\q^{n-1}A^{-(n-1)}
\]
are already covered by $R_{d,n}$. The remaining terms contribute
\[
\q^{-n}
A^{N_{d,n}-1}B
(
A^{-(2n+1)}B^{\frac{d+1}{n+1-d}}
+
\q^{n-1}A^{-(2n-1)}B^{\frac{d}{n+1-d}})
B^{\varepsilon}
\]
which is $\widetilde{R}_{d,n}$ when $A\le B^{\frac{1}{n+1-d}}$.

%%%%%%%%%%%%%%%%%%%%%%%%%%%%%%%%%%%%%%%%
Finally, for $E_{2}$, we just use \cref{prop:lattice_count_local} with $B\ge\q^{n}$ to obtain
\begin{align}
E_{2}
&\ll
A\log A
(\q^{-n}B^{1+\frac{d}{n+1-d}}+B^{\frac{1}{n+1-d}})\\
&\ll
\q^{-n}A^{N_{d,n}-1}B
\cdot
A^{-(N_{d,n}-3)}B^{\frac{d}{n+1-d}}.
\end{align}
Note that we have $N_{d,n}\ge nd+3$.
Indeed,
\[
N_{d,n}
=
\biggl(\frac{n}{d}+1\biggr)\cdots\biggl(\frac{n}{2}+1\biggr)
\biggl(\frac{n}{1}+1\biggr)
\ge
2^{d-2}\cdot\frac{5}{2}(n+1)
\ge
\frac{5}{4}d(n+1)
\ge
dn+4.
\]
Therefore, by using the assumption $A\ge B^{\frac{1}{n(n+1-d)}}$, we have
\[
A^{-(N_{d,n}-3)}B^{\frac{d}{n+1-d}}
\le
A^{-nd}B^{\frac{d}{n+1-d}}
\le
A^{-1}B^{\frac{1}{n(n+1-d)}}
\]
and so the contribution of $E_{2}$ is covered by $R_{d,n}$.
By combining the above estimates,
we obtain the theorem.
\end{proof}

%%%%%%%%%%%%%%%%%%%%%%%%%%%%%%%%%%%%%%%%
\begin{remark}
Under the assumptions $n\ge d\ge 2$ and $B\ge\q^{n}$, we have
\begin{gather}
B^{\frac{d+1}{(2n+1)(n+1-d)}}
\le
B^{\frac{1}{n+1-d}},\\
\q^{\frac{n-1}{2n-1}}B^{\frac{d}{(2n-1)(n+1-d)}}
\le
B^{\frac{n-1}{n(2n-1)}+\frac{d}{(2n-1)(n+1-d)}}
\le
B^{\frac{n-1+d}{(2n-1)(n+1-d)}}
\le
B^{\frac{1}{n+1-d}}.
\end{gather}
Therefore, in \cref{thm:first_moment_local_cond},
we can replace the assumptions on the size of $A,B,\q$
to $A\ge B^{\frac{1}{n+1-d}}$ and $B\ge\q^{n}$
and then remove the error term $\widetilde{R}_{n,d}$. 
\end{remark}

%%%%%%%%%%%%%%%%%%%%%%%%%%%%%%%%%%%%%%%%
\begin{remark}
By choosing the parameter $s$ in \cref{lem:leBoudec_bound} more efficiently,
we can improve the error term in \cref{thm:first_moment_local_cond}
to get a wider admissible range for $A,B,\q$.
However, we took $s=n$ and $s=n-1$ for simplicity.
% and the possible improvement is not so significant.
% \Yuta{To be checked.}
\end{remark}

%%%%%%%%%%%%%%%%%%%%%%%%%%%%%%%%%%%%%%%%
We now simplify \cref{thm:first_moment_local_cond} to obtain \cref{mainthm:numerator}:
\begin{proof}[Proof of \cref{mainthm:numerator}]
It suffices to bound the error terms of \cref{thm:first_moment_local_cond}
by those of \cref{mainthm:numerator} under the assumptions on the size of $A,B,\q$
in \cref{thm:first_moment_local_cond}.
By $B\ge\q^{n}$, we have
\[
\q^{n}B=(\q B^{-\frac{1}{n}})^{n}\le \q B^{-\frac{1}{n}}.
\]
Also, since $d\ge2$, we have
\[
\q B^{-\frac{1}{n+1-d}}\le\q B^{-\frac{1}{n}}
\quad\text{and}\quad
A^{-1}B^{\frac{1}{n(n+1-d)}}
=
A^{-1}B^{\frac{3}{3n(n+1-d)}}
\le
A^{-1}B^{\frac{d+1}{(2n+1)(n+1-d)}}.
\]
Finally, since $A\ge B^{\frac{d+1}{(2n+1)(n+1-d)}}$, we have
\[
A^{-(2n+1)}B^{\frac{d+1}{n+1-d}}
=
(A^{-1}B^{\frac{d+1}{(2n+1)(n+1-d)}})^{2n+1}
\le
A^{-1}B^{\frac{d+1}{(2n+1)(n+1-d)}}.
\]
Thus, any error term of \cref{thm:first_moment_local_cond}
is covered by those of \cref{mainthm:numerator}.
\end{proof}

\section{Lang-Weil estimate and reducible locus}

In this section we collect some results on
the existence of $\Q_p$-points of hypersurfaces 
that can be easily deduced from the Lang-Weil estimate.
The contents of this section are well-known to experts
(cf.\ proof of \cite[Theorem 3.6]{PoonenVoloch}), 
but we give the proof for the completeness. 

\begin{theorem}[{Lang--Weil~\cite[Theorem~1]{LangWeil:VarietyFiniteField}}]\label{Lang-Weil1}
Let $n,d, r \in \Z_{\geq 0}$ and suppose $d > 0$.
Then there is a constant $A(n,d,r) > 0$ depending only on $n,d,r$ such that
for any prime power $q$ and geometrically irreducible closed subvariety $X \subset \P^{n}_{\F_{q}}$ 
with $\dim X = r$ and $\deg X = d$, we have
\begin{align}
|\#X(\F_{q}) - q^{r}| \leq (d-1)(d-2)q^{r-1/2} + A(n,d,r)q^{r-1}.
\end{align}
\end{theorem}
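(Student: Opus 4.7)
The plan is to prove this by induction on the dimension $r$, following the original strategy of Lang and Weil \cite{LangWeil:VarietyFiniteField}. The base case $r=0$ is immediate since $X$ is then a single closed point whose degree is at most $d$. For $r=1$, $X$ is a geometrically irreducible projective curve of degree $d$ in $\P^n_{\F_q}$; a generic linear projection to $\P^{2}_{\F_q}$ produces a birational plane model of degree $d$, so the geometric genus of the normalization $\widetilde{X}$ satisfies $g \leq (d-1)(d-2)/2$. Weil's Riemann hypothesis for curves then gives $|\#\widetilde{X}(\F_q) - (q+1)| \leq 2g\sqrt{q}$, and since $|\#X(\F_q) - \#\widetilde{X}(\F_q)|$ is bounded by the number of singular points of $X$, which is $O_{d,n}(1)$, the claimed estimate with leading coefficient $(d-1)(d-2)$ follows.

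For the inductive step $r \geq 2$, I would perform a double count over $\F_q$-rational hyperplanes $H \subset \P^{n}_{\F_q}$. Since every $\F_q$-point of $\P^n$ lies on exactly $(q^n-1)/(q-1)$ such hyperplanes, we have
\[
\sum_{H}\#(X \cap H)(\F_q) = \frac{q^n-1}{q-1}\cdot\#X(\F_q).
\]
A Bertini-type theorem (applied geometrically and then descended to $\F_q$) supplies a proper closed subvariety $\Sigma \subset (\P^{n}_{\F_q})^{\vee}$ such that for every $H \notin \Sigma$ the intersection $X \cap H$ is geometrically irreducible of dimension $r-1$ and degree $d$. Applying the induction hypothesis to these ``good'' hyperplanes produces a principal contribution of size $\#\{H\}\cdot(q^{r-1} + (d-1)(d-2)q^{r-3/2} + A(n,d,r-1)q^{r-2})$. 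For the ``bad'' hyperplanes $H \in \Sigma(\F_q)$, of which there are $\ll_{n,d,r} q^{n-1}$ by a dimension bound, a crude Schwartz--Zippel-type estimate $\#(X \cap H)(\F_q) \ll_{n,d,r} q^{r-1}$ (proved separately by induction on irreducible components) suffices. Dividing through by $(q^n-1)/(q-1)$ and absorbing the resulting lower-order contributions into $A(n,d,r)$ yields the desired bound.

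The main obstacle is making the Bertini step effective over $\F_q$: one must not only show that a general hyperplane section of $X$ is geometrically irreducible of the expected dimension and degree, but also bound the exceptional locus $\Sigma$ in the dual space by a proper closed subvariety whose $\F_q$-point count is controlled uniformly in $q$ and $X$. This requires a quantitative analysis of the dual variety of $X$ together with a careful treatment of the locus of hyperplanes producing reducible or lower-dimensional sections, which is precisely the content of the argument in \cite{LangWeil:VarietyFiniteField}; for the purposes of the present work I would simply invoke their theorem as stated.
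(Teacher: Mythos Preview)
The paper does not prove this statement at all; it is simply quoted from \cite{LangWeil:VarietyFiniteField} and used as a black box. Your final sentence --- to invoke their theorem as stated --- is exactly what the paper does, so your proposal is correct and aligned with the paper's approach. The inductive sketch you provide beforehand is a reasonable outline of the original Lang--Weil argument (Weil's bound for curves plus a hyperplane-section count), but it is extraneous here since the paper makes no attempt to reprove the result.
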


\begin{theorem}[{Lang--Weil~\cite[Lemma~1]{LangWeil:VarietyFiniteField}}]\label{Lang-Weil2}
Let $n,d, r \in \Z_{\geq 0}$ and suppose $d > 0$.
Then there is a constant $A_{1}(n,d,r) > 0$ depending only on $n,d,r$ such that
for any prime power $q$ and irreducible closed subvariety $X \subset \P^{n}_{\F_{q}}$ 
with $\dim X = r$ and $\deg X \leq d$, we have
\begin{align}
\#X(\F_{q})  \leq A_{1}(n,d,r)q^{r}.
\end{align}
\end{theorem}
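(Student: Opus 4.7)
The plan is to reduce to the geometrically irreducible case \cref{Lang-Weil1} by induction on the dimension $r$, combined with a Galois-descent argument that handles the arithmetically but not geometrically irreducible case.

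For the base case $r = 0$, any closed subvariety of $\P^n_{\F_q}$ of degree $\le d$ consists of at most $d$ geometric points, so $\#X(\F_q) \le d$ and one may take $A_1(n,d,0) = d$.

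For the inductive step, I would decompose the base change $X_{\overline{\F_q}} = X_1 \cup \cdots \cup X_m$ into its geometrically irreducible components. Since $X$ is irreducible over $\F_q$, the Galois group $\mathrm{Gal}(\overline{\F_q}/\F_q)$, generated by Frobenius, acts transitively on $\{X_1,\ldots,X_m\}$; each $X_i$ has dimension $r$, and $\sum_i \deg X_i \le \deg X \le d$, so $\deg X_i \le d$ and $m \le d$. If $m = 1$, then $X$ is geometrically irreducible and \cref{Lang-Weil1} immediately gives
\begin{align}
\#X(\F_q) \le q^r + (d-1)(d-2)q^{r-\frac12} + A(n,d,r)q^{r-1} \ll_{n,d,r} q^r.
\end{align}

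The more delicate case is $m \ge 2$. Here the key observation is that every $\F_q$-point of $X$ lies in $Y \coloneqq \bigcap_{i=1}^m X_i$: if $x \in X(\F_q)$ and $x \in X_i$, then $x = \mathrm{Frob}(x) \in \mathrm{Frob}(X_i) = X_{\sigma(i)}$, so the nonempty set $\{i : x \in X_i\}$ is $\sigma$-invariant and by transitivity equals $\{1,\ldots,m\}$. The scheme $Y$ is Galois-stable and hence descends to a closed subscheme of $\P^n_{\F_q}$; as a proper closed subscheme of the irreducible $X_1$ it has dimension $\le r-1$, and iterated Bezout gives $\deg Y \le \prod_{i=1}^m \deg X_i \le d^d$. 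Applying the induction hypothesis to each of the at most $d^d$ many $\F_q$-irreducible components of $Y$ yields
\begin{align}
\#X(\F_q) \le \#Y(\F_q) \le d^d \cdot A_1(n, d^d, r-1)\cdot q^{r-1} \ll_{n,d,r} q^r.
\end{align}

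The main obstacle is ensuring that the parameters controlling the induction — the ambient dimension $n$ and the degree — remain uniformly bounded after descending from $X$ to $Y$; the ambient $\P^n$ is unchanged and Bezout controls $\deg Y$ independently of $n$, so this is handled automatically. Once this uniformity is in hand, the induction on $r$ closes and produces the claimed constant $A_1(n,d,r)$.
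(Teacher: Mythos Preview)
The paper does not supply its own proof of this statement: it simply quotes it as Lemma~1 of Lang--Weil and moves on, so there is nothing in the paper to compare against at the level of argument.

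Your proposed proof is correct and is in fact essentially Lang and Weil's original argument: induct on $r$, use \cref{Lang-Weil1} directly when $X$ is geometrically irreducible, and otherwise observe that the $\F_q$-points are trapped in the Galois-stable intersection $Y=\bigcap_i X_i$ of the geometric components, which has strictly smaller dimension and degree controlled by B\'ezout. Two small points worth tightening: first, the components of $Y$ need not all have dimension exactly $r-1$, so your induction should be stated for all dimensions $\le r-1$ simultaneously (or just absorb $q^{r'}\le q^{r-1}$); second, the bound ``at most $d^d$ irreducible components'' requires the refined B\'ezout inequality that controls the sum of degrees of \emph{all} components of an intersection, not just the top-dimensional ones --- this is standard (e.g.\ Fulton's intersection theory) but should be invoked explicitly. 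With these in place the argument closes and yields the claimed $A_1(n,d,r)$.
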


\begin{lemma}\label{lem:existence-smooth-points}
    Let $n\in \Z_{\geq 2}$ and $d \in \Z_{\geq 1}$.
    Then there is $P_0 \geq 1$ such that for every prime number $p > P_0$
    and every geometrically integral hypersurface $X \subset \P^n_{\F_p}$ of degree $d$,
    $X$ has a smooth $\F_p$-point.
\end{lemma}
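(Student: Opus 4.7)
The strategy is to combine the two versions of the Lang--Weil estimate: use \cref{Lang-Weil1} to produce a main term of order $p^{n-1}$ for the number of $\F_p$-points on $X$, and use \cref{Lang-Weil2} applied to the singular locus $X_{\sing}$ to control the ``bad'' contribution. Since $X$ is geometrically integral, it is in particular geometrically reduced, so $X_{\sing}$ is a proper closed subscheme of $X$ and hence $\dim X_{\sing}\le n-2$. If we can show, uniformly in the hypersurface, that $\deg X_{\sing}$ is bounded in terms of $n,d$ only, then \cref{Lang-Weil2} yields $\#X_{\sing}(\F_p)\le C(n,d)\,p^{n-2}$, and the required lower bound
\[
\#X(\F_p)-\#X_{\sing}(\F_p)\ge p^{n-1}-O_{n,d}(p^{n-3/2})-O_{n,d}(p^{n-2})>0
\]
holds for all $p\ge P_0$ with $P_0=P_0(n,d)$.

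To set this up, let $f\in\F_p[X_0,\ldots,X_n]$ be a defining form for $X$ of degree $d$. As a closed subscheme of $\mathbb{P}^n_{\F_p}$ the singular locus $X_{\sing}$ is cut out by $f,\partial_0 f,\ldots,\partial_n f$, a list of polynomials of degree at most $d$ whose number depends only on $n$. By a standard Bezout-type degree bound (e.g.\ sum the degrees of irreducible components, each of which is an irreducible subvariety of $\mathbb{P}^n$ contained in the zero locus of the above polynomials), the total degree of $X_{\sing}$ is bounded by a function of $n$ and $d$ alone. Applying \cref{Lang-Weil2} to each irreducible component of $X_{\sing}$, viewed in a suitable projective space containing it, gives
\[
\#X_{\sing}(\F_p)\ll_{n,d}p^{\dim X_{\sing}}\ll_{n,d}p^{n-2}.
\]

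Now I apply \cref{Lang-Weil1} to $X$ itself, which is geometrically irreducible of dimension $n-1$ and degree $d$, to obtain
\[
\#X(\F_p)\ge p^{n-1}-(d-1)(d-2)p^{n-3/2}-A(n,d,n-1)p^{n-2}.
\]
Subtracting the above bound on $\#X_{\sing}(\F_p)$ gives
\[
\#\bigl(X(\F_p)\setminus X_{\sing}(\F_p)\bigr)
\ge p^{n-1}-C_1(n,d)p^{n-3/2}-C_2(n,d)p^{n-2},
\]
which is strictly positive once $p$ exceeds some threshold $P_0$ depending only on $n,d$. Any point in $X(\F_p)\setminus X_{\sing}(\F_p)$ is a smooth $\F_p$-point of $X$, completing the proof.

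The main technical point is the uniform degree bound on $X_{\sing}$; Lang--Weil itself is applied entirely as a black box. A secondary subtlety is that when $p\mid d$ one cannot use Euler's relation to eliminate $f$ from the defining ideal of $X_{\sing}$, but this is harmless since the list $f,\partial_0 f,\ldots,\partial_n f$ still has size and degrees depending only on $n,d$.
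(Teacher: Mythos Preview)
Your proof is correct and follows essentially the same approach as the paper: apply \cref{Lang-Weil1} to $X$ for a lower bound of order $p^{n-1}$, apply \cref{Lang-Weil2} to the singular locus (whose components have number and degree bounded in terms of $n,d$ by a B\'ezout-type argument) for an upper bound of order $p^{n-2}$, and compare. The only cosmetic difference is that the paper bounds $\#V_+(\partial_0 f,\ldots,\partial_n f)(\F_p)$ rather than $\#X_{\sing}(\F_p)$, invoking Euler's relation (hence the condition $p>d$) to ensure this larger set still has dimension $\le n-2$; your formulation with $X_{\sing}$ avoids this detour.
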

\begin{proof}
    Write $X = V_+(f)$, where $f \in \F_p[x_0,\dots,x_n]$ is a homogeneous of degree $d$.
    Let $f_i = \partial f / \partial x_i$.
    Since $X$ is geometrically integral, the regular locus of $X_{\overline{\F}_p}$
    is non-empty open. Thus if $p > d$, we have $\dim V_+(f_0,\dots f_n) \leq n-2$.
    Also there are $C, D \geq 1$ depending only on $d,n$ such that
    \begin{align}
        &\text{the number of irreducible components of $V_+(f_0,\dots, f_n)$} \leq C\\
        &\text{degree of irreducible components of $V_+(f_0,\dots, f_n)$} \leq D.
    \end{align}
    By \cref{Lang-Weil2}, we have
    \begin{align}
        \# V_+(f_0,\dots,f_n)(\F_p) \leq C \max\{A_1(n,D,r) \mid r \leq n-2\} p^{n-2}.
    \end{align}
    By \cref{Lang-Weil1}, we have 
    \begin{align}
        \#X(\F_p) \geq p^{n-1} - (d-1)(d-2)p^{n-1-1/2} - A(n,d,n-1)p^{n-2}.
    \end{align}
    Therefore, if $p$ is large enough we have 
    \begin{align}
        \#X(\F_p) > \# V_+(f_0,\dots,f_n)(\F_p)
    \end{align}
    and we are done.
\end{proof}

Now let $n\in \Z_{\geq 0}$ and $d \in \Z_{\geq 1}$.
Recall we identify (the ring valued points of) $\A^{ N_{d,n}}_{\Z}$ with the set of homogeneous polynomials 
of degree $d$ in $n+1$ variables.
Then we have the following morphism of schemes corresponding to the product of polynomials:
\begin{align}
p_{d',d''} \colon \A^{ N_{d',n}}_{\Z} \times_{\Z} \A^{ N_{d'',n}}_{\Z} \longrightarrow \A^{ N_{d,n}}_{\Z}
\end{align}
where $d' + d'' = d$.
\begin{definition}
We set
\begin{align}
\NIP_{d,n} = \bigcup_{\substack{d'+d'' = d \\ d',d''\geq 1}} \text{ $\Bigl($scheme theoretic image of $p_{d',d''}$ $\Bigr)$} \subset \A^{ N_{d,n}}_{\Z}.
\end{align}
We equip $\NIP_{d,n}$ with the reduced scheme structure.
Here $\NIP$ stands for Non-Irreducible Polynomials.
\end{definition}

\begin{lemma}\label{dim-of-NIP}
Let $n, d \in \Z_{\geq 2}$ and suppose $(n,d) \neq (2,2), (2,3)$.
Then $\NIP_{d,n}$ is flat over $\Z$ and 
\begin{align}
&\dim \NIP_{d,n} \leq  N_{d,n} -1\\
&\dim (\NIP_{d,n})_\Q \leq  N_{d,n} -2
\end{align}
where $(\NIP_{d,n})_\Q$ is the generic fiber of $\NIP_{d,n}$
over $\Spec \Z$.
\end{lemma}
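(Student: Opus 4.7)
The plan is to decompose $\NIP_{d,n}$ into its integral pieces $Y_{d',d''}\coloneqq$ (scheme-theoretic image of $p_{d',d''}$) for $d'+d''=d$ with $d',d''\ge 1$, and to verify both assertions of the lemma on each such $Y_{d',d''}$ separately. Each $Y_{d',d''}$ is an integral closed subscheme of $\A^{N_{d,n}}_{\Z}$ because the source $\A^{N_{d',n}}_{\Z}\times_{\Z}\A^{N_{d'',n}}_{\Z}$ of $p_{d',d''}$ is integral, and since $\NIP_{d,n}$ is equipped with its reduced structure, its irreducible components are among the maximal $Y_{d',d''}$.

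For flatness over $\Z$, I would first show that each $Y_{d',d''}$ dominates $\Spec\Z$: given any prime $p$, multiplying non-zero homogeneous forms of degrees $d'$ and $d''$ with $\F_p$-coefficients produces an $\F_p$-point of $Y_{d',d''}$, so $(Y_{d',d''})_{\F_p}\neq\emptyset$. An integral finite-type $\Z$-scheme dominating $\Spec\Z$ is automatically flat. Flatness of the reduced union $\NIP_{d,n}$ then follows from the standard criterion that a reduced noetherian finite-type $\Z$-scheme is flat over $\Z$ iff each of its irreducible components dominates $\Spec\Z$.

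Next I would bound $\dim(Y_{d',d''})_{\Q}$ by the fibre method applied to $p_{d',d''}\otimes_{\Z}\Q$. On the dense open locus of pairs $(f,g)$ with both $f$ and $g$ geometrically irreducible (non-empty for $n\ge 2$ since reducibility is a proper closed condition on forms of degree $\ge 1$ in $n+1\ge 3$ variables), unique factorization in $\overline{\Q}[x_0,\dots,x_n]$ forces the fibre of multiplication over $fg$ to be a finite union of $\mathbb{G}_m$-orbits of the shape $\{(tf,t^{-1}g):t\in\mathbb{G}_m\}$, hence of dimension $1$. Therefore $\dim(Y_{d',d''})_{\Q}=N_{d',n}+N_{d'',n}-1$, and combining with flatness one gets $\dim Y_{d',d''}=N_{d',n}+N_{d'',n}$.

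The lemma thus reduces to the binomial inequality $N_{d',n}+N_{d'',n}\le N_{d,n}-1$ for all admissible splittings. The tightest case is $d'=1$, $d''=d-1$, which via Pascal's identity $N_{d,n}-N_{d-1,n}=\binom{n+d-1}{d}$ becomes $\binom{n+d-1}{d}\ge n+2$; this fails precisely for $(n,d)=(2,2)$ ($3<4$) and is borderline for $(n,d)=(2,3)$ ($4=4$), which accounts exactly for the two exclusions in the hypothesis. For the remaining splittings $(d',d'')$ with $d'\ge 2$ the required slack is larger and follows from the $d'=1$ case combined with Pascal's rule and the monotonicity of $\binom{n+e}{n}$ in $e$. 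The substantive geometric input is the $\mathbb{G}_m$-fibre analysis above; the rest is routine combinatorial bookkeeping, which I expect to be the only place where one must check a non-trivial case distinction.
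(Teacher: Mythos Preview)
Your argument is correct and shares the paper's overall architecture: decompose $\NIP_{d,n}$ into the integral pieces $Y_{d',d''}$, deduce flatness from the fact that each dominates $\Spec\Z$, and reduce everything to a binomial inequality. The one substantive difference is that you exploit the free $\mathbb{G}_m$-action $(f,g)\mapsto(tf,t^{-1}g)$ on the source to obtain $\dim(Y_{d',d''})_{\Q}\le N_{d',n}+N_{d'',n}-1$, whereas the paper uses only the crude bound $\le N_{d',n}+N_{d'',n}$ by source dimension and must therefore verify the stronger inequality $N_{d',n}+N_{d'',n}\le N_{d,n}-2$. That stronger inequality is exactly why the paper excludes $(n,d)=(2,3)$: there $N_{1,2}+N_{2,2}=9=N_{3,2}-1$, so the paper's method yields only $\dim\le N_{d,n}-1$. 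Your sharper bound, however, \emph{does} cover $(2,3)$ --- your own computation $\binom{4}{3}=4\ge n+2=4$ shows the needed inequality holds with equality --- so your remark that the borderline case $(2,3)$ ``accounts for'' its exclusion is not quite right; in your approach only $(2,2)$ genuinely fails. This is harmless for the lemma as stated, since $(2,3)$ is excluded by hypothesis anyway, but it means your $\mathbb{G}_m$ refinement actually proves a bit more than the paper does. The paper handles the combinatorial step by a direct telescoping expansion of $\binom{n+d}{n}-\binom{n+d'}{n}-\binom{n+d''}{n}$; your convexity reduction to the extremal splitting $d'=1$ is a clean alternative.
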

\begin{proof}
Let $X_{d',d''} = \Bigl($scheme theoretic image of $p_{d',d''}$ $\Bigr)$.
Since 
\begin{align}
    &\dim \NIP_{d,n} = \max\{ \dim X_{d',d''} \mid d'+d'' = d, d',d''\geq 1 \}\\
    &\dim (\NIP_{d,n})_\Q = \max\{ \dim (X_{d',d''})_\Q \mid d'+d'' = d, d',d''\geq 1 \},
\end{align}
to bound the dimension it is enough to show
\begin{align}
    \dim X_{d',d''} \leq N_{d,n}-1 \quad \text{and} \quad
    \dim (X_{d',d''})_\Q \leq N_{d,n}-2
\end{align}
for all $d',d' \geq 1$ with $d'+d'' = d$.
Note that $X_{d',d''}$ is an integral scheme because so is
$\A^{ N_{d',n}}_{\Z} \times_{\Z} \A^{ N_{d'',n}}_{\Z}$.
Therefore the generic point of $X_{d',d''}$ is mapped to the generic point of $\Spec \Z$.
By the definition of $\NIP_{d,n}$, its irreducible components can be found among $X_{d',d''}$ and hence $\NIP_{d,n}$ is flat over $\Spec \Z$.

Now, since the structure morphism 
$\pi \colon X_{d',d''} \longrightarrow \Spec \Z$ is flat,
for any scheme point $x \in X_{d',d''}$, we have
\begin{align}
    \dim \mathcal{O}_{X_{d',d''}, x} = \dim \mathcal{O}_{\Spec \Z, \pi(x)} 
    + \dim \mathcal{O}_{\pi^{-1}(\pi(x)), x}.
\end{align}
Thus 
\begin{align}
    \dim X_{d',d''} 
    \leq 1 
    + \max\{\dim \pi^{-1}(\pi(x)) \mid x \in X_{d',d''}\}.
\end{align}
Since $\pi$ is flat, all the non-empty fibers of $\pi$ have the same dimension
(cf.\ \cite[Theorem 14.116]{GortzWedhorn}).
Thus it is enough to show that the generic fiber of $\pi$ has dimension at most $N_{d,n}-2$.
Since the generic fiber $(X_{d',d''})_{\Q}$ is the scheme theoretic image of
the base change $(p_{d',d''})_{\Q}$ of $p_{d',d''}$, it is enough to show that
\begin{align}
    \dim \A_{\Q}^{N_{d',n}} \times_{\Q} \A_{\Q}^{N_{d'',n}} = N_{d',n} + N_{d'',n} \leq N_{d,n} -2,
\end{align}
or equivalently,
\begin{align}
    \binom{n + d'}{n} + \binom{n+ d''}{n} \leq \binom{n+d}{n} -2.
\end{align}
Noticing
\begin{align}
    &\binom{n+d'}{n} = \binom{n+d'-1}{n-1} + \binom{n+d'-2}{n-1} + \cdots +\binom{n}{n-1} + 1\\
    &\binom{n+d}{n} = \binom{n+d-1}{n-1} + \cdots + \binom{n+d''}{n-1} + \binom{n+d''}{n},
\end{align}
we get
\begin{align}
    \binom{n+d}{n} - \binom{n+d'}{n} - \binom{n+d''}{n} \geq 
    d'\binom{n}{n-2}d'' -1
\end{align}
and the right hand side is at least $2$ under our assumption.
\end{proof}

\begin{proposition}\label{Lang-Weil-range}
Let $n \in \Z_{\geq 2}$ and $d \in \Z_{\geq 1}$.
There is $P_{0} \geq 1$ with the following property. 
For all $p > P_{0}$ and $\aa \in \Z_{p}^{ N_{d,n}}$,
if $\aa\ \mod p \in \F_{p}^{ N_{d,n}} \setminus \NIP_{d,n}(\F_{p})$, then we have
\begin{align}
V_{+}(f_{\aa})(\Q_{p}) \neq  \emptyset.
\end{align}
\textup{(}cf.\ \cref{def:monomial_homog_poly_hypsurf} for the definition of the notation $f_{\aa}$.\textup{)}
\end{proposition}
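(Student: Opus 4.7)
The plan is to combine an equivalence between the complement of $\NIP_{d,n}$ and the locus of geometrically irreducible polynomials with the Lang--Weil estimate and Hensel's lemma. First I will verify that, for any field $k$, a point $\aa\in\A^{N_{d,n}}(k)\setminus\NIP_{d,n}(k)$ corresponds to a homogeneous polynomial $f_\aa$ that is nonzero and geometrically irreducible (i.e.\ irreducible in $\overline{k}[x_0,\ldots,x_n]$). The key observation is that for each $(d',d'')$ with $d'+d''=d$ and $d',d''\ge1$, the image of $p_{d',d''}$ on $\overline{k}$-points is
\[
\{0\}\cup\{g\cdot h\mid g\in\overline{k}[x_0,\ldots,x_n]_{d'}\setminus\{0\},\ h\in\overline{k}[x_0,\ldots,x_n]_{d''}\setminus\{0\}\},
\]
which is Zariski closed in $\A^{N_{d,n}}(\overline{k})$: its projectivization is the image of the proper morphism $\P^{N_{d',n}-1}\times\P^{N_{d'',n}-1}\to\P^{N_{d,n}-1}$. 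Hence the $\overline{k}$-points of the scheme theoretic image of $p_{d',d''}$ coincide with this set, and $\aa\in\NIP_{d,n}(k)$ iff $f_\aa$ admits a factorization over $\overline{k}$ into two homogeneous polynomials of positive degrees summing to $d$ (or $f_\aa=0$).

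Applied to $\overline{\aa}\coloneqq\aa\bmod p$, the hypothesis $\overline{\aa}\notin\NIP_{d,n}(\F_p)$ therefore forces $f_{\overline{\aa}}$ to be nonzero and geometrically irreducible in $\overline{\F_p}[x_0,\ldots,x_n]$, so that $V_+(f_{\overline{\aa}})\subset\P^{n}_{\F_p}$ is a geometrically integral hypersurface of degree $d$. By \cref{lem:existence-smooth-points}, there exists $P_0\ge 1$ depending only on $d,n$ such that for every prime $p>P_0$ the hypersurface $V_+(f_{\overline{\aa}})$ carries a smooth $\F_p$-point $\overline{x}$.

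Finally, since $\aa\in\Z_p^{N_{d,n}}$, the polynomial $f_\aa$ has $\Z_p$-coefficients and defines a closed subscheme $\mathcal{V}_\aa\subset\P^{n}_{\Z_p}$ whose special fiber is $V_+(f_{\overline{\aa}})$. Hensel's lemma lifts the smooth $\F_p$-point $\overline{x}$ to a $\Z_p$-point of $\mathcal{V}_\aa$, whose image in $\P^{n}(\Q_p)$ lies in $V_+(f_\aa)(\Q_p)$, giving the desired conclusion.

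No step constitutes a serious obstacle: the equivalence between membership in $\NIP_{d,n}$ and geometric reducibility needs only a small but standard closedness argument for the multiplication morphism $p_{d',d''}$, while the remaining steps are direct applications of \cref{lem:existence-smooth-points} (which itself rests on Lang--Weil) and the usual Hensel lifting of smooth points on an integral model of a hypersurface over a complete discrete valuation ring.
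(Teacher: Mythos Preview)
Your proof is correct and follows essentially the same approach as the paper: deduce that $f_{\overline{\aa}}$ is geometrically irreducible from $\overline{\aa}\notin\NIP_{d,n}(\F_p)$, apply \cref{lem:existence-smooth-points} to obtain a smooth $\F_p$-point, and lift via Hensel. One small remark: your ``iff'' between $\aa\in\NIP_{d,n}(k)$ and factorability over $\overline{k}$ is slightly more than you need---only the implication ``$f_\aa$ factors over $\overline{k}\Rightarrow\aa\in\NIP_{d,n}(k)$'' is used, and that direction follows immediately since any polynomial vanishing identically on $p_{d',d''}$ vanishes at $\aa$, without appealing to closedness of the image.
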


\begin{proof}
Let $P_0$ be as in \cref{lem:existence-smooth-points}.
If $\aa\ \mod p \in \F_{p}^{ N_{d,n}} \setminus \NIP_{d,n}(\F_{p})$,
then $V_+(f_{\aa\ \mod p})$ is geometrically integral.
Then by \cref{lem:existence-smooth-points}, $V_+(f_{\aa\ \mod p})$
has a smooth $\F_p$-point and thus $V_+(f_{\aa})$ has a $\Q_p$-point by the lifting property.
\end{proof}

%%%%%%%%%%%%%%%%%%%%%%%%%%%%%%%%%%%%%%%
%%%%%%%%%%%%%%%%%%%%%%%%%%%%%%%%%%%%%%%
% \if0
% \section{Lattice point counting}
% \Yohsuke{cite previous section}
% \begin{proposition}\label{lattice-point-counting-for-Vdn}
% Let $N \in \Z_{\geq 2}$, $q \in \Z_{\geq 2}$.
% Let $K \subset \R^{N}\times \R^{m}$ be a semi-algebraic set such that $\R K_{t} \subset K_{t}$ for all $t \in \R^{m}$.
% Let $\cc \in \Z^{N}$ be such that $\gcd(\cc, q)=1$.
% Let $A \in \R_{\geq 1}$.
% Then we have
% \begin{align}
% &\sum_{u \in (\Z/q\Z)^{\times}} \# \left(  \Z^{N}_{\prim} \cap K_{t} \cap B_{N}(A) \cap (u\cc + q \Z^{N}) \right)\\
% &= \frac{\f(q) \mu_{S^{N-1}}(K_{t}\cap S^{N-1})V_{N}}{\zeta(N)J_{N}(q)}A^{N} + O\left(\frac{\f(q)A^{N-1}}{q^{N-1}} + A \log 2A\right)
% \end{align}
% where the implicit constant depends only on $N, m, K$.
% Here
% \begin{align}
% &V_{N} = \vol(B_{N}(1))\\
% &J_{N}(q) = q^{N}\prod_{p|q}\left(1- \frac{1}{p^{N}}\right).
% \end{align}
% \Yohsuke{define them before}
% \end{proposition}
% \fi
%%%%%%%%%%%%%%%%%%%%%%%%%%%%%%%%%%%%%%%
%%%%%%%%%%%%%%%%%%%%%%%%%%%%%%%%%%%%%%%

\section{Newton method}

In this section, we recall Newton method over both
archimedean and non-archimedean fields.
These can be found in many literature, but we include proofs in the appendix for completeness.

\begin{proposition}\label{prop:newton-method-arch}
Let $K$ be $\R$ or $\C$ with the usual absolute value $|\cdot|$.
Let $d \in \Z_{>0}$ and $B \in \R_{>0}$.
Then there are positive real numbers $C, C' >0$ with the following properties.
For any polynomial $f \in K[t]$ with $\deg f = d$ and any $ \alpha_{0} \in K$ with $| \alpha_{0}| \leq B$,
if 
\begin{align}
|f( \alpha_{0})| \leq \frac{C}{|f|}|f'( \alpha_{0})|^{2} \quad \text{and} \quad f'( \alpha_{0}) \neq 0,
\end{align}
then there is $ \alpha \in K$ such that 
\begin{align}
f( \alpha)=0 \quad \text{and}\quad
| \alpha - \alpha_{0}| \leq C'\left| \frac{f( \alpha_{0})}{f'( \alpha_{0})}\right|.
\end{align}
Here $|f| := \max\{ |a| \mid \text{$a$ is a coefficient of $f$}\}$.
Moreover, we can take $C'=2$.
\end{proposition}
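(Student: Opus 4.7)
The plan is to analyze the standard Newton iteration
\[
\alpha_{n+1}=\alpha_n-\frac{f(\alpha_n)}{f'(\alpha_n)}
\]
and show that, under the smallness hypothesis, it converges quadratically to a root of $f$ in $K$. First I would fix the closed region $\mathcal{D}=\{z\in K:|z|\le B+1\}$ in which every iterate will be forced to stay, and observe that from the expansion $f(z)=\sum_{i=0}^{d}a_i z^i$ together with the triangle inequality one gets derivative bounds of the shape $|f^{(k)}(z)|\le C_{d,B,k}\,|f|$ for all $z\in\mathcal{D}$ and all $0\le k\le d$, where $C_{d,B,k}$ depends only on $d,B,k$. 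Set $M\coloneqq\max_{1\le k\le d}C_{d,B,k}$.

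Next I would extract the two quadratic-convergence estimates by Taylor expansion around $\alpha_n$. Writing $h_n\coloneqq|f(\alpha_n)/f'(\alpha_n)|=|\alpha_{n+1}-\alpha_n|$, the constant and linear terms of the Taylor expansion of $f$ at $\alpha_n$ cancel by the very definition of $\alpha_{n+1}$, producing
\[
|f(\alpha_{n+1})|\le A\,|f|\,h_n^2,\qquad
|f'(\alpha_{n+1})|\ge|f'(\alpha_n)|-A'\,|f|\,h_n,
\]
for constants $A,A'$ depending only on $d,B$, so long as $\alpha_n,\alpha_{n+1}\in\mathcal{D}$ and $h_n\le1$. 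Introducing the scale-invariant quantity $\kappa_n\coloneqq|f|\,h_n/|f'(\alpha_n)|$, these combine into a recursion $\kappa_{n+1}\le A\kappa_n^2/(1-A'\kappa_n)$, which for $\kappa_0$ small enough guarantees $\kappa_{n+1}\le\frac12\kappa_n$ and hence $h_{n+1}\le\frac12 h_n$ for all $n$.

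I would then choose the constant $C$ small enough that (i) the hypothesis $|f(\alpha_0)|\le(C/|f|)|f'(\alpha_0)|^2$ gives $\kappa_0$ below the threshold needed for the above contraction, and (ii) the running total $\sum_{m<n}h_m\le 2h_0$ remains bounded by $1$, keeping all iterates inside $\mathcal{D}$; a straightforward induction then carries the estimates from step $n$ to step $n+1$ and in particular shows $f'(\alpha_n)\neq 0$ throughout. Since $K$ is complete and $(h_n)$ is summable, $(\alpha_n)$ is Cauchy, converging to some $\alpha\in\mathcal{D}$ with $|\alpha-\alpha_0|\le\sum_{n\ge0}h_n\le 2h_0$, yielding $C'=2$. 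Continuity of $f$ together with $|f(\alpha_n)|\le A|f|h_{n-1}^2\to 0$ finally gives $f(\alpha)=0$.

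The main obstacle is the careful bookkeeping needed to obtain the clean constant $C'=2$: the halving $h_{n+1}\le h_n/2$ must kick in at the very first step and be maintained simultaneously with the confinement of the iterates to $\mathcal{D}$ (so that the coefficient-based derivative bounds remain valid) and with the positivity of $|f'(\alpha_n)|$. All three constraints must be encoded into a single smallness threshold $C=C(d,B)$, which is the only nontrivial point in an otherwise routine Newton–Kantorovich argument.
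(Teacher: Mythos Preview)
Your proposal is correct and follows essentially the same Newton--Kantorovich argument as the paper: the paper proves a slightly more general lemma (\cref{lem:Hensel_archimedean}) for $C^2$/holomorphic functions on the unit ball around $\alpha_0$, using the same Newton iteration, the same Taylor estimates controlling $|f(\alpha_{n+1})|$ and $|f'(\alpha_{n+1})|$ via second-derivative bounds, and the same inductive scheme yielding geometric decay of $|f(\alpha_n)/f'(\alpha_n)|$ with ratio $F\eta/2<1/2$; the proposition then follows by observing that for polynomials with $|\alpha_0|\le B$ the quantity $F$ is bounded by a constant times $|f|$. Your direct polynomial-specific treatment with the control parameter $\kappa_n=|f|h_n/|f'(\alpha_n)|$ is just a repackaging of the same induction, and your derivation of $C'=2$ via $\sum h_n\le 2h_0$ matches the paper's $(1+F\eta)<2$ bound.
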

\begin{proof}
This follows directly from more general \cref{lem:Hensel_archimedean}.
Note that the $F$ in \cref{lem:Hensel_archimedean}
with $|\alpha_0| \leq B$ is bounded by $|f|$ times a constant
depending only on $d$ and $B$.
\end{proof}

\begin{proposition}\label{prop:newton-method-na}
Let $(K,|\cdot |)$ be a field with complete non-archimedean absolute value.
Let $R = \{a \in K \mid |a|\leq 1\}$.
For any polynomial $f \in R[t]$ and $ \alpha_{0} \in R$, if
\begin{align}
|f( \alpha_{0})| < |f'( \alpha_{0})|^{2},
\end{align}
then there is $ \alpha \in R$ such that
\begin{align}
f( \alpha)=0 \quad \text{and}\quad
| \alpha - \alpha_{0}| \leq \left| \frac{f( \alpha_{0})}{ f'( \alpha_{0})} \right|.
\end{align}
\end{proposition}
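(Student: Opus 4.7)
The plan is the standard Newton iteration: define $\alpha_{n+1} := \alpha_n - f(\alpha_n)/f'(\alpha_n)$, verify that the sequence stays in $R$, decreases the defect $|f(\alpha_n)|$ quadratically while preserving $|f'(\alpha_n)|$, and converges to the desired root by completeness of $K$.

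The key algebraic inputs are the two Taylor-type expansions
\[
f(x+h) = f(x) + f'(x)\,h + h^{2} g(x,h),
\qquad
f'(x+h) = f'(x) + h\, k(x,h),
\]
where $g,k \in R[x,h]$ because the binomial coefficients appearing in the expansion of a polynomial with coefficients in $R$ are integers, hence of absolute value $\le 1$ in the non-archimedean setting. In particular $|g(x,h)|,|k(x,h)|\le 1$ whenever $x,h\in R$, and $|f'(x)|\le 1$ whenever $x\in R$.

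Setting $c := |f(\alpha_0)|/|f'(\alpha_0)|^{2} < 1$ and $h_n := -f(\alpha_n)/f'(\alpha_n)$, I would prove by induction on $n$ the three claims (i) $\alpha_n\in R$, (ii) $|f'(\alpha_n)| = |f'(\alpha_0)|$, and (iii) $|f(\alpha_n)| \le c^{2^n}|f'(\alpha_0)|^{2}$. The base case $n=0$ is the hypothesis. For the step, (ii) and (iii) give $|h_n| = c^{2^n}|f'(\alpha_0)| \le 1$, so $\alpha_{n+1}\in R$; the derivative expansion gives $f'(\alpha_{n+1}) = f'(\alpha_n) + h_n\, k(\alpha_n,h_n)$ with $|h_n k(\alpha_n,h_n)| \le |h_n| < |f'(\alpha_n)|$, so the strong triangle inequality forces $|f'(\alpha_{n+1})| = |f'(\alpha_n)|$; and the expansion of $f$ cancels the linear term by the very choice of $h_n$, leaving $f(\alpha_{n+1}) = h_n^{2} g(\alpha_n,h_n)$, hence $|f(\alpha_{n+1})| \le |h_n|^{2} = c^{2^{n+1}}|f'(\alpha_0)|^{2}$.

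From $|\alpha_{n+1}-\alpha_n| = |h_n| = c^{2^n}|f'(\alpha_0)|\to 0$, the sequence is Cauchy, so by completeness of $K$ and closedness of $R$ it converges to some $\alpha\in R$, and (iii) combined with continuity of $f$ gives $f(\alpha)=0$. Since the increments $|h_n|$ are strictly decreasing, the ultrametric inequality yields the claimed bound
\[
|\alpha - \alpha_0|
= \Bigl|\sum_{n\ge 0} h_n\Bigr|
\le \max_{n\ge 0} |h_n|
= |h_0|
= \bigl|f(\alpha_0)/f'(\alpha_0)\bigr|.
\]
There is no real obstacle here: this is the classical non-archimedean Newton/Hensel argument, and the strong triangle inequality makes the induction entirely mechanical — the only place requiring any care is verifying that $|f'(\alpha_n)|$ is preserved under the iteration, which reduces to the strict inequality $|h_n k(\alpha_n,h_n)| < |f'(\alpha_n)|$ supplied by the hypothesis $c<1$.
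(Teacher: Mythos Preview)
Your argument is correct and essentially identical to the paper's: the same Newton iteration $\alpha_{n+1}=\alpha_n-f(\alpha_n)/f'(\alpha_n)$, the same three inductive invariants ($\alpha_n\in R$, $|f'(\alpha_n)|=|f'(\alpha_0)|$, and $|f(\alpha_n)|\le c^{2^n}|f'(\alpha_0)|^2$), the same Taylor expansions over $R$, and the same ultrametric bound at the end. The only cosmetic slip is writing $|h_n|=c^{2^n}|f'(\alpha_0)|$ where only $\le$ is justified for $n\ge 1$, but this does not affect anything since the inequality is all that is used.
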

\begin{proof}
See \cref{lem:newton-method-na-appendix}.
\end{proof}

\begin{lemma}\label{lem:applyingNtn}
Let $n, d \in \Z_{>0}$.
Let $p$ be a prime number.
Let $\aa \in \Z_{\rm prim}^{N_{d,n}}$ be an arbitrary primitive integer vector
and $f \in \Z[x_{0},\dots ,x_{n}]$ be the corresponding homogeneous polynomial of degree $d$.
For any $\xi \in \Z_{p}^{n+1}$ with $\|\xi \|_{p}=1$, $e \in \Z_{>0}$, and $l \in \Z_{\geq 0}$, 
if
\begin{gather}
|f(\xi)|_{p} \leq p^{-e},\quad
p^{-(e-l)} < 1,\\
p^{-e} < \max_{0 \leq i \leq n}\left| \frac{ \partial f}{ \partial x_{i}}(\xi)\right|_{p}^{2},\quad
p^{-l} \leq \max_{0 \leq i \leq n}\left| \frac{ \partial f}{ \partial x_{i}}(\xi)\right|_{p},
\end{gather}
then there is $ \widetilde{\xi} \in \Z_{p}^{n+1}$ with $\| \widetilde{\xi}\|_{p}=1$ such that
\begin{align}
f( \widetilde{\xi}) =0
\quad\text{and}\quad
d_{p}(\xi, \widetilde{\xi}) \leq p^{-(e-l)}.
\end{align}
\end{lemma}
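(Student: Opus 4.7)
The plan is to reduce the multivariate lifting to a one-variable Newton iteration along the coordinate direction where $\partial f/\partial x_i(\xi)$ is $p$-adically largest, and then apply \cref{prop:newton-method-na} directly.

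First, I would choose $i_0 \in \{0,\ldots,n\}$ realizing the maximum
\[
\left|\tfrac{\partial f}{\partial x_{i_0}}(\xi)\right|_p
=\max_{0\le i\le n}\left|\tfrac{\partial f}{\partial x_{i}}(\xi)\right|_p,
\]
and form the single-variable polynomial
\[
g(t)\coloneqq f(\xi+t\ee_{i_0})\in\Z_p[t],
\]
where $\ee_{i_0}$ is the standard basis vector. The coefficients lie in $\Z_p$ because $\aa\in\Z^{N_{d,n}}$ and $\xi\in\Z_p^{n+1}$. We have $g(0)=f(\xi)$ and $g'(0)=(\partial f/\partial x_{i_0})(\xi)$, so by the hypotheses
\[
|g(0)|_p\le p^{-e}<\left|\tfrac{\partial f}{\partial x_{i_0}}(\xi)\right|_p^{\!2}=|g'(0)|_p^{2}.
\]
This is precisely the Newton hypothesis of \cref{prop:newton-method-na}. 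Applying that proposition produces $t^{\ast}\in\Z_p$ with $g(t^{\ast})=0$ and
\[
|t^{\ast}|_p\le\left|\tfrac{g(0)}{g'(0)}\right|_p\le\frac{p^{-e}}{p^{-l}}=p^{-(e-l)}.
\]

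Next I would set $\widetilde{\xi}\coloneqq\xi+t^{\ast}\ee_{i_0}$, so $f(\widetilde{\xi})=g(t^{\ast})=0$ automatically. It remains to verify $\|\widetilde{\xi}\|_p=1$ and bound $d_p(\xi,\widetilde{\xi})$. For the norm, since $|t^{\ast}|_p\le p^{-(e-l)}<1=\|\xi\|_p$ by assumption, the strong triangle inequality applied coordinate by coordinate shows that $|\widetilde{\xi}_j|_p=|\xi_j|_p$ for $j\ne i_0$ and $|\widetilde{\xi}_{i_0}|_p\le\max(|\xi_{i_0}|_p,|t^{\ast}|_p)\le1$, with equality holding either at $i_0$ (if $|\xi_{i_0}|_p=1$) or at some other index; hence $\|\widetilde{\xi}\|_p=\max_j|\xi_j|_p=1$.

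Finally, for the distance, note $\xi\wedge\widetilde{\xi}=\xi\wedge(\xi+t^{\ast}\ee_{i_0})=t^{\ast}(\xi\wedge\ee_{i_0})$ and $\|\xi\wedge\ee_{i_0}\|_p=\max_{j\ne i_0}|\xi_j|_p\le\|\xi\|_p=1$, so
\[
d_p(\xi,\widetilde{\xi})
=\frac{\|\xi\wedge\widetilde{\xi}\|_p}{\|\xi\|_p\|\widetilde{\xi}\|_p}
=|t^{\ast}|_p\cdot\|\xi\wedge\ee_{i_0}\|_p
\le p^{-(e-l)},
\]
which is the required conclusion. There is no real obstacle here: the content of the lemma is entirely in setting up the one-variable restriction and checking that the choice of $i_0$ converts the multivariate gradient hypotheses into the scalar Newton condition; all remaining work is bookkeeping with the strong triangle inequality.
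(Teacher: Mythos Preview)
Your proof is correct and essentially identical to the paper's: both restrict $f$ to the coordinate line through $\xi$ in the direction where the partial derivative is $p$-adically largest, apply \cref{prop:newton-method-na} to the resulting one-variable polynomial, and then check the norm and $d_p$ bounds by the strong triangle inequality. The only cosmetic difference is that the paper parametrizes the line by the coordinate $x_{i_0}$ itself (writing $g(x_{i_0})=f(\xi_0,\ldots,x_{i_0},\ldots,\xi_n)$) whereas you parametrize by the displacement $t=x_{i_0}-\xi_{i_0}$.
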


\begin{proof}

Without loss of generality, we may assume
\begin{align}
    \max_{0 \leq i \leq n}\left| \frac{\partial f}{ \partial x_{i}}(\xi) \right|_p =
    \left| \frac{\partial f}{ \partial x_{0}}(\xi) \right|_p.
\end{align}
Let us write $\xi=(\xi_0, \dots, \xi_n)$ and set $g(x_0) := f(x_0,\xi_1,\dots, \xi_n)$.
Then $g(x_0)$ is a polynomial with coefficient in $\Z_p$ because $\xi \in \Z_p^{n+1}$.
We have
\begin{align}
    |g(\xi_0)|_p = |f(\xi)|_p \leq p^{-e} < |g'(\xi_0)|_p^2.
\end{align}
By \cref{prop:newton-method-na}, there is $\tilde{\xi}_0 \in \Z_p$ such that
$g(\tilde{\xi}_0) = 0$ and 
\begin{align}
    |\tilde{\xi}_0 - \xi_0|_p \leq \frac{|g(\xi_0)|_p}{|g'(\xi_0)|_p}
    = \frac{|f(\xi)|_p}{|(\partial f / \partial x_0)(\xi)|_p}
    \leq \frac{p^{-e}}{p^{-l}} = p^{-(e-l)}.
\end{align}
Set $\tilde{\xi} = (\tilde{\xi}_0, \xi_1, \dots, \xi_n)$.
Since $\|\xi\|_p=1$ and $|\tilde{\xi}_0 - \xi_0|_p \leq p^{-(e-l)} < 1$,
we have $\|\tilde{\xi}\|_p=1$. 
We have $f(\tilde{\xi}) = g(\tilde{\xi}_0) = 0$.
Moreover
\begin{align}
    d_p(\tilde{\xi}, \xi) 
    = \max_{1 \leq i \leq n}\{ |\tilde{\xi}_0\xi_i - \xi_i \xi_0 |_p  \}
    \leq |\tilde{\xi}_0 - \xi_0|_p \leq p^{-(e-l)}.
\end{align}
and we are done.
\end{proof}

\section{Asymptotic formula for \texorpdfstring{$\# {\V}_{d,n}^{\rm loc}(A;\xi,\s)$}{Vdnloc(A)}}\label{sec:asymptotic-vdnloc}
%\section{Asymptotic formula for $\# {\V}_{d,n}^{\rm loc}(A)$}

We give an asymptotic formula of the number of hypersurfaces $V \subset \P^n_\Q$
of bounded height such that $\prod_{p \leq \infty}V(\Q_p)$
is non-empty and contains a point that is close to a given point of
$\prod_{p \leq \infty}\P^n(\Q_p)$ with respect to the product topology.

\begin{notation}
Let $n \geq 3$ and $d \geq 2$. 
Let $R$ be a ring.
For $\aa \in R^{N_{d,n}}$, $f_{\aa} \in R[x_{0},\dots, x_{n}]$ denotes the corresponding homogeneous polynomial of degree $d$. 
When $R$ is a field and $\aa \in R^{N_{d,n}} \setminus \{0\}$,
$\aa$ can be considered as a homogeneous coordinates of a point $a \in \P^{N_{d,n}-1}(R)$.
In this case, $V_{+}(f_{\aa}) \subset \P^{n}_{R}$ is determined by $a$ and 
we write $V_{+}(f_{a})$ instead of $V_{+}(f_{\aa})$.
\end{notation}

{\bf Setup}
\begin{itemize}

\item 
Let $\s = (\s_{p})_{p\in M_\Q} \in \R^{M_\Q}$ such that
$0<\s_{p} \leq 1$ and $\s_p = 1$ for all but finitely many $p$.
For $p \neq \infty$, let
\[
e_{p}\coloneqq \min\{ e \in \Z_{\geq 0} \mid p^{-e} \leq \s_{p}\}.
\]
We have $p^{-e_{p}} \leq \s_{p} < p^{-(e_{p}-1)}$.
Also $\s_{p} < 1$ if and only if $e_{p}\geq 1$.
We set 
\[
q\coloneqq \prod_{p < \infty}p^{e_{p}}.
\]

\item Let 
\begin{align}
    S \coloneqq \{p \in M_\Q \mid \s_p <1  \} \cup \{ \infty\}.
\end{align}
We write $S_{\rm fin} \coloneqq S \setminus \{\infty\}$.

\item $\xi = (\xi_{p})_{p \in S}$, where $\xi_{p} \in \P^{n}(\Q_{p})$.
For $p \in S_{\rm fin}$, we pick $\xii_{p} \in \Z_{p}^{n+1}$
such that $\left\|\xii_{p}\right\|_{p} = 1$ and $\left[\xii_{p}\right] = \xi_{p}$. 
We also pick  $\xii_{\infty} \in \R^{n+1}$ such that
$\left\|\xii_{\infty}\right\| = 1$ and $\left[\xii_{\infty}\right] = \xi_{\infty}$.
\end{itemize}

\begin{definition}\ 
We use the following notation:
\begin{enumerate}
\item
For a set $W \subset \R^N$, we write
\begin{align}
    \R W \coloneqq \{rw \mid r \in \R, w \in W  \}.
\end{align}
It is the two-sided cone over $W$.

\item
\begin{align}
%&\Z^{N}_{\rm prim} = \{\aa \in \Z^{N} \mid \gcd(\aa) = 1\}\\
(\Z^{N}_{p})_{\rm prim} &\coloneqq \{ \aa \in \Z_{p}^{N} \mid \|\aa\|_{p}=1\}\\
(\Z/p^{e}\Z)^{N}_{\rm prim} &\coloneqq \{\aa \in (\Z/p^{e}\Z)^{N} \mid \aa \not\equiv 0 \ \mod p \}\\
S^{N-1} &\coloneqq \{\aa \in \R^{N} \mid \|\aa\|=1\}.
\end{align}
\item
\begin{align}
&\mu_{p}\coloneqq\mu_{p}^{N} \quad \text{the Haar measure on $\Z_{p}^{N}$ such that $\mu_{p}(\Z_{p}^{N})=1$}\\
%&\mu_{\rm can}= \mu_{\P^{N}_{\Q_{p}}, {\rm can}} \quad \text{the canonical measure on $\P^{N}(\Q_{p})$}\\
%&\mu_{\P^{N}_{\Q_{p}}} = \frac{1}{\mu_{\P^{N}_{\Q_{p}}, {\rm can}} (\P^{N}(\Q_{p}))}\mu_{\P^{N}_{\Q_{p}}, {\rm can}} \\
&\mu_{S^{N-1}}  \quad \text{the spherical measure on $S^{N-1}$}.
\end{align}
\end{enumerate}
\end{definition}

We sometimes use the following normalized measure on $\P^{N-1}(\Q_p)$:
\begin{align}\label{lem:comparison-of-measures}
    \mu_{\P^{N-1}_{\Q_p}} 
    &\coloneqq \frac{1}{\mu_p^N((\Z_p^N)_{\prim})}\left( (\Z_{p}^{N})_{\prim} \to \P^{N-1}(\Q_{p})\right)_{*}\left( \mu_{p}^{N}|_{( \Z_{p}^{N})_{\prim}}\right)\\
    &= \frac{1}{1-p^{-N}}\left( (\Z_{p}^{N})_{\prim} \to \P^{N-1}(\Q_{p})\right)_{*}\left( \mu_{p}^{N}|_{( \Z_{p}^{N})_{\prim}}\right)
\end{align}

\begin{remark}
    The measure $(\#\P^{N-1}(\F_p)/p^{N-1}) \mu_{\P^{N-1}_{\Q_p}}$
    is called the canonical measure.
\end{remark}

% \if0
% Note that we have
% \begin{lemma}
% Let $N \geq 2$.
% Then
% \begin{align}
% \left( (\Z_{p}^{N})_{\prim} \to \P^{N-1}(\Q_{p})\right)_{*}\left( \mu_{p}^{N}|_{( \Z_{p}^{N})_{\prim}}\right) = (1-p^{-1}) \mu_{\P^{N-1}_{\Q_{p}}, {\rm can}}.
% \end{align}
% \end{lemma}
% \begin{proof}
%     \Yohsuke{write}
% \end{proof}
% \fi

\begin{definition}\ 
\begin{enumerate}
\item
For $p \in M_{\Q}$, we set
\[
Z_{p} \coloneqq \{ a \in \P^{N_{d,n}-1}(\Q_{p}) \mid V_{+}(f_{a})(\Q_{p}) \neq  \emptyset\}.
\]
\item
For $p \in M_{\Q}$, we set
\begin{align}
Z_{p}(\xi,\s) \coloneqq
\left\{ a \in \P^{N_{d,n}-1}(\Q_{p}) \ \middle| \  \txt{$\exists \eta \in V_{+}(f_{a})(\Q_{p})$ such that\\ $d_{p}(\eta, \xi_{p}) \leq \s_{p}$}\right\}
\end{align}
Note that we obviously have $Z_{p}(\xi,\s) \subset Z_{p}$ and
\begin{align}
    Z_p(\xi,\s) = Z_p
\end{align}
for $p \notin S$.

\item
For $p \in M_{\Q} \setminus \{\infty\}$, we set
\begin{align}
W_{p} &\coloneqq \left((\Z^{N_{d,n}}_{p})_{\rm prim} \to \P^{N_{d,n}-1}(\Q_{p})\right)^{-1}(Z_{p})\\
W_{p}(\xi,\s) &\coloneqq \left((\Z^{N_{d,n}}_{p})_{\rm prim} \to \P^{N_{d,n}-1}(\Q_{p})\right)^{-1}(Z_{p}(\xi,\s)).
\end{align}
Also, we set
\begin{align}
W_{\infty} &\coloneqq \left( S^{N_{d,n}-1} \to \P^{N_{d,n}-1}(\R)\right)^{-1}(Z_{\infty})\\
W_{\infty}(\xi,\s) &\coloneqq \left( S^{N_{d,n}-1}\to \P^{N_{d,n}-1}(\R)\right)^{-1}(Z_{\infty}(\xi,\s)).
\end{align}

\item
For $P \in \R_{\geq 1}$, we set
\begin{align}
M_{\Q, \leq P}\coloneqq \left\{ p \in M_{\Q} \ \middle|\ \text{$p$ is a prime number and $p\leq P$}\right\}.
\end{align}
\end{enumerate}
\end{definition}

\begin{definition}\label{def:rhops}
    We write
    \begin{align}
        \rho_{p}(\xi,\s) &\coloneqq \mu_{\P^{N_{d,n}-1}_{\Q_{p}}}(Z_{p}(\xi,\s)) 
        \quad \text{for $p \in M_{\Q} \setminus \{ \infty\}$};\\
        \rho_{\infty}(\xi,\s) &\coloneqq \mu_{S^{N_{d,n}-1}}(W_{\infty}(\xi,\s)).
    \end{align}
\end{definition}

\begin{definition}
Let $A \geq 1$.
We define 
\begin{align}
\V_{d,n}(A) &\coloneqq \{V_{+}(f_{\aa}) \mid \aa \in \Z^{N_{d,n}}_{\rm prim}, \|\aa\| \leq A\},\\
\V_{d,n}^{\rm loc}(A;\xi,\s) &\coloneqq \left\{ V \in \V_{d,n}(A)\ \middle|\ 
\txt{$\forall p \in S, \exists \eta \in V(\Q_{p})$ such that $d_{p}(\eta, \xi_{p}) \leq \s_{p}$\\
$\forall p \in M_{\Q} \setminus S, V(\Q_{p}) \neq  \emptyset$}\right\}.
\end{align}
\end{definition}

Using our notation, we can rewrite
\begin{align}
\# \V_{d,n}^{\loc}(A; \xi, \s)= \frac{1}{2}\# \left\{ \aa \in \Z_{ \prim}^{ N_{d,n}}  \ \middle|\  \| \aa\| \leq A, \forall p \in M_{\Q}, [\aa] \in Z_{p}(\xi,\s)\right\}.
\end{align}

For $P \in \R_{\geq 1}$, we have
\begin{align}
\#\V_{d,n}^{\loc}(A;\xi,\s) 
= &\frac{1}{2} \# \left\{ \aa \in \Z_{ \prim}^{ N_{d,n}} \ \middle|\ \| \aa \|\leq A, \forall p \in S \cup M_{\Q, \leq P}, [\aa] \in Z_{p}(\xi,\s)\right\}\\
&- \frac{1}{2} \# \left\{ \aa \in \Z_{ \prim}^{ N_{d,n}} \ \middle|\ \txt{$ \| \aa \|\leq A, \forall p \in S \cup M_{\Q, \leq P}, [\aa] \in Z_{p}(\xi,\s)$\\
$\exists p > P$ prime number, $[\aa] \notin Z_{p}$} \right\}.
\end{align}

For $A,P \in \R_{\geq 1}$, we define
\begin{align}
&M(A,P):= \# \left\{ \aa \in \Z_{ \prim}^{ N_{d,n}} \ \middle|\ \| \aa \|\leq A, \forall p \in S \cup M_{\Q, \leq P}, [\aa] \in Z_{p}(\xi,\s)\right\}\\
&E(A,P):=\# \left\{ \aa \in \Z_{ \prim}^{ N_{d,n}} \ \middle|\ \txt{$ \| \aa \|\leq A, \forall p \in S \cup M_{\Q, \leq P}, [\aa] \in Z_{p}(\xi,\s)$\\
$\exists p > P$ prime number, $[\aa] \notin Z_{p}$} \right\}.
\end{align}

Then we have
\begin{align}\label{eq:Vdn=M-E}
\#\V_{d,n}^{\loc}(A;\xi,\s)  = \frac{1}{2}M(A,P) - \frac{1}{2}E(A,P).
\end{align}

\if0
{\bf Overview}
 \[\scriptsize
\xymatrix{
\txt{Lang-Weil range\\Reducible locus $X$} \ar@/_{20pt}/@<-20pt>[ddd] \ar[rd]& \txt{ Easy bound of $Z_{p}(\xi,\s)$ by\\ congruence condition and \\$d_{\infty}$-inequality} \ar@/_{10pt}/[lddd]&\\
\text{Lang-Weil estimate} \ar@<-5pt>[dd]& \txt{Boundary ball estimate \\ Volume estimate of $Z_{p}(\xi,\s)$ } \ar[dd]& \text{Newton method} \ar[l]\\
&&\txt{Lattice point \\counting} \ar@/_{10pt}/[lld] \ar[ld]\\
\text{Bounding $E(A,P)$} \ar[rd] &\txt{Asymptotic formula \\of $M(A,P)$} \ar[d]&\\
&\text{Optimize $P$}&
}
\]
\fi

We summarize our strategy to get an asymptotic formula of 
$\#\V_{d,n}^{\loc}(A;\xi,\s) $, i.e. the strategy of the proof of
\cref{mainthm:denominator}.

\[\scriptsize
\xymatrix{
\text{Bounding reducible locus $\NIP_{d,n}$} \ar@/_{20pt}/@<-20pt>[ddd]& \text{Easy bound of volume of $Z_{p}(\xi,\s)$} \ar@/_{14pt}/[lddd]&\\
\text{Lang-Weil estimate} \ar@<-5pt>[dd]& \text{Volume estimate of $Z_{p}(\xi,\s)$ } \ar[dd]& \text{Newton method} \ar[l]\\
&&\parbox{15ex}{\centering Lattice point \\counting} \ar@/_{10pt}/[lld] \ar[ld]\\
\text{Bounding $E(A,P)$} \ar[rd] &\parbox{25ex}{\centering Asymptotic formula\\of $M(A,P)$}\ar[d]&\\
&\text{Optimize $P$}&
}
\]

\subsection{On the sets \texorpdfstring{$Z_{p}, Z_{p}( \xi, \s)$}{ZpZpsx}}
\label{subsec:desc-Zp}
\leavevmode
\medskip

{\bf A description of $Z_{p}, Z_{p}( \xi, \s)$.}

For $p \in M_{\Q}$,
consider the following diagram
 \[
\xymatrix{
& \mathcal{Z}_{p} \ar@{}[d]|{\bigcap} &\\
&\P^{n}_{\Q_{p}} \times_{\Q_{p}} \P^{ N_{d, n}-1}_{\Q_{p}}  \ar[ld]_{\pr_{1}} \ar[rd]^{\pr_{2}}&\\
\P^{n}_{\Q_{p}}& & \P^{ N_{d,n}-1}_{\Q_{p}}
}
\]
where $ \mathcal{Z}_{p}$ is the closed subscheme defined by
$f_{\aa}(\xx) =0$ where $\xx, \aa$ are homogeneous coordinates of $\P^{n}$ and $\P^{ N_{d,n}-1}$ respectively.
Then we have
\begin{align}
Z_{p} &= \pr_{2}( \mathcal{Z}_{p}(\Q_{p}))\\
Z_{p}(\xi, \s) & = \pr_{2}( \mathcal{Z}_{p}(\Q_{p}) \cap \pr_{1}^{-1}(B_{d_{p}}(\xi_{p},\s_{p})))
\end{align}
where $B_{d_{p}}(\xi_{p}, \s_{p}) = \left\{ \eta \in \P^{n}(\Q_{p}) \ \middle|\ d_{p}(\eta, \xi_{p}) \leq \s_{p} \right\}$.
In particular, $Z_{p}, Z_{p}(\xi, \s)$ are closed subsets of $\P^{ N_{d,n}-1}(\Q_{p})$ (with respect to the strong topology).

\begin{remark}
By this description, we can say that the boundary of $Z_{p}$ and $Z_{p}(\xi, \s)$ are contained in 
a proper Zariski closed subsets of $\P^{ N_{d,n}-1}(\Q_{p})$.
Indeed, the morphism $\pr_{2}|_{ \mathcal{Z}_{p}} \colon \mathcal{Z}_{p} \longrightarrow \P^{ N_{d,n}-1}_{\Q_{p}}$
is generically smooth because it is generically flat and general geometric fibers are smooth.
Pick a dense Zariski open subset $U \subset \P^{ N_{d,n}-1}_{\Q_{p}}$ such that 
$\pr_{2}|_{ \mathcal{Z}_{p}}$ is smooth over $U$.
Then the boundaries are contained in $(\P^{ N_{d,n}-1}_{\Q_{p}} \setminus U)(\Q_{p})$.
From this fact, we see that the boundaries have measure zero, but we need more
precise information about the boundaries (cf. \cref{boundary-ball-estimate}).
\end{remark}

\begin{lemma}\label{W-is-sa}
The sets $\R W_{\infty},\R W_{\infty}(\xi,\s) \subset \R^{ N_{d,n}}$ are semialgebraic.
More generally, consider the sets
\begin{align}
\mathcal{B}_{ N_{d,n}}(A) \cap \R W_{\infty}(\xi, \s) -\aa_{0} 
\end{align}
where $A \in \R$ and $\aa_0 \in \R^{N_{d,n}}$.
Then they form a semialgebraic family of subsets of $\R^{N_{d,n}}$
parametrized by 
\begin{align}
   (A,\s_{\infty}, \xii_{\infty}, \aa_0) \in \R \times \R \times \R^{n+1} \times \R^{N_{d,n}}. 
\end{align}
\end{lemma}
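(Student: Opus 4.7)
The plan is to exhibit, for each of the sets in question, an explicit first-order formula in the language of ordered rings whose atomic parts are polynomial (in)equalities in the ambient coordinates and in the parameters, and then invoke the Tarski--Seidenberg theorem (cf.\ \cite[Corollary~2.11]{vandenDries}) to conclude semialgebraicity.

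First I unwind the definitions. Since $W_{\infty}$ is the preimage of $Z_{\infty}$ under $S^{N_{d,n}-1}\to\P^{N_{d,n}-1}(\R)$, and both $W_{\infty}$ and $W_{\infty}(\xi,\s)$ are stable under the antipodal map, we have
\[
\aa\in\R W_{\infty}
\iff
\aa=0\ \text{or}\ \exists\xx\in\R^{n+1}\ (\xx\neq0\ \text{and}\ f_{\aa}(\xx)=0),
\]
and
\[
\aa\in\R W_{\infty}(\xi,\s)
\iff
\aa=0\ \text{or}\ \exists\xx\in\R^{n+1}\ (\xx\neq0,\ f_{\aa}(\xx)=0,\ d_{\infty}([\xx],\xi_{\infty})\le\s_{\infty}).
\]
Here $f_{\aa}(\xx)$ is polynomial in the coordinates of $\aa$ and $\xx$, and $\xx\neq0$ is the polynomial inequality $\|\xx\|^{2}>0$. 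Squaring the definition of $d_{\infty}$, the distance condition becomes the polynomial inequality
\[
\|\xx\wedge\xii_{\infty}\|^{2}\le\s_{\infty}^{2}\|\xx\|^{2}\|\xii_{\infty}\|^{2}
\]
in the coordinates of $\xx$, $\xii_{\infty}$ and $\s_{\infty}$. Thus each defining condition is a first-order formula with a single existential quantifier on $\xx$ and polynomial atomic parts in the remaining coordinates, and Tarski--Seidenberg gives that $\R W_{\infty}$ and $\R W_{\infty}(\xi,\s)$ are semialgebraic.

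For the family claim, consider the total space
\[
\widetilde{W}
\coloneqq
\left\{((A,\s_{\infty},\xii_{\infty},\aa_{0}),\bb)\in\R\times\R\times\R^{n+1}\times\R^{N_{d,n}}\times\R^{N_{d,n}}
\midmid
\bb\in\mathcal{B}_{N_{d,n}}(A)\cap\R W_{\infty}(\xi,\s)-\aa_{0}\right\}.
\]
It is cut out by the polynomial inequalities $A\ge0$ and $\|\bb+\aa_{0}\|^{2}\le A^{2}$, together with the condition $\bb+\aa_{0}\in\R W_{\infty}(\xi,\s)$. By the discussion above, this last condition is defined by a first-order formula in $(\bb+\aa_{0},\xii_{\infty},\s_{\infty})$ with polynomial atomic parts and a single existential quantifier on $\xx\in\R^{n+1}$. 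Hence $\widetilde{W}$ is defined by a single first-order formula in the whole tuple $(A,\s_{\infty},\xii_{\infty},\aa_{0},\bb)$ with polynomial data, and Tarski--Seidenberg shows that $\widetilde{W}$ is semialgebraic, which is precisely the desired family statement.

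The only subtlety is the careful rewriting of the defining conditions into polynomial form (squaring the distance and replacing "$\xx\neq0$" by $\|\xx\|^{2}>0$) so that Tarski--Seidenberg applies; beyond this bookkeeping, no serious obstacle appears.
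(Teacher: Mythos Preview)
Your proof is correct and follows essentially the same route as the paper: both arguments write down the total space over the parameters $(A,\s_{\infty},\xii_{\infty},\aa_{0})$ together with the auxiliary variable $\xx\in\R^{n+1}$, observe that the defining conditions (the ball condition, $\xx\neq0$, the squared distance inequality, and $f_{\aa+\aa_0}(\xx)=0$) are all polynomial, and then project out $\xx$ via Tarski--Seidenberg. The only cosmetic difference is that you separately note $\aa=0$ lies in $\R W_{\infty}(\xi,\s)$, which is harmless (and in fact redundant, since $f_{0}\equiv0$ already forces this).
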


\begin{proof}
Define 
$ \widetilde{K} \subset \R^{ N_{d,n}} \times \R \times \R \times \R^{n+1} \times \R^{ N_{d,n}} \times \R^{n+1}$
as follows:
\begin{align}
(\aa, A, \s_{\infty}, \xii_{\infty}, \aa_{0},\xx)  \in \R^{ N_{d,n}} \times \R \times \R \times \R^{n+1} \times \R^{ N_{d,n}} \times \R^{n+1}
\end{align}
is a member of $ \widetilde{K}$ if and only if
\begin{align}
\aa + \aa_{0} \in \mathcal{B}_{ N_{d,n}}(A),\quad
\xx \neq 0,\quad
d_{\infty}([\xx],[\xii_\infty]) \leq \s_{\infty},\quad
f_{\aa + \aa_{0}}(\xx) = 0.
\end{align}
These are semialgebraic conditions and thus $\widetilde{K}$ is a semialgebraic set.
Then the projection $K$ of $ \widetilde{ K}$ to the first five factors is also semialgebraic and
the fiber of $K$ over the point $(A, \s_{\infty}, \xii_{\infty}, \aa_{0})$ is $\mathcal{B}_{ N_{d,n}}(A) \cap \R W_{\infty}(\xi, \s) -\aa_{0} $.

If we remove $A$ and the condition involving $A$, and set $\aa_0=0$,
we get the semialgebraicity of $\R W$ and $\R W_{\infty}(\xi, \s)$.

%%%%%%%%%%%%%%%%%%%%%%%%%%%%%%%%
\if0

Since $W_{\infty} = W_{\infty}(\xi,\s)$ when $\s_{\infty} = 1$, it is enough to show that $\R W_{\infty}(\xi,\s)$ is semi-algebraic.
Consider the following diagram:
 \[
\xymatrix{
& \mathcal{W} \ar@{}[d]|{\bigcap} &\\
&\R^{n+1} \times \R^{ N_{d,n}}  \ar[ld]_{\pr_{1}} \ar[rd]^{\pr_{2}}&\\
\R^{n+1}& & \R^{ N_{d,n}} 
}
\]
where $ \mathcal{W} = \{(\xx,\aa) \mid f_{\aa}(\xx) = 0, \xx \neq 0\}$.
Note that this is a semi-algebraic set.
We have
\begin{align}
\R W_{\infty}(\xi,\s) = \pr_{2} \left(  \mathcal{W} \cap \pr_{1}^{-1}(\{ \xx \in \R^{n+1} \setminus \{0\} \mid d_{\infty}([\xx],\xi_{\infty})\leq \s_{\infty} \}) \right).
\end{align}
The right hand side is semi-algebraic and we are done.

\fi
%%%%%%%%%%%%%%%%%%%%%%%%%%%%%%%%

\end{proof}

Let us define $\nu_{d,n}^{(i)}$ by
    \begin{align}
        \nu_{d,n}^{(i)}(\xx) :=  \left(  \frac{\partial M}{\partial x_i} (\xx) \right)_{M \in \mathcal{M}_{d,n}} \quad i=0,\dots,n
    \end{align}
so that 
    \begin{align}
        \frac{\partial f_\aa}{\partial x_i}(\xx) = \langle \aa, \nu_{d,n}^{(i)}(\xx) \rangle.
    \end{align}

\begin{proposition}\label{easy-bound-Z}
We have the following\textup{:}
\begin{enumerate}
\item
\begin{align}
\rho_{\infty}(\xi,\s)=\mu_{S^{ N_{d,n}-1}}(W_{\infty}(\xi,\s)) \asymp  \s_{\infty}
\end{align}
where the implicit constant depends only on $d,n$\textup{;}
\item
For all $p \in S_{\fin}$ and $\aa \in \Z_{p}^{ N_{d,n}} \setminus \{0\}$, if $[\aa] \in Z_{p}( \xi,\s)$, then
\begin{align}
| \langle \aa, \nu_{d,n}(  \xii_{p})\rangle|_{p} \leq \s_{p} \| \aa \|_{p}.
\end{align}
\end{enumerate}
\end{proposition}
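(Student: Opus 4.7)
The plan is to handle (2) by a direct algebraic identity derived from the Veronese expansion, and (1) by combining the same identity (upper bound) with the archimedean Newton method (lower bound). For (2), lift $\eta$ to a primitive $\etaa \in \Z_p^{n+1}$ with $\|\etaa\|_p = 1$ and $f_\aa(\etaa) = 0$. Since $\|\xii_p\|_p = \|\etaa\|_p = 1$, the hypothesis $d_p(\eta,\xi_p) \leq \s_p$ becomes $\|\etaa \wedge \xii_p\|_p \leq \s_p$. Fixing $i_0$ with $|\xi_{p,i_0}|_p = 1$ and setting $u \coloneqq \eta_{i_0}/\xi_{p,i_0} \in \Z_p$, the bound on the wedge components $\eta_j\xi_{p,i_0}-\eta_{i_0}\xi_{p,j}$ gives $\|\etaa - u\xii_p\|_p \leq \s_p$; together with $\|\etaa\|_p = 1$ and $\s_p < 1$ this forces $u \in \Z_p^{\times}$. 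Writing $\etaa = u\xii_p + \zz$ with $\|\zz\|_p \leq \s_p$ and expanding each degree-$d$ monomial $M(\etaa) = M(u\xii_p + \zz)$ by the multinomial theorem, the unique term containing no $z_i$ equals $u^d M(\xii_p)$, while every other term contains some $z_i$ and so has $p$-adic norm $\leq \s_p$. Hence $\nu_{d,n}(\etaa) - u^d\nu_{d,n}(\xii_p)$ has $p$-adic norm $\leq \s_p$, and pairing with $\aa$ using $\langle\aa,\nu_{d,n}(\etaa)\rangle = 0$ and $|u|_p = 1$ yields the inequality.

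For the upper bound in (1), I run the same expansion over $\R$ with $\|\aa\|=1$: lifting $\eta$ to $\etaa \in S^n$ and writing $\etaa = \cos\theta\,\xii_\infty + \sin\theta\,\vv$ with $\vv \perp \xii_\infty$, $\|\vv\|=1$, $|\sin\theta| \leq \s_\infty$, I get $\nu_{d,n}(\etaa) - \cos^d\theta\,\nu_{d,n}(\xii_\infty) = O_{d,n}(\s_\infty)$. Substituting into $\langle\aa,\nu_{d,n}(\etaa)\rangle = 0$ and handling the trivial range $\s_\infty \asymp 1$ separately gives $|\langle\aa,\nu_{d,n}(\xii_\infty)\rangle| \ll_{d,n} \s_\infty$. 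Since $\|\nu_{d,n}(\xii_\infty)\| \geq 1/d!$ by \cref{Veronese_bound}, this places $\aa$ in a spherical band of width $\ll_{d,n}\s_\infty$ around the equator perpendicular to $\nu_{d,n}(\xii_\infty)$, which has $\mu_{S^{N_{d,n}-1}}$-measure $\ll_{d,n} \s_\infty$.

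For the matching lower bound, I construct an explicit $\aa_0 \in S^{N_{d,n}-1}$ with $f_{\aa_0}(\xii_\infty) = 0$ and $\|\nabla_x f_{\aa_0}(\xii_\infty)\| \asymp_{d,n} 1$. Choosing any unit $\xii_\infty^\perp \perp \xii_\infty$ and letting $f_{\aa_0}(\xx)$ be the normalization of $(\xii_\infty^\perp\cdot\xx)(\xii_\infty\cdot\xx)^{d-1}$, a direct computation shows $\nabla_x f_{\aa_0}(\xii_\infty)$ is parallel to $\xii_\infty^\perp$ with length $\asymp_{d,n} 1$, and $g_{\aa_0}(t) \coloneqq f_{\aa_0}(\xii_\infty + t\xii_\infty^\perp)$ is a nonzero constant times $t$. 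For $\aa$ in a fixed small $d,n$-dependent cap around $\aa_0$, the perturbation $g_\aa(t) \coloneqq f_\aa(\xii_\infty + t\xii_\infty^\perp)$ still satisfies $|g_\aa'(0)| \gtrsim_{d,n} 1$. Applying \cref{prop:newton-method-arch} at $t_0 = 0$ (whose smallness hypothesis on $|g_\aa(0)| = |\langle\aa,\nu_{d,n}(\xii_\infty)\rangle|$ is automatic for small enough $\s_\infty$) yields a real zero $t_\aa$ with $|t_\aa| \ll_{d,n} |\langle\aa,\nu_{d,n}(\xii_\infty)\rangle|$; hence $[\xii_\infty + t_\aa\xii_\infty^\perp] \in V_+(f_\aa)(\R)$ lies within $d_\infty$-distance $\ll_{d,n}|\langle\aa,\nu_{d,n}(\xii_\infty)\rangle|$ of $\xi_\infty$. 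Imposing the band condition $|\langle\aa,\nu_{d,n}(\xii_\infty)\rangle| \leq c_{d,n}\s_\infty$ for sufficiently small $c_{d,n}$ places $\aa$ in $W_\infty(\xi,\s)$, and the intersection of this band with the fixed cap around $\aa_0$ has $\mu_{S^{N_{d,n}-1}}$-measure $\gtrsim_{d,n}\s_\infty$, completing the lower bound.

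The main obstacle is uniformity in $\xii_\infty \in S^n$: the construction of $\aa_0$ depends on $\xii_\infty$ and the auxiliary $\xii_\infty^\perp$, so one must verify that $\|\aa_0\|$, the gradient $\|\nabla_x f_\aa(\xii_\infty)\|$ on the cap, and the parameter $|f|$ appearing in \cref{prop:newton-method-arch} can all be bounded from above and below by constants depending only on $d,n$. This reduces to a direct homogeneity computation with the polynomial $(\xii_\infty^\perp\cdot\xx)(\xii_\infty\cdot\xx)^{d-1}$, using only that $\xii_\infty$ and $\xii_\infty^\perp$ are unit vectors, so the sphere's $O(n+1)$-action ensures all quantities depend only on $d,n$.
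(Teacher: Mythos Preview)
Your proof is correct and follows essentially the same strategy as the paper's: the identity $\langle\aa,\nu_{d,n}(\xii)\rangle = \langle\aa,\nu_{d,n}(\xii)-\nu_{d,n}(\etaa)\rangle$ for the upper bound in (1) and for (2), and the archimedean Newton method (\cref{prop:newton-method-arch}) applied to a one-variable restriction for the lower bound in (1). The only cosmetic difference is in handling uniformity in $\xii_\infty$: the paper first rotates to $\xii_\infty=(1,0,\ldots,0)$ via an orthogonal transformation of $\R^{n+1}$ (noting that the induced linear map on the coefficient space $\R^{N_{d,n}}$ has operator norm and determinant bounded in terms of $d,n$ only) and then chooses $\aa_0$ abstractly, whereas you construct $\aa_0$ explicitly from $(\xii_\infty^\perp\cdot\xx)(\xii_\infty\cdot\xx)^{d-1}$ and invoke $O(n+1)$-equivariance at the end --- these are the same reduction.
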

\begin{proof}
    (1)
    First we prove $\rho_{\infty}(\xi, \s) \ll_{d,n} \s_\infty$.
    If $\aa \in W_{\infty}(\xi, \s)$, then there is $\etaa \in S^n$ such that
    $\langle \aa, \nu_{d,n}(\etaa) \rangle =0$ and $d_{\infty}([\etaa], [\xii_\infty]) \leq \s_\infty$.
    By replacing $\etaa$ with $-\etaa$, we may assume $\langle \etaa , \xii_\infty \rangle \geq 0$.
    Then we have
    \begin{align}
        |\langle \aa, \nu_{d,n}(\xii_\infty) \rangle|
        &= |\langle \aa, \nu_{d,n}(\xii_\infty) - \nu_{d,n}(\etaa) \rangle|
        \leq \|  \nu_{d,n}(\xii_\infty) - \nu_{d,n}(\etaa)  \|\\
        &\ll_{d,n} \| \xii_\infty - \etaa \|  \ll d_\infty([\xii_\infty], [\etaa]) \leq \s_\infty.
    \end{align}
    From this, we can easily deduce $\rho_\infty(\xi, \s) \ll_{d,n} \s_\infty$.

    Next we prove $\rho_{\infty}(\xi, \s) \gg_{d,n} \s_\infty$.
    Let us consider an orthogonal transformation of $\R^{n+1}$
    that sends $\xii_\infty$ to $(1,0,\dots, 0)$.
    This induces a linear automorphism $T$ on the coefficient space $\R^{N_{d,n}}$.
    Note that the operator norms of $T$ and $T^{-1}$ are bounded independently of $\xii_\infty$: 
    $\|T\| \ll_{d,n} 1$, $\|T^{-1}\| \ll_{d,n} 1$.
    Similarly, we have $|\det T|\ll_{d,n} 1$, $|\det T^{-1}| \ll_{d,n} 1$.
    Thus the density $\rho_\infty(\xi, \s)$ is independent of
    $\xii_\infty$ up to multiple by positive constants depending only on $d,n$.
    Therefore, we may assume $\xii_\infty = (1,0,\dots, 0)$.

    If $\xii_\infty = (1,0,\dots, 0)$, then $\nu_{d,n}(\xii_\infty)$ 
    and $\nu_{d,n}^{(1)}(\xii_\infty)$ are linearly independent.
    Thus there is $\aa_0 \in S^{N_{d,n}-1}$ such that 
    \begin{align}
        \langle \aa_0, \nu_{d,n}(\xii_\infty) \rangle = 0,\ 
        \langle \aa_0, \nu_{d,n}^{(1)}(\xii_\infty) \rangle =: \alpha_0 > 0.
    \end{align}
    Let us consider the set
    \begin{align}
        \Sigma(\tau) \coloneqq \left\{ \aa \in S^{N_{d,n}-1} \ \middle|\ 
        |\langle \aa, \nu_{d,n}(\xii_\infty) \rangle| \leq \tau,\ 
        \langle \aa, \nu_{d,n}^{(1)}(\xii_\infty) \rangle \geq \alpha_0/2
        \right\}
    \end{align}
    parametrized by $\tau>0$.
    We are applying Newton method to show that for $\aa \in \Sigma(\tau)$,
    the polynomial $f_\aa$ has a solution close to $\xii_\infty$.
    Indeed, let $f(t) = f_\aa(1,t,0,\dots,0)$.
    Then $f$ is a polynomial in $t$ with degree at most $d$ and
    \begin{align}
        &|f(0)| = |\langle \aa, \nu_{d,n}(\xii_\infty) \rangle| \leq \tau,\\
        &|f'(0)| = |\langle \aa, \nu_{d,n}^{(1)}(\xii_\infty) \rangle| \geq \frac{\alpha_0}{2},\\
        &|f| = \max\{\text{absolute value of coefficient of $f$}\} \leq 1.
    \end{align}
    By \cref{prop:newton-method-arch},
    there is $C > 0$ (depending only on $d$) with the following property:
    if $\tau \leq C \alpha_0^2/4$, then there is $y \in \R$ such that
    \begin{align}
        f(y) = 0
        \and
        |y| \leq \frac{4\tau}{\alpha_0}.
    \end{align}
    This means $f_\aa$ has a solution $(1,y,0,\dots,0)$, which satisfies
    \begin{align}
        d_\infty([(1,y,0,\dots,0)],\xii_\infty) = \frac{|y|}{\sqrt{1+y^2}} \leq \frac{4\tau}{\alpha_0}.
    \end{align}
    Therefore we in particular have
    \begin{align}
        \Sigma(\alpha_0 \s_\infty/4) \subset W_\infty(\xi,\s).
    \end{align}
    It is easy to see that
    \begin{align}
        \mu_{S^{N_{d,n}-1}}(\Sigma(\tau)) \gg_{n,d,\aa_0,\alpha_0} \tau
    \end{align}
    for $\tau \ll 1$.
    Since we choose $\aa_0, \alpha_0$ depending only on $d,n$, we are done.

    (2)
    Since $[\aa] \in Z_p(\xi, \s)$, there is $\etaa \in \Z_p^{n+1}$ with $\|\etaa\|_p=1$ such that
    $\langle \aa, \nu_{d,n}(\etaa) \rangle = 0$ and $d_p([\etaa], \xi_p) \leq \s_p < 1$.
    Note that since $d_p([\etaa], \xi_p) < 1$, we can take $\etaa$ so that at least one coordinate is unit and equal to the corresponding entry of $\xii_p$.
    Then we have
    \begin{align}
        |\langle \aa, \nu_{d,n}(\xii_p) \rangle|_p 
        &= |\langle \aa, \nu_{d,n}(\xii_p) - \nu_{d,n}(\etaa)\rangle|_p
        \leq \|\aa\|_p \|\nu_{d,n}(\xii_p) - \nu_{d,n}(\etaa) \|_p\\
        &\leq \|\aa\|_p \|\xii_p - \etaa \|_p = \|\aa\|_p d_p([\xii_p], [\etaa])
        \leq \s_p \|\aa\|_p.
    \end{align}
\end{proof}

\begin{definition}
Let $p$ be a prime number.
\begin{enumerate}
\item
Let $N \in \Z_{\geq 1}$.
For $\aa \in \Z_{p}^{N}$ and $r > 0$, we write
\begin{align}
B(\aa,r) \coloneqq \left\{ \xx \in \Z_{p}^{N} \ \middle|\ \| \xx - \aa \|_p \leq r\right\}.
\end{align}
Note that if $r = p^{-v}$ for some  $v \in \Z_{\geq 1}$, then
$B(\aa, p^{-v})$ is determined by the class $\aa\ \mod {p^{v}}$.
For $\aa' \in (\Z/p^{v}\Z)^{N}$, we write $B(\aa', p^{-v})$ the ball $B(\aa, p^{-v})$
where $\aa \in \Z_{p}^{N}$ is an arbitrary lift of $\aa'$.
\item
Let $v \in \Z_{\geq 1}$.
We set
\begin{align}
\partial W_{p}(p^{v})&\coloneqq 
\left\{ \aa \in (\Z/p^{v}\Z)^{ N_{d,n}} \ \middle|\ \txt{$B(\aa, p^{-v})\cap W_{p} \neq  \emptyset$\\ $B(\aa,p^{-v}) \not\subset W_{p}$}\right\},\\
\partial W_p(\xi,\s)(p^{v})&\coloneqq 
\left\{ \aa \in (\Z/p^{v}\Z)^{ N_{d,n}} \ \middle|\ \txt{$B(\aa, p^{-v})\cap W_{p}(\xi,\s) \neq  \emptyset$\\ $B(\aa,p^{-v}) \not\subset W_{p}(\xi,\s)$}\right\}.
\end{align}

\end{enumerate}
\end{definition}

\begin{proposition}\label{boundary-ball-estimate}
\ 
\begin{enumerate}
\item
Let $p$ be a prime number. For $v \in \Z_{\geq 1}$ with $v \geq e_p$, set
\begin{align}
\widetilde{v} \coloneqq \min \left\{\ceil*{\frac{v}{2}} , v-e_{p} +1\right\}.
\end{align}
Then we have
\begin{align}\label{ineq:boundary_ball_estimate_general}
\# \partial W_p(\xi, \s)(p^{v}) \leq \frac{p^{v N_{d,n}}}{p^{v- \widetilde{v}+(n+1)\min\{e_{p}, \widetilde{v}\}}}.
\end{align}
In particular, 
\begin{align}
\# \partial W_p(p^{v}) \leq \frac{p^{v N_{d,n}}}{p^{v- \ceil*{v/2}}}.
\end{align}

\item
For $p \in S_{\fin}$, we have
\begin{align}
\# \partial W_p(\xi, \s)(p^{e_{p}}) \leq  \frac{ p^{e_{p} N_{d,n}}}{p^{e_{p}+n}}.
\end{align}

\end{enumerate}
\end{proposition}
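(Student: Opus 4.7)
The plan is to use Newton's method (\cref{lem:applyingNtn}) to translate the boundary condition into a small system of linear congruences on $\aa$ witnessed by a primitive $\etaa\in\Z_p^{n+1}$ close to $\xi_p$, and then to count the solutions.

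Given $\aa\in\partial W_p(\xi,\s)(p^v)$, I would choose $\aa'_1,\aa'_2\in B(\aa,p^{-v})$ with $[\aa'_1]\in Z_p(\xi,\s)$ and $[\aa'_2]\notin Z_p(\xi,\s)$ (necessarily $\aa\not\equiv0\mod p$, for otherwise $B(\aa,p^{-v})$ contains no primitive vector). From $\aa'_1$ I would extract a primitive $\etaa\in\Z_p^{n+1}$ with $f_{\aa'_1}(\etaa)=0$ and $d_p([\etaa],\xi_p)\leq p^{-e_p}$; then $|f_{\aa'_2}(\etaa)|_p\leq p^{-v}$ follows from $\aa'_2\equiv\aa'_1\mod{p^v}$. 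Setting $p^{-m}\coloneqq\max_i|(\partial f_{\aa'_2}/\partial x_i)(\etaa)|_p$ and applying \cref{lem:applyingNtn} to $f_{\aa'_2}$ at $\etaa$ with $e=v$ and $l=m$: if $m\leq v-e_p$ and $2m<v$, Newton furnishes a zero of $f_{\aa'_2}$ within $p^{-(v-m)}\leq p^{-e_p}$ of $\etaa$, hence within $p^{-e_p}$ of $\xi_p$ by the ultrametric inequality, contradicting $[\aa'_2]\notin Z_p(\xi,\s)$. Therefore $m\geq\min(v-e_p+1,\lceil v/2\rceil)=\widetilde v$, and since $\aa\equiv\aa'_2\mod{p^v}$ with $v\geq\widetilde v$, the same inequality holds for $\aa$, giving
\[
\langle\aa,\nu_{d,n}(\etaa)\rangle\equiv0\mod{p^v},\quad \langle\aa,\nu_{d,n}^{(i)}(\etaa)\rangle\equiv0\mod{p^{\widetilde v}}\quad(0\leq i\leq n).
\]

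I would then split into two regimes. When $e_p\leq\widetilde v$ (forcing $\widetilde v=\lceil v/2\rceil$ and $v\leq2\widetilde v$), the Taylor expansion $\nu_{d,n}(\etaa+p^{\widetilde v}\bm k)\equiv\nu_{d,n}(\etaa)+p^{\widetilde v}\sum_i k_i\nu_{d,n}^{(i)}(\etaa)\mod{p^{2\widetilde v}}$ shows both congruences depend only on $\etaa\mod{p^{\widetilde v}}$; I union-bound over projective classes of primitive $\etaa$ modulo $p^{\widetilde v}$ with $d_p([\etaa],\xi_p)\leq p^{-e_p}$, numbering at most $\ll p^{n(\widetilde v-e_p)}$ (or $\ll p^{n\widetilde v}$ when $e_p=0$) after normalizing a unit coordinate of $\xii_p$. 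When $e_p>\widetilde v$ (so $\widetilde v=v-e_p+1$), the finer approximation $\etaa\equiv\xii_p\mod{p^{e_p}}$ together with a first-order Taylor expansion lets me replace $\etaa$ by $\xii_p$ directly, so no union bound is needed.

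For each relevant $\etaa$, assume WLOG that $\eta_0$ is a $p$-adic unit. The $(n+1)\times(n+1)$ submatrix of $(\nu_{d,n}^{(i)}(\etaa))_i$ indexed by the monomials $\{x_0^{d-1}x_j:0\leq j\leq n\}$ has determinant $d\,\eta_0^{(d-1)(n+1)}$, so when $p\nmid d$ the derivative vectors are linearly independent modulo $p$. Euler's identity $d\,\nu_{d,n}(\etaa)=\sum_i\eta_i\nu_{d,n}^{(i)}(\etaa)$ forces the $\nu_{d,n}$-congruence to be implied modulo $p^{\widetilde v}$ by the derivative congruences, so it contributes only the refinement factor $p^{v-\widetilde v}$; the count per $\etaa$ is thus $p^{vN_{d,n}}/p^{v+n\widetilde v}$. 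Combining with the $\etaa$-counts yields a bound at least as sharp as the claimed $p^{vN_{d,n}}/p^{v-\widetilde v+(n+1)\min(e_p,\widetilde v)}$, proving (1). Part (2) is the specialization $v=e_p$ (whence $\widetilde v=1$), and the unconditional estimate on $\#\partial W_p(p^v)$ is the case $e_p=0$.

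The main obstacle will be the primes dividing $d$, where Euler's identity degrades by $|d|_p$ and the chosen submatrix becomes singular modulo $p$ (since $\sum_i\eta_i\nu_{d,n}^{(i)}(\etaa)\equiv0\mod p$). To handle this, I would perform the linear-algebra count via Smith normal form of the map $\aa\mapsto(\langle\aa,\nu_{d,n}(\etaa)\rangle,\langle\aa,\nu_{d,n}^{(i)}(\etaa)\rangle)_{i}$: its image in $\Z/p^v\oplus(\Z/p^{\widetilde v})^{n+1}$ lies in the hyperplane $d\,a_0\equiv\sum_i\eta_i a_{i+1}\mod{p^{\widetilde v}}$, whose cardinality is $p^{v+n\widetilde v}$ whenever $\eta_0$ is a unit regardless of whether $p\mid d$, and the saturation of the image then follows from a careful examination of the Hessian-type submatrices, at the possible cost of weakening the bound by an absolute constant for the finitely many primes $p\mid d$.
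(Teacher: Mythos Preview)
Your approach is essentially the paper's: use Newton's method to show that any boundary $\aa$ admits a primitive witness $\etaa$ close to $\xii_p$ satisfying $\langle\aa,\nu_{d,n}(\etaa)\rangle\equiv0\pmod{p^v}$ and $\langle\aa,\nu_{d,n}^{(i)}(\etaa)\rangle\equiv0\pmod{p^{\widetilde v}}$ for all $i$, then perform a linear-algebra count of such $\aa$ per witness and union-bound over the witnesses.

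The gap is the case $p\mid d$. Your $(n+1)\times(n+1)$ submatrix with rows $\nu_{d,n}^{(0)},\ldots,\nu_{d,n}^{(n)}$ and columns $x_0^{d-1}x_j$ has determinant $d\,\eta_0^{(d-1)(n+1)}$, which vanishes modulo $p$ precisely when $p\mid d$; the proposed workarounds (Smith normal form, ``Hessian-type submatrices'', weakening by an absolute constant) are vague and in any case cannot deliver the proposition, which is a \emph{sharp} inequality with no implicit constant. The paper sidesteps the issue entirely by including $\nu_{d,n}(\etaa)$ itself among the rows: the minor formed by rows $\nu_{d,n},\nu_{d,n}^{(1)},\ldots,\nu_{d,n}^{(n)}$ (dropping $\nu_{d,n}^{(0)}$) and the same columns $x_0^d,x_0^{d-1}x_1,\ldots,x_0^{d-1}x_n$ is upper-triangular with determinant $\eta_0^{d+(d-1)n}$, a unit for \emph{every} prime $p$. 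Hence the cokernel of the map $J\colon(\Z/p^{\widetilde v}\Z)^{N_{d,n}}\to(\Z/p^{\widetilde v}\Z)^{n+2}$ is cyclic and $\#\Ker J\le p^{\widetilde v(N_{d,n}-n-1)}$ exactly, giving the per-witness count $\#K(\etaa)\le p^{v(N_{d,n}-1)-n\widetilde v}$ uniformly in $p$, with no appeal to Euler's identity or to a case split on $p\mid d$.

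A smaller point: the paper parametrizes the witnesses $\etaa$ modulo $p^{g_0}$ with $g_0=\max\{\lceil v/2\rceil,\,v-\widetilde v\}$ and counts lifts rather than projective classes, obtaining the exact $\#\mathcal{X}=p^{\max\{g_0-e_p,0\}(n+1)}$ without any ``$\ll$''. Your two-regime split on $e_p\lessgtr\widetilde v$ achieves the same arithmetic, but since the proposition has no implicit constant you should track exact counts throughout rather than write ``$\ll p^{n(\widetilde v-e_p)}$''.
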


\begin{proof}
    The last assertion in (1) follows from \cref{ineq:boundary_ball_estimate_general} by setting $e_p=0$.
    Indeed, in this case we have $\widetilde{v} = \ceil*{\frac{v}{2}}$.
    (2) follows from (1) by setting $v=e_p$. Note that $p \in S_{\fin}$
    implies $e_p \geq 1$.
    Thus it is enough to prove the bound of  $\# \partial W_p(\xi, \s)(p^{v})$.
    From now on, we fix $v \in \Z_{\geq 1}$ with $v \geq e_p$.

    \begin{claim}\label{claim:boundary-ball}
        If $\aa \in W_p(\xi,\s)$ and $B(\aa, p^{-v}) \not\subset W_p(\xi,\s)$,
        then there is $\xx \in (\Z_p^{n+1})_{\prim}$ such that
        \begin{align}
            f_{\aa}(\xx)=0,\quad
            d_p\left([\xx],[\xii_p]\right) \leq p^{-e_p}
            \and
            \left| \frac{\partial f_{\aa}}{\partial x_i}(\xx)  \right|_p \leq p^{- \widetilde{v}} \quad \text{for all $0 \leq i \leq n$}.
        \end{align}
    \end{claim}
    \begin{proof}[Proof of \cref{claim:boundary-ball}]
        Since $\aa \in W_p(\xi, \s)$, there is $\xx \in (\Z_p^{n+1})_{\prim}$
        such that 
        \begin{align}
            f_{\aa}(\xx)=0
            \and
            d_p([\xx],[\xii_p]) \leq p^{-e_p}.
        \end{align}
        Take a point $\bb \in B(\aa,p^{-v}) \setminus W_p(\xi,\s)$.
        Then we have
        \begin{align}
            |f_{\bb}(\xx)|_p = |f_\bb(\xx) - f_\aa(\xx)|_p = |f_{\bb-\aa}(\xx)|_p \leq p^{-v}.
        \end{align}
        We first show
        \begin{align}\label{ineq:derivative-at-b}
            \left| \frac{\partial f_{\bb}}{\partial x_i}(\xx)  \right|_p \leq p^{- \widetilde{v}}
            \quad \text{for all $0 \leq i \leq n$}.
        \end{align}
        Suppose there is $i$ such that 
        \begin{align}
            \left| \frac{\partial f_{\bb}}{\partial x_i}(\xx)  \right|_p > p^{- \widetilde{v}}.
        \end{align}
        Since $2(\widetilde{v}-1)\le2(\lceil\frac{v}{2}\rceil-1)<v$, we have
        \begin{align}
            \left| \frac{\partial f_{\bb}}{\partial x_i}(\xx)  \right|_p^2 > p^{-v} \geq |f_\bb(\xx)|.
        \end{align}
        By \cref{lem:applyingNtn} with $e = v, l = \tilde{v} - 1$, we get $\yy \in (\Z_p^{n+1})_{\prim}$
        such that
        \begin{align}
            f_\bb(\yy)=0
            \and
            d_p([\yy],[\xx]) \leq p^{-(v - (\widetilde{v}-1))}.
        \end{align}
        We have
        \begin{align}
            d_p\left([\yy],[\xii_p]\right) 
            \leq \max\left\{ d_p\left([\yy],[\xx]\right), d_p\left([\xx],[\xii_p]\right) \right\}
            \leq \max\{ p^{-(v- \widetilde{v} + 1)} , p^{-e_p} \} \leq p^{-e_p}.
        \end{align}
        Here for the last inequality, we use
        \begin{align}
            v - \widetilde{v} + 1 \geq v - (v-e_p + 1) + 1 = e_p.
        \end{align}
        Existence of such $\yy$ contradicts to the assumption $\bb \notin W_p(\xi, \s)$.
        Thus we have proven \cref{ineq:derivative-at-b}.

        Finally, for all $i$, we get
        \begin{align}
            \left| \frac{\partial f_{\aa}}{\partial x_i}(\xx)  \right|_p
            &= \left| \frac{\partial f_{\aa}}{\partial x_i}(\xx)
            -\frac{\partial f_{\bb}}{\partial x_i}(\xx) +\frac{\partial f_{\bb}}{\partial x_i}(\xx) \right|_p\\
            &\leq \max\left\{\left| \frac{\partial f_{\aa}}{\partial x_i}(\xx)
            -\frac{\partial f_{\bb}}{\partial x_i}(\xx) \right|_p,
            \left| \frac{\partial f_{\bb}}{\partial x_i}(\xx) \right|_p\right\}\\
            &= \max\left\{\left| \frac{\partial f_{\aa-\bb}}{\partial x_i}(\xx) \right|_p,
            \left| \frac{\partial f_{\bb}}{\partial x_i}(\xx) \right|_p\right\}\\
            & \leq \max\{ p^{-v}, p^{- \widetilde{v}} \}\leq p^{-\widetilde{v}}.
        \end{align}
    \end{proof}
    In light of this claim, we consider the following set
    \begin{align}
        U \coloneqq \left\{ \aa \in (\Z/p^v \Z)^{N_{d,n}}_{\prim} \ \middle|\  
        \txt{$\exists \xx \in (\Z/p^v \Z)^{n+1}_{\prim }$ s.t.\\
        $\xx \equiv \xii_p\ \mod {p^{e_p}}$\\
        $\langle \aa, \nu_{d,n}(\xx) \rangle \equiv 0\ \mod {p^v}$\\
        $\langle \aa, \nu_{d,n}^{(i)}(\xx)\rangle \equiv 0\ \mod {p^{\widetilde{v}}}$ for all $i$}  \right\}.
    \end{align}
    By \cref{claim:boundary-ball}, we have
    \begin{align}
        \partial W_p(\xi,\s)(p^{v}) \subset U.
    \end{align}
    Thus it is enough to bound $\# U$.
    We do this in several steps.
    
    {\bf Step 1}.
    For $\xx \in (\Z/p^v \Z)^{n+1}_{\prim}$, let 
    \begin{align}
        K(\xx) \coloneqq \left\{ \aa \in (\Z/p^v \Z)^{N_{d,n}}_{\prim} \ \middle|\  
        \txt{$\langle \aa, \nu_{d,n}(\xx) \rangle \equiv 0\ \mod {p^v}$\\
        $\langle \aa, \nu_{d,n}^{(i)}(\xx)\rangle \equiv 0\ \mod {p^{\widetilde{v}}}$ for all $i$}  \right\}.
    \end{align}
    Consider two $\xx, \xx' \in (\Z/p^v \Z)^{n+1}_{\prim}$.
    We want to know when we have $K(\xx) = K(\xx')$.
    Suppose $\xx \equiv \xx'\ \mod {p^g}$ for some $g$ with $1 \leq g \leq v$.
    In this case, we can write $\xx = \xx' + p^g \yy$ for some 
    $\yy = (y_0, \dots, y_n) \in (\Z/p^v \Z)^{n+1}$.
    Let $\aa \in K(\xx')$.
    Then 
    \begin{align}
        \langle \aa, \nu_{d,n}(\xx) \rangle \equiv 
        \langle \aa, \nu_{d,n}(\xx') \rangle 
        + \sum_{i=0}^n \langle \aa, \nu_{d,n}^{(i)}(\xx') \rangle p^g y_i
        \ \mod {p^{2g}}
    \end{align}
    Thus if $2g \geq v$ and $\widetilde{v} + g \geq v$, then 
    $\langle \aa, \nu_{d,n}(\xx) \rangle \equiv 0 \ \mod {p^v}$.
    If $2g \geq v$, then $g\ge\lceil v/2\rceil\ge\widetilde{v}$.
    Thus $\xx \equiv \xx' \ \mod {p^{\widetilde{v}}}$ and hence
    \begin{align}
        \langle \aa, \nu_{d,n}^{(i)}(\xx)\rangle 
        \equiv \langle \aa, \nu_{d,n}^{(i)}(\xx')\rangle  \equiv 0 \ \mod {p^{\widetilde{v}}} 
    \end{align}
    for all $i$.
    Therefore if we set
    \begin{align}
        g_0 \coloneqq \max\left\{ \ceil*{\frac{v}{2}}, v - \widetilde{v} \right\},
    \end{align}
    then for $\xx, \xx' \in (\Z/p^v \Z)^{n+1}_{\prim}$, we have
    \begin{align}
        \xx \equiv \xx' \ \mod {p^{g_0}} \Longrightarrow K(\xx) = K(\xx').
    \end{align}

    {\bf Step 2}.
    For a given $\xx \in (\Z/p^v \Z)^{n+1}_{\prim}$, we bound 
    $\# K(\xx)$.
    Consider the following diagram
    \[
    \xymatrix{
    \Ker \varphi \ar[r] \ar[rd] & (\Z/p^v \Z)^{N_{d,n}} \ar[r]^{\varphi} \ar[d]_\pi & \Z/p^v \Z\\
     & (\Z/p^{\widetilde{v}}\Z)^{N_{d,n}} \ar[r]_J & (\Z/p^{\widetilde{v}}\Z)^{n+2} 
    }
    \]
    where $\pi$ is the canonical projection,
    $\varphi(\aa) = \langle \aa, \nu_{d,n}(\xx) \rangle$, and
    $J$ is the map defined by multiplying the following matrix from left:
    \begin{align}\label{eq:matrix-of-J}
    \renewcommand*{\arraystretch}{1.5}
        \begin{pmatrix}
            \nu_{d,n}(\xx)\\
            \nu_{d,n}^{(0)}(\xx)\\
            \vdots\\
            \nu_{d,n}^{(n)}(\xx)
        \end{pmatrix}.
    \end{align}
    Here we consider $\nu_{d,n}(\xx)$ and $\nu_{d,n}^{(i)}(\xx)$
    as row vectors with $N_{d,n}$ entries.
    Then we have $K(\xx) = \Ker \varphi \cap \pi^{-1}(\Ker J)$.
    Since $\xx \in (\Z/p^v \Z)^{n+1}_{\prim}$, 
    $\nu_{d,n}(\xx) \in (\Z/p^v \Z)^{N_{d,n}}_{\prim}$ as well.
    Thus $\varphi$ is surjective and hence $\Ker \varphi$ is a direct summand 
    of $(\Z/p^v \Z)^{N_{d,n}}$.
    Thus we have
    \begin{align}\label{eq:kerphi}
        \Ker \varphi \cap \Ker \pi \simeq (p^{\widetilde{v}}\Z/p^v\Z)^{N_{d,n}-1}
        \simeq (\Z/p^{v - \widetilde{v}}\Z)^{N_{d,n}-1}.
    \end{align}
    Next we claim that the $n+1$-minors of the matrix \cref{eq:matrix-of-J} 
    generate the trivial ideal $(1) = \Z/p^{\widetilde{v}} \Z$.
    Indeed, we may assume $x_0 \not\equiv 0 \ \mod p$ without loss of generality.
    The submatrix formed by columns and rows corresponding to the monomials 
    $x_0^d, x_0^{d-1}x_1,\dots, x_0^{d-1}x_n$
    and $\nu_{d,n}, \nu_{d,n}^{(1)}, \dots, \nu_{d,n}^{(n)}$ respectively
    is
    \begin{align}
        \begin{pmatrix}
            x_0^d & x_0^{d-1}x_1 & \cdots &\cdots & x_0^{d-1}x_n\\
            0 & x_0^{d-1} & 0 & \cdots & 0 \\
            \vdots & \ddots & \ddots &&  \vdots\\
            \vdots & & \ddots & \ddots & 0 \\
            0 & \cdots & \cdots & 0 & x_0^{d-1}
        \end{pmatrix}.
    \end{align}
    The determinant of this matrix is $x_0^{d + (d-1)n}$ and this is a unit.
    By, for example, Nakayama's lemma, $\Cok J$ is generated by 
    at most one element over $\Z/p^{\widetilde{v}}\Z$.
    Therefore we get
    \begin{align}\label{ineq:bound-of-kerj}
        \# \Ker J = \frac{p^{\widetilde{v}N_{d,n}}}{\# \Img J} 
        = \frac{p^{\widetilde{v}N_{d,n}}}{p^{\widetilde{v}(n+2)}} \# \Cok J
        \leq p^{\widetilde{v}(N_{d,n} -n -1)}.
    \end{align}
    Combining \cref{eq:kerphi} and \cref{ineq:bound-of-kerj}, we get
    \begin{align}
        \#K(\xx) &\leq (\#\Ker J) \# (\Ker \varphi \cap \Ker \pi)\\
        &\leq p^{\widetilde{v}(N_{d,n} -n -1) + (v - \widetilde{v})(N_{d,n}-1)}
        = p^{v(N_{d,n}-1) - \widetilde{v}n}.
    \end{align}
    
    {\bf Step 3}.
    By Step 1 and Step 2, we get
    \begin{align}
        \# U = \# 
        \bigcup_{\substack{\xx \in (\Z/p^v\Z)^{n+1}_{\prim}\\ \xx \equiv \xii_p \ \mod {p^{e_p}}}}
        K(\xx)
        \leq p^{v(N_{d,n}-1) - \widetilde{v}n} \# \mathcal{X}
    \end{align}
    where 
    \begin{align}
        \mathcal{X} \coloneqq 
        \Img \left( \{ \xx \in (\Z/p^v\Z)^{n+1}_{\prim} 
        \mid \xx \equiv \xii_p \ \mod {p^{e_p}} \}  \longrightarrow
        (\Z/p^{g_0}\Z)^{n+1} \right)
    \end{align}
    (Recall that we assumed $v \geq e_p$).
    Since
    \begin{align}
        \# \mathcal{X} = p^{\max\{(g_0 - e_p),0\} (n+1)},
    \end{align}
    we get
    \begin{align}
        \#U \leq p^{v(N_{d,n}-1) - \widetilde{v}n + \max\{(g_0 - e_p),0\} (n+1)}.
    \end{align}
    Finally we calculate the exponent.
    First we have
    \begin{align}
        &v(N_{d,n}-1) - \widetilde{v}n + \max\{(g_0 - e_p),0\} (n+1)\\
        &= v(N_{d,n}-1) - \widetilde{v}n 
        + \max\left\{\ceil*{\frac{v}{2}}-e_p, v- \widetilde{v}-e_p ,0\right\} (n+1)\\
        & = v(N_{d,n}-1) - \widetilde{v}n 
        + \max\left\{\ceil*{\frac{v}{2}}-e_p ,0\right\} (n+1)\\
        & = v(N_{d,n}-1) + \widetilde{v} 
        - (n+1)\left( \widetilde{v} - \max\left\{\ceil*{\frac{v}{2}}-e_p ,0\right\}\right)\\
        & = v(N_{d,n}-1) + \widetilde{v} 
        - (n+1) \min\left\{ \widetilde{v} - \ceil*{\frac{v}{2}} + e_p, \widetilde{v} \right\}.
    \end{align}
    Here for the second equality we use
    \begin{align}
        v- \widetilde{v} -e_p =
        v - \min\left\{ \ceil*{\frac{v}{2}} + e_p, v+1  \right\}
        \leq \max\left\{\ceil*{\frac{v}{2}} - e_p, 0  \right\}.
    \end{align}
    Observe that 
    \begin{align}
        &e_p \geq \widetilde{v} \Longrightarrow 
        \min\left\{ \widetilde{v} - \ceil*{\frac{v}{2}} + e_p, \widetilde{v} \right\}
        = \widetilde{v}\\
        &e_p < \widetilde{v} \Longrightarrow 
        e_p < \ceil*{\frac{v}{2}} \Longrightarrow 
        \min\left\{ \widetilde{v} - \ceil*{\frac{v}{2}} + e_p, \widetilde{v} \right\}
        =\widetilde{v} - \ceil*{\frac{v}{2}} + e_p = e_p.
    \end{align}
   Thus
   \begin{align}
       \min\left\{ \widetilde{v} - \ceil*{\frac{v}{2}} + e_p, \widetilde{v} \right\}
       = \min\{ e_p, \widetilde{v} \}
   \end{align}
   and the exponent becomes
   \begin{align}
       v(N_{d,n}-1) + \widetilde{v} - (n+1) \min\{ e_p, \widetilde{v} \}.
   \end{align}
   Therefore we get
   \begin{align}
       \# \partial W_p(\xi,\s)(p^{v}) \leq \# U \leq 
       \frac{p^{vN_{d,n}}}{p^{v - \widetilde{v} + (n+1)\min\{e_p,\widetilde{v}  \}}}.
   \end{align}
    
\end{proof}

\begin{proposition}\label{prop-measure-of-padic-solvable-locus}
Let $p \in S_{\fin}$.
Then we have
\begin{align}
\left(1- \frac{1}{p^{ N_{d,n}-1}} - \frac{1}{p^{n}} \right) \frac{1}{p^{e_{p}}}\leq \mu_{p}(W_{p}(\xi, \s))\leq \left(1- \frac{1}{p^{ N_{d,n}-1}} \right) \frac{1}{p^{e_{p}}}.
\end{align}
Or equivalently,
\begin{align}
    \left(1- \frac{1}{p^{ N_{d,n}-1}} - \frac{1}{p^{n}} \right) \frac{1}{p^{e_{p}}}\leq (1-p^{-N_{d,n}})\mu_{\P^{N_{d,n}-1}_{\Q_p}}(Z_{p}(\xi, \s))\leq \left(1- \frac{1}{p^{ N_{d,n}-1}} \right) \frac{1}{p^{e_{p}}}.
\end{align}
\end{proposition}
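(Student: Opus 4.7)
\textbf{Setup and strategy.}
Write $\vv := \nu_{d,n}(\xii_p)$ and $\vv_i := \nu_{d,n}^{(i)}(\xii_p)$ for $i = 0, \ldots, n$, and set $C := \{\aa \in (\Z_p^{N_{d,n}})_{\prim} : \langle \aa, \vv\rangle \equiv 0\ \mod{p^{e_p}}\}$. My plan is to sandwich $W_p(\xi,\s)$ between $C$ (for the upper bound) and $C$ minus a small ``Newton-inaccessible'' subset (for the lower bound), so that the two bounds match to leading order. Since $\|\xii_p\|_p = 1$, after permuting coordinates I may assume that the first coordinate $x_0$ of $\xii_p$ lies in $\Z_p^\times$; I will also make essential use of Euler's identity $d\vv = \sum_i x_i \vv_i$, which holds in $\Z_p^{N_{d,n}}$.

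\textbf{Upper bound.}
\cref{easy-bound-Z}\,(2) applied to any primitive $\aa \in W_p(\xi,\s)$ gives $|\langle \aa, \vv\rangle|_p \leq \s_p = p^{-e_p}$, so $W_p(\xi,\s) \subseteq C$. To compute $\mu_p(C)$ I will note that the enlarged set $\{\aa \in \Z_p^{N_{d,n}} : \langle \aa, \vv\rangle \equiv 0\ \mod{p^{e_p}}\}$ has measure $p^{-e_p}$ since $\vv$ is primitive (its $x_0^d$ coordinate is a unit), and that the non-primitive part, parametrized via $\aa = p\bb$ by $\{\bb : \langle \bb, \vv\rangle \equiv 0\ \mod{p^{e_p - 1}}\}$, contributes $p^{-N_{d,n} - e_p + 1}$; subtraction yields $\mu_p(C) = p^{-e_p}(1 - p^{-(N_{d,n}-1)})$, which is exactly the claimed upper bound.

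\textbf{Lower bound.}
For the reverse inequality I will apply \cref{lem:applyingNtn} with $e = e_p$ and $l = 0$ to any $\aa \in C$ for which at least one derivative $\langle \aa, \vv_i\rangle$ is a $p$-adic unit. Since $e_p \geq 1$, all hypotheses are met and the conclusion produces $\tilde{\xi} \in (\Z_p^{n+1})_{\prim}$ with $f_\aa(\tilde{\xi}) = 0$ and $d_p([\tilde{\xi}], \xi_p) \leq p^{-e_p} \leq \s_p$, so $\aa \in W_p(\xi,\s)$. Hence $W_p(\xi,\s) \supseteq C \setminus D$ where $D := C \cap \{\aa : \langle \aa, \vv_i\rangle \equiv 0\ \mod{p} \text{ for all } i\}$, and it remains to show $\mu_p(D) \leq p^{-e_p - n}$. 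The key point is that Euler's identity yields $x_0 \langle \aa, \vv_0\rangle = d\langle \aa, \vv\rangle - \sum_{i \geq 1} x_i \langle \aa, \vv_i\rangle$, so the condition $\langle \aa, \vv_0\rangle \equiv 0\ \mod{p}$ is automatic once $\langle \aa, \vv\rangle \equiv 0\ \mod{p^{e_p}}$ and $\langle \aa, \vv_i\rangle \equiv 0\ \mod{p}$ for $i \geq 1$ are imposed. Thus $D$ is cut out by only $n+1$ independent conditions, and the $(n+1) \times (n+1)$ sub-matrix of $(\vv_1, \ldots, \vv_n, \vv)$ indexed by the monomial columns $x_0^{d-1}x_1, \ldots, x_0^{d-1}x_n, x_0^d$ is block-triangular with diagonal $(x_0^{d-1}, \ldots, x_0^{d-1}, x_0^d)$, giving unit determinant $x_0^{n(d-1)+d}$. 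The resulting $\Z_p$-surjection $\Z_p^{N_{d,n}} \twoheadrightarrow \Z_p^{n+1}$ pulls back $(p\Z_p)^n \times p^{e_p}\Z_p$ to a set of measure $p^{-n - e_p}$, completing the bound.

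\textbf{Main obstacle.}
The only genuine subtlety is the case $p \mid d$, in which $\{\vv_0, \ldots, \vv_n\}$ are already linearly dependent mod $p$ by Euler, so a naive rank-$(n+1)$ bound on the derivative congruences alone fails. The remedy sketched above—replacing $\vv_0$ by $\vv$ in the spanning family and leveraging the stronger $p^{e_p}$-congruence—restores a full-rank sub-determinant uniformly in whether $p$ divides $d$. Apart from this, the argument is elementary $\Z_p$-module linear algebra together with the two imported tools \cref{easy-bound-Z}\,(2) and \cref{lem:applyingNtn}.
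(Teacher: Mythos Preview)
Your argument is correct and arrives at the same bounds, but the route for the lower bound is genuinely different from the paper's. The paper proves that $Y:=C$ is precisely the set of primitive $\aa$ whose $p^{-e_p}$-ball meets $W_p(\xi,\s)$, and then invokes the general boundary-ball estimate \cref{boundary-ball-estimate}(2) as a black box to control $\mu_p(Y\setminus W_p(\xi,\s))$; that proposition in turn applies Newton's method at a \emph{root} $\xx$ of $f_\aa$ and counts the resulting $K(\xx)$ via a rank argument on $(\nu_{d,n},\nu_{d,n}^{(0)},\ldots,\nu_{d,n}^{(n)})$. You instead apply \cref{lem:applyingNtn} directly at the \emph{fixed} point $\xii_p$ to show $C\setminus D\subseteq W_p(\xi,\s)$, and bound $\mu_p(D)$ by the single rank computation on $(\vv_1,\ldots,\vv_n,\vv)$, using Euler's identity to discard $\vv_0$. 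Your approach is more self-contained for this specific statement; the paper's approach is structurally cleaner because \cref{boundary-ball-estimate} is reused elsewhere (e.g.\ in \cref{lem:bound-of-omega}), so proving it once amortises the work.

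One small wrinkle worth flagging: \cref{lem:applyingNtn} is stated for $\aa\in\Z_{\prim}^{N_{d,n}}$, not $\aa\in(\Z_p^{N_{d,n}})_{\prim}$. Its proof goes through verbatim for $p$-adic $\aa$ since it only uses that $f_\aa$ has $\Z_p$-coefficients, but you should note this explicitly when you cite it.
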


\begin{proof}
Consider the following set:
\begin{align}
Y \coloneqq \left\{ \aa \in ( \Z_{p}^{ N_{d,n}})_{\prim} \ \middle|\  \langle \aa , \nu_{d,n}(\xii_{p}) \rangle \equiv 0 \ \mod {p^{e_{p}}} \right\}.
\end{align}
Then we have 
\begin{claim}\label{claim:measure-of-W}\ 
\begin{enumerate}
\item
$Y = \left\{ \aa \in ( \Z_{p}^{ N_{d,n}})_{\prim}  \ \middle|\  B(\aa, p^{-e_{p}})\cap W_{p}(\xi,\s) \neq  \emptyset\right\}$
\item
$\mu_{p}(Y) - \frac{1}{p^{e_{p}+n}} \leq \mu_{p}(W_{p}(\xi, \s)) \leq \mu_{p}(Y)$
\item
$\mu_{p}(Y) = \left( 1 - \frac{1}{p^{ N_{d,n}-1}}\right) \frac{1}{p^{e_{p}}}$
\end{enumerate}
\end{claim}

\begin{proof}[Proof of \cref{claim:measure-of-W}]
    (1) Let $Y'$ denote the right hand side. Suppose $\aa \in Y'$.
    Take a point $\bb \in B(\aa, p^{-e_p}) \cap W_p(\xi,\s)$.
    Then we have
    \begin{align}
        \langle \aa, \nu_{d,n}(\xii_p) \rangle \equiv \langle \bb , \nu_{d,n}(\xii_p) \rangle
        \equiv 0 \ \mod {p^{e_p}}
    \end{align}
    and thus $\aa \in Y$.
    Conversely suppose $\aa \in Y$.
    Then we can write $\langle \aa, \nu_{d,n}(\xii_p) \rangle = p^{e_p} \alpha$
    for some $\alpha \in \Z_p$. We shall find $\cc \in \Z_p^{N_{d,n}}$
    such that
    \begin{align}
        \langle \aa + p^{e_p}\cc, \nu_{d,n}(\xii_p) \rangle = 0.
    \end{align}
    We have
    \begin{align}
        \langle \aa + p^{e_p}\cc, \nu_{d,n}(\xii_p) \rangle = 
        p^{e_p}\alpha + p^{e_p} \langle \cc , \nu_{d,n}(\xii_p) \rangle.
    \end{align}
    Since $\|\xii_p\|_p = 1$, we have $\| \nu_{d,n}(\xii_p)\|_p = 1$ as well
    and thus $\langle{}\cdot{},\nu_{d,n}(\xii_p) \rangle \colon \Z_p^{N_{d,n}} \longrightarrow \Z_p$
    is surjective. Therefore we can find $\cc \in \Z_p^{N_{d,n}}$
    such that $\langle \cc ,\nu_{d,n}(\xii_p) \rangle = - \alpha$ and we are done.

    (2)
    Obviously we have
    \begin{align}
        \bigcup_{\substack{\aa \in (\Z_p^{N_{d,n}})_{\prim} \\ B(\aa,p^{-e_p})\subset W_p(\xi,\s)}} B(\aa,p^{-e_p}) \subset W_p(\xi,\s)
        \subset \bigcup_{\substack{\aa \in (\Z_p^{N_{d,n}})_{\prim} \\ B(\aa,p^{-e_p})\cap W_p(\xi,\s) \neq \emptyset}} B(\aa,p^{-e_p}) = Y.
    \end{align}
    Thus 
    \begin{align}
        \mu_p(Y) - \mu_p\left( \bigcup_{ \aa \in \partial W_p(\xi,\s)(p^{e_p}) }B(\aa, p^{-e_p})  \right) \leq \mu_p(W_p(\xi,\s)) \leq \mu_p(Y).
    \end{align}
    By \cref{boundary-ball-estimate}(2), we have
    \begin{align}
        \mu_p\left( \bigcup_{ \aa \in \partial W_p(\xi,\s)(p^{e_p}) }B(\aa, p^{-e_p})  \right)
        \leq \frac{p^{e_p N_{d,n}}}{p^{e_p + n}} \frac{1}{p^{e_p N_{d,n}}} = \frac{1}{p^{e_p + n}}
    \end{align}
    and we are done.

    (3)
    We have
    \begin{align}
        Y =&\left\{ \aa \in \Z_p^{N_{d,n}} \ \middle|\ \langle \aa,  \nu_{d,n}(\xii_p) \rangle \equiv 0 \ \mod {p^{e_p}} \right\}\\
        &\setminus 
        p\left\{ \aa \in \Z_p^{N_{d,n}} \ \middle|\ \langle \aa, \nu_{d,n}(\xii_p) \rangle \equiv 0 \ \mod {p^{e_p-1}} \right\}.
    \end{align}
    Thus
    \begin{align}
        \mu_p(Y) = \frac{1}{p^{e_p}} - \frac{1}{p^{e_p-1}} \frac{1}{p^{N_{d,n}}}
    \end{align}
    and this is what we wanted.
\end{proof}
The first assertion follows from this claim.
The second one follows from \cref{lem:comparison-of-measures}.
\end{proof}

\begin{lemma}\label{lem:bound-non-soluble-locus}
We have 
\begin{align}
\mu_{\P^{ N_{d,n}-1}_{\Q_{p}}}(\P^{ N_{d,n}-1}(\Q_{p}) \setminus Z_{p}) \ll \frac{1}{p^{2}}
\end{align}
where the implicit constant depends only on $d,n$.
\end{lemma}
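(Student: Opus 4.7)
The plan is to combine the Lang--Weil estimate (Proposition~\ref{Lang-Weil-range}) with the dimension bound on the reducible locus (Lemma~\ref{dim-of-NIP}) via reduction modulo $p$. For the finitely many small primes $p \leq P_{0}$, where $P_{0} = P_{0}(d,n)$ is the threshold produced by Proposition~\ref{Lang-Weil-range}, I will use the trivial bound $\mu_{\P^{N_{d,n}-1}_{\Q_{p}}}(\cdot) \leq 1 \leq P_{0}^{2}/p^{2}$, which is acceptable since the implicit constant is allowed to depend on $d,n$.

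For $p > P_{0}$, the contrapositive of Proposition~\ref{Lang-Weil-range} says that any primitive $\aa \in (\Z_{p}^{N_{d,n}})_{\prim}$ with $[\aa] \notin Z_{p}$ satisfies $\aa \bmod p \in \NIP_{d,n}(\F_{p})$. Since $\NIP_{d,n}$ is stable under scaling (a rescaling of $f_{1}f_{2}$ can be absorbed into one factor), this set of ``bad'' $\aa$ descends through the quotient $(\Z_{p}^{N_{d,n}})_{\prim} \to \P^{N_{d,n}-1}(\Q_{p})$. Denoting by $\mathrm{red} \colon \P^{N_{d,n}-1}(\Q_{p}) \to \P^{N_{d,n}-1}(\F_{p})$ the reduction map and by $\overline{\NIP}_{d,n}(\F_{p}) \subset \P^{N_{d,n}-1}(\F_{p})$ the image of $\NIP_{d,n}(\F_{p}) \setminus \{0\}$, I obtain the containment
\[
\P^{N_{d,n}-1}(\Q_{p}) \setminus Z_{p} \subset \mathrm{red}^{-1}\bigl(\overline{\NIP}_{d,n}(\F_{p})\bigr).
\]

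By Lemma~\ref{dim-of-NIP}, $\NIP_{d,n}$ is flat over $\Z$ with generic fiber of dimension at most $N_{d,n}-2$, so by flatness every special fiber $(\NIP_{d,n})_{\F_{p}}$ also has dimension at most $N_{d,n}-2$. Applying Theorem~\ref{Lang-Weil2} component-by-component to $(\NIP_{d,n})_{\F_{p}}$ yields $\#\NIP_{d,n}(\F_{p}) \ll_{d,n} p^{N_{d,n}-2}$, hence $\#\overline{\NIP}_{d,n}(\F_{p}) \ll_{d,n} p^{N_{d,n}-3}$ after dividing by the fiber size $p-1$ of the projectivization. A direct computation from the definition of $\mu_{\P^{N_{d,n}-1}_{\Q_{p}}}$ shows that each fiber of $\mathrm{red}$ has measure $(p-1)/(p^{N_{d,n}}-1) \ll p^{-(N_{d,n}-1)}$. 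Combining,
\[
\mu_{\P^{N_{d,n}-1}_{\Q_{p}}}\bigl(\P^{N_{d,n}-1}(\Q_{p}) \setminus Z_{p}\bigr)
\ll_{d,n} p^{N_{d,n}-3} \cdot p^{-(N_{d,n}-1)} = p^{-2}.
\]

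The only point that needs a little care is verifying that Theorem~\ref{Lang-Weil2} can be invoked with constants depending only on $d,n$, i.e.\ that the number and degrees of the irreducible components of $(\NIP_{d,n})_{\F_{p}}$ are bounded uniformly in $p$. This is immediate from the definition of $\NIP_{d,n}$ as the union of the scheme-theoretic images of the finitely many morphisms $p_{d',d''}\colon \A^{N_{d',n}}_{\Z} \times_{\Z} \A^{N_{d'',n}}_{\Z} \to \A^{N_{d,n}}_{\Z}$, whose degrees are determined by $d,n$ alone; this is the only mildly delicate step of the argument.
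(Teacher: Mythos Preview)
Your proof is correct and follows essentially the same approach as the paper's. The paper works on the affine side, showing $\mu_p((\Z_p^{N_{d,n}})_{\prim}\setminus W_p)\le \#\NIP_{d,n}(\F_p)/p^{N_{d,n}}\ll p^{-2}$ via \cref{Lang-Weil-range} and \cref{dim-of-NIP}, while you phrase the same computation projectively through the reduction map; the two are equivalent under the normalization \cref{lem:comparison-of-measures}, and you are somewhat more explicit about the small-prime case and the uniformity of the Lang--Weil constants.
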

\begin{proof}
    By \cref{lem:comparison-of-measures}, it is enough to show that
    \begin{align}
        \mu_p((\Z_p^{N_{d,n}})_{\prim} \setminus W_p) \ll \frac{1}{p^2}.
    \end{align}
    By \cref{Lang-Weil-range}, we have
    \begin{align}
        (\Z_p^{N_{d,n}})_{\prim} \setminus W_p
        \subset \{ \aa \in \Z_p^{N_{d,n}} \mid \aa\ \mod p \in \NIP_{d,n}(\F_p) \}.
    \end{align}
    Therefore
    \begin{align}
        &\mu_p((\Z_p^{N_{d,n}})_{\prim} \setminus W_p) \\
        &\leq \mu_p\left( \{ \aa \in \Z_p^{N_{d,n}} \mid \aa\ \mod p \in \NIP_{d,n}(\F_p) \}\right)\\
        &= \frac{\# \NIP_{d,n}(\F_p)}{p^{N_{d,n}}} \ll \frac{p^{N_{d,n}-2}}{p^{N_{d,n}}} = \frac{1}{p^2}.
    \end{align}
    Here we use \cref{dim-of-NIP}.
\end{proof}

\subsection{Bounding \texorpdfstring{$E(A,P)$}{EAP}}
\leavevmode
In this subsection, we prove the following upper bound of $E(A,P)$.
The idea of the proof is based on the method called Ekedahl sieve,
originally carried out in \cite{Ekedahl:InfiniteCRT}.
A quantitative version is developed in \cite[Theorem 3.3]{bhargava2014geometric},
which is applicable to fairly general situation.
Our proof is similar to those of them, but we need an estimate that is independent of
$\s_\infty$ and $q$, which is not directly follows.

\begin{proposition}\label{bound-of-E}
There exists $P_{0} \in \R_{\geq 1}$ that depends only on $d,n$ such that
for all $A \geq 1$ and $P > P_{0}$, we have
\begin{align}
E(A,P) \ll \frac{1}{P\log 2P} \frac{\s_{\infty} A^{ N_{d,n}}}{q} + A^{ N_{d,n}-1}\log \log 3A
\end{align}
where the implicit constant depends only on $d,n$.
\end{proposition}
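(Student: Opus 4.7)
The plan is an Ekedahl-style geometric sieve over primes $p>P$. First, by \cref{Lang-Weil-range} I enlarge $P_{0}$ in terms of $d,n$ so that $[\aa]\notin Z_{p}$ forces $\aa\bmod p\in\NIP_{d,n}(\F_{p})$ for every prime $p>P_{0}$. The local conditions at $p\in S$ are incorporated via the necessary conditions from \cref{easy-bound-Z}: for each $p\in S_{\fin}$, $[\aa]\in Z_{p}(\xi,\s)$ forces $\langle\aa,\nu_{d,n}(\xii_{p})\rangle\equiv 0\ \mod{p^{e_{p}}}$, and for $p=\infty$, $[\aa]\in Z_{\infty}(\xi,\s)$ forces $\aa\in\mathcal{C}^{\perp}_{N_{d,n}}(\nu_{d,n}(\xii_{\infty}),c\,\s_{\infty})$ for some $c\ll_{d,n}1$. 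By CRT the mod-$p^{e_{p}}$ conditions combine into a single congruence $\langle\aa,\cc\rangle\equiv 0\ \mod{q}$ for an appropriate $\cc\in\Z^{N_{d,n}}$. Together with the Lang--Weil reduction,
\[
E(A,P)\le\sum_{p>P}\#\bigl\{\aa\in\Z^{N_{d,n}}_{\prim}\cap\ball{N_{d,n}}{A}:\text{band at $\infty$, $\langle\aa,\cc\rangle\equiv0\ \mod{q}$},\ \aa\bmod p\in\NIP_{d,n}(\F_{p})\bigr\}.
\]
I then split this sum at a threshold $M$, with small and large primes treated differently.

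For small primes $P<p\le M$ (where $M$ will be polylogarithmic in $A$): for each $p$ and each $\bar\bb\in\NIP_{d,n}(\F_{p})$, I apply \cref{prop:lattice_count_primitive_local_general} to the semialgebraic family of equatorial bands with $\Lambda=\Gamma=\Z^{N_{d,n}}$ and combined modulus $pq$ (noting $p\nmid q$ once $p>\max S_{\fin}$). The main term has size $\asymp\s_{\infty}A^{N_{d,n}}/(pq)$ by the volume estimate of \cref{lem:equater_band_vol}; summing over the $\ll p^{N_{d,n}-2}$ classes (using $\dim(\NIP_{d,n})_{\Q}\le N_{d,n}-2$ from \cref{dim-of-NIP} together with flatness of $\NIP_{d,n}$ over $\Z$) and then over $P<p\le M$ gives
\[
\frac{\s_{\infty}A^{N_{d,n}}}{q}\sum_{P<p\le M}\frac{1}{p^{2}}\ll\frac{1}{P\log 2P}\cdot\frac{\s_{\infty}A^{N_{d,n}}}{q}
\]
by partial summation from PNT, matching the target main term. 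The lattice errors, whose $\mathfrak{V}_{\nu}$-quantities are bounded by $\s_{\infty}$ (analogue of \cref{lem:V_bound} for the equatorial band via \cref{lem:equater_band_vol}), accumulate to $O(A^{N_{d,n}-1}\log\log 3A)$ for a suitable choice of $M$.

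For large primes $p>M$ the per-prime bound is too weak, so I invoke the Ekedahl trick based on \cref{dim-of-NIP}: since $(\NIP_{d,n})_{\Q}$ has codimension $\ge 2$, choose two $\Z$-polynomials $g_{1},g_{2}\in\Z[y_{1},\dots,y_{N_{d,n}}]$ of bounded degree (in terms of $d,n$) in the defining ideal of $\NIP_{d,n}$ whose common $\Q$-zero locus still has codimension $\ge 2$. Any $\aa$ contributing in this range satisfies $p\mid g_{1}(\aa)$ and $p\mid g_{2}(\aa)$ for some $p>M$. Either $g_{1}(\aa)=g_{2}(\aa)=0$, in which case $\aa$ lies on a codimension-$\ge 2$ $\Q$-subvariety, contributing only $O(A^{N_{d,n}-2+\varepsilon})$ integer points; or $\gcd(g_{1}(\aa),g_{2}(\aa))\ne 0$ has a prime factor $>M$, treated by a dyadic decomposition
\[
\sum_{m>M}\#\{\aa\in\ball{N_{d,n}}{A}:m\mid g_{1}(\aa),\ m\mid g_{2}(\aa),\text{band, mod $q$}\},
\]
exploiting the coprimality of $g_{1},g_{2}$ for per-modulus density $\ll 1/m^{2}$ and using \cref{prop:lattice_count_primitive_local_general} for the lattice errors; a suitable choice of $M$ then collapses this into a total contribution $\ll A^{N_{d,n}-1}\log\log 3A$.

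The main technical obstacle is to keep all lattice errors uniform in $q,\s_{\infty},p$ simultaneously, so that the small- and large-prime contributions fit into the single final error $A^{N_{d,n}-1}\log\log 3A$; the $\log\log 3A$ factor arises precisely from the optimal choice of the threshold $M$ balancing the two ranges. The codimension-$\ge 2$ input from \cref{dim-of-NIP} is essential: with codimension $1$ the per-prime density would only be $\ll 1/p$, making $\sum_{p}1/p$ diverge and ruling out a sieve bound of the desired shape.
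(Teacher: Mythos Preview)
Your overall plan—reduce $[\aa]\notin Z_p$ to $\aa\bmod p\in\NIP_{d,n}(\F_p)$ via \cref{Lang-Weil-range}, relax the $S$-conditions to a band at $\infty$ and a mod-$q$ hyperplane via \cref{easy-bound-Z}, then sieve over $p$—matches the paper. The small-prime analysis is essentially right, except for two corrections: the paper splits at $p=A$, not at a polylogarithmic $M$, and it applies \cref{lem:BarroeroWidmer} directly to the full-rank lattice $\Lambda^{(q)}_{\nu_{d,n}(\xii_0)}$ shifted by each class of $\NIP_{d,n}(\F_p)$ (your invocation of \cref{prop:lattice_count_primitive_local_general} is a mismatch, since that proposition sums over unit multiples $u\cc$, not over a mod-$q$ hyperplane). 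The per-class error is $\ll (A/p)^{N_{d,n}-1}$, and summing over $\ll p^{N_{d,n}-2}$ classes and then over $P<p\le A$ gives $A^{N_{d,n}-1}\sum_{p\le A}1/p\ll A^{N_{d,n}-1}\log\log 3A$ by Mertens; the $\log\log$ is not a threshold balance. (Your claim $\mathfrak{V}_\nu\ll\s_\infty$ for the equatorial band is also false—the band is thin in only one direction, so a generic $\nu$-dimensional projection has volume $\asymp 1$—but this does not affect the outcome since the error terms carry no $\s_\infty$ anyway.)

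The genuine gap is the large-prime range. Once $p>A$, your ``density $1/m^2$'' lattice count degenerates: the main term vanishes and the error is $O(1)$ per residue class, so summing over $\ll p^{N_{d,n}-2}$ classes in $V(g_1,g_2)(\F_p)$ and then over $A<p\ll A^{\deg g_i}$ diverges. The paper handles $p>A$ by a \emph{fibering} argument instead: choose a linear projection $\pi\colon\A^{N_{d,n}}\to\A^{N_{d,n}-1}$ with $\pi|_{\NIP_{d,n}}$ finite of degree $\le r$ after inverting finitely many primes, and a nonzero $f$ vanishing on $\pi(\NIP_{d,n})$. For $p>A$ the condition $\aa\bmod p\in\NIP_{d,n}(\F_p)$ forces $p\mid f(\pi(\aa))$; for fixed $\bb=\pi(\aa)$ with $f(\bb)\neq0$, there are at most $\deg f$ primes $p>A$ with $p\mid f(\bb)$ (since $|f(\bb)|\ll A^{\deg f}$), and for each such $p$ at most $r$ lifts $\aa$ in the box (since $p>A$). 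Summing over $\bb$ with $\|\bb\|\ll A$ gives $O(A^{N_{d,n}-1})$. This finite-fiber step—or an equivalent resultant argument, which your sketch does not supply—is what makes the tail converge.
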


We are proving this proposition in several steps.

Recall that we defined the subscheme $\NIP_{d,n} \subset \A^{ N_{d,n}}_{\Z}$, which contains 
all non-irreducible polynomials.
By \cref{dim-of-NIP}, we have $\dim \NIP_{d,n} \leq N_{d,n}-1$.
Since $\NIP_{d,n}$ is of finite type over $\Z$, we have $\dim (\NIP_{d,n})_{\Q} \leq N_{d,n}-2$,
where $(\NIP_{d,n})_{\Q}$ is the generic fiber over $\Z$.
In particular, we can pick a linear projection
\begin{align}
\pi_{\Q} \colon \A^{ N_{d,n}}_{\Q} \longrightarrow \A^{ N_{d,n}-1}_{\Q}
\end{align}
such that the induced morphism
\begin{align}
(\NIP_{d,n})_{\Q} \longrightarrow \A^{ N_{d,n}-1}_{\Q}
\end{align}
is finite.
By composing a linear automorphism if necessary, we may assume that 
there is a linear morphism $\pi \colon \A^{ N_{d,n}}_{\Z} \longrightarrow \A^{ N_{d,n}-1}_{\Z}$
such that the base change of $\pi$ to $\Q$ is $\pi_{\Q}$.
By a standard argument of ``passage to the limit'', 
there is $R$, which is the localization of $\Z$ by a single element, with the following properties.
Consider the following diagram:
 \[
\xymatrix{
(\NIP_{d,n})_{\Q} \ar@{}[d]|{\bigcap}  \ar[r] & (\NIP_{d,n})_{R}   \ar@{}[d]|{\bigcap} \ar[r]& \NIP_{d,n} \ar@{}[d]|{\bigcap} \\
\A^{ N_{d,n}}_{\Q} \ar[r] \ar[d]_{\pi_{\Q}} &\A^{ N_{d,n}}_{R} \ar[d]_{\pi_{R}} \ar[r]& \A^{ N_{d,n}}_{\Z} \ar[d]^{\pi} \\
*+[r]{\A^{ N_{d,n}-1}_{\Q}} \ar[r] & *+[r]{\A^{ N_{d,n}-1}_{R}}  \ar[r]& *+[r]{\A^{ N_{d,n}-1}_{\Z}} 
}
\]
where $(\NIP_{d,n})_{R}, \pi_{R}$ are the base change to $R$.
Then $\pi_{R}|_{(\NIP_{d,n})_{R}}$ is a finite morphism and 
the coordinate ring $ \mathcal{O}((\NIP_{d,n})_{R})$ of $(\NIP_{d,n})_{R}$ is generated by, say, $r$
elements over $ \mathcal{O}(\A^{ N_{d,n}-1}_{R})$.
Since $\dim (\NIP_{d,n})_{\Q} \leq  N_{d,n}-2$, $\NIP_{d,n}$ does not dominate $\A^{ N_{d,n}-1}_{\Z}$.
We pick a non-zero polynomial $f \in \mathcal{O}(\A^{ N_{d,n}-1}_{\Z})$
such that 
\begin{align}
\pi(\NIP_{d,n}) \subset V(f) \subset \A^{ N_{d,n}-1}_{\Z}.
\end{align}
Here $V(f)$ is the closed subscheme of $\A^{ N_{d,n}-1}_{\Z}$ defined by $f$.

Now we take $P_{0} \in \R_{\geq 1}$ so that 
\cref{Lang-Weil-range} works for this $P_{0}$ and also
any prime $p > P_{0}$ generates a prime ideal of $R$.

Choosing $P_{0}$ as above, for $A \geq 1$ and $P>P_{0}$, we have
\begin{align}
E(A,P)
=& \# \left\{ \aa \in \Z_{ \prim}^{ N_{d,n}} \ \middle|\ \txt{$ \| \aa \|\leq A, \forall p \in S \cup M_{\Q, \leq P}, [\aa] \in Z_{p}(\xi,\s)$\\
$\exists p > P$ prime number, $[\aa] \notin Z_{p}$} \right\}\\
\leq & \#\left\{ \aa \in \Z^{ N_{d,n}} \ \middle|\ \txt{$ 0<\| \aa \|\leq A, \forall p \in S, [\aa] \in Z_{p}(\xi,\s)$\\
$\exists p > P$ such that $p \notin S$ and $\aa\ \mod p \in \NIP_{d,n}(\F_{p})$}  \right\}\\
\leq& \#\left\{ \aa \in \Z^{ N_{d,n}} \ \middle|\ \txt{$ 0<\| \aa \|\leq A, \forall p \in S, [\aa] \in Z_{p}(\xi,\s)$\\
$P < \exists p \leq A$ s.t. $p \notin S$, $\aa\ \mod p \in \NIP_{d,n}(\F_{p})$}  \right\}\\
+&\#\left\{ \aa \in \Z^{ N_{d,n}} \ \middle|\ \txt{$ 0<\| \aa \|\leq A, \forall p \in S, [\aa] \in Z_{p}(\xi,\s)$\\
$\exists p > \max\{P,A\}$ s.t. $p \notin S$, $\aa\ \mod p \in \NIP_{d,n}(\F_{p})$}  \right\}\\
\leq & \#\left\{ \aa \in \Z^{ N_{d,n}} \ \middle|\ \txt{$ 0<\| \aa \|\leq A, \forall p \in S, [\aa] \in Z_{p}(\xi,\s)$\\
$P < \exists p \leq A$ s.t. $p \notin S$, $\aa\ \mod p \in \NIP_{d,n}(\F_{p})$}  \right\}\\
+&\#\left\{ \aa \in \Z^{ N_{d,n}} \ \middle|\ \txt{$ \| \aa \|\leq A$\\
$\exists p > \max\{P,A\}$ s.t. $\aa\ \mod p \in \NIP_{d,n}(\F_{p})$}  \right\}.
\end{align}
Here we use \cref{Lang-Weil-range} for the first inequality.
We set
\begin{align}
E_{1}(A,P) &\coloneqq \#\left\{ \aa \in \Z^{ N_{d,n}} \ \middle|\ \txt{$ 0<\| \aa \|\leq A, \forall p \in S, [\aa] \in Z_{p}(\xi,\s)$\\
$P < \exists p \leq A$ s.t. $p \notin S$, $\aa\ \mod p \in \NIP_{d,n}(\F_{p})$}  \right\}\\
E_{2}(A,P) &\coloneqq \#\left\{ \aa \in \Z^{ N_{d,n}} \ \middle|\ \txt{$ \| \aa \|\leq A$\\
$\exists p > \max\{P,A\}$ s.t. $\aa\ \mod p \in \NIP_{d,n}(\F_{p})$}  \right\}.
\end{align}

We first bound $E_{2}(A,P)$.

\begin{lemma}\label{bound-of-E2}
Let $P_{0}$ as above.
For $A\geq 1$ and $P>P_{0}$, we have
\begin{align}
E_{2}(A,P) \ll A^{ N_{d,n}-1}
\end{align}
where the implicit constant depends only on $d,n$.
\end{lemma}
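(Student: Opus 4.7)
The plan is to exploit the projection $\pi \colon \A^{N_{d,n}}_{\Z} \to \A^{N_{d,n}-1}_{\Z}$ and the auxiliary polynomial $f$ on $\A^{N_{d,n}-1}_{\Z}$ set up just before the statement. Recall that $\pi(\NIP_{d,n}) \subset V(f)$ and that $\pi_R|_{(\NIP_{d,n})_R}$ is finite with fiber size uniformly bounded by some constant $r = r(d,n)$. Since $\pi$ is linear, $\|\pi(\aa)\| \ll \|\aa\|$, and $|f(\pi(\aa))| \ll A^{\deg f}$ whenever $\|\aa\| \le A$, with implicit constants depending only on $d,n$. For any $\aa$ counted in $E_2(A,P)$, there is a prime $p > \max\{P,A\}$ such that $\aa \bmod p \in \NIP_{d,n}(\F_p)$, and therefore $p \mid f(\pi(\aa))$ in $\Z$.

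I would split the count according to whether $f(\pi(\aa)) = 0$ or $f(\pi(\aa)) \neq 0$. In the first case, $\pi(\aa)$ lies in $V(f)(\Z) \cap \mathcal{B}_{N_{d,n}-1}(O(A))$; since $(\NIP_{d,n})_\Q$ has dimension $\le N_{d,n}-2$ and $f$ is a non-zero polynomial whose zero locus contains $\pi(\NIP_{d,n})$, a standard lattice-point estimate on the real points of an affine hypersurface gives at most $O(A^{N_{d,n}-2})$ such values of $\pi(\aa)$. The fiber of $\pi$ over each $\bb \in \Z^{N_{d,n}-1}$ is a one-dimensional affine line in $\R^{N_{d,n}}$, so it contains $O(A)$ integer points of norm $\le A$. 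This gives $O(A^{N_{d,n}-1})$.

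For the second case, fix $\bb = \pi(\aa) \in \Z^{N_{d,n}-1}$ with $\|\bb\| \ll A$ and $f(\bb)\neq 0$. Since $|f(\bb)| \ll A^{\deg f}$, the number of primes $p > A$ dividing $f(\bb)$ is at most $\deg f / 1 = O(1)$ (each such prime contributes at least $\log A$ to $\log|f(\bb)| \le (\deg f)\log A + O(1)$). For each such prime $p$, the finiteness of $\pi_R|_{(\NIP_{d,n})_R}$ bounds the fiber $\pi^{-1}(\bb \bmod p)\cap \NIP_{d,n}(\F_p)$ by $r$ points. Any $\aa$ we are counting must lie on the affine line $\pi^{-1}(\bb) \subset \R^{N_{d,n}}$, must satisfy $\|\aa\|\le A$, and must reduce mod $p$ to one of these $r$ points; since $p > A$, each residue class on this segment contains at most one integer point, giving $O(1)$ values of $\aa$ for each $(\bb,p)$. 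Summing over $\bb$ with $\|\bb\|\ll A$ yields another $O(A^{N_{d,n}-1})$.

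Adding the two cases yields the claimed bound. The only mildly subtle point is the prime-counting step in the second case: one must remember that a prime $p > A$ dividing an integer $N$ with $|N| \ll A^{\deg f}$ can occur at most $\deg f$ times, so no $\log$ factor appears. Everything else is routine lattice-point counting together with the two properties (linearity of $\pi$ and finiteness of $\pi|_{\NIP_{d,n}}$) that were arranged in the preceding discussion.
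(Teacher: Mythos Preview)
Your proposal is correct and follows essentially the same approach as the paper: both split according to whether $f(\pi(\aa))$ vanishes, bound the vanishing case by the $O(A^{N_{d,n}-1})$ count of integer points on a hypersurface, and in the non-vanishing case use that each $\bb=\pi(\aa)$ admits only $O(1)$ primes $p>A$ dividing $f(\bb)$, with $O(1)$ preimages each by the finiteness of $\pi|_{\NIP_{d,n}}$ and the condition $p>A$. The only cosmetic difference is that you handle the first case by first counting $\bb\in V(f)$ and then fibers of $\pi$, whereas the paper bounds the zero set of $f\circ\pi$ directly in $\Z^{N_{d,n}}$.
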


\begin{proof}
We may assume $A\ge2$.
We further divide $E_{2}(A,P)$ into two part as follows:
\begin{align}
E_{2}(A,P) \leq 
 &\#\left\{ \aa \in \Z^{ N_{d,n}} \ \middle|\ \txt{$ \| \aa \|\leq A, f(\pi(\aa))\neq 0$\\
$\exists p > \max\{P,A\}$ s.t. $\aa \mod p \in \NIP_{d,n}(\F_{p})$}  \right\}\\
&+  \#\left\{ \aa \in \Z^{ N_{d,n}} \ \middle|\ \txt{$ \| \aa \|\leq A, f(\pi(\aa))=0$\\
$\exists p > \max\{P,A\}$ s.t. $\aa \mod p \in \NIP_{d,n}(\F_{p})$}  \right\}\\
\leq & 
\#\left\{ \aa \in \Z^{ N_{d,n}} \ \middle|\ \txt{$ \| \aa \|\leq A, f(\pi(\aa))\neq 0$\\
$\exists p > \max\{P,A\}$ s.t. $\aa \mod p \in \NIP_{d,n}(\F_{p})$}  \right\}\\
&+  \#\left\{ \aa \in \Z^{ N_{d,n}} \ \middle|\  \| \aa \|\leq A, f(\pi(\aa))=0 \right\}.
\end{align}
Since $f\circ \pi$ is a non-zero polynomial function on $\A^{ N_{d,n}}_{\Z}$,
we have
\begin{align}
\#\left\{ \aa \in \Z^{ N_{d,n}} \ \middle|\  \| \aa \|\leq A, f(\pi(\aa))=0 \right\} \ll A^{  N_{d,n}-1} \label{eq:E2bound-1}
\end{align}
where the implicit constant depends only on $d,n,\pi,f$.

Since $\pi \colon \Z^{ N_{d,n}} \longrightarrow \Z^{ N_{d,n}-1}$ is a linear map,
there is $ \| \pi \| \geq 0$ such that $ \| \pi(\xx) \| \leq  \| \pi \| \| \xx \|$ for all $\xx \in \Z^{ N_{d,n}}$.
We have
\begin{align}
&\#\left\{ \aa \in \Z^{ N_{d,n}} \ \middle|\ \txt{$ \| \aa \|\leq A, f(\pi(\aa))\neq 0$\\
$\exists p > \max\{P,A\}$ s.t. $\aa \mod p \in \NIP_{d,n}(\F_{p})$}  \right\}\\
& \leq \sum_{ p > \max\{P,A\}} \sum_{\substack{\aa \in \Z^{ N_{d,n}} \\ \| \aa \|\leq A \\ f(\pi(\aa)) \neq 0 \\ \aa\ \mod p \in \NIP_{d,n}(\F_{p}) }} 1\\
& \leq 
\sum_{\substack{\bb \in \Z^{ N_{d,n}-1} \\ f(\bb) \neq 0 \\ \| \bb \| \leq \| \pi \|A}}
\sum_{\substack{ p > \max\{P,A\} \\ p|f(\bb)}}
\sum_{\substack{\aa \in \Z^{ N_{d,n}} \\ \| \aa \| \leq A \\ \aa\ \mod p \in \NIP_{d,n}(\F_{p}) \\ \pi(\aa)=\bb}} 1.\label{eq:E2bound-2}
\end{align}

Since $p > \max\{P,A\} \geq P > P_{0}$, every fiber of
\begin{align}
\pi \colon \NIP_{d,n}(\F_{p}) \longrightarrow \A^{ N_{d,n}-1}(\F_{p})
\end{align}
consists at most $r$ elements.
Noting $p > A$, we get
\begin{align}
\sum_{\substack{\aa \in \Z^{ N_{d,n}} \\ \| \aa \| \leq A \\ \aa \mod p \in \NIP_{d,n}(\F_{p}) \\ \pi(\aa)=\bb}} 1 
\ll 1
\end{align}
where the implicit constant depends only on $d,n, r$.
Thus \cref{eq:E2bound-2} is bounded by constant multiple of
\begin{align}
\sum_{\substack{\bb \in \Z^{ N_{d,n}-1} \\ f(\bb) \neq 0 \\ \| \bb \| \leq \| \pi \|A}}
\sum_{\substack{ p > \max\{P,A\} \\ p|f(\bb)}}1.
\end{align}
Let 
\begin{align}
\a = \sum_{\substack{ p > \max\{P,A\} \\ p|f(\bb)}}1.
\end{align}
Then, for $\bb \in \Z^{ N_{d,n}-1}$ with $f(\bb)\neq0$, we get
\begin{align}
A^{\a}\le\max\{P,A\}^{\a} \leq \prod_{\substack{p > \max\{P,A\} \\ p | f(\bb)}} p \leq |f(\bb)| \ll A^{\deg f}.
\end{align}
Thus we get
\begin{align}
\alpha=
\sum_{\substack{ p > \max\{P,A\} \\ p|f(\bb)}}1 \ll 1,
\end{align}
where the implicit constant depends only on $\pi, f$.
Therefore we get
\begin{align}
&\#\left\{ \aa \in \Z^{ N_{d,n}} \ \middle|\ \txt{$ \| \aa \|\leq A, f(\pi(\aa))\neq 0$\\
$\exists p > \max\{P,A\}$ s.t. $\aa \mod p \in \NIP_{d,n}(\F_{p})$}  \right\}\\
&\ll  \sum_{\substack{\bb \in \Z^{ N_{d,n}-1} \\ f(\bb) \neq 0 \\ \| \bb \| \leq \| \pi \|A}}1
\ll A^{ N_{d,n}-1}
\end{align}
where the implicit constants depend only on $d,n,r,\pi,f$.
Combined with \cref{eq:E2bound-1}, we get
\begin{align}
E_{2}(A,P) \ll A^{ N_{d,n}-1}
\end{align}
where the implicit constant depends only on $d,n,r,\pi,f$.
Since we chose $\pi, f, r$ depending only on $d,n$, we are done.
\end{proof}

Next we are going to bound $E_{1}(A,P)$.

\begin{lemma}\label{lem-E1-bound}
For $A\geq 1$ and $P\geq 1$, we have
\begin{align}
E_{1}(A,P) \ll \frac{1}{P\log 2P} \frac{\s_{\infty}A^{ N_{d,n}}}{q} + A^{ N_{d,n}-1}\log \log 3A
\end{align}
where the implicit constant depends only on $d,n$.
\end{lemma}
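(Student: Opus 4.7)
The plan is to apply an Ekedahl sieve with the local conditions of $S$ tracked carefully, reusing the linear projection $\pi \colon \A^{N_{d,n}}_{\Z} \to \A^{N_{d,n}-1}_{\Z}$ and the non-zero polynomial $f$ with $\pi(\NIP_{d,n}) \subset V(f)$ from the setup preceding \cref{bound-of-E2}. I would first split $E_1(A,P)$ based on whether $f(\pi(\aa)) = 0$ or not. The term with $f(\pi(\aa))=0$ is bounded trivially by $O(A^{N_{d,n}-1})$ since $f \circ \pi$ is a non-zero polynomial on $\A^{N_{d,n}}_{\Z}$; this contribution is absorbed into the $A^{N_{d,n}-1}\log\log 3A$ term of the claim.

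For the main part with $f(\pi(\aa))\neq 0$, I would apply the union bound over primes $p\in(P,A]$ with $p\notin S$ (note that $p\notin S$ forces $(p,q)=1$), using the key observation that $\aa\ \mod p \in \NIP_{d,n}(\F_p)$ combined with $p > P_0$ and $f(\pi(\aa))\neq 0$ implies $p\mid f(\pi(\aa))$, and also that $\aa\ \mod p$ lies in at most $r$ preimages of $\pi(\aa)\ \mod p$ under $\pi|_{\NIP_{d,n}}$. For each fixed $p$ I would fiber over $\bb = \pi(\aa)\in\Z^{N_{d,n}-1}$ with $\|\bb\|\leq\|\pi\|A$ and $p\mid f(\bb)$, and along the one-dimensional affine sublattice $\pi^{-1}(\bb)\cap\Z^{N_{d,n}}$ count lattice points $\aa$ with $\|\aa\|\leq A$ satisfying: (i) the archimedean cone condition $\aa\in\R W_{\infty}(\xi,\s)$, which is semialgebraic in the parameters by \cref{W-is-sa}; (ii) the finite-place conditions $[\aa]\in Z_{p'}(\xi,\s)$ for $p'\in S_{\fin}$, reduced by \cref{easy-bound-Z}(2) to a linear congruence modulo $q$; and (iii) one of at most $r$ residue classes modulo $p$ from $\NIP_{d,n}(\F_p)$. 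Since $(p,q)=1$, CRT combines (ii) and (iii) into a single congruence modulo $pq$, and \cref{lem:BarroeroWidmer_original} applied to the one-dimensional lattice with the semialgebraic archimedean condition yields a fiber bound of the form $O(\s_{\infty} A/(pq) + 1)$.

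Using the Schwartz--Zippel-type bound $\#\{\bb\in\Z^{N_{d,n}-1}:\|\bb\|\leq\|\pi\|A,\,p\mid f(\bb)\} \ll A^{N_{d,n}-1}/p + p^{N_{d,n}-2}$, the total contribution for each $p$ becomes $\ll \s_{\infty} A^{N_{d,n}}/(qp^2) + A^{N_{d,n}-1}/p$, with lower-order error terms under control. Summing over $p\in(P,A]$ with $p\notin S$ and invoking $\sum_{p>P} p^{-2} \asymp 1/(P\log 2P)$ from the prime number theorem together with Mertens' $\sum_{p\leq A} p^{-1} \ll \log\log 3A$ would then give the claimed bound. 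The hardest part will be executing the one-dimensional lattice count uniformly in all the parameters $\xii_{\infty},\s,q,\cc,p,\bb$; in particular one must verify that the archimedean semialgebraic cone and the CRT-combined mod-$pq$ congruence produce implied constants depending only on $d$ and $n$, which requires carefully tracking the uniform semialgebraic family structure from \cref{W-is-sa} through the Barroero--Widmer machinery, and also ensuring that the off-main contributions from small $\|\bb\|$ or short fibers fit inside the $A^{N_{d,n}-1}\log\log 3A$ error.
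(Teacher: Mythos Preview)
There is a genuine gap in your one-dimensional fiber count. You claim that Barroero--Widmer applied to the affine line $\pi^{-1}(\bb)\cap\Z^{N_{d,n}}$ yields a bound of the form $O(\s_{\infty}A/(pq)+1)$, but the factor $\s_{\infty}$ cannot be recovered this way. The main term in a one-dimensional count is the \emph{length} of the fiber segment inside $\R W_{\infty}(\xi,\s)\cap\mathcal{B}_{N_{d,n}}(A)$ divided by the lattice step, and for any fiber whose direction vector points into the cone $\R W_{\infty}(\xi,\s)$ that length is $\asymp A$, regardless of how small $\s_{\infty}$ is. The bound $\mu_{S^{N_{d,n}-1}}(W_{\infty}(\xi,\s))\asymp\s_{\infty}$ from \cref{easy-bound-Z} is a codimension-one statement on the sphere and only manifests in a volume of the full dimension $N_{d,n}$; it does not control individual one-dimensional slices. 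With the correct fiber bound $O(A/(pq)+1)$, your summed main term becomes $A^{N_{d,n}}/(qp^{2})$ rather than $\s_{\infty}A^{N_{d,n}}/(qp^{2})$, and after summing over $p$ you obtain only $A^{N_{d,n}}/(qP\log 2P)$, missing the required $\s_{\infty}$.

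The paper's proof avoids the projection $\pi$ entirely for $E_{1}$; that device is reserved for $E_{2}$, where $p>A$ makes volume-based counting vacuous. Instead, one reduces the $S_{\fin}$-conditions via \cref{easy-bound-Z}(2) to the single congruence $\langle\aa,\nu_{d,n}(\xii_{0})\rangle\equiv0\ \mod q$, giving a full-rank lattice $\Lambda\subset\Z^{N_{d,n}}$ with $\det\Lambda=q$. For each prime $p\in(P,A]$ with $p\notin S$ and each class $x\in\NIP_{d,n}(\F_{p})$, one counts points of $\Lambda$ in the residue class $x\bmod p$ lying in $\mathcal{B}_{N_{d,n}}(A)\cap\R W_{\infty}(\xi,\s)$; since $p\nmid q$ this is a count for a coset of $p\Lambda$, and \cref{lem:BarroeroWidmer} in dimension $N_{d,n}$ (with the semialgebraic family from \cref{W-is-sa}) gives main term $\ll\s_{\infty}A^{N_{d,n}}/(p^{N_{d,n}}q)$ and error $\ll A^{N_{d,n}-1}/p^{N_{d,n}-1}$. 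One then multiplies by $\#\NIP_{d,n}(\F_{p})\ll p^{N_{d,n}-2}$ from \cref{dim-of-NIP} and sums over $p$ exactly as you describe. The $\s_{\infty}$ saving comes from the full-dimensional volume $\vol_{N_{d,n}}(\mathcal{B}_{N_{d,n}}(A)\cap\R W_{\infty}(\xi,\s))\asymp\s_{\infty}A^{N_{d,n}}$, not from any fiberwise slicing.
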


\begin{proof}
We have
\begin{align}
&E_{1}(A,P)\\
&= \#\left\{ \aa \in \Z^{ N_{d,n}} \ \middle|\ \txt{$ 0<\| \aa \|\leq A, \forall p \in S, [\aa] \in Z_{p}(\xi,\s)$\\
$P < \exists p \leq A$ s.t. $p \notin S$, $\aa\ \mod p \in \NIP_{d,n}(\F_{p})$}  \right\}\\
&= \# \left\{ \aa \in \Z^{ N_{d,n}} \ \middle|\   \txt{$0 < \| \aa \| \leq A, \aa \in \R W_{\infty}(\xi,\s), \forall p \in S_{\fin}, [\aa] \in Z_{p}(\xi,\s)$\\
$P < \exists p \leq A$ s.t. $p \notin S$, $\aa\ \mod p \in \NIP_{d,n}(\F_{p})$} \right\}\\
& \leq \# \left\{ \aa \in \Z^{ N_{d,n}} \ \middle|\ \txt{$\aa \in \mathcal{B}_{ N_{d,n}}(A) \cap \R W_{\infty}(\xi,\s)$\\ 
$\forall p \in S_{\fin}, | \langle \aa , \nu_{d,n}(\xii_{p}) \rangle|_{p} \leq \s_{p} \| \aa \|_{p}$\\
$P < \exists p \leq A$ s.t. $p \notin S$, $\aa\ \mod p \in \NIP_{d,n}(\F_{p})$}  \right\}.
\end{align}
We use \cref{easy-bound-Z} for the last inequality.
By Chinese remainder theorem, we can pick a $\xii_{0} \in \Z^{ N_{d,n}}$ such that
\begin{align}
\xii_{0} \equiv \xii_{p}\ \mod {p^{e_{p}}} \quad \text{for $p \in S_{\fin}$}.
\end{align}
Note that since $ \| \xii_{p} \|_{p}=1$, $\gcd(q, \xii_{0})=1$. 
We get
\begin{align}
E_{1}(A,P)  \leq 
\# \left\{ \aa \in \Z^{ N_{d,n}} \ \middle|\ \txt{$\aa \in \mathcal{B}_{ N_{d,n}}(A) \cap \R W_{\infty}(\xi,\s)$\\ 
$ \langle \aa , \nu_{d,n}(\xii_{0}) \rangle \equiv 0\ \mod q$\\
$P < \exists p \leq A$ s.t. $p \notin S$, $\aa\ \mod p \in \NIP_{d,n}(\F_{p})$}  \right\}.
\end{align}
Let 
\begin{align}
\L = \left\{ \aa \in \Z^{ N_{d,n}} \ \middle|\ \langle \aa, \nu_{d,n}(\xii_{0}) \rangle \equiv 0\ \mod q \right\}.
\end{align}
This is a lattice of rank $ N_{d,n}$. Using this lattice, we see
\begin{equation}
\label{lem-E1-bound:ineq1}
\begin{aligned}
&E_{1}(A,P) \\
&\leq \sum_{\substack{P < p \leq A\\ p \notin S}} \sum_{x \in \NIP_{d,n}(\F_{p})}
\# \left\{ \aa \in \L \cap \mathcal{B}_{ N_{d,n}}(A) \cap \R W_{\infty}(\xi, \s) \ \middle|\ \aa\ \mod p = x\right\}. 
\end{aligned}
\end{equation}
%{\Yuta{Now we can use the general lemma in the main file.}}

The set under the sum is non-empty only if
\begin{align}
\# \left\{ \aa \in \L \ \middle|\ \aa\ \mod p = x\right\} \neq  \emptyset.
\end{align}
When this is the case, pick $\aa_{0} \in \L$ such that $\aa_{0}\ \mod p = x$.
Then we have
\begin{align}
&\left\{ \aa \in \L \cap \mathcal{B}_{ N_{d,n}}(A) \cap \R W_{\infty}(\xi, \s) \ \middle|\ \aa\ \mod p = x\right\}\\
&= \L \cap \mathcal{B}_{ N_{d,n}}(A) \cap \R W_{\infty}(\xi, \s) \cap (\aa_{0} + p\Z^{ N_{d,n}})\\
&= (\aa_{0} + p \L) \cap  \mathcal{B}_{ N_{d,n}}(A) \cap \R W_{\infty}(\xi, \s). 
\end{align}
Here for the last equality, we use $\L \cap p\Z^{ N_{d,n}} = p\L$, which follows from $p \nmid q$.
Thus we get
\begin{align}
&\# \left\{ \aa \in \L \cap \mathcal{B}_{ N_{d,n}}(A) \cap \R W_{\infty}(\xi, \s) \ \middle|\ \aa\ \mod p = x\right\} \\
&= \# \left( p\L \cap \left( \mathcal{B}_{ N_{d,n}}(A) \cap \R W_{\infty}(\xi, \s) -\aa_{0}  \right)  \right).
\end{align}
We use \cref{lem:BarroeroWidmer} to estimate this number.
By \cref{W-is-sa}, the set
\begin{align}
\mathcal{B}_{ N_{d,n}}(A) \cap \R W_{\infty}(\xi, \s) -\aa_{0} 
\end{align}
comes from a semi-algebraic family of subsets of $\R^{ N_{d,n}}$
parametrized by $A, \s_{\infty}, \xii_{\infty}$, and $\aa_{0}$.
By \cref{lem:BarroeroWidmer},
we get
\begin{align}
&\# \left( p\L \cap \left( \mathcal{B}_{ N_{d,n}}(A) \cap \R W_{\infty}(\xi, \s) -\aa_{0}  \right)  \right)\\
&= \frac{\vol_{N_{d,n}}\left( \mathcal{B}_{ N_{d,n}}(A) \cap \R W_{\infty}(\xi, \s)\right)}{ \det (p\L)}
+O\left(\sum_{\nu=0}^{ N_{d,n}-1}  \frac{V_{\nu}\left( \mathcal{B}_{ N_{d,n}}(A) \cap \R W_{\infty}(\xi, \s)\right)}{\l_{1}(p\L) \cdots \l_{\nu}(p\L)} \right)\\
& \ll \frac{\s_{\infty} A^{ N_{d,n}}}{ p^{ N_{d,n}} \det \L} + O\left( \sum_{\nu=0}^{ N_{d,n}-1} \frac{A^{\nu}}{p^{\nu}}  \right)
\ll \frac{\s_{\infty} A^{ N_{d,n}}}{ qp^{ N_{d,n}}} + \frac{A^{ N_{d,n}-1}}{p^{ N_{d,n}-1}}
\end{align}
where the implicit constants depend only on $d,n$.
Here we use \cref{easy-bound-Z} for the first ``$\ll$'' and
the fact $\det \L = q$ and $p \leq A$ for the second ``$\ll$''.

Plugging the above estimate into \cref{lem-E1-bound:ineq1}, we get
\begin{align}
E_{1}(A,P) \ll \sum_{P < p \leq A} \# \NIP_{d,n}(\F_{p})\left( \frac{\s_{\infty} A^{ N_{d,n}}}{ qp^{ N_{d,n}}} + \frac{A^{ N_{d,n}-1}}{p^{ N_{d,n}-1}} \right).
\end{align}
Since $\dim (\NIP_{d,n})_\Q \leq N_{d,n}-2$ (\cref{dim-of-NIP}), we have
\begin{align}
\# \NIP_{d,n}(\F_{p}) \ll p^{ N_{d,n}-2}
\end{align}
where the implicit constant depends only on $d,n$.
Therefore, we get
\begin{align}
E_{1}(A,P)& \ll \sum_{P < p \leq A} \left( \frac{\s_{\infty} A^{ N_{d,n}}}{q} \frac{1}{p^{2}} + A^{ N_{d,n}-1} \frac{1}{p}     \right)\\
& \ll \frac{1}{P\log 2P} \frac{\s_{\infty} A^{ N_{d,n}}}{q} + A^{ N_{d,n}-1} \log \log 3A
\end{align}
where the implicit constants depends only on $d,n$.
Here we use the following two bound that follows from the prime number theorem:
\begin{align}
\sum_{P < p \leq A} \frac{1}{p^{2}} \leq \sum_{P < p} \frac{1}{p^{2}} \ll \frac{1}{P\log 2P}
\and
\sum_{P<p \leq A} \frac{1}{p} \leq \sum_{p \leq A} \frac{1}{p} \ll \log \log 3A.
\end{align}
\end{proof}

\begin{proof}[Proof of \cref{bound-of-E}]
This follows from \cref{lem-E1-bound} and \cref{bound-of-E2}.
\end{proof}

\subsection{Asymptotic formula for  \texorpdfstring{$M(A,P)$}{MAP}}

Recall that
\begin{align}
M(A,P)
\coloneqq{}&\# \left\{ \aa \in \Z_{ \prim}^{ N_{d,n}} \ \middle|\ \| \aa \|\leq A, \forall p \in S \cup M_{\Q, \leq P},\ [\aa] \in Z_{p}(\xi,\s)\right\}\\
={}&\# \left\{ \aa \in \Z_{\prim}^{ N_{d,n}} \ \middle|\ \txt{ $\| \aa \|\leq A, \aa \in \R W_{\infty}(\xi,\s)$\\ $ \forall p \in S_{\fin} \cup M_{\Q, \leq P},\ \aa \in W_{p}(\xi,\s)$} \right\}.
\end{align}
We are going to get an asymptotic formula for $M(A,P)$.
The difficulty is that we do not know the precise shape of the subsets 
$W_p(\xi, \s) \subset \Z_p^{N_{d,n}}$.
We approximate the set $W_p(\xi,\s)$ by finitely many disjoint balls with the same radius from inside and outside. 
Contained in a $p$-adic ball is equivalent to a certain congruence condition,
so we can apply \cref{prop:lattice_count_primitive_local_general} to count lattice points.

Let us start with preparing the balls. Consider the following two sets of balls in $ \Z_{p}^{ N_{d,n}}$, which approximate the sets $W_{p}(\xi, \s)$ from inside and outside:
for prime $p$ and $v_{p} \in \Z_{\geq 1}$, we define
\begin{align}
\Omega_{0}(p^{v_{p}}) &\coloneqq \left\{ B(\cc, p^{-v_{p}}) \ \middle|\  \cc \in \Z_{p}^{ N_{d,n}}, B(\cc, p^{-v_{p}}) \subset W_{p}(\xi,\s)  \right\}\\
\Omega_{1}(p^{v_{p}}) &\coloneqq \left\{ B(\cc, p^{-v_{p}}) \ \middle|\  \cc \in \Z_{p}^{ N_{d,n}}, B(\cc, p^{-v_{p}}) \cap W_{p}(\xi,\s) \neq  \emptyset  \right\}.
\end{align}

\begin{lemma}\label{lem:bound-of-omega}
Let $p$ be a prime. 
For $v_{p} \in \Z_{\geq 1}$, we set
\begin{align}
\widetilde{v_{p}} &= \min \left\{  \ceil*{ \frac{v_{p}}{2}}, v_{p}-e_{p}+1\right\}\\
\nu(v_{p},e_{p}) & = v_{p} - \widetilde{v_{p}} +(n+1)\min\{e_{p}, \widetilde{v_{p}}\}.
\end{align}
Then we have
\begin{enumerate}
\item
\begin{align}
\# \left(  \Omega_{1}(p^{v_{p}}) \setminus \Omega_{0}(p^{v_{p}})\right) \leq \frac{p^{v_{p} N_{d,n}}}{p^{\nu(v_{p},e_{p})}}
\end{align}
\item
\begin{align}
\frac{ \# \Omega_{1}(p^{v_{p}}) }{p^{v_{p} N_{d,n}}} &\leq (1-p^{- N_{d,n}}) \mu_{\P^{ N_{d,n}-1}_{\Q_p}}(Z_{p}(\xi,\s)) + \frac{1}{p^{\nu(v_{p},e_{p})}}\\
\frac{ \# \Omega_{0}(p^{v_{p}}) }{p^{v_{p} N_{d,n}}} &\geq (1-p^{- N_{d,n}}) \mu_{\P^{ N_{d,n}-1}_{\Q_p}}(Z_{p}(\xi,\s)) - \frac{1}{p^{\nu(v_{p},e_{p})}}.
\end{align}
\end{enumerate}
\end{lemma}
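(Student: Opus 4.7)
The plan is to relate $\Omega_1(p^{v_p})\setminus\Omega_0(p^{v_p})$ directly to the boundary set $\partial W_p(\xi,\s)(p^{v_p})$ treated in \cref{boundary-ball-estimate}, and then use disjointness of equal-radius balls together with the relation between $\mu_p$ on $W_p(\xi,\s)$ and $\mu_{\P^{N_{d,n}-1}_{\Q_p}}$ on $Z_p(\xi,\s)$.

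First, I observe that a ball $B(\cc,p^{-v_p})$ with $v_p\ge1$ consists entirely of vectors with the same residue mod $p$ as $\cc$, so it is either entirely primitive or entirely non-primitive; in particular such a ball can meet $W_p(\xi,\s)\subset(\Z_p^{N_{d,n}})_{\prim}$ only when $\cc\not\equiv0\ \mod p$. Balls in $\Omega_1(p^{v_p})\setminus\Omega_0(p^{v_p})$ are by definition precisely those that meet $W_p(\xi,\s)$ but are not contained in it, i.e.\ they correspond exactly to the residue classes in $\partial W_p(\xi,\s)(p^{v_p})$. Hence (1) is immediate from \cref{boundary-ball-estimate}\,(1), which gives the bound $\#\partial W_p(\xi,\s)(p^{v_p})\le p^{v_p N_{d,n}}/p^{\nu(v_p,e_p)}$.

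For (2), I use the fact that any two distinct balls of radius $p^{-v_p}$ in $\Z_p^{N_{d,n}}$ are disjoint and each has $\mu_p$-measure $p^{-v_p N_{d,n}}$. Since the balls in $\Omega_0(p^{v_p})$ are contained in $W_p(\xi,\s)$ while $W_p(\xi,\s)$ itself is contained in the union of balls in $\Omega_1(p^{v_p})$, this yields
\[
\frac{\#\Omega_0(p^{v_p})}{p^{v_p N_{d,n}}}\le\mu_p(W_p(\xi,\s))\le\frac{\#\Omega_1(p^{v_p})}{p^{v_p N_{d,n}}}.
\]
Combining these two inequalities with (1) gives
\[
\frac{\#\Omega_1(p^{v_p})}{p^{v_p N_{d,n}}}\le\mu_p(W_p(\xi,\s))+\frac{1}{p^{\nu(v_p,e_p)}},\quad
\frac{\#\Omega_0(p^{v_p})}{p^{v_p N_{d,n}}}\ge\mu_p(W_p(\xi,\s))-\frac{1}{p^{\nu(v_p,e_p)}}.
\]

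It remains only to rewrite $\mu_p(W_p(\xi,\s))$ in terms of $\mu_{\P^{N_{d,n}-1}_{\Q_p}}(Z_p(\xi,\s))$. By construction $W_p(\xi,\s)$ is the preimage of $Z_p(\xi,\s)$ under the quotient map $(\Z_p^{N_{d,n}})_{\prim}\to\P^{N_{d,n}-1}(\Q_p)$, and by the normalization in \cref{lem:comparison-of-measures} we have $\mu_p(W_p(\xi,\s))=(1-p^{-N_{d,n}})\,\mu_{\P^{N_{d,n}-1}_{\Q_p}}(Z_p(\xi,\s))$. Substituting this into the two inequalities above yields (2). No step here poses any real obstacle; the entire argument is a short packaging of the boundary estimate from \cref{boundary-ball-estimate} together with a disjoint-ball volume comparison, and the only point requiring care is keeping the normalization factor $1-p^{-N_{d,n}}$ consistent.
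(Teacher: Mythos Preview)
Your proof is correct and follows essentially the same approach as the paper: part (1) is identified with \cref{boundary-ball-estimate}, and part (2) is obtained from the sandwich $\#\Omega_0/p^{v_pN_{d,n}}\le\mu_p(W_p(\xi,\s))\le\#\Omega_1/p^{v_pN_{d,n}}$ combined with (1) and the normalization $\mu_p(W_p(\xi,\s))=(1-p^{-N_{d,n}})\mu_{\P^{N_{d,n}-1}_{\Q_p}}(Z_p(\xi,\s))$.
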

\begin{proof}
(1) is a restatement of \cref{boundary-ball-estimate}.
For (2), recall that 
\begin{align}
\mu_{p}(B(\cc, p^{-v_{p}})) = \frac{1}{p^{v_{p} N_{d,n}}}
\end{align}
for all $\cc \in \Z_{p}^{ N_{d,n}}$.
Thus, combined with (1), we get
\begin{align}
\frac{ \# \Omega_{1}(p^{v_{p}}) }{p^{v_{p} N_{d,n}}}  - \frac{1}{p^{\nu(v_{p},e_{p})}} \leq \frac{ \# \Omega_{0}(p^{v_{p}}) }{p^{v_{p} N_{d,n}}} 
\leq \mu_{p}(W_{p}(\xi,\s)) &\leq \frac{ \# \Omega_{1}(p^{v_{p}}) }{p^{v_{p} N_{d,n}}} \\
&\leq \frac{ \# \Omega_{0}(p^{v_{p}}) }{p^{v_{p} N_{d,n}}} + \frac{1}{p^{\nu(v_{p},e_{p})}}.
\end{align}
By definition of $\mu_{\P^{N_{d,n}-1}_{\Q_p}}$, we have
\begin{align}
\mu_{p}(W_{p}(\xi,\s)) = (1- p^{ - N_{d,n}}) \mu_{\P^{ N_{d,n}-1}_{\Q_p}}(Z_{p}(\xi,\s))
\end{align}
and we are done.
\end{proof}

\begin{proposition}\label{prop:M-upper-lower-bound-general-form}
Let $v_{p} \in \Z_{\geq 1}$ for each $p \in S_{\fin} \cup M_{\Q, \leq P}$.
Set
\begin{align}
q' \coloneqq \prod_{p \in S_{\fin} \cup M_{\Q, \leq P}} p^{v_{p}}.
\end{align}
Recall that we write 
\begin{align}
    \rho_{\infty}(\xi, \s) &= \mu_{S^{ N_{d,n}-1}}(W_{\infty}(\xi,\s)),\\
    \rho_{p}(\xi, \s) &= \mu_{\P^{N_{d,n}-1}_{\Q_p}}(Z_p(\xi,\s)) \quad p \neq \infty.
\end{align}
For $A, P \geq 1$, we have
\begin{align}
&M(A,P)\\
&\leq 
\frac{V_{ N_{d,n}}\prod_{p \in M_{\Q}} \rho_{p}(\xi, \s)}{\zeta( N_{d,n})} A^{ N_{d,n}}
\prod_{p \in S_{\fin} \cup M_{\Q, \leq P}}
\left(1 + \frac{1}{(1-p^{- N_{d,n}}) \rho_{p}(\xi, \s) p^{\nu(v_{p},e_{p})}} \right)\\
&\times \left(1 + O\left( \frac{1}{P\log 2P}\right) \right) 
\left(1 + O\left(  \frac{J_{ N_{d,n}}(q')}{\rho_{\infty}(\xi, \s) q'^{ N_{d,n}-1} A}  + \frac{J_{ N_{d,n}}(q')\log 2A}{\rho_{\infty}(\xi, \s) \f(q') A^{N_{d,n}-1}}\right)   \right)
\end{align}
and
\begin{align}
&M(A,P)\\
&\geq
\frac{V_{ N_{d,n}}\prod_{p \in M_{\Q}} \rho_{p}(\xi, \s)}{\zeta( N_{d,n})} A^{ N_{d,n}}
\prod_{p \in S_{\fin} \cup M_{\Q, \leq P}}
\left(1 - \frac{1}{(1-p^{- N_{d,n}}) \rho_{p}(\xi, \s) p^{\nu(v_{p},e_{p})}} \right)\\
&\times \left(1 + O\left( \frac{1}{P\log 2P}\right) \right) 
\left(1 + O\left(  \frac{J_{ N_{d,n}}(q')}{\rho_{\infty}(\xi, \s) q'^{ N_{d,n}-1} A}  + \frac{J_{ N_{d,n}}(q')\log 2A}{\rho_{\infty}(\xi, \s) \f(q') A^{N_{d,n}-1}}\right)   \right)
\end{align}
where the implicit constants depend only on $d,n$.
\end{proposition}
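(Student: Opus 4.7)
The plan is to sandwich the counting function $M(A,P)$ between lattice point counts over congruence classes modulo $q'$ and then apply \cref{prop:lattice_count_primitive_local_general} with $\Lambda=\Gamma=\Z^{N_{d,n}}$, $r=N_{d,n}$, and the semialgebraic family $\R W_{\infty}(\xi,\sigma)$ provided by \cref{W-is-sa}. First, for each $p\in S_{\fin}\cup M_{\Q,\leq P}$, I will approximate $W_p(\xi,\sigma)$ from inside and outside by the disjoint union of balls in $\Omega_0(p^{v_p})$ and $\Omega_1(p^{v_p})$ respectively; being in such a ball is exactly a congruence condition $\aa\equiv\bar\cc_p\!\pmod{p^{v_p}}$ with $\bar\cc_p\in(\Z/p^{v_p}\Z)^{N_{d,n}}_{\prim}$ (because $W_p(\xi,\sigma)\subset(\Z_p^{N_{d,n}})_{\prim}$ and $v_p\ge1$). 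By the Chinese remainder theorem, compatible choices across $p$ give a subset $\bar U_j\subset(\Z/q'\Z)^{N_{d,n}}$ of $q'$-primitive classes with $|\bar U_j|=\prod_p\#\Omega_j(p^{v_p})$.

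Next, since $W_p(\xi,\sigma)$ is invariant under $\Z_p^\times$-scalars, $\bar U_j$ is invariant under multiplication by $(\Z/q'\Z)^\times$, and $q'$-primitivity forces the action to be free. I will partition $\bar U_j$ into $|\bar U_j|/\varphi(q')$ orbits of size $\varphi(q')$ and apply \cref{prop:lattice_count_primitive_local_general} once per orbit. This yields, per orbit, a main term
\[
\frac{\varphi(q')\,\mathfrak{V}(\cc',q';\R W_{\infty}(\xi,\sigma))}{J_{N_{d,n}}(q')\,\zeta(N_{d,n})}\,A^{N_{d,n}}
\]
where $\mathfrak{V}=\vol_{N_{d,n}}(\R W_{\infty}(\xi,\sigma)\cap\mathcal{B}_{N_{d,n}}(1))=V_{N_{d,n}}\rho_{\infty}(\xi,\sigma)$ by spherical integration, and an error term in which $\lambda_i(\Z^{N_{d,n}})=1$ forces only $\sum_{1\le\nu<N_{d,n}}\varphi(q')\mathfrak{V}_\nu A^\nu/q'^\nu + A\log 2A$, with $\mathfrak{V}_\nu\ll 1$ since any $\nu$-dimensional projection of the unit ball has bounded volume.

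Summing over orbits then gives
\[
\text{(main)} = \frac{|\bar U_j|\,V_{N_{d,n}}\rho_{\infty}(\xi,\sigma)}{J_{N_{d,n}}(q')\,\zeta(N_{d,n})}A^{N_{d,n}},
\]
and applying \cref{lem:bound-of-omega}(2) to each factor of $|\bar U_j|/q'^{N_{d,n}}$ versus $(1-p^{-N_{d,n}})\rho_p(\xi,\sigma)$ produces the boundary product
\(
\prod_p\bigl(1\pm 1/((1-p^{-N_{d,n}})\rho_p(\xi,\sigma)p^{\nu(v_p,e_p)})\bigr)
\)
appearing in the statement, after cancelling the Euler factors of $J_{N_{d,n}}(q')/q'^{N_{d,n}}$ against those of $(1-p^{-N_{d,n}})$. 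The sum of error terms over orbits works out to exactly
\[
O\!\left(\frac{J_{N_{d,n}}(q')}{\rho_{\infty}(\xi,\sigma)\,q'^{N_{d,n}-1}\,A}+\frac{J_{N_{d,n}}(q')\log 2A}{\rho_{\infty}(\xi,\sigma)\,\varphi(q')\,A^{N_{d,n}-1}}\right),
\]
once divided by the main term.

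Finally, I will extend $\prod_{p\in S_{\fin}\cup M_{\Q,\le P}}\rho_p(\xi,\sigma)$ to the full product $\prod_{p<\infty}\rho_p(\xi,\sigma)$ in the statement of \cref{mainthm:denominator}: for $p>P$ not in $S$ we have $\rho_p(\xi,\sigma)=1-O(p^{-2})$ by \cref{lem:bound-non-soluble-locus}, so by the prime number theorem $\prod_{p>P}\rho_p=1+O(\sum_{p>P}p^{-2})=1+O(1/(P\log 2P))$. The main obstacle is the bookkeeping: keeping the boundary factors from \cref{lem:bound-of-omega} intact (since we cannot assume $v_p$ to be large relative to $e_p$), treating the $\nu(v_p,e_p)$ exponent correctly when $e_p\ge 1$, and verifying that the error from $\log 2A$ and the $\mathfrak{V}_\nu\ll 1$ bound combine to exactly the two error terms written in the statement rather than producing larger terms on the way.
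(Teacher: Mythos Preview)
Your proposal is correct and follows essentially the same approach as the paper: sandwich $M(A,P)$ via the ball systems $\Omega_0,\Omega_1$, pass to congruence classes in $(\Z/q'\Z)^{N_{d,n}}_{\prim}$ by CRT, exploit $(\Z/q'\Z)^\times$-invariance to reduce to \cref{prop:lattice_count_primitive_local_general} with $\Lambda=\Gamma=\Z^{N_{d,n}}$, then use \cref{lem:bound-of-omega} for the boundary factors and \cref{lem:bound-non-soluble-locus} to complete the Euler product. The only cosmetic difference is that the paper averages over $(\Z/q'\Z)^\times$ by writing $\sum_{\cc}=\varphi(q')^{-1}\sum_{u}\sum_{\cc}$ and swapping sums, rather than explicitly partitioning into orbits as you do; the computations are identical.
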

\begin{proof}
Let us write 
$
\mathcal{S} = S_{\fin} \cup M_{\Q, \leq P}.
$
We have
\begin{align}
M(A,P) &\leq  \sum_{\substack{(B_{p}) \in \prod_{p \in \mathcal{S}} \Omega_{1}(p^{v_{p}})  }} \# \left\{ \aa \in \Z^{ N_{d,n}}_{\prim} \ \middle|\  \txt{$\aa \in \ball{ N_{d,n}}{A} \cap \R W_{\infty}(\xi,\s)$\\
$\forall p \in \mathcal{S}, \aa \in B_{p}$} \right\},\\
M(A,P) &\geq  \sum_{\substack{(B_{p}) \in \prod_{p \in \mathcal{S}} \Omega_{0}(p^{v_{p}})  }} \# \left\{ \aa \in \Z^{ N_{d,n}}_{\prim} \ \middle|\  \txt{$\aa \in \ball{ N_{d,n}}{A} \cap \R W_{\infty}(\xi,\s)$\\
$\forall p \in \mathcal{S}, \aa \in B_{p}$} \right\}.
\end{align}

We identify a ball in $ \Z_{p}^{ N_{d,n}}$ of radius $p^{-v_{p}}$ with an element of $(\Z/p^{v_{p}}\Z)^{ N_{d,n}}$.
By Chinese remainder theorem, we have $\prod_{p \in \mathcal{S}} (\Z/p^{v_{p}}\Z)^{ N_{d,n}} \simeq (\Z/q' \Z)^{ N_{d,n}} $.
Let $ \mathcal{C}_{i}\  (i=0,1)$ be the image of $\prod_{p \in \mathcal{S}} \Omega_{i}(p^{v_{p}})\ (i=0,1)$ by this bijection:
 \[
\xymatrix{
\prod_{p \in \mathcal{S}} (\Z/p^{v_{p}}\Z)^{ N_{d,n}} \ar[r]^(.6){\sim} & (\Z/q' \Z)^{ N_{d,n}}\\
\prod_{p \in \mathcal{S}} (\Z/p^{v_{p}}\Z)^{ N_{d,n}}_{\prim} \ar@{}[u]|{\bigcup} \ar[r]^(.6){\sim} & (\Z/q' \Z)^{ N_{d,n}}_{\prim} \ar@{}[u]|{\bigcup}\\
\prod_{p \in \mathcal{S}} \Omega_{i}(p^{v_{p}}) \ar@{}[u]|{\bigcup} \ar[r]^(.6){\sim} & \mathcal{C}_{i} \ar@{}[u]|{\bigcup}
}
\]
Here note that all the elements of $\Omega_{i}(p^{v_{p}})$ are contained in $(\Z/p^{v_{p}}\Z)^{ N_{d,n}}_{\prim}$  because 
$W_{p}(\xi,\s) \subset ( \Z_{p}^{ N_{d,n}})_{\prim}$.
Also, by the definition of $W_{p}(\xi,\s)$, $ \mathcal{C}_{i}$ are stable under multiplication by any elements of $(\Z/q'\Z)^{\times}$.
With this notation, we get
\begin{align}
    &M(A,P)\\
    &\leq \sum_{\cc \in \mathcal{C}_1} \# \left\{ \aa \in \Z^{ N_{d,n}}_{\prim} \ \middle|\  \txt{$\aa \in \ball{ N_{d,n}}{A} \cap \R W_{\infty}(\xi,\s)$\\
    $\aa \equiv \cc\ \mod{q'}$} \right\}\\
    & = \sum_{\cc \in \mathcal{C}_1}
        \#\left( \Zprim^{N_{d,n}} \cap \ball{N_{d,n}}{A}\cap \R W_{\infty}(\xi,\s) \cap (\cc + q' \Z^{N_{d,n}})\right)\\
    & = \frac{1}{\f(q')} \sum_{u \in (\Z/q'\Z)^\times} \sum_{\cc \in \mathcal{C}_1}
          \#\left( \Zprim^{N_{d,n}} \cap \ball{N_{d,n}}{A}\cap \R W_{\infty}(\xi,\s) \cap (u\cc + q' \Z^{N_{d,n}})\right) \\
    & = \frac{1}{\f(q')}  \sum_{\cc \in \mathcal{C}_1} \sum_{u \in (\Z/q'\Z)^\times}
          \#\left( \Zprim^{N_{d,n}} \cap \ball{N_{d,n}}{A}\cap \R W_{\infty}(\xi,\s) \cap (u\cc + q' \Z^{N_{d,n}})\right).
\end{align}
Similarly,
\begin{align}
    &M(A,P)\\
    &\geq \sum_{\cc \in \mathcal{C}_0} \# \left\{ \aa \in \Z^{ N_{d,n}}_{\prim} \ \middle|\  \txt{$\aa \in \ball{ N_{d,n}}{A} \cap \R W_{\infty}(\xi,\s)$\\
    $\aa \equiv \cc\ \mod{q'}$} \right\}\\
    & = \frac{1}{\f(q')}  \sum_{\cc \in \mathcal{C}_0} \sum_{u \in (\Z/q'\Z)^\times}
          \#\left( \Zprim^{N_{d,n}} \cap \ball{N_{d,n}}{A}\cap \R W_{\infty}(\xi,\s) \cap (u\cc + q' \Z^{N_{d,n}})\right).
\end{align}

Note that $\R W_{\infty}(\xi,\s)$ form a semialgebraic family of homogeneous sets
when $\xi$ and $\s$ varies.
Applying \cref{prop:lattice_count_primitive_local_general} with
$\Gamma = \Lambda = \Z^{N_{d,n}}$, for all $A \geq 1$ we have
\begin{align}
    &\sum_{u \in (\Z/q'\Z)^\times}\#\left( \Zprim^{N_{d,n}} \cap \ball{N_{d,n}}{A}\cap \R W_{\infty}(\xi,\s) \cap (u\cc + q' \Z^{N_{d,n}})\right)\\
    & = \frac{\f(q') \vol_{N_{d,n}}(\R W_{\infty}(\xi,\s) \cap \ball{N_{d,n}}{1}) A^{N_{d,n}}}{J_{N_{d,n}}(q') \zeta(N_{d,n})} 
    + O\left( \frac{\f(q') A^{N_{d,n}-1}}{q'^{N_{d,n}-1}} + A\log 2A \right)\\
    & = \frac{\f(q') \rho_{\infty}(\xi, \s) V_{N_{d,n}} A^{N_{d,n}}}{J_{N_{d,n}}(q') \zeta(N_{d,n})} 
    + O\left( \frac{\f(q') A^{N_{d,n}-1}}{q'^{N_{d,n}-1}} + A\log 2A \right)
\end{align}
where the implicit constants depend only on $d,n$.
Therefore we get
\begin{equation}
\begin{aligned}\label{MAP-boudn-C1-C0}
    &M(A,P) \leq \# \mathcal{C}_1 \left\{
    \frac{ \rho_{\infty}(\xi, \s) V_{N_{d,n}} A^{N_{d,n}}}{J_{N_{d,n}}(q') \zeta(N_{d,n})} 
    + O\left( \frac{ A^{N_{d,n}-1}}{q'^{N_{d,n}-1}} + \frac{A\log 2A}{\f(q')} \right)
    \right\}\\
    &M(A,P) \geq \# \mathcal{C}_0 \left\{
    \frac{ \rho_{\infty}(\xi, \s) V_{N_{d,n}} A^{N_{d,n}}}{J_{N_{d,n}}(q') \zeta(N_{d,n})} 
    + O\left( \frac{ A^{N_{d,n}-1}}{q'^{N_{d,n}-1}} + \frac{A\log 2A}{\f(q')} \right)
    \right\}.
\end{aligned}
\end{equation}

Now by \cref{lem:bound-of-omega} we have
\begin{align}
    \# \mathcal{C}_1 &= \prod_{p \in \mathcal{S}} \# \Omega_{1}(p^{v_p})
    \leq \prod_{p \in \mathcal{S}} p^{v_p N_{d,n}} 
    \left\{(1-p^{-N_{d,n}})\rho_{p}(\xi,\s) + \frac{1}{p^{\nu(v_p, e_p)}}  \right\}\\
    &= J_{N_{d,n}}(q') \prod_{p \in \mathcal{S}} \rho_{p}(\xi,\s)
    \left( 1 + \frac{1}{(1-p^{-N_{d,n}})\rho_{p}(\xi,\s)p^{\nu(v_p, e_p)}}\right)
\end{align}
and
\begin{align}
    \# \mathcal{C}_0 &= \prod_{p \in \mathcal{S}} \# \Omega_{0}(p^{v_p})
    \geq \prod_{p \in \mathcal{S}} p^{v_p N_{d,n}} 
    \left\{(1-p^{-N_{d,n}})\rho_{p}(\xi,\s) - \frac{1}{p^{\nu(v_p, e_p)}}  \right\}\\
    &= J_{N_{d,n}}(q') \prod_{p \in \mathcal{S}} \rho_{p}(\xi,\s)
    \left( 1 - \frac{1}{(1-p^{-N_{d,n}})\rho_{p}(\xi,\s)p^{\nu(v_p, e_p)}}\right).
\end{align}
Plugging into \cref{MAP-boudn-C1-C0}, we get
\begin{equation}
    \begin{aligned}\label{ineq:MAP-bounds}
        M(A,P)& \leq  \prod_{p \in \mathcal{S}}  \rho_{p}(\xi,\s)
    \left( 1 + \frac{1}{(1-p^{-N_{d,n}})\rho_{p}(\xi,\s)p^{\nu(v_p, e_p)}}\right)\\
    &\times \left\{
    \frac{ \rho_{\infty}(\xi, \s) V_{N_{d,n}} A^{N_{d,n}}}{\zeta(N_{d,n})} 
    + O\left( \frac{J_{N_{d,n}}(q') A^{N_{d,n}-1}}{q'^{N_{d,n}-1}} + \frac{J_{N_{d,n}}(q')A\log 2A}{\f(q')} \right)
    \right\}\\
    M(A,P)& \geq  \prod_{p \in \mathcal{S}}  \rho_{p}(\xi,\s)
    \left( 1 - \frac{1}{(1-p^{-N_{d,n}})\rho_{p}(\xi,\s)p^{\nu(v_p, e_p)}}\right)\\
    &\times \left\{
    \frac{ \rho_{\infty}(\xi, \s) V_{N_{d,n}} A^{N_{d,n}}}{\zeta(N_{d,n})} 
    + O\left( \frac{J_{N_{d,n}}(q') A^{N_{d,n}-1}}{q'^{N_{d,n}-1}} + \frac{J_{N_{d,n}}(q')A\log 2A}{\f(q')} \right)
    \right\}
    \end{aligned}
\end{equation}
where the implicit constant depend only on $d,n$.

We want to replace $\prod_{p \in \mathcal{S}}  \rho_{p}(\xi,\s)$
with $\prod_{p \in M_\Q \setminus \{\infty\}}  \rho_{p}(\xi,\s)$.
By \cref{lem:bound-non-soluble-locus}, we have
\begin{align}
    \prod_{\substack{p>P\\p \notin S}}  \rho_{p}(\xi,\s)
    = \prod_{\substack{p>P\\ p \notin S}} \mu_{\P^{N_{d,n}-1}_{\Q_p}}(Z_p)
    &= \prod_{\substack{p>P\\ p \notin S}} \left( 1 - \mu_{\P^{N_{d,n}-1}_{\Q_p}}(\P^{N_{d,n}-1}(\Q_p) \setminus Z_p) \right)\\
    &= 1 + O\left( \frac{1}{P\log 2P} \right)
\end{align}
for $P \geq 1$ where the implicit constant depends only on $d,n$. 
Here we used the following bound that follows from the prime number theorem:
\begin{align}
\prod_{p>P}\biggl(1+O\biggl(\frac{1}{p^{2}}\biggr)\biggr)
=
1+O\biggl(\frac{1}{P\log2P}\biggr)
\quad\text{for $P\ge1$}.
\end{align}
(See also \cref{lem:bound-terms-in-product} below.)
% Note that we have
% \begin{align}
%     \Biggl(\prod_{\substack{p>P\\p \notin S}}  \rho_{p}(\xi,\s) \Biggr)^{-1}
%     = 1 + O\left( \frac{1}{P\log 2P} \right)
% \end{align}
% as well.
Replacing  $\prod_{p \in \mathcal{S}}  \rho_{p}(\xi,\s)$ 
with 
\begin{align}
    &\Biggl(\prod_{\substack{p \in M_\Q\\p<\infty}}  \rho_{p}(\xi,\s) \Biggr)
    \biggl(\prod_{\substack{p>P\\p \notin S}}  \rho_{p}(\xi,\s) \biggr)^{-1}
    =\Biggl(\prod_{\substack{p \in M_\Q \\ p < \infty} }  \rho_{p}(\xi,\s) \Biggr)
    \left(1 + O\left( \frac{1}{P\log 2P} \right) \right)
\end{align}
in \cref{ineq:MAP-bounds}, we get the desired inequalities.
\end{proof}

Now we are going to choose $v_p$'s, which means we are adjusting
the radii of the small balls utilized to approximate the sets
$W_p(\xi, \s)$.
Note that as $v_p$'s increase,
$\nu(v_p,e_p)$ and $q'$ are getting large.
This means the product factor 
\begin{align}
    \prod_{p \in S_{\fin} \cup M_{\Q,\leq P}}\left(1 \pm \frac{1}{(1-p^{- N_{d,n}}) \rho_{p}(\xi, \s) p^{\nu(v_{p},e_{p})}} \right)
\end{align}
in \cref{prop:M-upper-lower-bound-general-form}
approaches $1$ as we make $v_p$'s larger.
However, this increase in $v_p$'s also make the big-O terms in 
\cref{prop:M-upper-lower-bound-general-form} large.
Therefore, our task is to choose $v_p$'s in a manner that balances these effects.
We are choosing $v_p$'s as in the form specified in the following lemma.
\begin{lemma}\label{lem:choice-of-vp}
    Let $g_p \geq 1$ for $p \leq P$.
    Set 
    \begin{align}
        v_p \coloneqq
        \begin{cases}
            g_p  & \text{when $p \leq P$ and  $p \notin S$},\\
            e_p + g_p & \text{when $p \leq P$ and $p \in S$},\\
            e_p & \text{when $p>P$ and  $p \in S$}.
        \end{cases}
    \end{align}
    Then we have
    \begin{align}
        &\nu(v_p, e_p) = g_p - \ceil*{ \frac{g_p}{2}} & \text{when $p \leq P$ and  $p \notin S$},\\
        &\nu(v_p, e_p) \geq e_p + n + g_p - \ceil*{ \frac{g_p}{2}} & \text{when $p \leq P$ and $p \in S$},\\
        &\nu(v_p, e_p) = e_p + n & \text{when $p>P$ and  $p \in S$}. 
    \end{align}
\end{lemma}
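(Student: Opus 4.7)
The proof plan is a direct case analysis based on the three cases in the definition of $v_p$. In each case one simply unpacks the definitions
\[
\widetilde{v_p} = \min\left\{\ceil*{\frac{v_p}{2}}, v_p-e_p+1\right\},\qquad
\nu(v_p,e_p) = v_p - \widetilde{v_p} + (n+1)\min\{e_p,\widetilde{v_p}\}
\]
and simplifies. I expect no real obstacle; the only mildly delicate part is keeping track of the subcases inside Case~2, which I outline below.

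For Case~1 ($p \leq P$, $p \notin S$), we have $e_p = 0$ and $v_p = g_p \geq 1$, so $v_p - e_p + 1 = g_p + 1 \geq \ceil*{g_p/2}$, giving $\widetilde{v_p} = \ceil*{g_p/2}$ and $\min\{e_p,\widetilde{v_p}\} = 0$. The formula then collapses to $\nu(v_p,e_p) = g_p - \ceil*{g_p/2}$. For Case~3 ($p > P$, $p \in S$), we have $v_p = e_p \geq 1$, whence $v_p - e_p + 1 = 1 \leq \ceil*{e_p/2}$, so $\widetilde{v_p} = 1$ and $\min\{e_p,\widetilde{v_p}\} = 1$, yielding $\nu(v_p,e_p) = e_p - 1 + (n+1) = e_p + n$ directly.

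For Case~2 ($p \leq P$, $p \in S$, so $e_p \geq 1$ and $v_p = e_p + g_p$), I will split according to which of $\ceil*{(e_p+g_p)/2}$ or $g_p+1$ realizes the minimum, and then according to whether $\widetilde{v_p} \leq e_p$ or not. In the subcase $\widetilde{v_p} = g_p+1 \leq e_p$, direct substitution gives $\nu = e_p + n + (n+1)g_p$, which clearly exceeds $e_p + n + g_p - \ceil*{g_p/2}$. In the subcase $\widetilde{v_p} = \ceil*{(e_p+g_p)/2} \leq e_p$ (which forces $g_p \leq e_p$), one computes
\[
\nu(v_p,e_p) = (e_p+g_p) + n\ceil*{\tfrac{e_p+g_p}{2}} \geq e_p + g_p + n\ceil*{\tfrac{g_p}{2}} + n \geq e_p + n + g_p - \ceil*{\tfrac{g_p}{2}},
\]
using $e_p \geq 1$. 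In the remaining subcase $e_p < \widetilde{v_p} = \ceil*{(e_p+g_p)/2} \leq g_p + 1$ (which forces $g_p \geq e_p$), one gets
\[
\nu(v_p,e_p) = \floor*{\tfrac{e_p+g_p}{2}} + (n+1)e_p \geq \floor*{\tfrac{g_p}{2}} + (n+1)e_p = g_p - \ceil*{\tfrac{g_p}{2}} + (n+1)e_p,
\]
and using $e_p \geq 1$ once more gives the bound $e_p + n + g_p - \ceil*{g_p/2}$.

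Assembling the three cases yields the claim. The argument is purely arithmetic bookkeeping, with the main (minor) care being to verify that the definition of $\widetilde{v_p}$ behaves as stated in each subcase of Case~2; aside from that, the estimates follow from $e_p \geq 1$ when $p \in S$ and the elementary identity $g_p - \ceil*{g_p/2} = \floor*{g_p/2}$.
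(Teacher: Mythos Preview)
Your Cases~1 and~3 are correct and match the paper. In Case~2, subcases (i) and (iii) are also fine. But subcase~(ii) contains a false intermediate inequality: you claim
\[
n\ceil*{\tfrac{e_p+g_p}{2}} \;\geq\; n\ceil*{\tfrac{g_p}{2}} + n,
\]
i.e.\ $\ceil*{(e_p+g_p)/2} \geq \ceil*{g_p/2}+1$, ``using $e_p\geq 1$''. This fails for $e_p=g_p=1$, where the left side is $1$ and the right side is $2$. (Indeed $e_p=g_p=1$ lies in your subcase~(ii): $\widetilde{v_p}=\ceil*{2/2}=1\leq e_p$.) The fix is trivial: from $e_p,g_p\geq 1$ you get $\ceil*{(e_p+g_p)/2}\geq 1$, hence $\nu=(e_p+g_p)+n\ceil*{(e_p+g_p)/2}\geq e_p+g_p+n\geq e_p+n+g_p-\ceil*{g_p/2}$ directly.

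For comparison, the paper avoids your three-way split in Case~2 altogether: it splits only into $e_p\geq g_p+1$ (your subcase~(i)) versus $e_p\leq g_p$, and in the latter range observes that $\widetilde{v_p}=\ceil*{(e_p+g_p)/2}\geq e_p$ always, so $\min\{e_p,\widetilde{v_p}\}=e_p$ and one computes $\nu\geq (n+1)e_p + g_p - \ceil*{g_p/2}$ in one line. Your subcase~(ii) is essentially the boundary $\widetilde{v_p}=e_p$ of this range, which is why it is unnecessary to treat separately.
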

\begin{proof}
    When $p \leq P$ and $p \notin S$, we have $e_p=0$ and $\widetilde{v_p} = \ceil*{\frac{g_p}{2}}$.
    Thus we get
    \begin{align}
        \nu(v_p,e_p) = \nu(g_p,0) = g_p - \ceil*{\frac{g_p}{2}}.
    \end{align}

    When $p \leq P$ and $p \in S$, we have
    \begin{align}
        \widetilde{v_p} = \min\left\{ \ceil*{\frac{e_p + g_p}{2}}, g_p + 1  \right\}.
    \end{align}
    If $e_p \geq g_p + 1$, then $\widetilde{v_p} = g_p + 1$.
    Then 
    \begin{align}
        \nu(v_p, e_p) &= \nu(e_p + g_p, e_p) = e_p+g_p - (g_p +1) + (n+1)(g_p+1)\\
        &\geq e_p + (n+1)g_p + n.
    \end{align}
    If $e_p \leq g_p$, then 
    \begin{align}
        \widetilde{v_p} = \ceil*{\frac{e_p+g_p}{2}} \geq e_p.
    \end{align}
    Thus
    \begin{align}
        \nu(v_p,e_p) &= \nu(e_p+g_p,e_p) = e_p + g_p - \ceil*{\frac{e_p+g_p}{2}} + (n+1)e_p\\
        & \geq e_p + g_p - \ceil*{\frac{e_p}{2}} - \ceil*{\frac{g_p}{2}}+ (n+1)e_p
        \geq (n+1)e_p + g_p - \ceil*{\frac{g_p}{2}}.
    \end{align}

    Finally, when $p>P$ and $p \in S$, we have $\widetilde{v_p} = 1$.
    Thus we get
    \begin{align}
        \nu(v_p,e_p) = \nu(e_p,e_p) = e_p -1 +n+1 = e_p + n.
    \end{align}
\end{proof}

\begin{lemma}\label{lem:bound-terms-in-product}\ 
    \begin{enumerate}
        \item For $l \in \Z_{>0}$ and $a_i \in \R$, $i=1,\dots, l$, if
        $\sum_{i=1}^l |a_i| < 1/2$, then we have
        \begin{align}
            \prod_{i=1}^l (1 + a_i) = 1 + O\left(\sum_{i=1}^l |a_i|\right).
        \end{align}

        \item
        We have
        \begin{alignat}{2}
            (1-p^{- N_{d,n}}) \rho_{p}(\xi, \s) &\gg \frac{1}{p^{e_p}} &\qquad p &\in S_{\fin}\\
            (1-p^{- N_{d,n}}) \rho_{p}(\xi, \s) &\gg 1 &\qquad p &\notin S
        \end{alignat}
        where the implicit constants depend at most on $d,n$.
    \end{enumerate}
\end{lemma}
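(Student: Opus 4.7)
For part (1), the plan is to apply the standard logarithm–exponential trick. Since $|a_i|\le\sum_j|a_j|<\frac12$, each factor $1+a_i$ lies in $(\frac12,\frac32)$, so taking logarithms is safe and $|\log(1+a_i)-a_i|\le a_i^2\le|a_i|$. Summing yields $\log\prod_{i=1}^l(1+a_i)=\sum_i a_i+O(\sum_i|a_i|)=O(\sum_i|a_i|)$, and since the right-hand side has absolute value $<1$, the identity $e^y-1=O(y)$ for $|y|<1$ completes the bound. Alternatively one can expand $\prod(1+a_i)-1$ as a sum over non-empty subsets and compare with $\prod(1+|a_i|)-1\le e^{\sum|a_i|}-1\le 2\sum|a_i|$.

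For the first bound in part (2) (the case $p\in S_{\fin}$), I would quote \cref{prop-measure-of-padic-solvable-locus} directly, which gives
\[
(1-p^{-N_{d,n}})\rho_p(\xi,\s)\ge\Bigl(1-p^{-(N_{d,n}-1)}-p^{-n}\Bigr)p^{-e_p}.
\]
Since we are working in the regime $n\ge3$ and $N_{d,n}=\binom{n+d}{d}\ge\binom{5}{2}=10$, for every prime $p\ge2$ the prefactor $1-p^{-(N_{d,n}-1)}-p^{-n}\ge 1-2^{-9}-2^{-3}>\tfrac58$, giving a uniform positive constant depending only on $d,n$.

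For the second bound in part (2) (the case $p\notin S$, where $\s_p=1$ and $\rho_p(\xi,\s)=\rho_p$ is independent of $\xi,\s$), I would split at a threshold $P_0=P_0(d,n)$. For $p>P_0$, \cref{lem:bound-non-soluble-locus} gives $\rho_p=1-O(p^{-2})\ge\tfrac12$ once $P_0$ is large enough; combined with $1-p^{-N_{d,n}}\ge\tfrac12$, this yields $(1-p^{-N_{d,n}})\rho_p\gg 1$. For the finitely many primes $p\le P_0$, I would exhibit an explicit open subset of $W_p$ of controllable measure: the set
\[
U_p\coloneqq\{\aa\in(\Z_p^{N_{d,n}})_{\prim}\mid p\mid a_{x_0^d},\ p\nmid a_{x_0^{d-1}x_1}\}
\]
satisfies $\mu_p(U_p)=(p-1)/p^2$, and every $\aa\in U_p$ gives a polynomial $f_\aa$ with the smooth $\F_p$-point $[1{:}0{:}\cdots{:}0]$ on $V_+(f_\aa\bmod p)$, which lifts by Hensel to a $\Q_p$-point. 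Hence $U_p\subset W_p$, so $\rho_p\ge\mu_p(U_p)/(1-p^{-N_{d,n}})\ge (p-1)/p^2\gg 1/P_0$. Taking the minimum of the two regimes gives the desired uniform bound.

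The only step requiring any care is the uniform positivity of $\rho_p$ for small $p$ in the case $p\notin S$: \cref{lem:bound-non-soluble-locus} alone is insufficient there because its implied constant may exceed $p^2$, so one must supply an explicit Hensel-lifting lower bound. The $U_p$ construction above sidesteps this cleanly without invoking Lang--Weil in the small-$p$ regime.
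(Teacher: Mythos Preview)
Your proof is correct and, for parts (1) and the first half of (2), essentially matches the paper's approach: the paper bounds $\prod(1+a_i)$ above by $\prod e^{|a_i|}$ and below by $\prod(1-|a_i|)\ge\prod e^{-2|a_i|}$, which is your log/exp trick, and for $p\in S_{\fin}$ it likewise just quotes \cref{prop-measure-of-padic-solvable-locus}.

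For the second half of (2), the paper's proof is a single sentence: ``Second inequality follows from \cref{lem:bound-non-soluble-locus}.'' That lemma gives $\rho_p\ge 1-C(d,n)/p^2$, which is only positive for $p$ large in terms of $d,n$; the paper implicitly relies on the fact that for the remaining finitely many primes (a set depending only on $d,n$) each $\rho_p$ is a fixed positive number independent of $\xi,\s$. Your explicit Hensel set $U_p=\{p\mid a_{x_0^d},\ p\nmid a_{x_0^{d-1}x_1}\}$ supplies this missing positivity uniformly and constructively, so your argument is strictly more complete here than the paper's terse citation.
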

\begin{proof}
    (1)
    Since $\sum_{i=1}^l |a_i| < 1/2$, we have
    \begin{align}
        \prod_{i=1}^l (1 + a_i) \leq \prod_{i=1}^l \exp(|a_i|)=
        \exp\left(\sum_{i=1}^l |a_i|\right) = 1 + O\left(\sum_{i=1}^l |a_i|\right)
    \end{align}
    and
    \begin{align}
        \prod_{i=1}^l (1 + a_i) &\geq \prod_{i=1}^l (1 - |a_i|)
        \geq \prod_{i=1}^l\exp\left( -2|a_i|\right) 
        = \exp\left( -2 \sum_{i=1}^l|a_i| \right)\\
        &= 1 + O\left(\sum_{i=1}^l |a_i|\right).
    \end{align}

    (2)
    First inequality follows from \cref{prop-measure-of-padic-solvable-locus}.
    Second inequality follows from \cref{lem:bound-non-soluble-locus}.
    
\end{proof}

\begin{lemma}\label{lem:choosing-vps-according-to-parameter}
    There is a constant $C \geq 1$ depending only on $d,n$ with the following property.
    For any $V \geq 2$ and $P \geq 1$, there are 
    $v_p \in \Z_{\geq1}$ for each $p \in S_{\fin}\cup M_{\Q,\leq P}$ such that
    \begin{align}
        \sum_{p \in S_{\fin}\cup M_{\Q,\leq P}} 
        \frac{1}{(1-p^{-N_{d,n}})\rho_p(\xi,\s)p^{\nu(v_p,e_p)}}
        \leq C \left( \frac{\pi(P)}{V} + \frac{1}{P^{n-1}} \right)
    \end{align}
    and
    \begin{align}
        q' \leq q (P^3V^2)^{\pi(P)}.
    \end{align}
    Here $\pi(P)$ is the number of prime numbers less than or equal to $P$.
\end{lemma}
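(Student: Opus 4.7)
\medskip

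The plan is to apply \cref{lem:choice-of-vp} with a specific choice of $g_p$'s that balance the two constraints: we want the reciprocal of $p^{\nu(v_p,e_p)}$ (adjusted by the factor $(1-p^{-N_{d,n}})\rho_p(\xi,\s)$) to be no more than $1/V$ at each prime $p\le P$, while keeping the modulus $q'$ not too large. Concretely, I would take
\[
g_p\coloneqq 2\lceil\log_p V\rceil\quad\text{for each prime $p\le P$,}
\]
and define $v_p$ from $g_p$ via the three cases of \cref{lem:choice-of-vp}. This makes $\lfloor g_p/2\rfloor\ge\log_p V$, so $p^{\lfloor g_p/2\rfloor}\ge V$ at every prime $p\le P$.

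For the sum, I would split $S_{\fin}\cup M_{\Q,\le P}$ into the three cases of \cref{lem:choice-of-vp} and use \cref{lem:bound-terms-in-product}\,(2) to absorb the factor $(1-p^{-N_{d,n}})\rho_p(\xi,\s)$. In the first two cases ($p\le P$), combining the lower bound on $\rho_p$ with the formula for $\nu(v_p,e_p)$ gives a term of order $1/p^{\lfloor g_p/2\rfloor}$ or $1/p^{n+\lfloor g_p/2\rfloor}$, both $\le 1/V$ by the choice of $g_p$; since there are at most $\pi(P)$ such primes, this contributes $\ll \pi(P)/V$. In the third case ($p>P$ and $p\in S$), one gets $\nu(v_p,e_p)=e_p+n$, which combines with the bound $(1-p^{-N_{d,n}})\rho_p(\xi,\s)\gg p^{-e_p}$ to yield a term of order $1/p^n$; summing over primes $p>P$ and using $n\ge 2$ gives $\ll 1/P^{n-1}$.

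For the modulus, the third case contributes exactly $\prod_{p>P,\,p\in S}p^{e_p}$ and the first two cases contribute $\prod_{p\le P,\,p\in S}p^{e_p+g_p}\cdot\prod_{p\le P,\,p\notin S}p^{g_p}$, so together
\[
q'=q\prod_{p\le P}p^{g_p}.
\]
Since $p^{\lceil\log_p V\rceil}\le pV$, we have $p^{g_p}\le p^2V^2$, and then the trivial bound $\prod_{p\le P}p\le P^{\pi(P)}$ yields $q'\le q\,(P^2V^2)^{\pi(P)}\le q\,(P^3V^2)^{\pi(P)}$.

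There is no real obstacle here; the main care needed is in the bookkeeping for the three cases of \cref{lem:choice-of-vp} and in verifying the inequality $n\ge 2$ suffices for convergence of $\sum_{p>P}p^{-n}\ll P^{-(n-1)}$ via partial summation (or just $\sum_{p>P}p^{-n}\le\sum_{m>P}m^{-n}\ll P^{-(n-1)}$). The final constant $C=C(d,n)$ absorbs the implicit constants from \cref{lem:bound-terms-in-product} and from the tail sum estimate.
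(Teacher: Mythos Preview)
Your proposal is correct and follows essentially the same approach as the paper: both invoke \cref{lem:choice-of-vp}, split the sum into the three cases, apply \cref{lem:bound-terms-in-product}(2), and choose $g_p$ so that $p^{\lfloor g_p/2\rfloor}\ge V$. The only cosmetic difference is that the paper characterizes $g_p$ implicitly by $p^{\lfloor g_p/2\rfloor-1}<V\le p^{\lfloor g_p/2\rfloor}$ rather than your explicit $g_p=2\lceil\log_p V\rceil$, which in fact gives you the slightly sharper $p^{g_p}\le p^2V^2$ (versus the paper's $p^{g_p}\le p^3V^2$), though this is immaterial for the stated bound.
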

\begin{proof}
    Let us write
    \begin{align}
        a_p = \frac{1}{(1-p^{-N_{d,n}})\rho_p(\xi,\s)p^{\nu(v_p,e_p)}}.
    \end{align}
    We choose $v_p$'s as in the form of \cref{lem:choice-of-vp} and
    we specify $g_p$'s later.
    Then we have
    \begin{align}
        &\sum_{p \in S_{\fin}\cup M_{\Q,\leq P}} a_p\\
        & = \sum_{\substack{p \leq P \\ p \notin S}}a_p
        + \sum_{\substack{ p \in S_{\fin} \\ p \leq P}} a_p
        + \sum_{\substack{p \in S_{\fin} \\ p > P}} a_p\\
        &\ll 
        \sum_{\substack{p \leq P \\ p \notin S}} \frac{1}{p^{\nu(v_p,e_p)}}
        + \sum_{\substack{ p \in S_{\fin} \\ p \leq P}} \frac{p^{e_p}}{p^{\nu(v_p,e_p)}}
        + \sum_{\substack{p \in S_{\fin} \\ p > P}} \frac{p^{e_p}}{p^{\nu(v_p,e_p)}}
        & \text{by \cref{lem:bound-terms-in-product}(2)}\\
        & \leq 
        \sum_{\substack{p \leq P \\ p \notin S}} 
        \frac{1}{p^{g_p - \ceil*{ \frac{g_p}{2} }}}
        + \sum_{\substack{ p \in S_{\fin} \\ p \leq P}} 
        \frac{1}{p^{g_p - \ceil*{ \frac{g_p}{2}}+n}}
        + \sum_{\substack{p \in S_{\fin} \\ p > P}} 
        \frac{1}{p^{n}}
        & \text{by \cref{lem:choice-of-vp}}\\
        & \leq \sum_{p \leq P} \frac{1}{p^{g_p - \ceil*{ \frac{g_p}{2} }}} 
        + \sum_{p > P}  \frac{1}{p^{n}}
    \end{align}
    where the implicit constant depends only on $d,n$.
    Now for a given $V \geq 2$, let $g_p \in \Z_{\geq 1}$ be such that
    \begin{align}\label{ineq:choose-gp}
        p^{g_p - \ceil*{ \frac{g_p}{2}} -1} < V \leq p^{g_p - \ceil*{ \frac{g_p}{2}}}.
    \end{align}
    Then we get
    \begin{align}
        \sum_{p \in S_{\fin}\cup M_{\Q,\leq P}} a_p \ll
        \frac{\pi(P)}{V} + \frac{1}{P^{n-1}}.
    \end{align}
    To get the bound of $q'$, note that
    \begin{align}
        2 \left(g_p - \ceil*{ \frac{g_p}{2}} \right) + 1 \geq g_p.
    \end{align}
    Thus
    \begin{align}
        q' = \prod_{p \in S_{\fin}\cup M_{\Q,\leq P} } p^{v_p}
        &= \prod_{\substack{p \leq P \\ p \notin S}} p^{v_p}
        \prod_{\substack{ p \in S_{\fin} \\ p \leq P}} p^{v_p}
        \prod_{\substack{p \in S_{\fin} \\ p > P}}  p^{v_p}\\
        &= \prod_{\substack{p \leq P \\ p \notin S}} p^{g_p}
        \prod_{\substack{ p \in S_{\fin} \\ p \leq P}} p^{e_p+g_p}
        \prod_{\substack{p \in S_{\fin} \\ p > P}}  p^{e_p}\\
        &=q \prod_{p \leq P} p^{g_p}
        \leq q \prod_{p \leq P} p (pV)^2 \leq q (P^3V^2)^{\pi(P)}.
    \end{align}
    
\end{proof}

\begin{proposition}\label{prop:asymp-formula-MAP}
    There is a constant $C > 2$ depending only on $d,n$ with the following
    property.
    For any $A,P \geq 1$ and $V\geq 2$ such that
    \begin{align}
        \frac{\pi(P)}{V} + \frac{1}{P^{n-1}} \leq \frac{1}{C},
    \end{align}
    we have
    \begin{align}
        M(A,P)= &
        \frac{V_{ N_{d,n}}\prod_{p \in M_{\Q}} \rho_{p}(\xi, \s)}{\zeta( N_{d,n})} A^{ N_{d,n}} \Biggl( 1 + 
        O\Biggl(
        \frac{1}{P\log 2P} + \frac{\pi(P)}{V}  \\
        &+\frac{q(P^3V^2)^{\pi(P)}}{\s_{\infty}A}+
        \frac{q^{N_{d,n}-1}(\log\log 3q) (P^3V^2)^{\pi(P)N_{d,n}}\log 2A}{\s_{\infty}A^{N_{d,n}-1}}
        \Biggr)
        \Biggr)
    \end{align}
\end{proposition}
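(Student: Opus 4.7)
The plan is to apply \cref{prop:M-upper-lower-bound-general-form} with the specific choice of local parameters $v_p$ produced by \cref{lem:choosing-vps-according-to-parameter} for the given input $V$. This choice balances the three sources of error in that proposition: the product factor $\prod(1\pm a_p)$ approximating the true $p$-adic densities, the archimedean lattice counting error, and the tail $1/(P\log 2P)$ coming from the primes $p>P$ not treated explicitly.

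First, I would invoke \cref{lem:choosing-vps-according-to-parameter} with the given $V\ge 2$ and $P\ge 1$ to obtain integers $v_p$ for $p\in S_{\fin}\cup M_{\Q,\leq P}$ satisfying
\[
\sum_{p\in S_{\fin}\cup M_{\Q,\leq P}}\frac{1}{(1-p^{-N_{d,n}})\rho_p(\xi,\s)\,p^{\nu(v_p,e_p)}}\;\le\;C_0\Bigl(\tfrac{\pi(P)}{V}+\tfrac{1}{P^{n-1}}\Bigr),\qquad q'\le q\,(P^3V^2)^{\pi(P)},
\]
for some $C_0=C_0(d,n)$. Choosing the absolute constant $C$ in the statement to be $\ge 2C_0$, the hypothesis $\pi(P)/V+1/P^{n-1}\le 1/C$ forces this sum to be strictly less than $1/2$, so \cref{lem:bound-terms-in-product}\,(1) converts both product factors $\prod_p(1\pm a_p)$ appearing in the upper and lower bounds of \cref{prop:M-upper-lower-bound-general-form} into $1+O(\pi(P)/V+1/P^{n-1})$. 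This squeezes the two bounds together to yield an honest asymptotic formula.

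The remaining task is to convert the archimedean error terms in \cref{prop:M-upper-lower-bound-general-form} into the form stated. Using $\rho_\infty(\xi,\s)\asymp\s_\infty$ from \cref{easy-bound-Z}\,(1) together with the crude estimates $J_{N_{d,n}}(q')\le q'^{N_{d,n}}$ and $\varphi(q')\gg q'/\log\log 3q'$, one gets
\[
\frac{J_{N_{d,n}}(q')}{\rho_\infty(\xi,\s)\,q'^{N_{d,n}-1}A}\ll\frac{q'}{\s_\infty A},\qquad \frac{J_{N_{d,n}}(q')\log 2A}{\rho_\infty(\xi,\s)\,\varphi(q')A^{N_{d,n}-1}}\ll\frac{q'^{\,N_{d,n}-1}(\log\log 3q')\log 2A}{\s_\infty A^{N_{d,n}-1}}.
\]
Substituting $q'\le q(P^3V^2)^{\pi(P)}$ and bounding $\log\log 3q'$ by $(\log\log 3q)\,(P^3V^2)^{\pi(P)}$ (a very loose but sufficient estimate, since in fact $\log\log 3q'\le\log\log 3q+O(\log(\pi(P)\log PV))$) produces exactly the error terms $q(P^3V^2)^{\pi(P)}/(\s_\infty A)$ and $q^{N_{d,n}-1}(\log\log 3q)(P^3V^2)^{\pi(P)N_{d,n}}\log 2A/(\s_\infty A^{N_{d,n}-1})$ in the conclusion. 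Collapsing the three independent $(1+O(\cdot))$ factors (from the product, from the tail at primes $p>P$, and from the archimedean lattice count) into a single $(1+O(\cdot))$ factor then finishes the proof.

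I expect the main obstacle to be purely bookkeeping: choosing the constant $C$ large enough to legitimately invoke \cref{lem:bound-terms-in-product}\,(1), and verifying that the very crude bounds used for $J_{N_{d,n}}(q')$, $\varphi(q')$ and $\log\log 3q'$ all fit inside the advertised error term. No new lattice-counting or geometric input is required beyond the lemmas already in place.
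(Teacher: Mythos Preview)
Your proposal is correct and follows essentially the same route as the paper: invoke \cref{lem:choosing-vps-according-to-parameter}, feed the resulting $v_p$'s into \cref{prop:M-upper-lower-bound-general-form}, use \cref{lem:bound-terms-in-product}\,(1) to collapse the product $\prod(1\pm a_p)$, and then simplify the archimedean error via $\rho_\infty\asymp\s_\infty$ and $J_{N_{d,n}}(q')/\varphi(q')\ll q'^{N_{d,n}-1}\log\log 3q'$. The only point you leave implicit is that the term $1/P^{n-1}$ coming from the product is absorbed into $1/(P\log 2P)$ using $n\ge 3$; the paper notes this explicitly, but it is routine.
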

\begin{proof}
    The task is to simplify the last three factors in the
    upper and lower bounds of $M(A,P)$ in the statement of
    \cref{prop:M-upper-lower-bound-general-form}.
    Given $A,P \geq 1$ and $V \geq 2$,
    \cref{lem:choosing-vps-according-to-parameter} and
    \cref{lem:bound-terms-in-product}(1) say if
    \begin{align}
        \frac{\pi(P)}{V} + \frac{1}{P^{n-1}}
    \end{align}
    is sufficiently small (depending only on $d,n$),
    we can choose $v_p \in \Z_{\geq 1}$ so that
    \begin{align}
        \prod_{p \in S_{\fin} \cup M_{\Q, \leq P}}
        \left(1 \pm \frac{1}{(1-p^{- N_{d,n}}) \rho_{p}(\xi, \s) p^{\nu(v_{p},e_{p})}} \right)= 
        1+ O\left(\frac{\pi(P)}{V} + \frac{1}{P^{n-1}} \right)
    \end{align}
    and
    \begin{align}\label{ineq:bound-of-q'}
        q' \leq q(P^3V^2)^{\pi(P)}
    \end{align}
    where the implicit constant depends only on $d,n$.
    Since
    \begin{align}
        \frac{\pi(P)}{V} + \frac{1}{P^{n-1}} \ll 1 
        \quad \text{and} \quad \frac{1}{P \log 2P} \ll 1,
    \end{align}
    we get
    \begin{align}
        &\left(1 + O\left(\frac{\pi(P)}{V} + \frac{1}{P^{n-1}} \right) \right)
        \left(1 + O\left( \frac{1}{P\log 2P}\right) \right) \\
        &\times \left(1 + O\left(  \frac{J_{ N_{d,n}}(q')}{\rho_{\infty}(\xi, \s) q'^{ N_{d,n}-1} A}  + \frac{J_{ N_{d,n}}(q')\log 2A}{\rho_{\infty}(\xi, \s) \f(q') A^{N_{d,n}-1}}\right)   \right)\\
        &=1 + O\left(\frac{\pi(P)}{V} + \frac{1}{P^{n-1}} + \frac{1}{P\log 2P}
        +\frac{q'}{\s_{\infty}A} + \frac{{q'}^{N_{d,n}-1}\log\log 3q' \log 2A}{\s_{\infty}A^{N_{d,n}-1}}
        \right)\\
        &= 1+ O\Biggl(
        \frac{1}{P\log 2P} + \frac{\pi(P)}{V} + 
        \frac{q(P^3V^2)^{\pi(P)}}{\s_{\infty}A}\\
        &\qquad \qquad \qquad  +\frac{q^{N_{d,n}-1}(\log\log3q) (P^3V^2)^{\pi(P)N_{d,n}}\log 2A}{\s_{\infty}A^{N_{d,n}-1}}
        \Biggr)
    \end{align}
    Here for the first equality, we use \cref{easy-bound-Z}(1) and the fact
    $J_{N_{d,n}}(q')/\f(q') \ll {q'}^{N_{d,n}-1}\log\log3q'$, and for the second equality,
    we use \cref{ineq:bound-of-q'} and our assumption $n \geq 3$.
    Plugging this into the two inequalities in \cref{prop:M-upper-lower-bound-general-form}, we are done.
\end{proof}

\subsection{Asymptotic formula of \texorpdfstring{$\#\V_{d,n}^{\loc}(A;\xi,\s)$}{VdnlocAsxi}}

Recall we have
\begin{align}
    \#\V_{d,n}^{\loc}(A;\xi,\s) = \frac{1}{2}M(A,P) - \frac{1}{2}E(A,P)
\end{align}
for $A,P \geq 1$ \cref{eq:Vdn=M-E}.
By \cref{bound-of-E}, \cref{prop:asymp-formula-MAP}, 
there are constants $P_0, C_0 \geq 1$ depending only on $d,n$ such that 
for 
\begin{align}
    A \geq 1, \quad P \geq P_0, \quad V \geq C_0 \pi(P),
\end{align}
we have
\begin{align}
    \#\V_{d,n}^{\loc}(A;\xi,\s) &=
    \frac{V_{ N_{d,n}}\prod_{p \in M_{\Q}} \rho_{p}(\xi, \s)}{2\zeta( N_{d,n})} A^{ N_{d,n}}
        \Biggl(1+O\Biggl(
        \frac{1}{P\log 2P} + \frac{\pi(P)}{V} \\
        &+ \frac{q(P^3V^2)^{\pi(P)}}{\s_{\infty}A}+
        \frac{q^{N_{d,n}-1}(\log\log 3q)(P^3V^2)^{\pi(P)N_{d,n}}\log 2A}{\s_{\infty}A^{N_{d,n}-1}}
        \Biggr)\Biggr)\\
        &+ O\left( \frac{1}{P\log 2P} \frac{\s_{\infty} A^{ N_{d,n}}}{q} + A^{ N_{d,n}-1}\log \log 3A\right)
\end{align}
where the implicit constants depend only on $d,n$.
By \cref{prop-measure-of-padic-solvable-locus}, \cref{easy-bound-Z}, and 
\cref{lem:bound-non-soluble-locus}, we have 
\begin{align}
    \frac{V_{ N_{d,n}}\prod_{p \in M_{\Q}} \rho_{p}(\xi, \s)}{2\zeta( N_{d,n})} 
    \asymp_{d,n} \frac{\s_{\infty}}{q}.
\end{align}
Thus if we write
\begin{align}
    &\#\V_{d,n}^{\loc}(A;\xi,\s) = 
    \frac{V_{ N_{d,n}}\prod_{p \in M_{\Q}} \rho_{p}(\xi, \s)}{2\zeta( N_{d,n})} A^{N_{d,n}}
    \left(1 + R(A;\xi,\s)\right),
\end{align}
then
\begin{align}\label{eq:bound-of-R}
    R(A;\xi,\s) \ll &\frac{1}{P\log 2P} + \frac{\pi(P)}{V}
    + \frac{q(P^3V^2)^{\pi(P)}}{\s_{\infty}A}\\
    &+\frac{q^{N_{d,n}-1}(\log\log 3q)(P^3V^2)^{\pi(P)N_{d,n}}\log 2A}{\s_{\infty}A^{N_{d,n}-1}} \\
    &+ \frac{q \log\log 3A}{\s_{\infty}A}
\end{align}
provided $A \geq 1,P \geq P_0$, and $V \geq C_0 \pi(P)$.
Here the implicit constant depend only on $d,n$.
By adjusting parameters $V$ and $P$, we get the following.
\begin{theorem}
\label{thm:denominator}
    Let us write
    \begin{align}
    &\#\V_{d,n}^{\loc}(A;\xi,\s) = 
    \frac{V_{ N_{d,n}}\prod_{p \in M_{\Q}} \rho_{p}(\xi, \s)}{2\zeta( N_{d,n})} A^{N_{d,n}}
    \left(1 + R(A;\xi,\s)\right).
    \end{align}
    If 
    \begin{align}\label{eq:asymp-formula-vdn-condition}
        \frac{\s_\infty A}{q} \gg 1 \quad \text{and} \quad
        A \geq q \left((\log \log 3q)(\log 2A)\right)^{\frac{1}{N_{d,n}-2}},
    \end{align}
    then we have
    \begin{align}\label{eq:asymp-formula-vdn-bound}
        R(A;\xi,\s) \ll \frac{1}{\log \frac{\s_\infty A}{q} \log \log \frac{\s_\infty A}{q}}
        + \frac{q \log\log 3A}{\s_{\infty}A}.
    \end{align}
    Here the implicit constant in \cref{eq:asymp-formula-vdn-condition}
    is an absolute constant and that of \cref{eq:asymp-formula-vdn-bound}
    depends only on $d,n$.
    Moreover, the size of the coefficient of the main term is
    \begin{align}
        \frac{V_{ N_{d,n}}\prod_{p \in M_{\Q}} \rho_{p}(\xi, \s)}{2\zeta( N_{d,n})} 
    \asymp_{d,n} \frac{\s_{\infty}}{q}.
    \end{align}
\end{theorem}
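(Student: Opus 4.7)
The plan is to combine the bound for $E(A,P)$ from \cref{bound-of-E} with the asymptotic formula for $M(A,P)$ from \cref{prop:asymp-formula-MAP} via the identity $\#\V_{d,n}^{\loc}(A;\xi,\s) = \frac{1}{2}M(A,P) - \frac{1}{2}E(A,P)$ from \cref{eq:Vdn=M-E}, and then optimize the free parameters $P$ and $V$. Most of the technical work is already established, so what remains is a careful choice of parameters. Specifically, the preceding discussion has already distilled the estimate \cref{eq:bound-of-R} for the relative error $R(A;\xi,\sigma)$; the task is simply to show that for $X \coloneqq \sigma_\infty A/q$, a suitable choice of $(P,V)$ depending on $X$ makes each of the five error terms in \cref{eq:bound-of-R} at most $O(1/(\log X \log \log X)) + O(q\log\log 3A/(\sigma_\infty A))$.

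The key step is to set
\[
P = \alpha \log X, \qquad V = (\log X)^2,
\]
where $\alpha > 0$ is a small constant depending only on $d,n$ to be determined. With this choice, the prime number theorem gives $\pi(P) \asymp \alpha \log X / \log\log X$, and hence $P\log 2P \asymp \alpha \log X \log \log X$ controls the first error term, while $\pi(P)/V \asymp \alpha/(\log X \log\log X)$ handles the second. The key quantity
\[
(P^3 V^2)^{\pi(P)} \leq \exp\bigl(\pi(P)(3\log P + 2\log V)\bigr) \leq X^{C\alpha}
\]
with $C$ depending only on $d,n$; choosing $\alpha$ sufficiently small (depending on $N_{d,n}$, hence on $d,n$) forces $(P^3V^2)^{\pi(P)N_{d,n}} \leq X^{1-\delta}$ for some $\delta > 0$. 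Combined with the hypothesis
\[
\frac{q^{N_{d,n}-2}(\log\log 3q)\log 2A}{A^{N_{d,n}-2}} \leq 1,
\]
which is exactly condition \cref{eq:asymp-formula-vdn-condition}, this shows the third and fourth error terms in \cref{eq:bound-of-R} are $O(X^{-\delta})$ and in particular absorbed into $O(1/(\log X \log\log X))$.

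The hardest piece to balance is the fourth term
\[
\frac{q^{N_{d,n}-1}(\log\log 3q)(P^3V^2)^{\pi(P)N_{d,n}}\log 2A}{\sigma_\infty A^{N_{d,n}-1}},
\]
which is why the hypothesis \cref{eq:asymp-formula-vdn-condition} takes the form it does: after writing
\[
\frac{q^{N_{d,n}-1}}{\sigma_\infty A^{N_{d,n}-1}} = \frac{1}{X}\cdot\Bigl(\frac{q}{A}\Bigr)^{N_{d,n}-2},
\]
the hypothesis provides exactly the cancellation of $(\log\log 3q)(\log 2A)$ needed to bring this factor down to $1/X$. One must verify the admissibility hypothesis $V \geq C_0\pi(P)$ of \cref{prop:asymp-formula-MAP} and the condition $P \geq P_0$ of \cref{bound-of-E}, both of which follow provided $X$ is larger than a suitable absolute constant — this yields the condition $A/\q \gg 1$.

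Finally, the size of the leading coefficient follows immediately from the estimates already recorded: \cref{easy-bound-Z}(1) gives $\rho_\infty(\xi,\sigma) \asymp \sigma_\infty$, \cref{prop-measure-of-padic-solvable-locus} yields $\rho_p(\xi,\sigma) \asymp p^{-e_p}$ for $p \in S_{\fin}$, \cref{lem:bound-non-soluble-locus} controls the Euler product over $p \notin S$, and together these give the coefficient bound $V_{N_{d,n}}\prod_p \rho_p(\xi,\sigma)/(2\zeta(N_{d,n})) \asymp_{d,n} \sigma_\infty/q = 1/\q$, completing the proof.
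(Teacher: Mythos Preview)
Your proof is correct and follows essentially the same strategy as the paper. The only difference is in how the optimization is carried out: the paper sets $V = P^2$ and then argues abstractly about $\inf_{P}(f(P)+g(P)+h(P))$ by locating the crossing point of the decreasing term $f(P)=1/(P\log 2P)$ with the increasing terms $g(P)\asymp X^{-1}e^{14P}$ and $h(P)$, which yields $P\approx\frac{1}{14}\log X$; you instead plug in $P=\alpha\log X$, $V=(\log X)^2$ directly and verify each term is small. These are the same choice up to constants, and your explicit substitution is arguably cleaner.
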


\begin{proof}
    We want to find an upper bound of the infimum of the right hand side of
    \cref{eq:bound-of-R} when $V$ and $P$ vary.
    We set $V = P^2$. (Larger $V$ does not improve the upper bound because
    of the first term and prime number theorem.
    It turns out that smaller $V$ does not improve the bound as well
    because of the form of the third and fourth terms.)
    Note that by enlarging $P_0$ in advance, we have
    $P^2 \geq C_0 \pi(P)$ for all $P \geq P_0$, which we need for setting $V = P^2$. 
    Then for $A \geq 1$ and $P \geq P_0$, we have
    \begin{align}
    R(A;\xi,\s) \ll &\frac{1}{P\log 2P} 
    + \frac{qP^{7\pi(P)}}{\s_{\infty}A}\\
    &+\frac{q^{N_{d,n}-1}(\log\log 3q)P^{7\pi(P)N_{d,n}}\log 2A}{\s_{\infty}A^{N_{d,n}-1}} 
    + \frac{q \log\log 3A}{\s_{\infty}A}.
    \end{align}
    By the prime number theorem, further enlarging $P_0$, we have
    \begin{align}
        \pi(P) \leq \frac{2P}{\log P} \quad \text{for all $P \geq P_0$}.
    \end{align}
    Thus we have
    \begin{align}
        P^{7\pi(P)} \leq e^{14P}\and
        P^{7\pi(P)N_{d,n}} \leq e^{14N_{d,n}P}
    \end{align}
    for $P \geq P_0$.
    Thus
    \begin{align}\label{eq:bound-R-para-P}
    R(A;\xi,\s) \ll &\frac{1}{P\log 2P} 
    + \frac{q}{\s_{\infty}A}e^{14P}\\
    &+\frac{q^{N_{d,n}-1}(\log\log 3q)\log 2A}{\s_{\infty}A^{N_{d,n}-1}} e^{14N_{d,n}P}
    + \frac{q \log\log 3A}{\s_{\infty}A}
    \end{align}
    for $A\geq 1$ and $P \geq P_0$.
    Let us set
    \begin{align}
        &f(P) = \frac{1}{P\log 2P},\ g(P)= \frac{q}{\s_{\infty}A}e^{14P},\\
        &h(P)=\frac{q^{N_{d,n}-1}(\log\log 3q)\log 2A}{\s_{\infty}A^{N_{d,n}-1}} e^{14N_{d,n}P}.
    \end{align}
    Since $f$ is decreasing and $g,h$ are increasing with respect to $P$,
    we have
    \begin{align}\label{eq:bound-f+g+h}
        \inf_{P \geq P_0} \left( f(P) + g(P) + h(P) \right) \ll
        \inf_{P \geq P_0} f(P) + \inf_{P \geq P_0} g(P) + \inf_{P \geq P_0} h(P)
        + \alpha + \beta
    \end{align}
    where 
    \begin{align}
        &\alpha = 
        \begin{cases}
            f(P_*) \quad &\text{if $\exists P_* \geq P_0$ such that $f(P_*) = g(P_*)$}\\
            0 \quad &\text{otherwise}
        \end{cases}\\
        &\beta = 
        \begin{cases}
            f(P_*) \quad &\text{if $\exists P_* \geq P_0$ such that $f(P_*) = h(P_*)$}\\
            0 \quad &\text{otherwise}
        \end{cases}.
    \end{align}
    If $\s_{\infty}A/q \gg 1$, we can take log of the equality $f(P)=g(P)$ for 
    several times and see that 
    \begin{align}
        &\exists P \geq 3,\ \text{s.t.}\ f(P)=g(P)\ \text{and}\\
        &14P + \log P + \log \log 2P = \log \frac{\s_{\infty}A}{q};\\
        &\log P + O(1) = \log \log \frac{\s_{\infty}A}{q};\\
        &\log \log P + O(1) = \log \log \log \frac{\s_{\infty}A}{q}.
    \end{align}
    Thus we get
    \begin{align}
        \alpha \ll \frac{1}{\log \frac{\s_{\infty}A}{q} \log \log \frac{\s_{\infty}A}{q}}.
    \end{align}
    Note that under the second condition in \cref{eq:asymp-formula-vdn-condition},
    we have 
    \begin{align}
        K:=\frac{\s_{\infty}A^{N_{d,n}-1}}{q^{N_{d,n}-1}(\log\log 3q)\log 2A} \geq
        \frac{\s_{\infty}A}{q}.
    \end{align}
    Therefore, we can solve $f(P)=h(P)$ in the same way and get
    \begin{align}
        \beta \ll \frac{1}{\log K \log \log K} \leq 
        \frac{1}{\log \frac{\s_{\infty}A}{q} \log \log \frac{\s_{\infty}A}{q}}.
    \end{align}
    Going back to \cref{eq:bound-R-para-P} and \cref{eq:bound-f+g+h}, we get
    \begin{align}
        R(A;\xi,\s) \ll &\frac{q}{\s_{\infty}A} 
        + \frac{q^{N_{d,n}-1}(\log\log 3q)\log 2A}{\s_{\infty}A^{N_{d,n}-1}}\\
        &+ \frac{1}{\log \frac{\s_{\infty}A}{q} \log \log \frac{\s_{\infty}A}{q}}
        + \frac{q \log\log 3A}{\s_{\infty}A}.
    \end{align}
    By condition \cref{eq:asymp-formula-vdn-condition}, 
    the second term is bounded by the first term.
    Since the first term is obviously bounded by the last term, we get
    \begin{align}
        R(A;\xi,\s) \ll 
        \frac{1}{\log \frac{\s_{\infty}A}{q} \log \log \frac{\s_{\infty}A}{q}}
        + \frac{q \log\log 3A}{\s_{\infty}A}.
    \end{align}
\end{proof}

\appendix

\section{Newton method}

We give proofs of Newton methods here for completeness.

\begin{lemma}[Newton method --Archimedean case--]
\label{lem:Hensel_archimedean}
Consider
%%%%%
\begin{itemize}
%%%%%
	\item\label{lem:Hensel_archimedean:field}
		A field $K=\mathbb{R}$ or $\mathbb{C}$.
	\item\label{lem:Hensel_archimedean:function}
		A point $\alpha_0\in K$ and a function
		\begin{equation}
			f\colon\overline{B}(\alpha_0)\to K
		\end{equation}
        which is $C^2$ when $K= \R$ and holomorphic when $K=\C$,
		where $\overline{B}(\alpha_0)$ is the closed ball
		of radius $1$ centered at $\alpha_0$.
\end{itemize}
%%%%%
Let
\[
F\coloneqq
2\max\biggl(
\max_{x\in\overline{B}(\alpha_0)}|f'(x)|,
\max_{x\in\overline{B}(\alpha_0)}|f''(x)|,
1\biggr).
%F\coloneqq
%2\max\biggl(
%\max_{x\in\overline{B}(\alpha_0)}|f''(x)|,
%1\biggr).
\]
If the function $f(x)$ satisfies
\begin{equation}
\label{lem:Hensel_archimedean:initial_cond}
	f'(\alpha_0)\neq0
	\and
	\eta\coloneqq\frac{|f(\alpha_0)|}{|f'(\alpha_0)|^2}<\frac{1}{F},
\end{equation}
then there exists $\alpha\in \overline{B}(\alpha_0)$ such that
\begin{enumerate}[label=\textup{(\arabic*)}]
	\item\label{lem:Hensel_archimedean:zero}
		We have $f(\alpha)=0$, i.e.\ $\alpha$ is a zero of $f(x)$.
	\item\label{lem:Hensel_archimedean:close}
		We have
		\[
			|\alpha_0-\alpha|
			<(1+F\eta)\frac{|f(\alpha_0)|}{|f'(\alpha_0)|}
			<(1+F\eta)|f'(\alpha_0)|,
		\]
		i.e.\ $\alpha$ is close to $\alpha_0$.
\end{enumerate}
\end{lemma}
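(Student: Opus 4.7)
The plan is to run the standard Newton iteration
\[
\alpha_{n+1}\coloneqq\alpha_n-\frac{f(\alpha_n)}{f'(\alpha_n)}
\qquad(n\ge0)
\]
and show quadratic convergence to a zero $\alpha$ of $f$, with the whole orbit remaining inside $\overline{B}(\alpha_0)$. The smallness condition $\eta<1/F$ is the Newton--Kantorovich-type hypothesis that makes the iteration contract.

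First I would set $d_n\coloneqq|f(\alpha_n)|/|f'(\alpha_n)|$ (the size of the $n$-th Newton step) and $\eta_n\coloneqq|f(\alpha_n)|/|f'(\alpha_n)|^2$. By Taylor's theorem (valid in both the $C^2$ real case and the holomorphic case) applied at $\alpha_n$ with remainder up to order $2$,
\[
f(\alpha_{n+1})
=f(\alpha_n)+f'(\alpha_n)(\alpha_{n+1}-\alpha_n)+R_n,\qquad
|R_n|\le\tfrac{1}{2}\sup_{x\in\overline{B}(\alpha_0)}|f''(x)|\cdot d_n^2\le\tfrac{F}{4}d_n^2,
\]
where the first two terms cancel by the choice of $\alpha_{n+1}$. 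Similarly,
\[
|f'(\alpha_{n+1})-f'(\alpha_n)|\le\sup_{\overline{B}(\alpha_0)}|f''|\cdot d_n\le\tfrac{F}{2}d_n,
\]
so that $|f'(\alpha_{n+1})|\ge|f'(\alpha_n)|-\tfrac{F}{2}d_n$. Dividing these two estimates,
\[
d_{n+1}\le\frac{(F/4)d_n^2}{|f'(\alpha_n)|-(F/2)d_n}
=\frac{(F/4)\eta_n d_n}{1-(F/2)\eta_n|f'(\alpha_n)|},
\]
and a parallel computation gives a quadratic recursion for $\eta_n$.

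The next step is an induction argument showing that, under $\eta_0=\eta<1/F$, the iterates $\alpha_n$ all lie in $\overline{B}(\alpha_0)$, the derivatives $f'(\alpha_n)$ are bounded away from zero by roughly $|f'(\alpha_0)|/2$, and the step sizes satisfy $d_{n+1}\le\tfrac{1}{2}d_n$. Together with the improved quadratic bound $d_{n+1}\ll F\eta_n d_n$, one gets $d_n\le 2^{-n}d_0$, whence the sequence $(\alpha_n)$ is Cauchy, hence convergent to some $\alpha\in\overline{B}(\alpha_0)$ satisfying $f(\alpha)=0$. Summing the telescoping bound on step sizes as a geometric series,
\[
|\alpha-\alpha_0|
\le\sum_{n=0}^{\infty}d_n
\le d_0\Bigl(1+\tfrac{F\eta}{1-F\eta}+\cdots\Bigr)
<(1+F\eta)\,\frac{|f(\alpha_0)|}{|f'(\alpha_0)|},
\]
which is exactly the claimed bound in \cref{lem:Hensel_archimedean:close}; the trivial estimate $|f(\alpha_0)|/|f'(\alpha_0)|=\eta|f'(\alpha_0)|\le|f'(\alpha_0)|$ gives the second inequality there.

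The main obstacle will be bookkeeping the constants so that the recursion $d_{n+1}\le\tfrac12 d_n$ holds from the very first step using only the hypothesis $\eta<1/F$, and, simultaneously, so that $|f'(\alpha_n)|$ never drops below a fixed positive fraction of $|f'(\alpha_0)|$ (this ensures the Newton step is well-defined at every stage and that $\alpha_n$ never exits $\overline{B}(\alpha_0)$). This is a purely arithmetic check, but it is the point where the factor $2$ in the definition of $F$ and the quadratic gain $\eta_{n+1}\ll F\eta_n^2$ must be balanced carefully; everything else is formal.
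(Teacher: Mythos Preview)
Your proposal is correct and follows essentially the same approach as the paper: Newton iteration, Taylor estimates giving $|f(\alpha_{n+1})|\le(F/4)d_n^2$ and $|f'(\alpha_{n+1})|\ge(1-\tfrac{F}{2}\eta_n)|f'(\alpha_n)|$, then an induction controlling $\eta_n\le\eta$ and the step sizes geometrically. The only place to tighten is that you should record $d_n\le(F\eta/2)^n d_0$ rather than just $d_n\le2^{-n}d_0$, since summing the latter gives $2d_0$ which is not $<(1+F\eta)d_0$; with the sharper ratio the geometric sum is $d_0/(1-F\eta/2)<(1+F\eta)d_0$ as required.
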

%%%%%%%%%%%%%%%%%%%%%%%%%%%%%%%%%%%%%%%%
\begin{proof}
Let us define a sequence of positive real numbers $(\eta_n)_{n=1}^{\infty}$ inductively by
\begin{equation}
	\label{lem:Hensel_archimedean:eta_def}
	\eta_0\coloneqq\eta
	\and
	\eta_{n+1}\coloneqq F\eta_n^2.
\end{equation}
Since $0\le\eta<\frac{1}{F}$, we find that $\eta$ is non-increasing.
We then define a sequence $(\alpha_n)_{n=0}^{\infty}$ so that $\alpha_0$ is 
the given one and
\begin{equation}
\label{lem:Hensel_archimedean:alpha_def}
	\alpha_{n+1}\coloneqq\alpha_n-\frac{f(\alpha_n)}{f'(\alpha_n)},
\end{equation}
where the well-definedness, i.e.\ 
$\alpha_n\in\overline{B}(\alpha_0)$ and $f'(\alpha_n)\neq0$ will be proven in the induction below.
We prove the following claims by induction on $n$:
%%%%%
\begin{enumerate}[label=\textup{(\alph*)}]
%%%%%
	\item\label{lem:Hensel_archimedean:alpha_bound}
		We have
		\begin{equation}
			|\alpha_m-\alpha_n|
			\le\biggl|\frac{f(\alpha_0)}{f'(\alpha_0)}\biggr|
			\sum_{m\le i<n}\bigg(\frac{F\eta}{2}\bigg)^i
			\le1
		\end{equation}
		for all $0\le m\le n$. In particular, $\alpha_n\in\overline{B}(\alpha_0)$.
	\item\label{lem:Hensel_archimedean:dev_bound}
		We have
		\begin{equation}
			|f'(\alpha_n)|\ge\frac{1}{2}|f'(\alpha_{n-1})|
		\end{equation}
		for all $n\ge1$. Also, $|f'(\alpha_n)|>0$.
	\item\label{lem:Hensel_archimedean:frac_bound2}
		We have
		\begin{equation}
			\frac{|f(\alpha_n)|}{|f'(\alpha_n)|^2}
			\le\eta_n\le\eta
		\end{equation}
		for all $n\ge0$.
	\item\label{lem:Hensel_archimedean:frac_bound1}
		We have
		\begin{equation}
			\biggl|\frac{f(\alpha_n)}{f'(\alpha_n)}\biggr|
			\le\biggl|\frac{f(\alpha_0)}{f'(\alpha_0)}\biggr|\biggl(\frac{F\eta}{2}\biggr)^{n}.
		\end{equation}
\end{enumerate}
%%%%%
\medskip

%%%%%
\prooftitle{Initial case $n=0$.}
The assertion \cref{lem:Hensel_archimedean:alpha_bound}
and \cref{lem:Hensel_archimedean:frac_bound1} are trivial
since in this case, $0\le m\le n$ implies $n=m=0$.
The former part of assertion \cref{lem:Hensel_archimedean:dev_bound} is not relevant to the case $n=0$.
The latter part of assertion \cref{lem:Hensel_archimedean:dev_bound},
$f'(\alpha_0)\neq0$ is assumed in \cref{lem:Hensel_archimedean:initial_cond}.
The assertion \cref{lem:Hensel_archimedean:frac_bound2} is obvious from the definition of $\eta$.
\medskip

%%%%%
\prooftitle{Induction step from $n$ to $n+1$.}
Assume the assertion holds up to the $n$-th case, where $n\ge0$.
We then show the $(n+1)$-th case.
By \cref{lem:Hensel_archimedean:dev_bound} for the $n$-th case,
the denominator $f'(\alpha_{n})$ in the definition \cref{lem:Hensel_archimedean:alpha_def} is non-zero.
For any $0\le m\le n$,
by \cref{lem:Hensel_archimedean:alpha_def}
and \cref{lem:Hensel_archimedean:frac_bound1} for the $1,\ldots,n$-th case, we have
\[
|\alpha_m-\alpha_{n+1}|
\le
\sum_{m\le i<n+1}\biggl|\frac{f(\alpha_i)}{f'(\alpha_i)}\biggr|
\le
\biggl|\frac{f(\alpha_0)}{f'(\alpha_0)}\biggr|
\sum_{m\le i<n+1}\biggl(\frac{F\eta}{2}\biggr)^{i}.
\]
By $0\le\eta<\frac{1}{F}$ and \cref{lem:Hensel_archimedean:initial_cond}, we thus have
\[
\biggl|\frac{f(\alpha_0)}{f'(\alpha_0)}\biggr|
\sum_{m\le i<n+1}\biggl(\frac{F\eta}{2}\biggr)^{i}
\le 2\biggl|\frac{f(\alpha_0)}{f'(\alpha_0)}\biggr|
= 2|f'(\alpha_0)|\eta
\le F\eta
\le 1.
\]
By considering the case $m=0$, we also find $\alpha_{n+1}\in\overline{B}(\alpha_0)$.
This proves \cref{lem:Hensel_archimedean:alpha_bound} for the $(n+1)$-th case.
By applying the Taylor theorem to $f'(x)$
(when $K=\mathbb{C}$, we use the Taylor theorem on the line segment between $\alpha_n$ and $\alpha_{n+1}$)
with \cref{lem:Hensel_archimedean:alpha_def},
\begin{align}
|f'(\alpha_{n+1})|
&=
\biggl|f'(\alpha_{n})+\int_{\alpha_{n}}^{\alpha_{n+1}}f''(u)du\biggr|\\
&\ge
|f'(\alpha_{n})|-\frac{F}{2}|\alpha_{n+1}-\alpha_n|
=
|f'(\alpha_{n})|-\frac{F}{2}\biggl|\frac{f(\alpha_{n})}{f'(\alpha_n)}\biggr|.
\end{align}
By \cref{lem:Hensel_archimedean:frac_bound2} for the $n$-th case,
since $0\le\eta<\frac{1}{F}$, we have
\[
|f'(\alpha_{n+1})|
\ge|f'(\alpha_{n})|\biggl(1-\frac{F\eta}{2}\biggr)
\ge\frac{1}{2}|f'(\alpha_{n})|.
\]
By \cref{lem:Hensel_archimedean:dev_bound} for the $n$-th case,
we also find from this that $f'(\alpha_{n+1})\neq0$.
This proves \cref{lem:Hensel_archimedean:dev_bound} for the $(n+1)$-th case.
By applying the Taylor theorem to $f(x)$
with \cref{lem:Hensel_archimedean:alpha_def}, we get
\begin{equation}
\label{lem:Hensel_archimedean:Taylor_quadratic}
\begin{aligned}
|f(\alpha_{n+1})|
&=
\biggl|
f(\alpha_{n})+f'(\alpha_n)(\alpha_{n+1}-\alpha_{n})+\int_{\alpha_{n}}^{\alpha_{n+1}}f''(u)(\alpha_{n+1}-u)du\biggr|\\
&\le
\biggl|f(\alpha_{n})-f'(\alpha_{n})\frac{f(\alpha_n)}{f'(\alpha_{n})}\biggr|
+
\frac{F}{4}\biggl|\frac{f(\alpha_n)}{f'(\alpha_{n})}\biggr|^2
=
\frac{F}{4}\biggl|\frac{f(\alpha_n)}{f'(\alpha_{n})}\biggr|^2.
\end{aligned}
\end{equation}
By using \cref{lem:Hensel_archimedean:dev_bound} for the $(n+1)$-th case proven above, this gives
\[
\frac{|f(\alpha_{n+1})|}{|f'(\alpha_{n+1})|^2}
\le
F\biggl(\frac{|f(\alpha_n)|}{|f'(\alpha_{n})|^2}\biggr)^2.
\]
By \cref{lem:Hensel_archimedean:frac_bound2} for the $n$-th case,
we get
\[
\frac{|f(\alpha_{n+1})|}{|f'(\alpha_{n+1})|^2}
\le F\eta_n^2
=\eta_{n+1}.
\]
i.e.\ \cref{lem:Hensel_archimedean:frac_bound2} for the $(n+1)$-th case.
By \cref{lem:Hensel_archimedean:Taylor_quadratic}
and \cref{lem:Hensel_archimedean:dev_bound}, \cref{lem:Hensel_archimedean:frac_bound2} for the $(n+1),n$-th case,
\[
\biggl|\frac{f(\alpha_{n+1})}{f'(\alpha_{n+1})}\biggr|
\le
\frac{F}{4}\frac{1}{|f'(\alpha_{n+1})|}\biggl|\frac{f(\alpha_{n})}{f'(\alpha_{n})}\biggr|^2
\le
\frac{F}{2}\frac{|f(\alpha_{n})|}{|f'(\alpha_{n})|^2}\biggl|\frac{f(\alpha_{n})}{f'(\alpha_{n})}\biggr|
\le
\frac{F\eta}{2}\biggl|\frac{f(\alpha_{n})}{f'(\alpha_{n})}\biggr|
\]
By \cref{lem:Hensel_archimedean:frac_bound1} for the $n$-th case, we obtain
\cref{lem:Hensel_archimedean:frac_bound1} for the $(n+1)$-th case.
This completes the induction.

%%%%%%%%%%%%%%%%%%%%%%%%%%%%%%%%%%%%%%%%
By \cref{lem:Hensel_archimedean:alpha_bound},
$(\alpha_{n})_{n=1}^{\infty}$
is Cauchy and so converges to a point in $\overline{B}(\alpha_0)$.
Let
\[
\alpha\coloneqq\lim_{n\to\infty}\alpha_n.
\]
By \cref{lem:Hensel_archimedean:alpha_bound} and \cref{lem:Hensel_archimedean:frac_bound2}, we then have
\[
|f(\alpha_n)|
\le
\biggl(\frac{F}{2}\biggr)^2\eta_n
\to0\quad(n\to\infty)
\]
and so $f(\alpha)=0$, i.e.\ \cref{lem:Hensel_archimedean:zero} holds.
By \cref{lem:Hensel_archimedean:alpha_bound}, we have
\[
|\alpha-\alpha_0|
=\lim_{m\to\infty}|\alpha_m-\alpha_0|
\le\biggl|\frac{f(\alpha_0)}{f'(\alpha_0)}\biggr|
\sum_{i=0}^{\infty}\bigg(\frac{F\eta}{2}\bigg)^i
\le(1+F\eta)\biggl|\frac{f(\alpha_0)}{f'(\alpha_0)}\biggr|.
\]
This proves \cref{lem:Hensel_archimedean:close} and completes the proof.
\end{proof}

\begin{lemma}[Newoton method --Non-archimedean case--]\label{lem:newton-method-na-appendix}
Let $(K,|\cdot |)$ be a field with complete non-archimedean absolute value.
Let $R = \{a \in K \mid |a|\leq 1\}$.
For any polynomial $f \in R[t]$ and $ \alpha_{0} \in R$, if
\begin{align}
|f( \alpha_{0})| < |f'( \alpha_{0})|^{2},
\end{align}
then there is $ \alpha \in R$ such that
\begin{align}
f( \alpha)=0 \quad \text{and}\quad
| \alpha - \alpha_{0}| \leq \left| \frac{f( \alpha_{0})}{ f'( \alpha_{0})} \right|.
\end{align}
\end{lemma}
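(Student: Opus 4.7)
The plan is to construct the root $\alpha$ by Newton iteration: set $\alpha_{n+1}\coloneqq\alpha_n-f(\alpha_n)/f'(\alpha_n)$ starting from the given $\alpha_0$, and show that this sequence converges to a root in $R$ with the desired bound. Let $c\coloneqq|f(\alpha_0)|/|f'(\alpha_0)|^2\in[0,1)$ and plan to prove by induction on $n$ the following package of statements:
\begin{enumerate}[label=\textup{(\alph*)}]
\item $\alpha_n\in R$;
\item $|f'(\alpha_n)|=|f'(\alpha_0)|$;
\item $|f(\alpha_n)|\le|f'(\alpha_0)|^2c^{2^n}$;
\item $|\alpha_{n+1}-\alpha_n|\le|f'(\alpha_0)|c^{2^n}$.
\end{enumerate}
Statement (d) with $n=0$ gives $|\alpha_1-\alpha_0|=|f(\alpha_0)/f'(\alpha_0)|$, and subsequent terms shrink like $c^{2^n}\to0$.

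The engine of the induction is the non-archimedean Taylor expansion applied to the polynomial $f\in R[t]$. For any $\beta,h\in R$ one has formally
\[
f(\beta+h)=f(\beta)+f'(\beta)h+h^2\,P(\beta,h),\quad
f'(\beta+h)=f'(\beta)+h\,Q(\beta,h),
\]
where $P,Q\in R[s,t]$: this is because for $f(t)=\sum a_kt^k\in R[t]$ the coefficients of $h^j$ in $\sum a_k(\beta+h)^k$ involve only $\binom{k}{j}a_k\beta^{k-j}\in R$, no division by $j!$ is required. Applying this with $\beta=\alpha_n$ and $h=-f(\alpha_n)/f'(\alpha_n)$ (which lies in $R$ once (b) and (c) are known, since $|h|\le|f'(\alpha_0)|c^{2^n}\le c<1$), the linear terms cancel in $f(\alpha_{n+1})$ and one obtains $|f(\alpha_{n+1})|\le|h|^2=|f(\alpha_n)|^2/|f'(\alpha_0)|^2$, which upgrades (c) at index $n$ to (c) at index $n+1$. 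For (b) at index $n+1$ one uses $|hQ|\le|h|<|f'(\alpha_0)|=|f'(\alpha_n)|$ together with the strong triangle inequality. Statements (a) and (d) at index $n+1$ then follow immediately.

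Once the induction is established, completeness of $K$ yields a limit $\alpha\coloneqq\lim_n\alpha_n\in R$, and the ultrametric inequality gives
\[
|\alpha-\alpha_0|\le\max_{n\ge0}|\alpha_{n+1}-\alpha_n|\le|f'(\alpha_0)|c=\Bigl|\frac{f(\alpha_0)}{f'(\alpha_0)}\Bigr|,
\]
as desired. Continuity of $f$ (automatic for polynomials) together with (c) gives $f(\alpha)=\lim f(\alpha_n)=0$.

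The only place that requires care is the verification that the formal Taylor remainders $P(\beta,h)$ and $Q(\beta,h)$ have absolute value $\le1$; this is the crux and is handled by noting $f\in R[t]$ plus integrality of binomial coefficients. Everything else is bookkeeping with the strong triangle inequality and the initial hypothesis $c<1$; there is no genuine obstacle, and indeed the proof is strictly simpler than its archimedean counterpart in \cref{lem:Hensel_archimedean} because no constants $F$ enter and the ultrametric inequality lets us drop all the additive error terms that plagued that argument.
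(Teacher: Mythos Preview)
Your proof is correct and is essentially the same as the paper's: both run the Newton iteration $\alpha_{n+1}=\alpha_n-f(\alpha_n)/f'(\alpha_n)$, prove by induction that $|\alpha_n|\le1$, $|f'(\alpha_n)|=|f'(\alpha_0)|$, and $|f(\alpha_n)|\le|f'(\alpha_0)|^2 c^{2^n}$ (the paper writes $c_n$ with $c_n=c_{n-1}^2$), using the fact that the second-order Taylor remainder of a polynomial in $R[t]$ stays in $R$, and then pass to the limit. The only cosmetic difference is that you package the increment bound as a separate item (d), whereas the paper derives it inline.
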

\begin{proof}
We inductively construct $ \alpha_{n} \in K$ for $n =0, 1,2,3,\dots$ with $ \alpha_{0}$ being the given one.
Let 
\begin{align}
c_{0} = \frac{|f( \alpha_{0})|}{|f'( \alpha_{0})|^{2}} \quad \text{and} \quad c_{n} = c_{n-1}^{2} \quad \text{for $n=1,2,\dots$}.
\end{align}
Note that $c_{n} < 1$ and $c_{n} \to 0$.
Suppose we have constructed $ \alpha_{0},\dots, \alpha_{n}$ so that for $i=0,\dots,n$, we have
\begin{enumerate}
\item[$(1)_{i}$] $| \alpha_{i}| \leq 1$;
\item[$(2)_{i}$] $|f'( \alpha_{i})| = |f'( \alpha_{0})|$;
\item[$(3)_{i}$] $|f( \alpha_{i})| \le c_{i}|f'( \alpha_{i})|^{2}$
\end{enumerate}
and
\begin{align}
\alpha_{i} = \alpha_{i-1} - \frac{f( \alpha_{i-1})}{f'( \alpha_{i-1})} \quad \text{for $i=1,\dots,n$.}
\end{align}
Now we set 
\begin{align}
\alpha_{n+1} = \alpha_{n} - \frac{f( \alpha_{n})}{f'( \alpha_{n})}.
\end{align}
We are going to show $(1)_{n+1},(2)_{n+1},(3)_{n+1}$ hold.
First by the definition of $ \alpha_{n+1}$, $(1)_{n}, (3)_{n}$, and the fact that $f \in R[t]$, we have
\begin{align}
| \alpha_{n+1}| = \left| \alpha_{n} - \frac{f( \alpha_{n})}{f'( \alpha_{n})} \right| \leq \max\{ | \alpha_{n}| , c_{n}|f'( \alpha_{n})| \} \leq 1,
\end{align}
that is $(1)_{n+1}$ holds.
Next, using $(1)_{n},(1)_{n+1},(3)_{n}$, we can show that
\begin{align}
|f'( \alpha_{n+1})-f'( \alpha_{n})| \leq | \alpha_{n+1} - \alpha_{n}| = \left|  \frac{f( \alpha_{n})}{f'( \alpha_{n})}\right|
\le c_{n}|f'( \alpha_{n})| < |f'( \alpha_{n})|.
\end{align}
This implies $|f'( \alpha_{n+1})| = |f'( \alpha_{n})|$ and hence $|f'( \alpha_{n+1})| = |f'( \alpha_{0})| $.
This proves $(2)_{n+1}$.
Finally, by the definition of $ \alpha_{n+1}$, we have
\begin{align}
|f( \alpha_{n+1})|& = | f( \alpha_{n+1}) - f( \alpha_{n}) - f'( \alpha_{n})( \alpha_{n+1} - \alpha_{n}) | \\
& \leq | \alpha_{n+1} - \alpha_{n}| ^{2} = \left|  \frac{f( \alpha_{n})}{f'( \alpha_{n})}\right|^{2}.
\end{align}
Here the inequality follows from the fact that
\begin{align}
\frac{f(x) - f(y) - f'(y)(x-y)}{(x-y)^{2}}
\end{align}
is a polynomial in $x,y$ over $R$.
Therefore, by $(2)_{n+1}, (3)_{n}$ we get
\begin{align}
\left|  \frac{f( \alpha_{n+1})}{f'( \alpha_{n+1})^{2}}\right| \leq \left|  \frac{f( \alpha_{n})}{f'( \alpha_{n})^{2}}\right|^{2} \le c_{n}^{2} = c_{n+1}.
\end{align}
This proves $(3)_{n+1}$.

By this construction, we get a sequence $\{ \alpha_{n}\}_{n=0}^{\infty}$.
Since 
\begin{align}
| \alpha_{n+1} - \alpha_{n}| = \left|  \frac{f( \alpha_{n})}{f'( \alpha_{n})}\right| \le c_{n}|f'( \alpha_{0})|,
\end{align}
there is a limit $ \alpha = \lim_{n \to \infty} \alpha_{n}$ in $R$.
We show this $ \alpha$ is what we wanted.
First, we have $|f( \alpha)| = \lim_{n\to \infty} |f( \alpha_{n})|  \leq \lim_{n \to \infty}c_{n}|f'( \alpha_{0})|^{2} = 0$ and hence $f( \alpha)=0$.

Note that for $n \geq 0$, we have
\begin{align}
\left|  \frac{f( \alpha_{n})}{f'( \alpha_{n})}\right| \le c_{n}|f'( \alpha_{n})| \leq c_{0}|f'( \alpha_{0})| = \left|  \frac{f( \alpha_{0})}{f'( \alpha_{0})}\right|.
\end{align}
Therefore, if we take $n \geq 0$ large enough so that $| \alpha - \alpha_{n}| \leq |f( \alpha_{0})/f'( \alpha_{0})|$,
then we get
\begin{align}
| \alpha - \alpha_{0}| \leq \max\left\{| \alpha - \alpha_{n}|, \left|  \frac{f( \alpha_{n-1})}{f'( \alpha_{n-1})}\right|, \dots,
\left|  \frac{f( \alpha_{0})}{f'( \alpha_{0})}\right|  \right\}
\leq \left|  \frac{f( \alpha_{0})}{f'( \alpha_{0})}\right|.
\end{align}
\end{proof}

%%%%%%%%%%%%%%%%%%%%%%%%%%%%%%%%%%%%%%%%
\bibliographystyle{amsplain}
\bibliography{FirstMoment}
\bigskip

%%%%%%%%%%%%%%%%%%%%
\begin{flushleft}
{\textsc{%
\small
Yohsuke Matsuzawa\\[.3em]
\footnotesize
Department of Mathematics, Graduate School of Science,\\
Osaka Metropolitan University,
3-3-138, Sugimoto, Sumiyoshi, Osaka, 558-8585, Japan.}

\small
\textit{Email address}: \texttt{matsuzaway@omu.ac.jp}
}
\bigskip

{\textsc{%
\small
Yuta Suzuki\\[.3em]
\footnotesize
Department of Mathematics, Rikkyo University,\\
3-34-1 Nishi-Ikebukuro, Toshima-ku, Tokyo 171-8501, Japan.
}

\small
\textit{Email address}: \texttt{suzuyu@rikkyo.ac.jp}
}
\end{flushleft}

\end{document}